\documentclass[11pt]{article}
\usepackage[margin=1in]{geometry}

\usepackage{amsfonts}
\usepackage{amscd}
\usepackage{amssymb}
\usepackage{amsthm}
\usepackage{amsmath, xspace}
\usepackage{blkarray}
\usepackage[dvipsnames]{xcolor}
\usepackage{enumitem}
\usepackage{fancyhdr}
\usepackage{mathdots}
\usepackage{mathtools}
\usepackage{mathrsfs}
\usepackage{multicol}
\usepackage{stmaryrd}
\usepackage{ytableau}
\usepackage{comment}
\usepackage{cancel}

\usepackage{arydshln}
\usepackage[plainpages,backref]{hyperref}
\usepackage{lscape}

\theoremstyle{plain}
\newtheorem{thm}{Theorem}[section]
\newtheorem{lemma}[thm]{Lemma}
\newtheorem{prop}[thm]{Proposition}
\newtheorem*{prop*}{Proposition}

\newtheorem*{cor*}{Corollary}
\theoremstyle{definition}
	
\newtheorem{remark}[thm]{Remark}
\newtheorem*{remark*}{Remark}
\newtheorem{exampleinner}[thm]{Example}
\newenvironment{example}
  {\pushQED{\qed}\exampleinner}
  {\popQED\endexampleinner}

\theoremstyle{remark}

\numberwithin{equation}{section}

\providecommand{\keywords}[1]{\textbf{\textit{Key words---}} #1}

\newcommand{\dg}{\mathrm{dg}}
\newcommand{\dgplus}{\mathrm{dg}^{+}}
\newcommand{\sgn}{\mathrm{sgn}}

\def\cB{\mathcal{B}}

\def\CC{\mathbb{C}}

\def\KK{\mathbb{K}}

\def\ZZ{\mathbb{Z}}

\def\fa{\mathfrak{a}}

\def\fh{\mathfrak{h}}

\def\Card{\mathrm{Card}}

\def\ev{\mathrm{ev}}

\def\wt{\mathrm{wt}}

\usepackage{etex} 
\usepackage{pictexwd}
\usepackage{tikz}
	\usetikzlibrary{patterns} 
	\usetikzlibrary{shapes.misc}
\usetikzlibrary{arrows}

\newcounter{r}
\newcounter{s}

\newcommand\Part[1]{
        \setcounter{r}{1}
	 \foreach \x in {#1}{
 	{\ifnum\value{r}=1
		\draw (0,\value{r}-1)--(\x,\value{r}-1); 
		\fi}
	\draw (0,\value{r}) to (\x,\value{r});
   	\foreach \y in {0, ..., \x} {\draw (\y,\value{r})--(\y,\value{r}-1);}
	\addtocounter{r}{1}
 }}

\newcommand\Tableau[1]{
        \foreach \x [count = \c from 1] in {#1} {
		\foreach \y [count = \d from 1] in \x{
			\node at (\d-.5,\c-.5) {\scriptsize$\y$}; 
			\draw (\d,\c) to (\d,\c-1);
			{\ifnum\d=1
				\draw (0,\c) to (0,\c-1);
				\fi}
			\setcounter{r}{\d}
		}
		{\ifnum\c=1
			\draw (0,0)--(\value{r},0);
			\fi}
		\draw(0,\c) to (\value{r},\c);
		\setcounter{s}{\c}}}

\newcommand\sTableau[1]{
        \foreach \x [count = \c from 1] in {#1} {
		\foreach \y [count = \d from 1] in \x{
			\node at (\d-.5,\c-.5) {\tiny$\y$}; 
			\draw (\d,\c) to (\d,\c-1);
			{\ifnum\d=1
				\draw (0,\c) to (0,\c-1);
				\fi}
			\setcounter{r}{\d}
		}
		{\ifnum\c=1
			\draw (0,0)--(\value{r},0);
			\fi}
		\draw(0,\c) to (\value{r},\c);
		\setcounter{s}{\c}}}

\tikzstyle{V}=[draw, fill =black, circle, inner sep=0pt, minimum size=1.5pt]
\tikzstyle{wV}=[draw, fill =white, circle, inner sep=0pt, minimum size=4.5pt]
\tikzstyle{bV}=[draw, fill =black, circle, inner sep=0pt, minimum size=4.5pt]
\tikzstyle{over}=[draw=white,double=black,line width=2pt, double distance=.5pt]

\def\Over[#1,#2][#3,#4]{ 
	\draw[style=over]   (#2,#1) .. controls ++(#4*.5-#2*.5,0) and ++(-#4*.5+#2*.5,0) .. (#4,#3);}
\def\Under[#1,#2][#3,#4]{ 
	\draw  (#2,#1) .. controls ++(#4*.5-#2*.5,0) and ++(-#4*.5+#2*.5,0) .. (#4,#3);}
\def\Cross[#1,#2][#3,#4]{
	\Under[#3,#2][#1,#4]\Over[#1,#2][#3,#4]}

\def\tops[#1][#2][#3]{
	\foreach\x in {#1}{
		\draw (#2,\x+.15) -- (#2+.1, \x+.15) (#2, \x-.15) -- (#2+.1, \x-.15) ;
		\draw (#2+.1,\x) arc (0:360:.75mm and 1.5mm);}
	\foreach \x in {1,...,#3} {\draw (#2,\x)  to (#2+.05,\x); \node[V] at (#2+.05,\x){};}
	}
\def\Bottoms[#1][#2][#3]{
	\foreach\x in {#1}{
		\draw (#2, \x+.15) -- (#2-.1, \x+.15) (#2, \x-.15) -- (#2-.1, \x-.15) ;
		\draw (#2-.1, \x+.15) arc (90:270:.75mm and 1.5mm);}
	\foreach \x in {1,...,#3} {\draw (#2, \x)  to (#2-.05, \x); \node[V] at (#2-.05, \x){};}
	}
\def\Caps[#1][#2,#3][#4]{
	\longrightarrow ps[#1][#3][#4]
	\Bottoms[#1][#2][#4]
	}
\def\Pole[#1][#2,#3]{
	\shade[left color=white,right color=white] (#2,#1+.15) rectangle (#3,#1-.15);
	\draw[over] (#2,#1+.15) to (#3,#1+.15) (#2,#1-.15) to (#3,#1-.15) ;}
\def\Label[#1,#2][#3][#4]{
	\node[right] at (#2+.1,#3) {#4};
	\node[left] at (#1-.1,#3) {#4};		}
\def\Nodes[#1][#2]{
	 \foreach \x in {1,...,#2} {\node[V] at (#1,\x){};	}
	}
\def\PoleCaps[#1][#2,#3]{
	\foreach\x in {#1}{
		\draw (#2,\x+.15) -- (#2-.1,\x+.15) (#2,\x-.15) -- (#2-.1,\x-.15) ;
		\draw (#2-.1,\x+.15) arc (0:-180:1.5mm and .75mm);}
	\foreach\x in {#1}{
		\draw (#3,\x+.15) -- (#3+.1,\x+.15) (#3,\x-.15) -- (#3+.1,\x-.15) ;
		\draw (#3+.1,\x+.15) arc (0:360:1.5mm and .75mm);}
	}
\def\PoleTwist[#1,#2]{
	\foreach \x/\y in {-1/1L, -.7/1R, 0/2L, .3/2R}{\coordinate(T\y) at (#2,\x); \coordinate(B\y) at (#1,\x);}
	\draw[thin] (B1R) .. controls ++(#2*.5-#1*.5-.1,0) and ++(-#2*.5+#1*.5-.1,0) ..  (T2R)
			(B1L)   .. controls ++(#2*.5-#1*.5+.1,0) and ++(-#2*.5+#1*.5+.1,0) ..    (T2L) ;
	\draw[line width=2pt, white]
			(#1,.15)  .. controls +(#2*.5-#1*.5,0) and +(-#2*.5+#1*.5,0) ..   (#2,-.85) ;
	\draw[thin,over] 
		(B2R) .. controls ++(#2*.5-#1*.5+.1,0) and ++(-#2*.5+#1*.5+.1,0) ..  (T1R) 
			(B2L)  .. controls +(#2*.5-#1*.5-.1,0) and +(-#2*.5+#1*.5-.1,0) ..   (T1L) ;
			}

\def\SymPolesCaps[#1,#2][#3]{
	\draw (#1,.3) -- (#1-.1,.3) (#1,.15) -- (#1-.1, .15) ;
	\draw (#1-.1, .3) arc (0:-180:2pt and 1.5pt);
	\draw (#1,#3+.7) -- (#1-.1,#3+.7) (#1,#3+.85) -- (#1-.1,#3+.85) ;
	\draw (#1-.1,#3+.85)  arc (0:-180:2pt and 1.5pt);
	\draw (#2,.3) -- (#2+.1, .3) (#2, .15) -- (#2+.1, .15) ;
	\draw (#2+.1, .3) arc (0:360:2pt and 1.5pt);
	\draw (#2, #3+.7) -- (#2+.1, #3+.7) (#2, #3+.85) -- (#2+.1, #3+.85) ;
	\draw (#2+.1, #3+.85) arc (0:360:2pt and 1.5pt);}

\newcommand{\posleq}[1]{
	\hspace{0.1cm}
	\begin{tikzpicture}
	\draw (-0.8ex, -0.5ex) -- (0.8ex, -0.5ex);
	\draw (-0.8ex, 0.4ex) -- (0.7ex, -0.2ex);
	\draw (-0.8ex, 0.4ex) -- (0.7ex, 1ex);
	\draw (0.4ex,0.4ex) --(1.1ex, 0.4ex);
	\draw (0.75ex,0.75ex) --(0.75ex, 0.05ex);
	\end{tikzpicture}
	\hspace{0.1cm}
	}
\newcommand{\negleq}[1]{
	\hspace{0.1cm}
	\begin{tikzpicture}
	\draw (-0.8ex, -0.5ex) -- (0.8ex, -0.5ex);
	\draw (-0.8ex, 0.4ex) -- (0.7ex, -0.2ex);
	\draw (-0.8ex, 0.4ex) -- (0.7ex, 1ex);
	\draw (0.4ex,0.4ex) --(1.1ex, 0.4ex);
	\end{tikzpicture}
	\hspace{0.1cm}
	}
	
\newcommand{\zeroleq}[1]{
	\hspace{0.1cm}
	\begin{tikzpicture}
	\draw (-0.8ex, -0.5ex) -- (0.8ex, -0.5ex);
	\draw (-0.8ex, 0.4ex) -- (0.7ex, -0.2ex);
	\draw (-0.8ex, 0.4ex) -- (0.7ex, 1ex);
	\draw  (0.75ex,0.4ex) ellipse (0.2ex and 0.35ex);
	\end{tikzpicture}
	\hspace{0.1cm}
	}
	
\newcommand{\posgeq}[1]{
	\hspace{0.1cm}
	\begin{tikzpicture}
	\draw (-0.8ex, -0.5ex) -- (0.8ex, -0.5ex);
	\draw (0.8ex, 0.4ex) -- (-0.7ex, -0.2ex);
	\draw (0.8ex, 0.4ex) -- (-0.7ex, 1ex);
	\draw (-0.4ex,0.4ex) --(-1.1ex, 0.4ex);
	\draw (-0.75ex,0.75ex) --(-0.75ex, 0.05ex);
	\end{tikzpicture}
	\hspace{0.1cm}
	}
\newcommand{\neggeq}[1]{
	\hspace{0.1cm}
	\begin{tikzpicture}
	\draw (-0.8ex, -0.5ex) -- (0.8ex, -0.5ex);
	\draw (0.8ex, 0.4ex) -- (-0.7ex, -0.2ex);
	\draw (0.8ex, 0.4ex) -- (-0.7ex, 1ex);
	\draw (-0.4ex,0.4ex) --(-1.1ex, 0.4ex);
	\end{tikzpicture}
	\hspace{0.1cm}
	}
	
\newcommand{\zerogeq}[1]{
	\hspace{0.1cm}
	\begin{tikzpicture}
	\draw (-0.8ex, -0.5ex) -- (0.8ex, -0.5ex);
	\draw (0.8ex, 0.4ex) -- (-0.7ex, -0.2ex);
	\draw (0.8ex, 0.4ex) -- (-0.7ex, 1ex);
	\draw  (-0.75ex,0.4ex) ellipse (0.2ex and 0.35ex);
	\end{tikzpicture}
	\hspace{0.1cm}
	}

\newcommand{\posl}[1]{
	\hspace{0.1cm}
	\begin{tikzpicture}
	\draw (-0.8ex, 0.4ex) -- (0.7ex, -0.2ex);
	\draw (-0.8ex, 0.4ex) -- (0.7ex, 1ex);
	\draw (0.4ex,0.4ex) --(1.1ex, 0.4ex);
	\draw (0.75ex,0.75ex) --(0.75ex, 0.05ex);
	\end{tikzpicture}
	\hspace{0.1cm}
	}
\newcommand{\negl}[1]{
	\hspace{0.1cm}
	\begin{tikzpicture}
	\draw (-0.8ex, 0.4ex) -- (0.7ex, -0.2ex);
	\draw (-0.8ex, 0.4ex) -- (0.7ex, 1ex);
	\draw (0.4ex,0.4ex) --(1.1ex, 0.4ex);
	\end{tikzpicture}
	\hspace{0.1cm}
	}
	
\newcommand{\zerol}[1]{
	\hspace{0.1cm}
	\begin{tikzpicture}
	\draw (-0.8ex, 0.4ex) -- (0.7ex, -0.2ex);
	\draw (-0.8ex, 0.4ex) -- (0.7ex, 1ex);
	\draw  (0.75ex,0.4ex) ellipse (0.2ex and 0.35ex);
	\end{tikzpicture}
	\hspace{0.1cm}
	}
	
\newcommand{\posg}[1]{
	\hspace{0.1cm}
	\begin{tikzpicture}
	\draw (0.8ex, 0.4ex) -- (-0.7ex, 1ex);
	\draw (0.8ex, 0.4ex) -- (-0.7ex, -0.2ex);
	\draw (-0.4ex,0.4ex) --(-1.1ex, 0.4ex);
	\draw (-0.75ex,0.75ex) --(-0.75ex, 0.05ex);
	\end{tikzpicture}
	\hspace{0.1cm}
	}
\newcommand{\negg}[1]{
	\hspace{0.1cm}
	\begin{tikzpicture}
	\draw (0.8ex, 0.4ex) -- (-0.7ex, -0.2ex);
	\draw (0.8ex, 0.4ex) -- (-0.7ex, 1ex);
	\draw (-0.4ex,0.4ex) --(-1.1ex, 0.4ex);
	\end{tikzpicture}
	\hspace{0.1cm}
	}
	
\newcommand{\zerog}[1]{
	\hspace{0.1cm}
	\begin{tikzpicture}
	\draw (0.8ex, 0.4ex) -- (-0.7ex, -0.2ex);
	\draw (0.8ex, 0.4ex) -- (-0.7ex, 1ex);
	\draw  (-0.75ex,0.4ex) ellipse (0.2ex and 0.35ex);
	\end{tikzpicture}
	\hspace{0.1cm}
	}

\makeatletter
\renewcommand{\@makefnmark}{\mbox{\textsuperscript{}}}
\makeatother

\title{Formulas for Koornwinder polynomials}
\author{
Laura Colmenarejo\quad\ \ email:\ lcomen@ncsu.edu \\
Lucas Gagnon\quad\ \ email:\ lgagnon@usc.edu \\
Arun Ram\quad\quad\ \,\ email:\ aram@unimelb.edu.au \\
\\
}
\date{\today}

\usetikzlibrary{arrows.meta}

\begin{document}

\maketitle

\vspace{-2em}
\begin{center}
{\sl Dedicated to Vic Reiner}
\end{center}

\begin{abstract}
\noindent

This paper provides formulas for Koornwinder polynomials in analogy with the creation formula, the alcove walk formula and the non-attacking fillings formula for the type $GL_n$ Macdonald polynomials.  We state the creation formula in
terms of the divided-difference operators used in Schubert calculus, and we use a box-greedy reduced word to reformulate the alcove walk formula in terms of uncompressed set-valued tableaux.  
Then two types of compression, ``around-the-end compression'' and ``across-the-$0$-gap compression'', are used to derive a formula for Koornwinder polynomials in terms of compressed set-valued tableaux.  
Throughout we work in the full generality of relative Koornwinder polynomials,
which are the analogues of the permuted basement Macdonald polynomials used in the type $GL_n$ case.
\end{abstract}

\keywords{Macdonald polynomials, symmetric functions, Hecke algebras, tableaux}
\footnote{AMS Subject Classifications: Primary 05E05; Secondary 33D52.}

\tableofcontents

\section*{Introduction}

\emph{Koornwinder polynomials}, also known as \emph{Macdonald--Koornwinder polynomials},  are the Macdonald polynomials for the affine root system $CC_{n}$.  (In Macdonald's notation~\cite[(1.3.18)]{Mac03}, $CC_{n}$ is $(C^\vee_n, C_n)$.)  
For $n = 1$, Koornwinder polynomials are the Askey-Wilson polynomials.
More generally,  Koornwinder polynomials are the `universal' Macdonald polynomials of classical type, in that the Macdonald polynomials for any affine root system of classical type can be obtained by specializing Koornwinder polynomials (see e.g.~\cite[Fig. 1.0.1]{CR24}).

The electronic (`non-symmetric') Koornwinder polynomial $E_{\mu}$, for $\mu \in \ZZ^{n}$, is a Laurent polynomial in variables $x_1, \ldots, x_n$ with coefficients that involve six additional parameters: $q^{\frac12}, t^{\frac12}, t_0^{\frac12}, t_n^{\frac12}, u_0^{\frac12}, u_n^{\frac12}$.  
The purpose of this paper is to describe the monomial expansion in $x_1^{\pm1}, \ldots, x_n^{\pm1}$ of all Koornwinder polynomials $E_{\mu}$.  

For the sake of comparison, three well-known expansion formulas for the `usual' (type $GL_n$) non-symmetric Macdonald polynomials are:

\begin{enumerate}
\item[(a)] The creation formula (see~\cite[(5.6)]{Ch96} and~\cite[(5.10.13)]{Mac03} and~\cite[(3.11)]{CR24}),
\item[(b)] The alcove walk formula (see~\cite[Theorem 3.1]{RY08} and~\cite[Theorem 1.1]{GR21}),
\item[(c)] The non-attacking fillings formula (see~\cite[Theorem 3.5.1]{HHL06} and~\cite[Theorem 1.1]{GR21}).
\end{enumerate}
The creation and alcove walk formulas are known for every affine root system.  
However, the general formulation uses the double affine Hecke algebra (DAHA) and may be hard to parse.  
Our first goal is to give a simplified description of the creation and alcove walk formulas for Koornwinder polynomials, emphasizing parallels with the type $GL_n$ case where possible.  
Concretely,
\begin{itemize}
\item in Proposition~\ref{cformsteps} we state the creation formula for Koornwinder polynomials using the divided-difference operators used in Schubert calculus,
and 
\item in Theorem~\ref{thm:USVformula}, we reparametrize the alcove walk formula for Koornwinder polynomials using set-valued tableaux, generalizing the set-valued tableaux formula for type $GL_n$ Macdonald polynomials that is given in~\cite{DR22}.
\end{itemize}
Notable applications of the alcove walk formula in the Koornwinder case appear in~\cite{OS13} and~\cite{Yam23}.

Our second aim is to understand how the non-attacking fillings formula generalizes to Koornwinder polynomials.  
In type $GL_n$, the non-attacking fillings formula can be obtained from the alcove walk formula by \emph{compression}, a heuristic for consolidating coefficients in order to reduce the number of summands in the final expansion (see~\cite{Len08} and~\cite{GR21}).  
In Theorem~\ref{CSVformula}, we state a new formula for Koornwinder polynomials using two kinds of compression:
\begin{enumerate}
\item[(a)] across-the-$0$-gap compression, which replaces a sum of $2^{k}$ terms by $k+1$ terms, and
\item[(b)] around-the-end compression, which replaces a sum of $2^{k}$ terms by $k^2$ terms,
\end{enumerate}
where $k$ is the size of the relevant compression section.
Around-the-end compression is a straightforward generalization of the compression used in~\cite[\S1.2]{GR21} to go from the alcove walk formula to the non-attacking fillings formula.  
Across-the-$0$-gap compression is, less transparently, a generalization of the compression from non-attacking fillings to queue tableaux found in~\cite[Appendix A]{CMW18}, see also~\cite{CdGW15}.  
Combining both kinds of compression reduces the alcove walk formula to a sum over a distinguished subset of set-valued tableaux that we call \emph{CSV-tableaux}; see Section~\ref{section:compression}.  
Despite having more compression than in type $GL_n$, our formula for Koornwinder polynomials has a significantly larger number of terms than the type $GL_n$ non-attacking fillings formula, which is a sum over tableaux with only one entry per box.  

Throughout the paper, we work in the generality of the relative Koornwinder polynomials $E^z_\mu$, where $z$ is a signed permutation.  
These are the type $CC_{n}$ analogues of the permuted basement Macdonald polynomials studied by~\cite{Al16}.  
Each of our formulas extends to compute all $E^z_\mu$, and we suspect that after appropriate specialization, our formulas are related to the tableaux constructions for type B/C Hall-Littlewood polynomials
given in~\cite{Len10}.

We have also created several tools for future investigations into Koornwinder polynomials.  
First, the \emph{box-greedy reduced word} defined in~\cite{GR21} is a key tool for obtaining the set-valued tableaux formulation of the alcove walk for type $GL_n$, and in Section~\ref{section:bgrw} we generalize the box-greedy reduced word to the Koornwinder case.  
Second, our compression result relies on several identities involving $c$-functions in the DAHA, in the sense of~\cite{CR24}, which may simplify future work.  
Lastly, we have written {\color{purple}sage code~\cite{GagnonGitHub}} that implements many of our constructions, including all combinatorial algorithms, the alcove walk formula, and the uncompressed set-valued tableaux formula.  

The paper is organized as follows: In Section~\ref{sec: Koornwinder poly} we introduce the electronic, relative and bosonic (`symmetric') Koornwinder polynomials. Section~\ref{sec: c-functions} focuses on the needed notions related to the affine and finite Weyl groups, the coroots and the coroot sequence, evaluations and the $c$-functions and fold functions. 
Section~\ref{subsec:weight functions} describes the family of functions that will be weights in our compressed formula for Koornwinder polynomials.  
In Section~\ref{sec: box combinatorics} we introduce the box combinatorics, that is, the box-greedy reduced word (Section~\ref{section:bgrw}) and the coroot sequence (Section~\ref{section:coroot}).  In effect, this combinatorics is the Koornwinder analogue of the arm and leg statistics used in the type $GL_n$ case (see~\cite[Remark 2.3]{GR21} for the relationship).

The final two sections focus on formulas for Koornwinder polynomials.  
Section~\ref{sec: step-by-step} states two recursive definitions of relative Koornwinder polynomials: the creation formula using divided difference operations and the set-valued tableaux formulation of the alcove walk formula.  
Section~\ref{section:compression} applies compression to the set-valued tableaux formula from Section~\ref{sec: step-by-step} in order to prove our compressed formula, Theorem~\ref{CSVformula}, with important definitions given in Sections~\ref{set-valued tab} and~\ref{weights}. The proof of Theorem~\ref{CSVformula} is also contained in Section~\ref{section:compression}, and follows the same inductive structure as the proof of the uncompressed set-valued tableaux formula.

As in~\cite{Mac03} and~\cite{GR21}, the double affine Hecke algebra for type $CC_n$ is our most essential tool for deriving formulas for Koornwinder polynomials.  
In Appendix~\ref{section:DAHA} we give a brief but thorough exposition of the DAHA for type $CC_n$.
Finally in Appendix~\ref{sec: examples} we provide a result with a small extension of the 0-gap compression of
Proposition~\ref{0gap} and explain how it is used to compute the cases of Koornwinder polynomials that are computed by the rhombic staircase tableaux given in~\cite[Theorem 1.10]{CMW18}.  At this time, we do not understand how to relate our combinatorial construction to the combinatorics of
rhombic staircase tableaux.

\bigskip\noindent
\textbf{Acknowledgments.} It is a pleasure to dedicate this paper to Vic Reiner, who has consistently and exceptionally contributed to, 
supported and served our algebraic combinatorics community with sustained energy for so many years. L.\ Colmenarejo is partially supported by the Simons Foundation. A.\ Ram thanks Alexandr Garbali, Jules Lamers and Weiying Guo for many helpful conversations and enthusiastic help computing one-box Koornwinder polynomials before we understood how to go about getting compressed formulas for Koornwinder polynomials.  We also thank A.~Aggarwal, H.~Ben Dali, J.~de Gier, P.~Diaconis, A.~Garbali, W.~Guo, J.~Lamers, D.~Orr, and A.~Schilling for their helpful comments on a preliminary version of this paper.

\section{Macdonald-Koornwinder polynomials}\label{sec: Koornwinder poly}
As mentioned in the introduction, the Macdonald-Koornwinder polynomials are the Macdonald polynomials for the affine root system of type $CC_n$. For short, in the rest of the paper, we refer to them simply as \emph{Koornwinder polynomials}.

Although there is an extensive body of literature on Macdonald polynomials, our exposition here is self-contained and no background on the topic is assumed. 

\subsection{Operators on polynomials}

Let
$$
q, t^{\frac12}, t_0^{\frac12}, u_0^{\frac12}, t_n^{\frac12}, u_n^{\frac12} \quad\hbox{be independent parameters,}
$$
and let
$$\KK = \CC(q, t^{\frac12}, t_0^{\frac12}, u_0^{\frac12}, t_n^{\frac12}, u_n^{\frac12} )
\quad\hbox{be the field of fractions of $\CC[q, t^{\frac12}, t_0^{\frac12}, u_0^{\frac12}, t_n^{\frac12}, u_n^{\frac12} ]$.}  
$$
Let $\KK[x^{\pm1}] = \KK[x_1^{\pm1}, \ldots, x_n^{\pm1}]$.  
Let $\ZZ^n$ denote the set of sequences $\mu = (\mu_1, \ldots, \mu_n)$ of integers.
The ring 
$$\hbox{$\KK[x^{\pm1}] \quad $ has basis}\quad
\{ x^\mu\ |\ \mu = (\mu_1 ,\ldots, \mu_n) \in \ZZ^n\},
\qquad\hbox{where
\quad
$x^\mu = x_1^{\mu_1}\cdots x_n^{\mu_n}$.}
$$
For $f\in \KK[x^{\pm1}]$ define the following sets of operators on $\KK[x^{\pm1}]$. \newline
Define operators $\xi_{s_0}, \xi_{s_1}, \ldots, \xi_{s_n}$ on $\KK[x^{\pm1}]$ by
\begin{align}
(\xi_{s_0}f)(x_1, \ldots, x_n) &= f(qx^{-1}_1, x_2, \ldots, x_n), \nonumber \\
(\xi_{s_i}f)(x_1, \ldots, x_n) &= f(x_1, \ldots, x_{i-1}, x_{i+1}, x_i, x_{i+2}, \ldots, x_n), \quad \hbox{for  $i\in \{1, \ldots, n\}$} 
\label{xisiopdefn}
\\
(\xi_{s_n}f)(x_1, \ldots, x_n) &= f(x_1, \ldots, x_{n-1}, x^{-1}_n),  \nonumber
\end{align} 
Define operators $X_1, \ldots, X_n$ on $\KK[x^{\pm1}]$ by
\begin{equation}
(X_j f)(x_1, \ldots, x_n) = x_j f(x_1, \ldots, x_n), \qquad \hbox{for  $j\in \{1, \ldots, n\}$.} 
\label{polyaction}
\end{equation}

Define operators $T_0, T_1, \ldots, T_n$ on $\KK[x^{\pm1}]$ by
\begin{align}
t_0^{\frac12}T_0 &= t_0  - \left( \frac{(x_1+q^{\frac12}t^{\frac12}_0 u^{-\frac12}_0)
(x_1-q^{\frac12}t^{\frac12}_0u^{\frac12}_0)}
{x_1^2-q}\right) (1-\xi_{s_0}),
\nonumber \\
t^{\frac12}T_i &= t - \frac{x_{i+1}-tx_i}{x_{i+1}-x_i} (1-\xi_{s_i}),
\qquad\hbox{for $i\in \{1, \ldots, n-1\}$},
\label{DGaction}
\\
t_n^{\frac12}T_n &= t_n  - \left( \frac{(1+t^{\frac12}_n u^{-\frac12}_n x_n)
(1- t^{\frac12}_n u^{\frac12}_n x_n)}
{1-x_n^2}\right) (1-\xi_{s_n}),
\nonumber
\end{align}
(see~\cite[\S3]{Nou95} and~\cite[(13)]{Sah99} and~\cite[(73)]{CGdGW16}).
The operators $T_0, T_1, \ldots, T_n$ and $X_1, \ldots, X_n$ satisfy all the relations of the
double affine Hecke algebra of type $CC_n$, which implies that the material in the appendix in Section~\ref{section:DAHA} of this paper applies to these operators.

\begin{remark}
In Schubert calculus one encounters (see~\cite[\S4]{FL06}) operators  of the form
\begin{equation}
\partial_0 = \frac{1-\xi_{s_0}}{x_1^2-q}, \qquad
\partial_i = \frac{1-\xi_{s_i}}{x_{i+1}-x_i}, \qquad
\partial_n = \frac{1-\xi_{s_n}}{1-x_n^2},
\qquad\hbox{for $i\in \{1, \ldots, n-1\}$.}
\label{ddivopdefn}
\end{equation}
By~\eqref{DGaction}, 
$$
\begin{array}{l}
t_0^{\frac12}T_0 = t_0 - (x_1+q^{\frac12}t_0^{\frac12}u_0^{-\frac12})(x_1-q^{\frac12}t_0^{\frac12}u_0^{\frac12})\partial_0, \\[7pt]
t^{\frac12}T_i = t - (x_{i+1}-tx_i)\partial_i, \qquad \hbox{for $i\in \{1, \ldots, n-1\}$,} \\[7pt]
t_n^{\frac12}T_n = t_n - (1+t_n^{\frac12}u_n^{-\frac12}x_n)(1-t_n^{\frac12}u_n^{\frac12}x_n)\partial_n. 
\end{array}
$$

\qed
\end{remark}

\begin{remark} \textbf{Askey-Wilson parameters.}
In Type $CC_1$, the bosonic Macdonald polynomials $P_\lambda(q, t_1,u_1,t_0,u_0)$
are also known as the Askey-Wilson polynomials. Following~\cite[\S3]{Nou95}, the correspondence to the original Askey-Wilson parameters is given by
\begin{equation}
q=q, \quad
a = q^{\frac12}t_0^{\frac12}u_0^{\frac12}, \quad
b = - q^{\frac12}t_0^{\frac12}u_0^{-\frac12}, \quad
c = t_n^{\frac12}u_n^{\frac12}, \quad
d = -t_n^{\frac12}u_n^{-\frac12}.
\label{AWparams}
\end{equation} 
These conversions are equivalent to
$$t_0 = -q^{-1}ab, \qquad t_n = -cd, \qquad u_0 = -ab^{-1}, \qquad u_n = -cd^{-1},$$
and it is useful to note that $t_0^{\frac12}t_n^{\frac12} = q^{-\frac12}(abcd)^{\frac12}$, 
$$
\begin{array}{lcl}
a+b = q^{\frac12}t_0^{\frac12}(u_0^{\frac12}-u_0^{-\frac12}),
& \qquad & 
c+d = t_n^{\frac12}(u_n^{\frac12}-u_n^{-\frac12}), \\
q^{-\frac12}a+q^{\frac12}b^{-1} = u_0^{\frac12}(t_0^{\frac12}-t_0^{\frac12}), 
& & 
c+d^{-1} = u_n^{\frac12}(t_n^{\frac12}-t_n^{-\frac12}).
\end{array}
$$
Up to permutations of $a,b,c,d$, these parameters are used 
in~\cite[(1)]{Sah00}, 
in~\cite[(17)]{CGdGW16}, 
in~\cite[Def. 2.2]{CMW23} 
and, with different notation, in ~\cite[(5.1.14)]{Mac03}.  
\qed
\end{remark}

\subsection{Electronic, relative and bosonic Koornwinder polynomials}

Define operators $Y_1, \ldots, Y_n$ on $\KK[x^{\pm1}]$ by
\begin{equation}
Y_j = T_{j-1}^{-1}\cdots T_1^{-1}T_0T_1\cdots T_n\cdots T_j,
\qquad\hbox{for $j\in \{1, \ldots, n\}$.}
\label{Yjdefn}
\end{equation}
The group $W_{\mathrm{fin}}$ of signed permutations (generated by $s_1, \ldots, s_n$)
acts on $\ZZ^n$ by 
\begin{align*}
s_i(\mu_1, \ldots, \mu_n) &= (\mu_1, \ldots, \mu_{i-1}, \mu_{i+1}, \mu_i, \mu_{i+2}, \ldots, \mu_n), 
\quad\hbox{for $i\in \{1, \ldots, n-1\}$, \quad and } \\
s_n(\mu_1, \ldots, \mu_n) &= (\mu_1, \ldots, \mu_{n-1}, -\mu_n).
\end{align*}
For $\mu\in \ZZ^n$ let $v_\mu\in W_{\mathrm{fin}}$ be the minimal length signed permutation such that $v_\mu \mu$ is weakly increasing
with all entries $\le 0$.
(See~\eqref{vmuvals} for an explicit formula for $v_\mu$.)
The \emph{electronic Koornwinder polynomials}
$$E_\mu(x_1,\ldots, x_n;q,t, t_0^{\frac12}, u_0^{\frac12}, t_n^{\frac12}, u_n^{\frac12})
\in \KK[x^{\pm1}]
\quad\hbox{are indexed by $\quad \mu=(\mu_1, \ldots, \mu_n)\in \ZZ^n$}
$$
and are defined by the conditions that 
\begin{equation}
Y_j E_\mu = q^{-\mu_j}t^{-v_\mu(j)} (t^{\frac12}_0t^{\frac12}_n t^n)^{\mathrm{sgn}(v_\mu(j))}
E_\mu, \quad \hbox{for $j\in \{1, \ldots, n\}$,}
\label{Eeig}
\end{equation}
and the coefficient of $x^{\mu}$ in $E_\mu$ is $1$.

Let $\mu = (\mu_1, \ldots, \mu_n)\in \ZZ^n$ and $z\in W_{\mathrm{fin}}$.  For a reduced word
$z = s_{i_1}\cdots s_{i_r}$ define $T_z = T_{i_1}\cdots T_{i_r}$.
\begin{equation}
\hbox{The \emph{relative Koornwinder polynomial $E^z_\mu$} is} \qquad
E^z_\mu = 
\frac{t^{\frac12\ell_s(v_\mu^{-1})}t_n^{\frac12\ell_d(v_\mu^{-1})}  }{ t^{\frac12\ell_s(zv_\mu^{-1}) }t_n^{\frac12\ell_d(zv_\mu^{-1})}  } 
T_z E_\mu.
\label{relMac}
\end{equation}
where the normalization constant 
$t^{\frac12(\ell_s(v_\mu^{-1})-\ell_s(zv_\mu^{-1}))}t_n^{\frac12(\ell_d(v_\mu^{-1})-\ell_d(zv_\mu^{-1}))}$ 
is determined by requiring the coefficient of $x^{z\mu}$ to be 1.

The $E^z_\mu$ are sometimes called \emph{open boundary ASEP polynomials} (see~\cite[Theorem 1.10 and Remark 1.13]{CMW23}).
In the type $GL_n$ case the $E^z_\mu$ are sometimes called \emph{permuted basement Macdonald polynomials} (see~\cite{Al16} and~\cite[(3.7)]{GR21}).
It is not difficult to generalize~\cite[Proposition 1.2]{GR21b} from the $S_n$ case to the case of $W_{\mathrm{fin}}$
to show that the $E^z_\mu$ for $\mu$ in a single $W_{\mathrm{fin}}$-orbit form a qKZ-family (see~\cite[Def.\ 2.4 and Prop.\ 2.6]{CMW23}).

Let
$$(\ZZ_{\ge0}^n)^+ = \{
\lambda=(\lambda_1, \ldots, \lambda_n)\in \ZZ^n \ |\ \lambda_1\ge \cdots \ge \lambda_n\ge 0\}.
$$
For $\lambda\in (\ZZ^n_{\ge0})^+$
the \emph{bosonic Koornwinder polynomials} are 
\begin{equation}
P_\lambda
= \sum_{w\in W_{\mathrm{fin}}} E^w_\lambda.
\label{bosMac}
\end{equation}
From~\cite[(53) and the first formula in Proposition 3.2]{CR24} or~\cite[(5.5.14) and \S5.7]{Mac03}, an alternate formula for $P_\lambda$ is  
\begin{equation*}
P_\lambda = 
\frac{1}{W_\lambda(t,t_n)} 
\sum_{w\in W_{\mathrm{fin}}} 
w\Big(E_\lambda  \Big(\prod_{i<j} \frac{(1-tx_i^{-1}x_j)(1-tx_i^{-1}x_j^{-1})}{(1-x_i^{-1}x_j)(1-x_i^{-1}x_j^{-1})}\Big)
\Big(\prod_{i=1}^n \frac{(1-t_n^{\frac12}u_n^{\frac12}x_i^{-1})(1+t_n^{\frac12}u_n^{-\frac12} x_i^{-1})}{(1-x_i^{-2})}\Big)\Big).
\end{equation*}
This is an analogue of the formula for the Hall-Littlewood polynomial given in~\cite[(2.1)]{Mac}.  
The normalization constant $\frac{1}{W_\lambda(t,t_n)}$ makes the coefficient of $x^\lambda$ in $P_\lambda$ equal to $1$.
In the notation of~\eqref{lengthdef}, an explicit formula for $W_\lambda(t,t_n)$ is 
\begin{equation}
W_\lambda(t,t_n) = \sum_{z\in W_\lambda} t^{\ell_s(z)}t_n^{\ell_d(z)},
\qquad\hbox{where}\quad
W_\lambda = \{ z\in W_{\mathrm{fin}}\ |\ z \lambda = \lambda\}.
\label{Ppoly}
\end{equation}

To get a feel for what Koornwinder polynomials look like, it is useful to look at explicit formulas for the
case when $\mu= (\mu_1, \ldots, \mu_n)$ is such that $\vert \mu\vert = \mu_1+\cdots+\mu_n = 1$ as provided by the following
Proposition.  These formulas can be derived directly
by using the recursion formulas in Section~\ref{Erecursion} to compute, 
in succession, $E_{\varepsilon_1}, E_{\varepsilon_2},
\ldots, E_{\varepsilon_n}, E_{-\varepsilon_n}, \ldots, E_{-\varepsilon_2}, E_{-\varepsilon_1}$.

\begin{prop} \label{Emu1box}  Let $\varepsilon_j = (0,\ldots, 0, 1, 0, \ldots, 0)$ be the sequence
with $1$ in the $j$th component.
\item[(a)] 
If $j\in \{1, \ldots, n\}$ then
$$E_{\varepsilon_j} 
= x_j +\frac{1-t}{1-qt^{2n-j-1}t_0t_n}  (x_{j-1}+\cdots + x_1)  
+ \frac{ q^{\frac12}t_0^{\frac12}(u_0^{-\frac12} - u_0^{\frac12})
+qt^{n-1} t_0 t_n^{\frac12}(u_n^{-\frac12} - u_n^{\frac12}) }{1-qt^{2n-j-1}t_0t_n},
$$
\item[(b)] 
If $j\in \{1, \ldots, n\}$ then
\begin{align*}
E_{-\varepsilon_j} 
&= x^{-1}_j + \frac{1-t}{1-qt^2} (x_{j+1}^{-1}+\cdots +x_n^{-1} + x_n+\cdots +x_{j+1}) \\
&\qquad +  \frac{ qt^{n-1}t^{n-i}(1-qt^i)t_0t_n +(1-qt^n) t^{n-i}(1-qt^{i-1}) t_n 
-t^{n-i+1}(1-qt^{i-1}) +(1-t) }
{ (1-qt^i) (1- q^2t^{2(n-1)} t_0t_n)} x_j \\
&\qquad + \Big(\frac{1-qt^n}{1-qt^i}\Big)\Big( \frac{
(1-t)(qt^{n-1}t_n+1)}{1-q^2t^{2(n-1)}t_0t_n} \Big) (x_{j-1}+\cdots+x_1) \\
&\qquad + \Big(\frac{1-qt^n}{1-qt^i}\Big)\Big( \frac{
q^{\frac12}t_0^{\frac12} (qt^{n-1}t_n+1)(u_0^{-\frac12} - u_0^{\frac12}) 
+(qt^{n-1}t_0 + 1) t_n^{\frac12} (u_n^{-\frac12}-u_n^{\frac12}) }
{1-q^2t^{2(n-1)}t_0t_n}\Big).
\end{align*}
\end{prop}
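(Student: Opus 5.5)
The plan is to follow the strategy indicated just before the statement: build all the one-box polynomials in succession, starting from $E_0=1$. The path is
$$E_{\varepsilon_1}\to E_{\varepsilon_2}\to\cdots\to E_{\varepsilon_n}\to E_{-\varepsilon_n}\to\cdots\to E_{-\varepsilon_1}.$$
Each step uses an intertwiner, i.e.\ a recursion of the form $E_{s_i\mu} = (T_i + c)E_\mu$ from Section~\ref{Erecursion}. The constant $c$ is determined by the $Y$-eigenvalues \eqref{Eeig} of $E_\mu$.

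\textbf{Creation step.} The first step produces $E_{\varepsilon_1}$ from $E_0=1$. It uses the affine creation operator, which adds a box in position $1$. Concretely, one applies the $s_0$-intertwiner $T_0$ (plus a constant) to $1$, after moving $\varepsilon_1$ into the appropriate position. By \eqref{DGaction}, $\xi_{s_0}x_1 = qx_1^{-1}$, and
$$\frac{(x_1+q^{\frac12}t_0^{\frac12}u_0^{-\frac12})(x_1-q^{\frac12}t_0^{\frac12}u_0^{\frac12})}{x_1^2-q}\,(x_1-qx_1^{-1})$$
simplifies to a linear polynomial in $x_1$. This is where the constant term $q^{\frac12}t_0^{\frac12}(u_0^{-\frac12}-u_0^{\frac12})$ appears. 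The $t_n^{\frac12}(u_n^{-\frac12}-u_n^{\frac12})$ contribution enters through $T_n$, which occurs inside $Y_1$ and in the required normalization.

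A cleaner alternative is an ansatz. Write
$$E_{\varepsilon_1} = x_1 + a + \sum_{k\ge2} b_k\bigl(x_k + x_k^{-1}\bigr)+b_1 x_1^{-1}.$$
By triangularity, the terms other than $x_1$ and the constant should vanish. Then solve $Y_1E_{\varepsilon_1} = q^{-1}(\cdots)E_{\varepsilon_1}$ for $a$ by computing $Y_1$ on the span of $\{1,x_1\}$.

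\textbf{Moving along the positive and negative range.} For $1\le j<n$, we have $s_j\varepsilon_j=\varepsilon_{j+1}$, so
$$E_{\varepsilon_{j+1}} = \Bigl(t^{\frac12}T_j + \frac{1-t}{1-\lambda_{j+1}/\lambda_j}\Bigr)E_{\varepsilon_j}$$
up to normalization, where $\lambda_j$ are the eigenvalues from \eqref{Eeig}. Using $t^{\frac12}T_j = t-(x_{j+1}-tx_j)\partial_j$, we get $\partial_j x_j = -1$ and $\partial_j$ kills the symmetric part $x_{j-1}+\cdots+x_1$ and the constant. So each step only moves $x_j\mapsto x_{j+1}$ and shifts coefficient mass onto $x_j$. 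An induction on $j$ then gives (a): the denominator $1-qt^{2n-j-1}t_0t_n$ comes from the eigenvalue ratio.

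Next, $E_{-\varepsilon_n}$ is obtained from $E_{\varepsilon_n}$ via $T_n$. Here $\partial_n x_n = \frac{x_n-x_n^{-1}}{1-x_n^2}=-x_n^{-1}$, and the product $(1+t_n^{\frac12}u_n^{-\frac12}x_n)(1-t_n^{\frac12}u_n^{\frac12}x_n)$ produces $x_n^{-1}$, a constant, and $x_n$ terms. Finally $E_{-\varepsilon_{j}}$ is obtained from $E_{-\varepsilon_{j+1}}$ via $T_j$, descending in $j$. In this stage $\partial_j$ acts on $x_{j+1}^{-1}$ and $x_{j+1}$ in the same way as before.

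\textbf{Main obstacle.} The hardest part is the bookkeeping in (b). At each descending step the coefficients of $x_j$, of $x_{j-1}+\cdots+x_1$, and of the constant all change. Their rational functions must be tracked and simplified to the stated closed forms, with the factor $\frac{1-qt^n}{1-qt^i}$ telescoping. Here $i$ should be read as $j$, or as the appropriate index from $v_\mu$ via \eqref{vmuvals}.

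I would handle this by induction on $n-j$. At each step, verify that the claimed formula for $E_{-\varepsilon_j}$ has leading coefficient $1$ and is an eigenvector for the relevant $Y$'s, rather than pushing the recursion forward blindly. The eigenvalue check reduces to the single-step identity for $T_j$, and that is a finite rational-function identity. Any residual discrepancy would be checked against the sage implementation for small $n$.
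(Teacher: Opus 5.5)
Your route is the paper's own. Its only justification is the sentence before the statement: run (E0)--(E3) along $\varepsilon_1,\ldots,\varepsilon_n,-\varepsilon_n,\ldots,-\varepsilon_1$. Your induction for (a) is correct: $t^{\frac12}T_jx_j=x_{j+1}$, $t^{\frac12}T_j$ acts by $t$ on $s_j$-invariants, and $\ev_{\varepsilon_j}(Y_j^{-1}Y_{j+1})=qt^{2n-j-2}t_0t_n$.

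The creation step is misdescribed, though. $\tau_0$ is not ``$T_0$ plus a constant''; note that $T_0\cdot 1=t_0^{\frac12}$. It is $(T_0^{\vee})^{-1}+F^-_{\alpha_0}=X_1T_0^{-1}Y_1+F^-_{\alpha_0}$. So Proposition~\ref{cformsteps}(c), with $\partial_0 1=0$, gives
\[
\widehat{E}_{\varepsilon_1}=t_0^{-\frac12}\ev_0(Y_1)\,x_1+\ev_0(F^-_{\alpha_0}),\qquad \ev_0(Y_1)=t^{n-1}t_0^{\frac12}t_n^{\frac12}.
\]
Both the $u_0$-term and the $u_n$-term of (a) come from $\ev_0(F^-_{\alpha_0})$, since $t_{\alpha_0}=u_n$ and $u_{\alpha_0}=u_0$. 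They do not come from $T_0x_1$ (which contains an $x_1^{-1}$ term anyway), nor from $T_n$ inside $Y_1$. Dividing by $t^{n-1}t_n^{\frac12}$ gives (a) for $j=1$.

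For (b), the genuine gap is that you do not run the recursion. You propose instead to check the printed closed form, and that check fails: the printed formula is wrong even with $i$ read as $j$.

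Running the recursion, $\ev_{-\varepsilon_{j+1}}(Y_j^{-1}Y_{j+1})=qt^j$. So $E_{-\varepsilon_j}=(t^{\frac12}T_j+c_j)E_{-\varepsilon_{j+1}}$ with $c_j=\frac{1-t}{1-qt^j}$. Here $t^{\frac12}T_j$ sends $x_{j+1}^{-1}\mapsto x_j^{-1}$, $x_j\mapsto x_{j+1}$ and $x_{j+1}\mapsto (t-1)x_{j+1}+tx_j$, and the factor $t+c_j=\frac{1-qt^{j+1}}{1-qt^j}$ telescopes. This confirms the last two lines of (b). However, every $x_k^{\pm1}$ with $k>j$ gets coefficient $\frac{1-t}{1-qt^j}$, not $\frac{1-t}{1-qt^2}$.

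The coefficient $\gamma_j$ of $x_j$ satisfies $\gamma_j=t\gamma_{j+1}+c_j\delta_{j+1}$, where $\delta_{j+1}$ is the coefficient of $x_j+\cdots+x_1$ in $E_{-\varepsilon_{j+1}}$. The starting value is
\[
\gamma_n=t_n^{\frac12}\ev_{\varepsilon_n}(F^+_{\alpha_n})=\frac{1-t_n+qt^{n-1}t_n(1-t_0)}{1-q^2t^{2n-2}t_0t_n}.
\]
This is the only source of an $x_n$ term, because $t_n^{\frac12}T_nx_n=x_n^{-1}+t_n^{\frac12}(u_n^{-\frac12}-u_n^{\frac12})$. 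Solving the recursion gives
\[
\gamma_j=\frac{(1-t)+t^{n-j+1}(1-qt^{j-1})-t^{n-j}(1-qt^n)(1-qt^{j-1})t_n-qt^{2n-j-1}(1-qt^j)t_0t_n}{(1-qt^j)(1-q^2t^{2n-2}t_0t_n)}.
\]
This is the printed numerator with the signs of its first three terms reversed.

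A one-line check confirms it. At $t_0=t_n=u_0=u_n=1$ one has $\tau_n=\xi_{s_n}$, so $E_{-\varepsilon_n}=x_n^{-1}+\frac{1-t}{1-qt^{n-1}}(x_{n-1}+\cdots+x_1)$ has no $x_n$ term. The printed coefficient of $x_j$ at $j=n$, by contrast, becomes $\frac{2(1-t)}{(1-qt^n)(1-q^2t^{2n-2})}$. So finish (b) by carrying the recursion forward and stating these corrected coefficients, not by verifying the printed form.
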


\section{Coroots, evaluations and fold functions}\label{sec: c-functions}

In this section we introduce the needed notions related to the affine and finite Weyl groups, the coroots and the coroot sequence, evaluations, the $c$-functions and the fold functions. 
These objects provide the type $CC_{n}$ generalization of the content of~\cite[\S 2]{GR21}.  See also~\cite[\S2]{CR24} 
for further information about the type $CC_{n}$ case.  

\subsection{The affine Weyl group $W$ and the finite Weyl group $W_{\mathrm{fin}}$}
\label{sec:WeylGroups}

Let
\begin{equation}
Q = \hbox{$\ZZ$-span}\left\{ \varepsilon_1, \ldots, \varepsilon_n, \hbox{$\frac12$} K \right\}
\label{eq:Qdef}
\end{equation}
be the free $\ZZ$-module spanned by symbols $\varepsilon_1, \ldots, \varepsilon_n$ and $\frac12 K$.
The affine Weyl group $W$ is the group of $\ZZ$-linear transformations of $Q$
generated by the transformations $s_0, s_1, \ldots, s_n$ that act on an element $\lambda = \lambda_1 \varepsilon_1 + \cdots + \lambda_n \varepsilon_n + \frac{k}{2}K$ in $Q$ by
\begin{align}
s_0 \lambda &= - \lambda_1\varepsilon_1 + \lambda_2\varepsilon_2
+\cdots+\lambda_n \varepsilon_n + \big(\hbox{$\frac{k}{2}$}+\lambda_1) K,
\nonumber
\\
s_n \lambda &= \lambda_1\varepsilon_1+\cdots +\lambda_{n-1}\varepsilon_{n-1}
-\lambda_n \varepsilon_n + \hbox{$\frac{k}{2}$}K, \quad\hbox{and}
\label{lvl0onaZ}
\\
s_i \lambda &= \lambda_1\varepsilon_1 + \cdots + \lambda_{i-1}\varepsilon_{i-1}
+\lambda_{i+1}\varepsilon_i+\lambda_i \varepsilon_{i+1} + \lambda_{i+2}\varepsilon_{i+2}+\cdots +
\lambda_n\varepsilon_n + \hbox{$\frac{k}{2}$} K,
\nonumber
\end{align}
for $i\in \{1, \ldots, n-1\}$.

By~\cite[Ch.\ V, \S4.9 Prop.\ 10]{Bou} or~\cite[\S11-3]{Kane}, 
$W$ is presented by generators $s_0,s_1, \ldots, s_n$ and relations
\begin{equation}
(s_i)^2 = 1
\qquad\hbox{and}\qquad
\begin{matrix}\begin{tikzpicture}[every node/.style={inner sep=1}, scale=1]
\foreach \x in {0,5}{\foreach \y in {-1,1}{\draw (\x,\y*.05) to (\x+1,\y*.05);}}
\node[wV, label=above:{$s_0$}] (0) at (0,0) {};
\node[wV, label=above:{$s_n$}] (6) at (6,0) {};
\foreach \x/\y in {1/1,2/2,4/n\!-\!2,5/n\!-\!1}{
    \node[wV] (\x) at (\x,0) {};}
\draw (1) to (2) (4) to (5);
\draw[dashed] (2) to (4);
\node[wV, label=above:{$s_1$}] (1) at (1,0) {};
\node[wV, label=above:{$s_2$}] (2) at (2,0) {};
\node[wV, label=above:{$s_{n-2}$}] (4) at (4,0) {};
\node[wV, label=above:{$s_{n-1}$}] (5) at (5,0) {};
\end{tikzpicture}
\end{matrix}
\label{Wrels}
\end{equation}
where we use the following graphical notation for relations:
$$
\begin{array}{ccccc}
\begin{tikzpicture}[every node/.style={inner sep=1}, scale=1]
	\node[wV, label=above:{$g_i$} ] (0) at (0,0) {};
	\node[wV, label=above:{$g_j$} ] (1) at (1,0) {};
	\end{tikzpicture} & \qquad & 
\begin{tikzpicture}[every node/.style={inner sep=1}, scale=1]
	\foreach \x in {0}{\foreach \y in {0}{\draw (\x,\y*.05) to (\x+1,\y*.05);}}
	\node[wV, label=above:{$g_i$} ] (0) at (0,0) {};
	\node[wV, label=above:{$g_j$} ] (1) at (1,0) {};
	\end{tikzpicture} & \qquad & 
\begin{tikzpicture}[every node/.style={inner sep=1}, scale=1]
	\foreach \x in {0}{\foreach \y in {-1,1}{\draw (\x,\y*.05) to (\x+1,\y*.05);}}
	\node[wV, label=above:{$g_i$} ] (0) at (0,0) {};
	\node[wV, label=above:{$g_j$} ] (1) at (1,0) {};
	\end{tikzpicture} \\
g_ig_j =g_jg_i & & 
g_ig_jg_i = g_jg_ig_j & & 
g_ig_jg_ig_j = g_jg_ig_jg_i.
\end{array}
$$
The \emph{finite Weyl group} is the subgroup $W_{\mathrm{fin}}$ of $W$ generated by $s_{1}, \ldots, s_{n}$.   
There is a group homomorphism 
\begin{equation}
\overline{\phantom{T}}\colon W \longrightarrow  W_{\mathrm{fin}}
\quad\hbox{given by}\quad
\hbox{$\overline{s_0} = s_1\cdots s_{n-1}s_ns_{n-1}\cdots s_1$
and $\overline{s_i} = s_i$ for $i\in \{1, \ldots, n\}$.}
\label{Wtofin}
\end{equation}

Let $w \in W$.  A \emph{reduced word for $w$} is a minimal-length expression $w= s_{i_1}\ldots s_{i_\ell}$.   
For any reduced word $w= s_{i_1}\ldots s_{i_\ell}$, define
\begin{equation}
\begin{array}{lcl}
\ell_g(w) = \hbox{\# of factors $s_0$ in $s_{i_1} \ldots s_{i_\ell}$}, 
& \qquad &
\ell_d(w) = \hbox{\# of factors $s_n$ in $s_{i_1} \ldots s_{i_\ell}$}, 
\nonumber \\
\ell(w) = \ell = \hbox{\# of factors in $s_{i_1} \ldots s_{i_\ell}$}, 
& &
\ell_s(w) = \ell(w)-\ell_g(w)-\ell_d(w).
\label{lengthdef}
\end{array}
\end{equation}
Each of $\ell(w), \ell_d(w)$, $\ell_g(w)$, and $\ell_s(w)$ are independent of the choice of reduced word for $w$.
As in~\cite{CR24}, the subscripts in $\ell_{g}$, $\ell_{d}$, and $\ell_{s}$ serve to remind us that the generators $s_{0}$ and $s_{n}$ live on the left (``\underline{g}auche'') and right (``\underline{d}roite'') sides of the graph in~\eqref{Wrels}, while the generators $s_{1}, \ldots, s_{n-1}$ generate the \underline{s}ymmetric group $S_{n} \subseteq W$.

\begin{remark} \label{sgnorder}
The finite Weyl group is isomorphic to the group of signed permutations, where a \emph{signed permutation} is a bijection 
$$w\colon \{1, \ldots, n, -n, \ldots, -1\} \longrightarrow  \{1, \ldots, n, -n, \ldots, -1\} \quad\hbox{such that}\quad
w(-i) = -w(i).
$$
Using the total order on $\{1,\ldots, n, -n, \ldots, -1\}$ given by
\[
1 \prec 2 \prec \cdots \prec n \prec -n \prec -(n-1) \prec \cdots \prec -1,
\]
and defining for $w\in W_{\mathrm{fin}}$
$$
\mathrm{Inv}(w) = \big\{i \in \{1, \ldots, n\}, j \in \{\pm 1, \ldots, \pm n\} \;|\; \text{$i \prec j$ and $w(i) \succ w(j)$}\big\},
$$
then $\ell(w) = \ell_{s}(w) + \ell_{d}(w) = \#\mathrm{Inv}(w)$.
\qed
\end{remark}

\subsection{The elements $h_\mu$, $u_\mu$ and $v_\mu$}

Define $h_1, \ldots, h_n\in W$ by
$$h_1 = s_0s_1s_2\cdots s_{n-1}s_ns_{n-1}\cdots s_2s_1
\quad\hbox{and}\quad
h_j = s_jh_{j-1}s_j\ \quad \hbox{for $j\in \{2, \ldots, n\}$.}
$$
For $\mu=(\mu_1, \ldots, \mu_n)\in \ZZ^n$ define the \emph{translation} $h_\mu$ by
$$h_\mu = h_1^{\mu_1}\cdots h_n^{\mu_n}$$
and define $u_\mu\in W$ and $v_\mu\in W_{\mathrm{fin}}$ by the equation
\begin{equation}
h_\mu = u_\mu v_\mu, \qquad
\hbox{where $v_\mu\in W_{\mathrm{fin}}$ and $u_\mu$ is minimal length in the coset $h_\mu W_{\mathrm{fin}}$.}
\label{uvmudef}
\end{equation}
Alternatively for $\mu\in \ZZ^n$ the element $v_\mu\in W_{\mathrm{fin}}$ is the minimal length signed permutation such that $v_\mu \mu$ is weakly increasing with all entries $\le 0$.  Then
\begin{equation}
v_\mu(i) = - \mathrm{sgn}(\mu_i)
\left(\begin{array}{l}
1
+ \#\{j \in \{1, \ldots, n\} \ |\ \vert \mu_{j} \vert > \vert \mu_i\vert \}\big) \\
+\#\{j \in \{1, \ldots, i-1\} \ |\ \hbox{$\vert \mu_{j}\vert = \vert \mu_i\vert$ and $\mu_j\le0$}  \}  \\
+\#\{j \in \{i+1, \ldots, n\} \ |\ \hbox{$\vert \mu_{j}\vert = \vert \mu_i\vert$ and $\mu_i>0$}  \}
\end{array}\right).
\label{vmuvals}
\end{equation}
Under the action of $W$ on $Q$ given in~\eqref{lvl0onaZ},
$$h_\mu K = K \quad\hbox{and}\quad
h_\mu(\varepsilon_j)
= \varepsilon_j - \mu_j K, \quad \hbox{for $j\in \{1, \ldots ,n\}$.}
$$

\subsection{Coroots and coroot sequences}

Define $\alpha_0, \alpha_1, \ldots, \alpha_n$ in $Q$ by
\begin{equation}
\alpha_0 = -\varepsilon_1 + \hbox{$\frac12$}K, \qquad
\alpha_n = \varepsilon_n, \qquad\hbox{and}\qquad
\alpha_i = \varepsilon_i - \varepsilon_{i+1} \quad\hbox{for $i\in \{1, \ldots, n-1\}$.}
\label{scoroots}
\end{equation}

The set of coroots for type $CC_n$ is the union of the five $W$-orbits given by
\begin{align*}
O_1 
&= W\cdot\alpha_n = W\cdot \varepsilon_n 
= \{ \pm \varepsilon_i+rK \ |\  i\in \{1, \ldots, n\}, r\in \ZZ\}, \\
O_2 &= W\cdot 2\alpha_n = W\cdot 2\varepsilon_n
= \{ \pm 2\varepsilon_i+2rK \ |\  i\in \{1, \ldots, n\}, r\in \ZZ\} , \\
O_3 
&= W\cdot \alpha_0 = W\cdot (-\varepsilon_1+\hbox{$\frac12$}K) 
= \{ \pm(\varepsilon_i+\hbox{$\frac12$}(2r+1)K \ |\  i\in \{1, \ldots, n\}, r\in \ZZ\}, \\
O_4 
&= W\cdot 2\alpha_0 = W_Y\cdot (-2\varepsilon_1+K)
= \{ \pm 2\varepsilon_i+(2r+1)K \ |\  i\in \{1, \ldots, n\}, r\in \ZZ\} , \\
O_5  
&= W\cdot \alpha_1 = W\cdot (\varepsilon_1-\varepsilon_2)
= 
\left\{ \begin{array}{l}
\pm(\varepsilon_i+\varepsilon_j)+rK,  \\
\pm(\varepsilon_i-\varepsilon_j)+rK  
\end{array}\ \Big\vert\ 
i,j\in \{1, \ldots, n\}, i<j, r\in \ZZ \right\}.
\end{align*}

Let $w\in W$ and let $w=s_{i_\ell} \ldots s_{i_1}$ be a reduced word for $w \in W$.
The \emph{coroot sequence} of the reduced word $w=s_{i_\ell}\cdots s_{i_1}$ is the sequence
$(\beta_\ell,\ldots, \beta_1)$ given by 
\begin{equation}
\beta_\ell = 
s_{i_1}s_{i_2}\cdots s_{i_{\ell-1}}\alpha_{i_\ell}, \quad
\beta_{\ell-1} = 
s_{i_1}s_{i_2}\cdots s_{i_{\ell-2}}\alpha_{i_{\ell-1}}, \quad \ldots \quad
\beta_2 = s_{i_1}\alpha_{i_2}, \quad
\beta_1 = \alpha_{i_1}.
\label{crtseq}
\end{equation}

\begin{example}
\label{ex:rootseq}
For $n = 3$, represent the generators $s_{0}, s_{1}, s_{2}$, and $s_{3}$ diagrammatically by
$$
\begin{array}{cccccc}
s_0=
\begin{matrix}
\begin{tikzpicture}[scale=0.5, baseline=-55, xscale=-1,yscale=-1]
\foreach \x in {1,2,3}
\filldraw[black]  (0,\x) circle [radius=2pt] node[anchor=west]{\tiny $\x$};
\foreach \y in {1,2,3}
\filldraw[black]  (2,\y) circle [radius=2pt] node[anchor=east]{\tiny $\y$};
\draw (0,1) -- (1,0);
\draw (1,0) -- (2,1);
\draw (0,2) -- (2,2);
\draw (0,3) -- (2,3);
\filldraw [color=yellow!45!orange, fill, fill opacity=0.5]  (1,0) circle (5pt);
\end{tikzpicture}
\end{matrix}
,\quad
&
s_1= \begin{matrix}\ 
\begin{tikzpicture}[scale=0.5, baseline=-55, xscale=-1,yscale=-1]
\foreach \x in {1,2,3}
\filldraw[black]  (0,\x) circle [radius=2pt] node[anchor=west]{\tiny $\x$};
\foreach \y in {1,2,3}
\filldraw[black]  (2,\y) circle [radius=2pt] node[anchor=east]{\tiny $\y$};
\draw (0,1) -- (2,2);
\draw (0,2) -- (2,1);
\draw (0,3) -- (2,3);
\filldraw [color=blue!50, fill, fill opacity=0.5] (1,1.5) circle (5pt);
\end{tikzpicture}
\end{matrix}
,\quad
&s_2=\begin{matrix}
\begin{tikzpicture}[scale=0.5, baseline=-55, xscale=-1,yscale=-1]
\foreach \x in {1,2,3}
\filldraw[black]  (0,\x) circle [radius=2pt] node[anchor=west]{\tiny $\x$};
\foreach \y in {1,2,3}
\filldraw[black]  (2,\y) circle [radius=2pt] node[anchor=east]{\tiny $\y$};
\draw (0,1) -- (2,1);
\draw (0,2) -- (2,3);
\draw (0,3) -- (2,2);
\filldraw [color=blue!50, fill, fill opacity=0.5] (1,2.5) circle (5pt);
\end{tikzpicture}
\end{matrix}, \quad
&\ 
s_3 = \begin{matrix}
\begin{tikzpicture}[scale=0.5, baseline=-55, xscale=-1,yscale=-1]
\foreach \x in {1,2,3}
\filldraw[black]  (0,\x) circle [radius=2pt] node[anchor=west]{\tiny $\x$};
\foreach \y in {1,2,3}
\filldraw[black]  (2,\y) circle [radius=2pt] node[anchor=east]{\tiny $\y$};
\draw (0,1) -- (2,1);
\draw (0,2) -- (2,2);
\draw (0,3) -- (1,4);
\draw (1,4) -- (2,3);
\filldraw [color=purple!50!red, fill, fill opacity=0.5](1,4) circle (5pt);
\end{tikzpicture}
\end{matrix}.
\end{array}
$$
Then the reduced word
$$
s_{0}s_{3}s_{2}s_{1}s_{0}s_{2}s_{3}s_{2}s_{1}s_{0}s_{1}s_{2}s_{3}s_{2}s_{1}s_{0}
=\begin{matrix}
\begin{tikzpicture}[scale = 0.7]
\foreach \y in {1, 2, 3}{
    \draw[fill] (3, -\y) circle (1pt) node (1\y) {};
    \node[left] at (1\y) {$\scriptstyle \y$};
    \draw[fill] (10, -\y) circle (1pt) node (5\y) {};
    \node[right] at (5\y) {$\scriptstyle \y$};
    \foreach \x in {2,...,4}{
        \draw[fill] (2*\x, -\y) circle (0pt) node (\x\y) {};
    }
}
\draw (12.center) -- (22.center);
\draw (13.center) -- (23.center);
\draw (21.center) -- (32.center);
\draw (22.center) -- (33.center);
\draw (33.center) -- (43.center);
\draw (31.center) -- (42.center);
\draw (42.center) -- (52.center);
\draw (43.center) -- (53.center);
\draw (3.5, -0.5) node[inner sep = 1.2pt] (1s0) {};
\draw (11.center) -- (1s0.center) -- (21.center);
\draw (2*2.2, -3.5) node (2s3) {};
\draw (2*2.41667, -2.41667) node (2s2) {};
\draw (2*2.58333, -1.58333) node (2s1) {};
\draw (2*2.8, -0.5) node (2s0) {};
\draw (23.center) -- (2s3.center) -- (2s2.center) -- (2s1.center) -- (2s0.center) -- (31.center);
\draw (2*3.22, -3) node[inner sep = 1.2pt] (3s2) {};
\draw (2*3.33, -3.5) node[inner sep = 1.2pt] (3s3) {};
\draw (2*3.41667, -3) node[inner sep = 1.2pt] (3s2p) {};
\draw (2*3.64286, -1.64286) node[inner sep = 1.2pt] (3s1) {};
\draw (2*3.83333, -0.5) node[inner sep = 1.2pt] (3s0) {};
\draw (32.center) -- (3s2.center) -- (3s3.center) -- (3s2p.center) -- (3s1.center) -- (3s0.center) -- (41.center);
\draw (2*4.17143, -2) node[inner sep = 1.2pt] (4s1) {};
\draw (2*4.34286, -3) node[inner sep = 1.2pt] (4s2) {};
\draw (2*4.42857, -3.5) node[inner sep = 1.2pt] (4s3) {};
\draw (2*4.5, -3) node[inner sep = 1.2pt] (4s2p) {};
\draw (2*4.64286, -2) node[inner sep = 1.2pt] (4s1p) {};
\draw (2*4.85714, -0.5) node[inner sep = 1.2pt] (4s0) {};
\draw (41.center) -- (4s1.center) -- (4s2.center) -- (4s3.center) -- (4s2p.center) -- (4s1p.center) -- (4s0.center) -- (51.center);
\draw[color=yellow!45!orange, fill, fill opacity=0.5] (1s0) circle (3pt) {};
\draw[color=yellow!45!orange, fill, fill opacity=0.5] (2s0) circle (3pt);
\draw[color=yellow!45!orange, fill, fill opacity=0.5] (3s0) circle (3pt);
\draw[color=yellow!45!orange, fill, fill opacity=0.5=] (4s0) circle (3pt);
\draw[color=blue!50, fill, fill opacity=0.5] (2s2)  circle (3pt);
\draw[color=blue!50, fill, fill opacity=0.5] (2s1)  circle (3pt);
\draw[color=blue!50, fill, fill opacity=0.5] (3s2)  circle (3pt);
\draw[color=blue!50, fill, fill opacity=0.5] (3s2p)  circle (3pt);
\draw[color=blue!50, fill, fill opacity=0.5] (3s1)  circle (3pt);
\draw[color=blue!50, fill, fill opacity=0.5] (4s1)  circle (3pt);
\draw[color=blue!50, fill, fill opacity=0.5] (4s2)  circle (3pt);
\draw[color=blue!50, fill, fill opacity=0.5] (4s2p)  circle (3pt);
\draw[color=blue!50, fill, fill opacity=0.5] (4s1p)  circle (3pt);
\draw[color=purple!50!red, fill, fill opacity=0.5] (2s3) circle (3pt);
\draw[color=purple!50!red, fill, fill opacity=0.5] (3s3) circle (3pt);
\draw[color=purple!50!red, fill, fill opacity=0.5] (4s3) circle (3pt);
\end{tikzpicture}
\end{matrix}
$$
has coroot sequence
$$
\left(\begin{array}{l}
-\varepsilon_{1} + \frac{5}{2}K,  \,
- \varepsilon_{2} + K, \,
-\varepsilon_{2} - \varepsilon_{3} + K,  \,
-\varepsilon_{1} - \varepsilon_{2} + 3K, \,
-\varepsilon_{2} + \frac{1}{2} K,  \,
\\
- \varepsilon_{1} + \varepsilon_{3} + 2K, \,
-\varepsilon_{1} + 2K,
-\varepsilon_{1} - \varepsilon_{3} + 2K, \,
-\varepsilon_{1} - \varepsilon_{2} + 2K, \,
-\varepsilon_{1} + \frac{3}{2} K,  \,
\\
-\varepsilon_{1} + \varepsilon_{2} + K, \,
-\varepsilon_{1} + \varepsilon_{3} + K, \,
- \varepsilon_{1} + K, \,
-\varepsilon_{1} - \varepsilon_{3} + K, \,
-\varepsilon_{1}  - \varepsilon_{2} + K , \,
-\varepsilon_{1} + \frac{1}{2} K
\end{array}
\right).
$$
\end{example}

Each colored dot in the diagrammatic expression of the reduced word $w=s_{i_1}\cdots s_{i_\ell}$
corresponds to a single factor in the reduced word and the corresponding element of the coroot sequence 
is determined by following the edges from this dot to the right end of a diagram. 
For an edge with endpoint $a$, let
\begin{align*}
l(a) &= \text{the number of yellow dots on the path to $a$},\,\text{and} \\
d(a) &= \text{the number of pink dots on the path to $a$}.
\end{align*}
(not counting the dot we start at).
Then the coroot corresponding to the colored dot is determined by the formulas
\begin{align*}
\beta_{{\color{blue!50}{\bullet}}{<}^a_b} 
&= (-1)^{l(a)+d(a)}\varepsilon_a-(-1)^{l(b)+d(b)}\varepsilon_b+(l(a)+l(b))K, \\
\beta_{{\color{yellow!45!orange}{\bullet}}-a} 
&=  -(-1)^{l(a)+d(a)}\varepsilon_a + (l(a)+\hbox{$\frac12$})K, \\
\beta_{{\color{purple!50!red}{\bullet}}-a} 
&= (-1)^{l(a)+d(a)}\varepsilon_a + l(a)K.
\end{align*}

\subsection{Evaluations}

Identify the Laurent polynomial ring $\KK[Y] = \KK[Y_1^{\pm1}, \ldots, Y_n^{\pm1}]$
with the group algebra of $Q$ from~\eqref{eq:Qdef} 
via the notations
\begin{equation}
q^{\frac12} = Y^{-\frac12 K} \quad\hbox{and}\quad
Y_i = Y^{\varepsilon_i},
\quad\hbox{and}\quad
q^{-\frac{k}{2}} Y_1^{\lambda_1}\cdots Y_n^{\lambda_n}
=Y^{\frac{k}{2}K+\lambda_1\varepsilon_1+\cdots+\lambda_n\varepsilon_n}=Y^{\lambda},
\label{Ynot}
\end{equation}
for $i\in \{1, \ldots, n\}$ and $\lambda 
= \lambda_1\varepsilon_1+\cdots+\lambda_n\varepsilon_n+\frac{k}{2}K
\in  Q$. 
Then
\begin{equation}
Y^{\alpha_0} = q^{-\frac12} Y_1^{-1}, \qquad
Y^{\alpha_n} = Y_n, \qquad \hbox{and}\qquad
Y^{\alpha_i} = Y_iY^{-1}_{i+1}\quad\hbox{for $i\in \{1, \ldots, n-1\}$.}
\label{Ysroots}
\end{equation}

For $\mu\in \ZZ^n$ consider the element $v_\mu\in W_{\mathrm{fin}}$ defined in~\eqref{uvmudef} and
define the homomorphism $\ev_\mu \colon \KK[Y] \longrightarrow  \KK$ by
\begin{equation}
\ev_\mu(Y_j) = q^{-\mu_j} t^{-v_\mu(j)} (t_0^{\frac12}t_n^{\frac12}t^n)^{\mathrm{sgn}(v_\mu(j))},
\qquad\hbox{for $j\in \{1, \ldots, n\}$,}
\label{evhomdefn}
\end{equation}
and extend the definition to elements of $\KK(Y)$ such that the denominator does not evaluate to $0$.
If $f(Y)\in \KK(Y)$ and $\ev_\mu(f(Y))$ is defined then
$$f(Y)E_\mu = \ev_\mu(f(Y))E_\mu.$$

\begin{example} \label{CFshrink}
A short computation that will be used in the proof of Lemma~\ref{endcombgen} is the following.  
Let $i\in \{1, \ldots, n-2\}$ and assume that $\nu=(\nu_1, \ldots, \nu_n)\in \ZZ^n$ with $\nu_{i+1}=\nu_{i+2}$.  
Then using notation as in~\eqref{cfcndefn} and~\eqref{Fdefn},
\begin{align*}
\ev_\nu(Y_{i+2}) &= \ev_\nu(T_{i+1}^{-1}Y_{i+1}T_{i+1}^{-1})=t^{-1}\ev_\nu(Y_{i+1})\qquad\hbox{and} \\
\ev_\nu(C_{\varepsilon_i+\varepsilon_{i+1}}F^+_{\varepsilon_i+\varepsilon_{i+2}})
&= \ev_\nu\Big(\frac{t^{-\frac12}(1-tY_i^{-1}Y_{i+1}^{-1})}{(1-Y_i^{-1}Y_{i+1}^{-1})}\cdot \frac{t^{-\frac12}(1-t)}{1-Y_i^{-1}Y_{i+2}^{-1}}\Big)
\\
&= \ev_\nu\Big(\frac{t^{-\frac12}(1-Y_i^{-1}Y_{i+2}^{-1})}{(1-Y_i^{-1}Y_{i+1}^{-1})}\cdot \frac{t^{-\frac12}(1-t)}{1-Y_i^{-1}Y_{i+2}^{-1}}\Big)
\\
&= t^{-\frac12}\ev_\nu\Big(\frac{t^{-\frac12}(1-t)}{1-Y_i^{-1}Y_{i+1}^{-1}}\Big)
= t^{-\frac12}\ev_\nu(F^+_{\varepsilon_i+\varepsilon_{i+1}}).
\end{align*}
\end{example}

\subsection{$c$-functions and fold functions}

Introduce the following notation that assigns parameters to each $\alpha_j$ by
\begin{equation}
\begin{array}{llcl}
t_{\alpha_0}=u_n, \quad &t_{\alpha_n} = t_n, &\quad\hbox{and}\quad &t_{\alpha_i} = t\ \quad \hbox{for $i\in \{1, \ldots, n-1\}$,} \\
u_{\alpha_0}=u_0, \quad &u_{\alpha_n} = t_0, &\quad\hbox{and}\quad &u_{\alpha_i} = t\ \quad \hbox{for $i\in \{1, \ldots, n-1\}$},
\end{array}
\label{cortparams}
\end{equation}
and extend this parameter notation to all coroots by setting
\begin{equation}
t_{w\alpha_i} = t_{\alpha_i}
\quad\hbox{and}\quad
u_{w\alpha_i} = u_{\alpha_i}, \qquad\hbox{for $i\in \{0, \ldots, n\}$ and $w\in W$.}
\label{crtparext}
\end{equation}
Following~\cite[(4.2.2)]{Mac03} and~\cite[(16)]{CR24}, the (local) \emph{$c$-functions} are defined by 
\begin{equation}
C_{-\alpha} = 
t^{-\frac12}_{\alpha}
\frac{(1-t_{\alpha}^{\frac12}u_{\alpha}^{\frac12}Y^{\alpha})
(1+t_{\alpha}^{\frac12}u_{\alpha}^{-\frac12}Y^{\alpha})}
{(1-Y^{2\alpha})}
\label{cfcndefn}
\end{equation}
and the \emph{fold functions} $F^+_{\alpha_i}$ and $F^-_{\alpha_i}$ are given by
\begin{equation}
F_{\alpha}^+ = t_{\alpha}^{-\frac12} - C_{-\alpha}
\qquad\hbox{and}\qquad
F_{\alpha}^- = t_{\alpha}^{\frac12} - C_{-\alpha}.
\label{Fdefn}
\end{equation}

A short direct computation (see~\cite[(17)]{CR24}) gives
\begin{align}
C_\alpha+C_{-\alpha}  
= t_{\alpha}^{\frac12}+t_{\alpha}^{-\frac12}
\label{cplusc}
\end{align}
and it follows that
\begin{align}
t^{\frac12}_{\alpha} + F^+_{\alpha} 
&= t^{\frac12}_{\alpha} + t^{-\frac12}_{\alpha} - C_{-\alpha} 
= C_{-\alpha} + C_\alpha - C_{-\alpha} = C_\alpha
\qquad\hbox{and}
\nonumber
\\
t^{-\frac12}_{\alpha} + F^-_{\alpha}
&= t^{-\frac12}_{\alpha}+t^{\frac12}_{\alpha} - C_{-\alpha} 
=C_{-\alpha} + C_\alpha - C_{-\alpha} = C_\alpha
\label{bigC}
\end{align}

If $t_{\alpha} = u_{\alpha}=t$ then
$$
C_{-\alpha} = t^{-\frac12}\frac{(1-tY^{\alpha})}{(1-Y^{\alpha})}, \qquad
F_{\alpha}^+ = t^{-\frac12}\frac{(1-t)}{1-Y^{-\alpha}}
\qquad\hbox{and}\qquad
F_{\alpha}^- = t^{-\frac12}\frac{(1-t)Y^{-\alpha_i}}{1-Y^{-\alpha}}.
$$
More generally,
\begin{align*}
F_{\alpha}^+ &= t_{\alpha}^{-\frac12} - C_{-\alpha}
= t_{\alpha}^{-\frac12} -  t^{-\frac12}_{\alpha}
\frac{(1-t_{\alpha}^{\frac12}u_{\alpha}^{\frac12}Y^{\alpha})
(1+t_{\alpha}^{\frac12}u_{\alpha}^{-\frac12}Y^{\alpha})}
{(1-Y^{2\alpha})}  \\
&= \frac{(t_{\alpha}^{-\frac12}-t_{\alpha}^{\frac12}) }{ 1-Y^{-\alpha} }
\frac{\Big(
1+\frac{(u_{\alpha}^{-\frac12}-u_{\alpha}^{\frac12})} {(t_{\alpha}^{-\frac12}-t_{\alpha}^{\frac12})}
Y^{-\alpha}\Big) }{1+Y^{-\alpha}}
\end{align*}
and
\begin{align*}
F_{\alpha}^- 
&= t_{\alpha}^{\frac12}  -  C_{-\alpha} 
= t_{\alpha}^{\frac12} -  t^{-\frac12}_{\alpha}
\frac{(1-t_{\alpha}^{\frac12}u_{\alpha}^{\frac12}Y^{\alpha})
(1+t_{\alpha}^{\frac12}u_{\alpha}^{-\frac12}Y^{\alpha})}
{(1-Y^{2\alpha})}  \\
&= \frac{ (t_{\alpha}^{-\frac12}-t_{\alpha}^{\frac12})Y^{-\alpha} }{1-Y^{-\alpha} }
\frac{\Big( \frac{(u_{\alpha}^{-\frac12}-u_{\alpha}^{\frac12}) }{(t_{\alpha}^{-\frac12}-t_{\alpha}^{\frac12})} 
+Y^{-\alpha} \Big) }
{1+Y^{-\alpha} }.
\end{align*}

\subsection{Weight functions}\label{subsec:weight functions}

The following weight functions will be used in Section~\ref{section:compression} for the CSV-tableaux formula for relative Koornwinder polynomials. It is conceptually convenient to define these weight functions here because, like the $c$-functions $C_\alpha$ and the fold functions $F_\alpha^\pm$, they are elements of $\KK(Y_1, \ldots,Y_n)$.
These weight functions $A^{(i,j,m)}_\ell$, which are linear combinations of products of the $C_\alpha$ and $F_\alpha^\pm$, are the extensions of the $C_\alpha$ and $F_\alpha^\pm$ that are needed to package the coefficients that occur in the expansion of relative Macdonald polynomials $E^z_\mu$ in their compressed form.
The weight functions $A^{(i,j,m)}_\ell$ depend on sequences of coroots. For the purpose of defining these functions, these sequences of coroots can be arbitrary, but it will be helpful to use the same labeling for the sequences that will be used in Propositions~\ref{0gap} and~\ref{0end}.

\smallskip\noindent
\begin{enumerate}
\item[(CS2)] Let $i,m\in \{1, \ldots, m\}$ with $i< m$.
Let $\beta=(\beta_m, \ldots, \beta_{i+1})$ be a sequence of coroots.  For $j\in \{i, \ldots, m\}$ define
\begin{equation}
A^{(i,j,m)}_\ell(\beta)
= A^{(i,j,m)}_\ell(\beta_m, \ldots, \beta_{i+1}) = \begin{cases}
t^{-\frac12(m-\ell-1)}F^+_{\beta_{i+1}}, &\hbox{if $\ell\in \{i,\ldots, j-1\}$,}  \\
t^{-\frac12(m-\ell-1)}F^-_{\beta_{i+1}}, &\hbox{if $\ell\in \{j, \ldots, m-1\}$,}  \\
1, &\hbox{if $\ell=m$.}
\end{cases}
\label{AEwts}
\end{equation}

\item[(CS1pre)]
Let $i\in \{1, \ldots, n\}$ and let $\beta = (\beta_{-i}, \ldots, \beta_{-n}, \beta_n, \ldots, \beta_{i+1} )$ be a sequence of coroots.
Define $A^{(k)}_\ell(\beta)$ for $\ell\in \{-i, \ldots, -n, n, \ldots, i\}$ by
\begin{equation}
A^{(i)}_{-i}(\beta) = 1 \qquad\hbox{and}\qquad
A^{(i)}_{-(i+1)}(\beta) = F^+_{\beta_{-i}}
\label{Adef1A}
\end{equation}
and
\begin{align}
A^{(i)}_{i+1}(\beta) 
&= 
\Big(\prod_{m=i+1}^{n-1} C_{\beta_{-m}}\Big) C_{\beta_{-n}}  t^{-\frac12 (n-i+1)} F_{\beta_n}^+
\nonumber \\
&\qquad
+F_{\beta_{-i}}^-
\Big(
F_{\beta_{-n}}^+ 
+ \sum_{s=i+1}^{n-1} F^+_{\beta_{-s}}
\Big(\prod_{r=s+1}^{n-1} C_{\beta_{-r}}\Big) C_{\beta_{-n}}  t^{-\frac12 (n-s-1)} F_{\beta_n}^+
 \Big)
\nonumber \\
A^{(i)}_i(\beta)
&=
F_{\beta_{-n}}^+ 
+ \sum_{w=i}^{n-1} F^+_{\beta_{-w}}
\Big(\prod_{r=w+1}^{n-1} C_{\beta_{-r}}\Big) C_{\beta_{-n}} t^{-\frac12 (n-s-1)} F_{\beta_n}^+.
\label{Adef1B}
\end{align}
For $\ell\in \{i+2, \ldots, n\}$ define
\begin{align}
A_{-\ell}^{(i)}(\beta) &= t^{-\frac12 (\ell-i-1)} F^+_{\beta_{-i}}, 
\nonumber \\
A_\ell^{(i)}(\beta) &= \Big(\prod_{m=i}^{\ell-2} C_{\beta_{-m}}\Big) \Big(\prod_{r=\ell}^{n-1} C_{\beta_{-r}}\Big) C_{\beta_{-n}}
t^{-\frac12 (n-\ell)} F^+_{\beta_n}
+ t^{\frac12 (\ell-i-1)} 
F^-_{\beta_{-i}}F_{\beta_{-n}}^+  
\nonumber  \\
&\qquad +t^{\frac12 (\ell-1-i)}
F^-_{\beta_{-i}}
\Big(\sum_{s=\ell}^{n-1} F^+_{\beta_{-s}}
\Big(\prod_{r=s}^{n-1} C_{\beta_{-r}}\Big) C_{\beta_{-n}}
t^{-\frac12 (n-s-1)} F^+_{\beta_n}
\Big).
\label{Adef1C}
\end{align}

\item[(CS1gen)] Define functions
$$A^{(i,m,-i)}_{\pm\ell}(\beta)
=A^{(i,m,-i)}_{\pm\ell}(\beta_{-i}, \ldots, \beta_{-n}, \beta_n, \ldots, \beta_{i+1})
\quad\hbox{for}\quad \begin{array}{l}
\hbox{$i\in \{1, \ldots, n\}$,} \\
\hbox{$m\in \{i, \ldots, n, -n, \ldots, -i\}$ \quad and} \\
\hbox{$\ell\in \{i+1, \ldots, n, -n, \ldots, -k\}$,}
\end{array}
$$
as follows:

\smallskip\noindent
\emph{Case 1:} For $m=k$ define
\begin{align}
A^{(i,i,-i)}_{-\ell}(\beta) &= A^{(i)}_{-\ell}(\beta)&&\hbox{if $-\ell\in \{-i, \ldots, -n\}$,} 
\nonumber \\
A^{(i,i,-i)}_{\ell}(\beta) &= A^{(i)}_{\ell}(\beta)&&\hbox{if $\ell\in \{i, \ldots, n\}$.} 
\label{AgencA}
\end{align}
\emph{Case 2:} For $m\in \{i+1, \ldots, n\}$ define
\begin{align*}
A^{(i,m,-i)}_{-\ell}(\beta) &= A^{(i)}_{-\ell}(\beta) + (t^{\frac12}-t^{-\frac{1}{2}})t^{-\frac{1}{2}(\ell-1-i)} A^{(i)}_{-i}(\beta), 
&&\hbox{if $-\ell\in \{-i, \ldots, -m\}$,} 
\\
A^{(i,m,-i)}_{-\ell}(\beta) &= A^{(i)}_{-\ell}(\beta), &&\hbox{if $-\ell\in \{-(m+1), \ldots, -n\}$,} 
\\
A^{(i,m,-i)}_{\ell}(\beta) &= A^{(i)}_{\ell}(\beta), &&\hbox{if $\ell\in \{i+1, \ldots, n\}$,} 
\end{align*}
and
\begin{equation}
A^{(i,m,-i)}_i(\beta) = A^{(i)}_i(\beta) + (t^{\frac12}-t^{-\frac12})
\sum_{j=i+1}^m t^{\frac12(j-1-i)} A^{(i)}_j(\beta).
\label{AgencB}
\end{equation}
\emph{Case 3:}  For $-m = -n$ define
\begin{align*}
A^{(i,-n,-i)}_{-\ell}(\beta) &= A^{(i)}_{-\ell}(\beta) + (t^{\frac12}-t^{-\frac{1}{2}})t^{-\frac{1}{2}(\ell-1-i)}A^{(i)}_{-i}(\beta), 
&&\hbox{if $-\ell\in \{-i, \ldots, -n\}$,} 
\\
A^{(i,-n,-i)}_{\ell}(\beta) &= A^{(i)}_{\ell}(\beta), &&\hbox{if $\ell\in \{i+1, \ldots, n\}$,} 
\end{align*}
and
  \begin{equation}
A^{(i,-n,-i)}_i(\beta) = A^{(i)}_i(\beta) +  (t^{\frac12}-t^{-\frac12})
\Big(\sum_{j=i+1}^n t^{\frac12(j-i-1)} A^{(i)}_j(\beta) \Big) + t(t_n^{\frac12}-t_n^{-\frac12}) A^{(i)}_{-i}(\beta).
\label{AgencC}
\end{equation}
\emph{Case 4:}  For $-m\in \{-i, \ldots, -(n-1)\}$ define
\begin{align*}
A^{(i,-m,-i)}_{-\ell}(\beta) &= A^{(i)}_{-\ell}(\beta) + (t^{\frac12}-t^{-\frac{1}{2}})t^{-\frac{1}{2}(\ell-1-i)} A^{(i)}_{-i}(\beta), 
&&\hbox{if $-\ell\in \{-i, \ldots, -n\}$,} 
\\
A^{(i,-m,-i)}_{\ell}(\beta) &= A^{(i)}_\ell(\beta) 
+ (t^{\frac12}-t^{-\frac12})t^{-\frac12(2n-4-\ell-i)}t_n^{-\frac12} A^{(i)}_{-i}(\beta), 
&&\hbox{if $\ell\in \{m, \ldots, n\}$,}
\\
A^{(i,-m,-i)}_{\ell}(\beta) &= A^{(i)}_{\ell}(\beta), &&\hbox{if $\ell\in \{i+1, \ldots, m-1\}$,} 
\end{align*}
and 
\begin{align}
A^{(i,-m,-i)}_i(\beta) 
&= A^{(i)}_i(\beta) +  (t^{\frac12}-t^{-\frac12})
\Big(\sum_{j=i+1}^n t^{\frac12(j-i)} A^{(i)}_j(\beta)\Big) 
\nonumber \\
&\qquad + (t^{\frac12} - t^{-\frac12})t_n^{\frac12} \Big(\sum_{j=m+1}^n t^{\frac12(2n-j-1-i)} A^{(i)}_{-j}(\beta) \Big)
\nonumber \\
&\qquad + \Big(
t(t_n^{\frac12}-t_n^{-\frac12}) + t^{\frac12}t_n^{\frac12}(t^{\frac12}-t^{-\frac12}) (t^{n-m}-1)\Big) A^{(i)}_{-i}(\beta).
\label{AgencD}
\end{align}
\end{enumerate}

\section{Box Combinatorics}\label{sec: box combinatorics}

Fix $n\in \ZZ_{>0}$.
A \emph{box} is an element of $\{1,\ldots, n\}\times \ZZ$ so that
$$\{\hbox{boxes}\} = \{ (i,j)\ |\ i\in \{1, \ldots, n\},\ j\in \ZZ \}.$$
To conform to~\cite[p.2]{Mac}, arrange the boxes $(i,j)$ as for matrix entries, where the first coordinate indicates the row 
(from top to bottom) and the second coordinate indicates the column (from left to right). 

Pictorially for $n=5$ the set of boxes looks like
\[
\begin{array}{ccc|c|ccc}
\cdots
&\boxed{{}_{25}\ (1,-2)} 
&\boxed{{}_{15}\ (1,-1)} 
&\boxed{{}_{5}\ (1,0)} 
&\boxed{{}_{6\ }\ (1,1)} 
&\boxed{{}_{16}\ (1,2)} 
&\cdots 
\\
\cdots
&\boxed{{}_{24}\ (2,-2)} 
&\boxed{{}_{14}\ (2,-1)} 
&\boxed{{}_{4}\ (2,0)} 
&\boxed{{}_{7\ }\ (2,1)} 
&\boxed{{}_{17}\ (2,2)} 
&\cdots 
\\
\cdots
&\boxed{{}_{23}\ (3,-2)} 
&\boxed{{}_{13}\ (3,-1)} 
&\boxed{{}_{3}\ (3,0)} 
&\boxed{{}_{8\ }\ (3,1)} 
&\boxed{{}_{18}\ (3,2)} 
&\cdots 
\\
\cdots
&\boxed{{}_{22}\ (4,-2)} 
&\boxed{{}_{12}\ (4,-1)} 
&\boxed{{}_{2}\ (4,0)} 
&\boxed{{}_{9\ }\ (4,1)} 
&\boxed{{}_{19}\ (4,2)} 
&\cdots 
\\
\cdots
&\boxed{{}_{21}\ (5,-2)} 
&\boxed{{}_{11}\ (5,-1)} 
&\boxed{{}_{1}\ (5,0)} 
&\boxed{{}_{10}\ (5,1)} 
&\boxed{{}_{20}\ (5,2)} 
&\cdots 
\end{array}
\]
The labels in the bottom-left corner of each box correspond to
\begin{equation}
\hbox{the \emph{spiral coordinate} of the box $(i,j)$ \quad given by}\ 
\begin{cases}
n+i+2n(j-1), &\hbox{if $j>0$,} \\
n-i+1, &\hbox{if $j=0$,} \\
n-i-2nj, &\hbox{if $j<0$.}
\end{cases}
\label{spiral}
\end{equation}

Let $\mu=(\mu_1, \ldots, \mu_n)\in \ZZ^n$.  
The \emph{diagram} of $\mu$ is the collection of boxes
$$
\dg(\mu) = \left\{(r, \sgn(\mu_{r})c) \;|\; \hbox{$r\in \{1, \ldots, n\}$ and $c\in \{1, \ldots, \vert \mu_r\vert \}$}\}
\cup 
\{(r, 0) \;|\; \mu_{r} \le 0 \right\}.
$$
Throughout the paper we identify $\mu$ and $\dg(\mu)$.

\begin{example}
We illustrate the diagrams for $\mu = (-3,5,-1,4)$ (left) and $\nu = (0,2,3,-1,1)$ (right). 
$$
\dg(\mu)= \begin{matrix}
\begin{tikzpicture}[scale = 0.65]
\foreach \x\y in {1/-3, 1/-2, 1/-1, 1/0, 2/1, 2/2, 2/3, 2/4, 2/5, 3/-1, 3/0, 4/1, 4/2, 4/3, 4/4}{
    \pgfmathtruncatemacro{\shift}{(\y>0) - (\y<0)}
    \begin{scope}[xshift = 0.3*\shift cm]
    \draw[thin] (\y+0.025, -\x-0.025) rectangle (0.975+\y, -\x-0.975);
    \draw (0.5+\y,-\x-0.5) node (\x\y) {};
    \end{scope}
}
\draw (-0.15, -0.75) -- (-0.15, -5.25);
\draw (1.15, -0.75) -- (1.15, -5.25);
\end{tikzpicture}
\end{matrix}
\qquad \qquad 
\dg(\nu)=\begin{matrix}
\begin{tikzpicture}[scale = 0.65]
\foreach \x\y in {1/0, 2/1, 2/2,  3/1, 3/2, 3/3, 4/0, 4/-1, 5/1}{
    \pgfmathtruncatemacro{\shift}{(\y>0) - (\y<0)}
    \begin{scope}[xshift = 0.3*\shift cm]
    \draw[thin] (\y+0.025, -\x-0.025) rectangle (0.975+\y, -\x-0.975);
    \draw (0.5+\y,-\x-0.5) node (\x\y) {};
    \end{scope}
}
\draw (-0.15, -0.75) -- (-0.15, -6.25);
\draw (1.15, -0.75) -- (1.15, -6.25);
\end{tikzpicture}
\end{matrix}
$$
\end{example}

\subsection{The box-greedy reduced word for $u_\mu$}
\label{section:bgrw}

The affine Weyl group $W$ acts on $\ZZ^n$ by
\begin{align}
s_0\mu &=   (-\mu_1+1, \mu_2, \ldots, \mu_n), \nonumber \\
s_i\mu &= (\mu_{1}, \ldots, \mu_{i-1}, \mu_{i+1}, \mu_{i}, \mu_{i+2}, \ldots, \mu_n), \qquad  \hbox{if $i\in \{1, \ldots, n\}$,}  
\label{WonZn} 
\\
s_n \mu &= (\mu_1, \ldots, \mu_{n-1}, -\mu_n). \nonumber
\end{align}
For $\mu = (\mu_{1}, \ldots, \mu_{n}) \in \ZZ^n$ the element $u_\mu\in W$ defined in~\eqref{uvmudef} is the 
unique minimal-length element in $W$ such that
\begin{equation}
u_\mu(0,\ldots, 0) = (\mu_1, \ldots, \mu_n),
\label{umudefn}
\end{equation}
The box greedy reduced word $u_\mu^\square$ (defined in~\eqref{boxgreedyredwd})
is a specially chosen reduced word for the element $u_\mu\in W$.

Let $(r, c)$ be a box of $\mu$ and let $b$ be the spiral coordinate of $(r,c)$.  Define 
the \emph{sequence of attacking boxes for $(r,c)$} in two cases:

\smallskip\noindent
\emph{Case $c=1$:} 
$\mathrm{attack}_\mu(r,1)$  is the set of boxes
$$\{ (i,0)\ |\ \hbox{$i\in \{1, \ldots, r-1\}$ and $(i,1)\not\in \mu$}\}\cup \{ (i,1)\ |\  \hbox{$i\in \{1, \ldots, r-1\}$ and $(i,1)\in \mu$}\}$$
arranged in decreasing order by spiral coordinate.

\smallskip\noindent
\emph{Case $c\ne1$:} 
$\mathrm{attack}_\mu(r,c)$  is the set of boxes with spiral coordinates in
$$\{ b-i\ |\ \hbox{$i\in \{1, \ldots, 2n-1\}$ and $b-i\in \mu$}\}$$
arranged in decreasing order by spiral coordinate.

\begin{example}
If $\mu=(0,2,3,-1,1)$ and $b = (3, 1)$, and $b' = (4, -1)$ then
$$
\operatorname{attack}(3,1) = ((2,1),(1,0)), \qquad\hbox{and} \qquad
\operatorname{attack}(4,-1) = ((5,1), (3,1),(2,1), (1,0)) 
$$
$$
\begin{matrix}
\begin{matrix}
\begin{tikzpicture}[scale = 0.65]
\foreach \x\y in {1/0, 2/1, 2/2,  3/1, 3/2, 3/3, 4/0, 4/-1, 5/1}{
    \pgfmathtruncatemacro{\shift}{(\y>0) - (\y<0)}
    \begin{scope}[xshift = 0.3*\shift cm]
    \draw[thin] (\y+0.025, -\x-0.025) rectangle (0.975+\y, -\x-0.975);
    \draw (0.5+\y,-\x-0.5) node (\x\y) {};
    \end{scope}
}
\node at (10) {$a_{2}$};
\node at (21) {$a_1$};
\node at (31) {$b$};
\draw (-0.15, -0.75) -- (-0.15, -6.25);
\draw (1.15, -0.75) -- (1.15, -6.25);
\end{tikzpicture}
\end{matrix}
&\qquad
&\begin{matrix}
\begin{tikzpicture}[scale = 0.65]
\foreach \x\y in {1/0, 2/1, 2/2,  3/1, 3/2, 3/3, 4/0, 4/-1, 5/1}{
    \pgfmathtruncatemacro{\shift}{(\y>0) - (\y<0)}
    \begin{scope}[xshift = 0.3*\shift cm]
    \draw[thin] (\y+0.025, -\x-0.025) rectangle (0.975+\y, -\x-0.975);
    \draw (0.5+\y,-\x-0.5) node (\x\y) {};
    \end{scope}
}
\node at (10) {$a'_4$};
\node at (21) {$a'_3$};
\node at (31) {$a'_2$};
\node at (51) {$a'_1$};
\node at (4-1) {$b'$};
\draw (-0.15, -0.75) -- (-0.15, -6.25);
\draw (1.15, -0.75) -- (1.15, -6.25);
\end{tikzpicture}
\end{matrix}
\\
\mathrm{attack}(b) = (a_1,a_2)
&&\mathrm{attack}(b') = (a'_1,a'_2, a'_3,a_4')
\end{matrix}
$$
\end{example}

For $\ell\in \{1, \ldots, n\}$ define
\begin{equation}
d_\ell = s_{\ell-1}\cdots s_2s_1s_0
\qquad\hbox{and}\qquad
d_{-\ell} = s_\ell\cdots s_{n-1}s_n s_{n-1}\cdots s_2s_1s_0.
\label{didefn}
\end{equation}
Then define
\begin{equation}
u_\mu(r,c) = \begin{cases} 
1+\#\mathrm{attack}(r,c), &\hbox{if $\mu_r>0$ and $c=1$,} \\
-\Big(1+\#\mathrm{attack}(r,c)\Big), &\hbox{otherwise}.
\end{cases}
\label{boxfactor}
\end{equation}
The \emph{box-greedy reduced word for $u_\mu$} is 
\begin{equation}
u_\mu^\square = \prod_{(r,c)\in \dgplus(\mu) } d_{u_\mu(r,c)},
\label{boxgreedyredwd}
\end{equation}
where the product is taken over the boxes of $\mu$ in decreasing order by spiral coordinates. Note that if we read the product from right to left, then the entries are in increasing order by spiral coordinate.

In Proposition~\ref{prop:boxgreedycoroot} we compute the inversion set of $u_{\mu}$.  
This set has the same size as the number of letters in the box-greedy reduced word, which proves that it is a reduced word for $u_{\mu}$.

\subsection{The coroot sequence of $u_\mu^\square$}
\label{section:coroot}

Fix $\mu\in \ZZ^n$ and let $b=(r,c)$ be a box in $\mu$. The \emph{leg length of $\mu$ at $b$} is 
\begin{equation}
l(b) = l(r,c) = \vert \mu_r\vert -\vert c\vert.
\label{legdefn}
\end{equation}
Define a sequence of coroots $\beta(b) = \beta(r,c)$ as follows.
Let
$$
a_1 = (r_{1}, c_{1}), \quad
a_2 = (r_{2}, c_{2}), \quad\ldots,\quad
a_k = (r_{k}, c_{k})
$$
be the sequence of attacking boxes for the box $b=(r,c)$ coordinates (in decreasing spiral order).

\smallskip\noindent
\emph{Case $c=1$:} Then $\beta(b) = (\beta(b)_{k+1}, \ldots, \beta(b)_1)$, where
\begin{align}
\beta(b)_1 &= -\varepsilon_{|v_\mu(r)|} + \left(l(b) + \textstyle\frac{1}{2}\right) K ,  \nonumber \\
\beta(b)_j &= -\varepsilon_{|v_\mu(r)|} - \varepsilon_{|v_\mu(r_{j-1})|} + \left(l(b) + l(a_{j-1})+1\right)K \ \hbox{for $j \in \{k+1, k, \ldots, 2\}$}.
\label{rtseqa}
\end{align}

\smallskip\noindent
\emph{Case $c\ne1$:} Then $u_\mu(b) = -k<0$ and $\beta(b) = (\beta(b)_{-k}, \ldots, \beta(b)_{-n}, \beta(b)_{n-1}, \ldots, \beta(b)_1)$, where
\begin{align}
\beta(b)_{-j} &= -\varepsilon_{|v_\mu(r)|} + \varepsilon_j + ( l(b)+1)K,
&&\text{for $-j\in \{-k, -(k+1), \ldots, -(n-1)\}$,}
\nonumber
\\
\beta(b)_{-n} &= - \varepsilon_{|v_\mu(r)|} + (l(b)+1) K,
\nonumber  \\
\beta(b)_j &= - \varepsilon_{|v_\mu(r)|} - \varepsilon_j + (l(b)+1)K, 
&& \text{for $j\in \{n-1, \ldots, k+2\}$,} 
\label{rtseqb}
\\
\beta(b)_j &= - \varepsilon_{|v_\mu(r)|} - \varepsilon_{|v_\mu(r_{j-1})|} + (l(b)+l(a_{j-1})+1)K, 
&& \text{for $j \in \{k+1, k, \ldots, 2\}$,}
\nonumber  \\
\beta(b)_1 &= - \varepsilon_{|v_\mu(r)|} + (l(b)+\textstyle \frac{1}{2} \displaystyle)K, 
\nonumber
\end{align}

Recall from~\eqref{crtseq} that the \emph{coroot sequence} of the reduced word $w=s_{i_\ell}\cdots s_{i_1}$ is the sequence
$(\beta_\ell,\ldots, \beta_1)$ given by
\begin{equation*}
\beta_\ell = 
s_{i_1}s_{i_2}\cdots s_{i_{\ell-1}}\alpha_{i_\ell}, \quad
\beta_{\ell-1} = 
s_{i_1}s_{i_2}\cdots s_{i_{\ell-2}}\alpha_{i_{\ell-1}}, \quad \ldots \quad
\beta_2 = s_{i_1}\alpha_{i_2}, \quad
\beta_1 = \alpha_{i_1}.
\end{equation*}
The following result explicitly determines the coroot sequence of the box-greedy reduced word $u_\mu^\square$ in terms of the
combinatorics of the diagram of $\mu$.

\begin{prop}
\label{prop:boxgreedycoroot}
Let $\mu \in \ZZ^{n}$ and let $u_\mu^\square = s_{i_\ell}\cdots s_{i_1}$ be the box-greedy reduced word for the element $u_\mu$.
Then the coroot sequence $(\beta_\ell, \ldots, \beta_1)$ for $u_\mu^\square$ is 
$$\hbox{the concatenation (in increasing spiral order) of the sequences}\quad
\beta(r,c), \quad\hbox{for $(r,c)\in \mu$.}$$
\end{prop}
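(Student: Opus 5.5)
Since the box-greedy word is the product $u_\mu^\square=\prod d_{u_\mu(r,c)}$, I will prove the claim by induction on the number of boxes of $\mu$, adding boxes in increasing spiral order. If $b=(r,c)$ is the box of largest spiral coordinate in $\mu$, let $\nu$ be $\mu$ with $b$ removed, so that $\nu_r=\mu_r-\operatorname{sgn}(\mu_r)$ (with the boxes in column $0$ handled as the case $\mu_r\le 0$, $c=0$ or $-1$). Then
\[
u_\mu^\square = d_{u_\mu(b)}\,u_\nu^\square ,
\]
where I first check that removing $b$ does not change the factors $d_{u_\nu(b')}$ attached to the earlier boxes. Recall that the coroot sequence \eqref{crtseq} is read with $\beta_1=\alpha_{i_1}$ coming from the rightmost letter. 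Hence the first entries of the coroot sequence of $u_\mu^\square$ are exactly those of $u_\nu^\square$, which by induction give the concatenation of the $\beta(b')$ for $b'\in\nu$. The one check needed here is that $\beta(b')$, computed in $\mu$, agrees with $\beta(b')$ computed in $\nu$. The leg lengths $l(b')=|\mu_{r'}|-|c'|$ are unchanged for boxes in rows other than $r$. In row $r$ both $\mu_r$ and $v_\mu$ can change, so I will compare $v_\mu$ and $v_\nu$ through \eqref{vmuvals} and verify that the combinations $\varepsilon_{|v(r')|}$ together with the legs transform compatibly. This probably needs to be phrased via $u_\nu$ versus $u_\mu$ rather than entry by entry.

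The remaining entries are
\[
u_\nu\, s_{j_1}\cdots s_{j_{p-1}}\,\alpha_{j_p},
\]
where $d_{u_\mu(b)}=s_{j_m}\cdots s_{j_1}$ is read from right to left, starting with $s_0$. I will compute the partial products $s_0$, $s_0s_1$, $s_0s_1s_2$, $\dots$ applied to simple coroots:
\[
s_0s_1\cdots s_{p-1}\,\alpha_p=-\varepsilon_1-\varepsilon_{p+1}+K,
\qquad
\text{and, after passing } s_n,\quad -\varepsilon_1+\varepsilon_j+K .
\]
The first entry is $\alpha_0=-\varepsilon_1+\tfrac12K$. Then I apply $u_\nu$ using its action on $Q$. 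Writing $h_\nu=u_\nu v_\nu$ gives
\[
u_\nu=h_\nu v_\nu^{-1}, \qquad u_\nu\varepsilon_j=\varepsilon_{v_\nu^{-1}(j)}-\nu_{v_\nu^{-1}(j)}K
\]
(with signs). So everything comes down to identifying which row of $\nu$ sits in position $j$ after sorting by $v_\nu$, and what its $|\nu|$ value is. The point to establish is that $v_\nu^{-1}(1)$ corresponds to row $r$, the row receiving the new box, and that positions $2,\dots,k+1$ correspond, in order, to the rows of the attacking boxes $a_1,\dots,a_k$. The later positions (for $c\ne1$) are rows not attacking $b$, where only $\varepsilon_j$ with zero $K$-shift survives. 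This turns $-\nu_{\cdot}$ into $l(b)$ and $l(a_{j-1})$, and the conversion uses $\varepsilon_{|v_\mu(r)|}$ in the final indexing because $h_\mu$ and $h_\nu$ differ by the new box. This step yields \eqref{rtseqa} and \eqref{rtseqb}.

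Reducedness is then automatic. The computed coroots are positive and pairwise distinct, being distinct elements of the inversion set, and their number equals the length of the word, as the paper remarks after \eqref{boxgreedyredwd}. I would also verify along the way that $d_{u_\mu(b)}u_\nu$ is the minimal-length representative mapping $0$ to $\mu$, i.e.\ \eqref{umudefn}.

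The main obstacle is the bookkeeping identification in the second paragraph. One has to show that the greedy count $\#\mathrm{attack}(r,c)$, which is defined via spiral coordinates within a window of $2n-1$, matches exactly the number of rows whose current (sorted) position precedes row $r$. The ties in \eqref{vmuvals}, for equal $|\mu_j|$ and differing signs, must be treated separately in each of the cases $c=1$, $c>1$, $c=0$ and $c<0$. I would first handle $c=1$, where the attacking set is explicit, and then reduce $c\ne1$ to a statement about which rows lie strictly ahead of $r$ in the spiral within one full turn.
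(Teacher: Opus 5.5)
There is a genuine gap: the new factor is on the wrong side of your factorization. You followed the literal sentence after \eqref{boxgreedyredwd} (``decreasing order''), but that sentence is misleading. The word that actually represents $u_\mu$ has the factor of the box with \emph{smallest} spiral coordinate on the left and that of the box with \emph{largest} spiral coordinate on the right. This is how Example~\ref{ex023m11} writes $u^\square_{(0,2,3,-1,1)}=(s_1s_0)(s_2s_1s_0)\cdots$, starting with box $(2,1)$ and with $\beta_1=\beta(3,3)_1$. It is also how the paper's proof peels off the \emph{first} box via $u_\mu^\square=d_ku_\nu^\square$.

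Take $n=1$, $\mu=(2)$, $b=(1,2)$, $\nu=(1)$. Then $u_\nu=s_0$, $d_{u_\mu(b)}=d_{-1}=s_1s_0$, and $u_\mu=s_0s_1s_0=u_\nu\,d_{-1}$. By contrast $d_{-1}u_\nu=s_1$, which does not even send $0$ to $\mu$. So with your choice of $b$ the correct identity is $u_\mu^\square=u_\nu^\square\,d_{u_\mu(b)}$. The initial segment of the coroot sequence is therefore the coroot sequence $(\gamma_\ell)$ of $d_{u_\mu(b)}$ alone, and the coroots of $u_\nu^\square$ appear only after applying $d_{u_\mu(b)}^{-1}$.

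Consequently your ``one check'' --- that $\beta(b')$ computed in $\mu$ equals $\beta(b')$ computed in $\nu$ --- is false, not merely delicate. Deleting $b$ lowers the leg length of every other box in row $r$. In the example, $\beta_\nu(1,1)=(-\varepsilon_1+\tfrac12K)$, whereas $\beta_\mu(1,1)=(-\varepsilon_1+\tfrac32K)=d_{-1}^{-1}(-\varepsilon_1+\tfrac12K)$. Moreover, $\beta_1=-\varepsilon_1+\tfrac12K$ of $u_\mu^\square$ belongs to the new box $(1,2)$, not to $(1,1)$. (Separately, for a letter in a left factor $d$ of $d\,u_\nu^\square$ the coroot is $u_\nu^{-1}s_{j_1}\cdots s_{j_{p-1}}\alpha_{j_p}$, not $u_\nu s_{j_1}\cdots s_{j_{p-1}}\alpha_{j_p}$. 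The inverse is what converts row indices into the sorted indices $|v(\cdot)|$.)

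You can repair the argument in one of two ways.

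\emph{Follow the paper.} Peel off the box of smallest spiral coordinate, which is necessarily in column $\pm1$.
\begin{enumerate}
\item Then $u_\mu^\square=d_ku_\nu^\square$ with $\nu=d_k^{-1}\mu$, which is a rotated and reflected sequence rather than $\mu$ with a box deleted in place.
\item The coroots of $u_\nu^\square$ then genuinely form the initial segment.
\item The remaining boxes of $\nu$ and $\mu$ are matched in spiral order, with legs and attacking boxes preserved and $v_\nu=v_\mu\overline{d_k}$.
\item The new coroots are $u_\nu^{-1}\gamma_\ell=v_\mu h_\mu^{-1}d_k\gamma_\ell$, which need only be computed for $c=\pm1$. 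This also sidesteps your window-counting obstacle for $c\neq1$.
\end{enumerate}

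\emph{Keep peeling the largest box.} Then you must prove two things.
\begin{enumerate}
\item[(i)] $\beta_\mu(b)$ is exactly $(\gamma_\ell)$. That is, $|v_\mu(r)|=1$, $l(b)=0$, and the attacking boxes have leg length $0$ and occupy sorted positions $2,\ldots,k+1$.
\item[(ii)] $d_{u_\mu(b)}^{-1}\beta_\nu(b')=\beta_\mu(b')$ for every other box $b'$. This is where the change of legs in row $r$, and the passage from $v_\nu$ to $v_\mu$, must be absorbed.
\end{enumerate}
Neither (i) nor (ii) is what your proposal currently sets out to verify.
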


\begin{proof}
The proof is by induction on the number of boxes of $\mu$.
The base case is when $\mu$ has a single box.   Let
\begin{align*}
\begin{array}{lcl}
\gamma_1 =\alpha_0 = -\varepsilon_1+\hbox{$\frac12$}K, 
& \qquad &
\gamma_{-1} = s_0s_1\cdots s_n\cdots s_2\alpha_1 = -\varepsilon_1+\varepsilon_2+K, \\
\gamma_2 =s_0\alpha_1 = -\varepsilon_1-\varepsilon_2+K, 
& & 
\gamma_{-2} = s_0s_1\cdots s_n\cdots s_3\alpha_2 = -\varepsilon_1+\varepsilon_3+K, \\
\vdots & & \vdots \\
\gamma_n = s_0\cdots s_{n-2}\alpha_{n-1}= -\varepsilon_1-\varepsilon_n+K, 
& & 
\gamma_{-n} = s_0\cdots s_{n-1}\alpha_n = -\varepsilon_1+K, 
\end{array}
\end{align*}

\smallskip\noindent
If $\mu = \varepsilon_i$ then $u_{\varepsilon_i}^\square = s_{i-1}\cdots s_1s_0$ and 
$\beta_{\varepsilon_i}((i,1)) = ( \gamma_i, \ldots, \gamma_2, \gamma_1)$.

\smallskip\noindent
If $\mu = -\varepsilon_i$ then $u_{-\varepsilon_i}^\square = s_i\ldots s_n \cdots s_1s_0$ and
$\beta_{-\varepsilon_i}((i,-1)) = ( \gamma_{-i}, \ldots, \gamma_{-n}, \gamma_n, \ldots, \gamma_1)$.

The induction step is as follows.  Let $b=(r,c)$ be the first box in $\mu$ (in spiral order) and, 
with $u_\mu(r,c)$ as defined in~\eqref{boxfactor},  let $k=u_\mu(r,c)$ so that
$$u_\mu^\square = d_k u_\nu^\square,
\qquad\hbox{where}\qquad \nu = d_k^{-1}\mu.
$$
Then $c\in \{1,-1\}$ and $\nu$ has one fewer box than $\mu$.  Except for the first box of $\mu$,
the boxes in $\nu$ are in correspondence to the boxes in $\mu$ (simply by taking the boxes of $\mu$ and
of $\nu$ in increasing order by spiral coordinate).
Under this correspondence the leg length and the attacking boxes are preserved.
Using the homomorphism $\overline{\phantom{T}}\colon W \longrightarrow  W_{\mathrm{fin}}$ defined in~\eqref{Wtofin},
$v_\mu^{-1} = \overline{h_\mu v_\mu^{-1}} = \overline{u_\mu} = \overline{d_ku_\nu} = \overline{d_k h_\nu v_\nu^{-1}} 
= \overline{d_k}v_\nu^{-1}$
giving
$v_\nu = v_\mu \overline{d_k}$.
Thus
$$\beta_\mu(b') = \beta_\nu(b')\quad \hbox{if $b'$ is not the first box in $\mu$.}
$$

If $b=(r,c)$ is the first box in $\mu$ then the elements of the root sequence of $u_\mu^\square$ that correspond to the
factors in $d_k$ are $(\beta(b)_k, \ldots, \beta(b)_1)$ where
$$\beta_\mu(b)_\ell =u_\nu^{-1} \gamma_\ell = u_\mu^{-1}d_k\gamma_\ell
= v_\mu h_\mu^{-1}d_k \gamma_\ell
$$
because $d_{k}u_{\nu} = u_{\mu}$ and 
$u_\mu^{-1} = v_\mu h_\mu^{-1}$. 

\smallskip\noindent
\emph{Case $b=(r,1)$.}
In this case $\mu= (-\mu_1, \ldots, -\mu_{r-1}, \mu_r, \ldots, \mu_n)$ with $-\mu_1, \ldots, -\mu_{r-1}\in \ZZ_{\le 0}$ and $\mu_r\in \ZZ_{>0}$.
The leg length of $b$ is $l(b)=\mu_r-1$, the sequence of attacking boxes is 
$$(a_1, \ldots, a_{r-1}) = ((1,0), \ldots (r-1,0))\quad\hbox{with leg lengths}\quad
(l(a_1), \ldots, l(a_{r-1})) = (\mu_1, \ldots, \mu_{r-1})$$
and $k=r-1$.
If $j\in \{2, \ldots, k+1\}$ then
\begin{align*}
v_\mu h_\mu^{-1}d_r(\gamma_{j})
&= v_\mu h_\mu^{-1}s_{r-1}\cdots s_1 s_0(-\varepsilon_1-\varepsilon_j+K) 
= v_\mu h_\mu^{-1}(\varepsilon_r-K-\varepsilon_{j-1}+K) \\
&= v_\mu (\varepsilon_r+\mu_r K-K-\varepsilon_{j-1}+\mu_{j-1}K +K) \\
&= (\varepsilon_{v_{\mu}(r) } + l(b)K)
+(-\varepsilon_{v_{\mu}(j-1)} + l(a_{j-1}) K)
+ K \\
&= - \varepsilon_{\vert v_{\mu}(r)\vert } - \varepsilon_{\vert v_{\mu}(r_j)\vert } + (l(b) + l(a_{j-1}) + 1)K,
\end{align*}
where 
we use that since $\mu_r>0$ then $v_\mu(r)<0$ and since $-\mu_{j-1}<0$ then $v_\mu(j-1)>0$.
Similarly,
\begin{align*}
v_\mu h_\mu^{-1}d_r(\gamma_{1}) 
&= v_\mu h_\mu^{-1}s_{r-1}\cdots s_1 s_0 (-\varepsilon_1+\hbox{$\frac12$} K) 
= -\varepsilon_{\vert v_{\mu}(r)\vert } +(l(b) + \hbox{$\frac{1}{2})$}K.
\end{align*}

\medskip\noindent
\emph{Case $b=(r,-1)$.}
Then $\mu=(-\mu_1, \ldots, -\mu_r, 0, \ldots, 0)$ with $-\mu_1, \ldots, -\mu_{r-1}\in \ZZ_{\le0}$ and $-\mu_r\in \ZZ_{<0}$.
The leg length of $b$ is $l(b) = \mu_r-1$, the attacking boxes are 
$$(a_1, \ldots, a_{r-1}) = ((0,1), (0,2), \ldots, (0,r-1))
\quad\hbox{with leg lengths}\quad
(l(a_1), \ldots, l(a_{r-1}) = (\mu_1, \ldots, \mu_{r-1})
$$
and $k=r-1$.
If $j \in \{2, \ldots, k+1\}$, then, using that $-\mu_r<0$ so that $v_\mu(r)>0$ and $-\mu_{j-1}\le 0$ so that $v_\mu(j-1)>0$ gives
\begin{align*}
v_\mu h_\mu^{-1}d_{-r}(\gamma_{j}) 
&=v_\mu h_\mu^{-1}s_r\cdots s_n\cdots s_1s_0(-\varepsilon_{1} - \varepsilon_{j} + K)  
=v_\mu h_\mu^{-1}(-\varepsilon_{r} -K - \varepsilon_{j-1} + K)  \\
&=v_\mu (-\varepsilon_{r} +\mu_rK -K - \varepsilon_{j-1} +\mu_{j-1} K+ K)  \\
&= (-\varepsilon_{|v_{\mu}(r)|} + l(b))K)
+(-\varepsilon_{|v_{\mu}(j-1)|} + l(a_{j-1}) K)
+ K \\
&= -\varepsilon_{|v_{\mu}(r)|} -\varepsilon_{|v_{\mu}(j-1)|} + (l(b) + l(a_{j-1}) + 1)K.
\end{align*}
If $j \in \{k+2, \ldots, n-1\}$ then using that $v_\mu(j)=j$ gives
\begin{align*}
v_\mu h_\mu^{-1}d_{-r}(\gamma_{j}) 
&=v_\mu h_\mu^{-1}s_r\cdots s_n\cdots s_1s_0(\varepsilon_1-\varepsilon_j+K) 
=v_\mu h_\mu^{-1}(-\varepsilon_r-K-\varepsilon_j+K) \\
&=v_\mu (-\varepsilon_r+\mu_rK-\varepsilon_j-\mu_j K+K) 
= -\varepsilon_{|v_{\mu}(r)\vert} -\varepsilon_j + (l(b) + 1)K.
\end{align*}
If $-j \in \{-k, -(k+1), \ldots, -(n-1)\}$ then
\begin{align*}
v_\mu h_\mu^{-1}d_{-r}(\gamma_{-j}) 
&= v_\mu h_\mu^{-1}s_r\cdots s_n\cdots s_1s_0(-\varepsilon_{1} + \varepsilon_{j+1} + K)  
= v_\mu h_\mu^{-1}(-\varepsilon_{r} -K+ \varepsilon_{j+1} + K)  \\
&= v_\mu (-\varepsilon_{r} +\mu_r K-K+ \varepsilon_{j+1} -\mu_{j+1}K+ K)  
= - \varepsilon_{\vert v_{\mu}(r)\vert }-\varepsilon_{j+1} + (l(b) + 1)K.
\end{align*}
Similarly,
\begin{align*}
v_\mu h_\mu^{-1}d_{-r}(\gamma_{1}) 
&=v_\mu h_\mu^{-1}s_r\cdots s_n\cdots s_1s_0(-\varepsilon_1+\hbox{$\frac12$}K))  
= -\varepsilon_{\vert v_{\mu}(r)\vert } + (l(b) + \textstyle\frac{1}{2})K \qquad\hbox{and} \\
v_\mu h_\mu^{-1}d_{-r}(\gamma_{-n}) 
&= v_\mu h_\mu^{-1}d_{-r}(-\varepsilon_{1} + K) = -\varepsilon_{v_{\mu}(r)} + (- \mu_{r}-1 )K + K
= -\varepsilon_{v_{\mu}(r)} + (l(b) + 1)K.
\end{align*}
\end{proof}

\begin{example} \label{ex023m11}
Let $\mu = (0,2,3,-1,1)$. Then $n=5$ and $v_\mu = (5\overline{2}\overline{1}3\overline{4})$ and the box-greedy reduced word for $u_\mu$ is
\begin{align*}
u_{(0,2,3,-1,1)}^\square 
&= (s_1s_0)(s_2s_1s_0)(s_4s_3s_2s_1s_0)(s_5s_4s_3s_2s_1s_0) 
\\ &\qquad\qquad\cdot 
(s_4s_5s_4s_3s_2s_1s_0)(s_4s_5s_4s_3s_2s_1s_0)(s_1s_2s_3s_4s_3s_2s_1s_0),
\end{align*}
which is obtained by concatenating the sequences of the diagram below in the spiral order
\[
\begin{matrix}\begin{tikzpicture}[scale = 0.65]
\foreach \x\y in {1/0, 4/0}{
    \draw[thin] (\y+0.025, -\x-0.025) rectangle (0.975+\y, -\x-.975);
    \draw (0.5+\y,-\x-0.5) node (\x\y) {};
}
\foreach \x\y in {4/-1}{
    \draw[thin] (5*\y+0.025-0.3, -\x-0.025) rectangle (4.975+5*\y-0.3, -\x-.975);
    \draw (2.5+5*\y-0.3,-\x-0.5) node (\x\y) {};
}
\foreach \x\y in {2/1, 2/2, 3/1, 3/2, 5/1}{
    \draw[thin] (5*\y+0.025 + 0.3 - 4, -\x-0.025) rectangle (4.975+5*\y + 0.3 - 4, -\x-.975);
    \draw (2.5+5*\y + 0.3 - 4,-\x-0.5) node (\x\y) {};
}
\foreach \x\y in {3/3}{
    \draw[thin] (5*\y+0.025 + 0.3 - 4, -\x-0.025) rectangle (4.975+5*\y + 0.3 - 3, -\x-.975);
    \draw (2.5+5*\y + 0.3 - 3.5,-\x-0.5) node (\x\y) {};
}
\node at (21) {$ s_{1}s_{0}$};
\node at (22) {$ s_{4}s_{5}s_{4}s_{3}s_{2}s_{1}s_{0}$};
\node at (31) {$ s_{2}s_{1}s_{0}$};
\node at (32) {$ s_{4}s_{5}s_{4}s_{3}s_{2}s_{1}s_{0}$};
\node at (33) {$ s_{1}s_{2}s_{3}s_{4}s_{5}s_{4}s_{3}s_{2}s_{1}s_{0}$};
\node at (4-1) {$ s_{5}s_{4}s_{3}s_{2}s_{1}s_{0}$};
\node at (51) {$ s_{4}s_{3}s_{2}s_{1}s_{0}$};
\draw (-0.15, -0.75) -- (-0.15, -6.25);
\draw (1.15, -0.75) -- (1.15, -6.25);
\end{tikzpicture}
.
\end{matrix}
\]
Note that $\ell(u^\square_{(0,2,3,-1,1)}) = 10+7+7+6+5+3+2=40$.
The root sequence for $u_{(0,2,3,-1,1)}^\square$ (read top to bottom in each box according to the spiral order) is
$$\beta_{(0,2,3,-1,1)} = (\beta_{40}, \ldots, \beta_2, \beta_1) = (\beta(2,1)_2, \ldots, \beta(3,3)_2, \beta(3,3)_1).$$
In Figure~\ref{ex: root sequence} we present the computation for all the boxes in $\mu$.

The first box in spiral order is $b = (2,1)$ and $d_{u_\mu(2,1)} = d_1=s_1s_0$ and
\begin{align*}
\beta(2,1)_1 &= u_\nu^{-1}\gamma_1 = v_\mu h_\mu^{-1}s_1s_0\alpha_0 
= v_\mu h_{(0,-2,-3,1,-1)}^{-1} s_1(\varepsilon_1-\hbox{$\frac12$}K)
\\
&= v_\mu h_{(0,-2,-3,1,-1)}^{-1} (\varepsilon_2-\hbox{$\frac12$}K)
= v_\mu (\varepsilon_2+(-\hbox{$\frac12$}+2)K)
= \varepsilon_{-2}+\hbox{$\frac32$}K
= -\varepsilon_2+\hbox{$\frac32$}K
\\
\hbox{and}\quad
\beta(2,1)_2 &= u_\nu^{-1}\gamma_2 
= v_\mu h_\mu^{-1}s_1s_0s_0\alpha_1 
= v_\mu h_\mu^{-1}(-\alpha_1)
= v_\mu h_{(0,-2,-3,1,-1)}^{-1} (-\varepsilon_1+\varepsilon_2)
\\
&= v_\mu (-\varepsilon_1+\varepsilon_2+2K)
= -\varepsilon_5-\varepsilon_2+2K,
\end{align*}
illustrating the induction step of the proof of Proposition~\ref{prop:boxgreedycoroot}.
\end{example}

\begin{figure}[h]
$$
\begin{matrix}
\begin{tikzpicture}[scale = 0.65]
\foreach \x\y in {1/0}{
    \draw[thin] (\y+0.025, -\x-0.025) rectangle (0.975+\y, -\x-.975);
    \draw (0.5+\y,-\x-0.5) node (\x\y) {};
}
\foreach \x\y in {2/1, 2/2}{
    \draw[thin] (5*\y+0.025 + 0.3 - 4, 10-6*\x-0.025) rectangle (4.975+5*\y + 0.3 - 4, 10-6*\x-3.5);
    \draw (2.5+5*\y + 0.3 - 4,10-6*\x-1.75) node (\x\y) {};
}
\foreach \x\y in {3/1, 3/2, 3/3}{
    \draw[thin] (5*\y+0.025 + 0.3 - 4, 18.5-8*\x-0.025) rectangle (4.975+5*\y + 0.3 - 4, 18.5-8*\x-4.7);
    \draw (2.5+5*\y + 0.3 - 4,18.5-8*\x-2.3) node (\x\y) {};
}
\foreach \x\y in {4/0}{
    \draw[thin] (\y+0.025, 9.8-5*\x-0.025) rectangle (0.975+\y, 9.8-5*\x-2.975);
    \draw (0.5+\y,9.8-5*\x-2) node (\x\y) {};
}
\foreach \x\y in {4/-1}{
    \draw[thin] (5*\y+0.025-0.3, 9.8-5*\x-0.025) rectangle (4.975+5*\y-0.3, 9.8-5*\x-2.975);
    \draw (2.5+5*\y-0.3,9.8-5*\x-1.5) node (\x\y) {};
}
\foreach \x\y in {5/1}{
    \draw[thin] (5*\y+0.025 + 0.3 - 4, 9.3-4.5*\x-0.025) rectangle (4.975+5*\y + 0.3 - 4, 9.3-4.5*\x-2.6);
    \draw (2.5+5*\y + 0.3 - 4,9.3-4.5*\x-1.35) node (\x\y) {};
}
\node at (21) {\tiny$\begin{array}{l}
\beta(2,1)_{2} =   -\varepsilon_{2}-\varepsilon_5 + 2K \\
\beta(2,1)_{1} =   -\varepsilon_{2}+\frac{3}{2} K 
\end{array}$};
\node at (22) {\tiny$\begin{array}{l}
\beta(2,2)_{-4} =  -\varepsilon_{2}+\varepsilon_5+K  \\
\beta(2,2)_{-5} =  -\varepsilon_{2}+K \\
\beta(2,2)_{5} =  -\varepsilon_{2}-\varepsilon_5+K \\
\beta(2,2)_{4} =  -\varepsilon_{2}-\varepsilon_{1}+3K \\
\beta(2,2)_{3} =  -\varepsilon_{2}-\varepsilon_{4}+K \\
\beta(2,2)_{2} =  -\varepsilon_{2}-\varepsilon_3+K \\
\beta(2,2)_{1} =  -\varepsilon_{2}+\frac{1}{2} K 
\end{array}$};
\node at (31) {\tiny$\begin{array}{l}
\beta(3,1)_{3} =  -\varepsilon_{1}-\varepsilon_5+3K \\
\beta(3,1)_{2} =  -\varepsilon_{1}-\varepsilon_{2}+4K \\
\beta(3,1)_{1} =  -\varepsilon_{1}+\frac52 K 
\end{array}$};
\node at (32) {\tiny$\begin{array}{l}
\beta(3,2)_{-4} = -\varepsilon_{1}+\varepsilon_5+2K \\
\beta(3,2)_{-5} = -\varepsilon_{1}+2K \\
\beta(3,2)_{5} = -\varepsilon_{1}-\varepsilon_5+2K \\
\beta(3,2)_{4} = -\varepsilon_{1}-\varepsilon_4+2K \\
\beta(3,2)_{3} = -\varepsilon_{1}-\varepsilon_3+2K \\
\beta(3,2)_{2} = -\varepsilon_{1}-\varepsilon_{2}+2K \\
\beta(3,2)_{1} = -\varepsilon_{1}+\frac{3}{2} K 
\end{array}$};
\node at (33) {\tiny$\begin{array}{l}
\beta(3,3)_{-1} = -\varepsilon_{1}+\varepsilon_2+K \\
\beta(3,3)_{-2} = -\varepsilon_{1}+\varepsilon_3+K \\
\beta(3,3)_{-3} = -\varepsilon_{1}+\varepsilon_4+K \\
\beta(3,3)_{-4} = -\varepsilon_{1}+\varepsilon_5+K \\
\beta(3,3)_{-5} = -\varepsilon_{1} + K \\
\beta(3,3)_{5} = -\varepsilon_{1}-\varepsilon_5+K \\
\beta(3,3)_{4} = -\varepsilon_{1}-\varepsilon_4+K \\
\beta(3,3)_{3} = -\varepsilon_{1}-\varepsilon_3+K \\
\beta(3,3)_{2} = -\varepsilon_{1}-\varepsilon_2+K \\
\beta(3,3)_{1} = -\varepsilon_{1}+\frac{1}{2} K 
\end{array}$};
\node at (4-1) {\tiny$\begin{array}{l}
\beta(4,-1)_{-5} =  - \varepsilon_3+K \\
\beta(4,-1)_{5} =  -\varepsilon_3-\varepsilon_5+K \\
\beta(4,-1)_{4} =  -\varepsilon_3-\varepsilon_2+2K \\
\beta(4,-1)_{3} =  -\varepsilon_3-\varepsilon_1+3K \\
\beta(4,-1)_{2} =  -\varepsilon_3-\varepsilon_4+K \\
\beta(4,-1)_{1} =  -\varepsilon_3+\frac{1}{2} K 
\end{array}$};
\node at (51) {\tiny$\begin{array}{l}
\beta(5,1)_{5} =  -\varepsilon_{4} -\varepsilon_{3}+2K \\ 
\beta(5,1)_{4} =  -\varepsilon_{4} -\varepsilon_5+K \\
\beta(5,1)_{3} =  -\varepsilon_{4} -\varepsilon_{2}+2K \\
\beta(5,1)_{2} =  -\varepsilon_{4} -\varepsilon_{1}+3K \\
\beta(5,1)_{1} =  -\varepsilon_{4} +\frac12 K 
\end{array}$};
\draw (-0.15, -0.75) -- (-0.15, -16.25);
\draw (1.15, -0.75) -- (1.15, -16.25);
\end{tikzpicture}
\end{matrix}
$$
\caption{Entries of the root sequence for $u_{(0,2,3,-1,1)}^\square$.}
\label{ex: root sequence}
\end{figure}

\newpage

\section{Step-by-step formulas for relative Koornwinder polynomials}\label{sec: step-by-step}

In this section we give a simplified description of the creation and alcove walk formulas for Koornwinder polynomials, emphasizing parallels with the type $GL_n$ case where possible.  Section~\ref{Erecursion} gives the recursive method for computing Koornwinder polynomials using creation operators.
This is the same as the very general method given in~\cite[\S5.10]{Mac03} except that the notations
are specific to the type $CC_n$ case, giving the recursion for creating Koornwinder polynomials.
Section~\ref{divided different op} rewrites the creation operators in terms of the divided-difference operators used in Schubert calculus for classical type root systems.
Section~\ref{alcove walk steps} derives the formulas for single alcove walk steps and Section~\ref{section:usvtableaux} assembles the alcove walk step formulas to give a (uncompressed) set-valued tableaux formula for Koornwinder polynomials.  The result in Theorem~\ref{thm:USVformula}
completes the Koornwinder case generalization of the type $GL_n$ results in~\cite{DR22}.

\subsection{The creation formula} \label{Erecursion}

While the Koornwinder polynomials are completely characterized by the eigenvalue and normalization conditions, this does not immediately give an efficient formula for each $E_\mu$. 
Fortunately, there is a family of operators $\tau_{0}, \tau_{1}, \ldots, \tau_{n}$ which recursively compute $E_{\mu}$.  
This is analogous to the way that, in Schubert calculus, the Schubert polynomials are constructed recursively using divided-difference operators.
In this section we describe how these recursions can be packaged nicely as a single formula for creating the Koornwinder polynomial
$E_\mu$. 

Recalling the operators $T_0, T_1, \ldots, T_n$ and $X_1$ on Laurent polynomials
$\KK[x^{\pm1}]$ defined in~\eqref{DGaction}, 
define an operator $T_0^{\vee}$
on $\KK[x^{\pm1}]$ by
\begin{equation}
(T_0^{\vee})^{-1} = X_1T_1\cdots T_n\cdots T_1.
\label{TiveeonKX}
\end{equation}
Using the fold functions $F^+_{\alpha_i}$ and $F^-_{\alpha_i}$ defined in~\eqref{Fdefn},  define
\begin{equation}
\begin{array}{r@{\;}c@{\;}l} 
\tau_{0} &=&  T_{0}^{\vee} + F^+_{\alpha_0}\\
&=& (T_{0}^{\vee})^{-1} +  F^-_{\alpha_0},
\end{array}
\qquad\text{and}\qquad
\begin{array}{r@{\;}c@{\;}l}
\tau_i &=& T_i + F^+_{\alpha_i} \\
&=& (T_i)^{-1} +  F^-_{\alpha_i},
\end{array}
\qquad\hbox{for $i\in \{1, \ldots, n\}$.}
\label{tauvee}
\end{equation}
By Proposition~\ref{taupastY}, if $\lambda = \lambda_1\varepsilon_1+\cdots \lambda_n \varepsilon_n+\frac{k}{2}K$
as in~\eqref{Ynot}, then
\begin{equation}
\tau_i Y^{\lambda} = Y^{s_i\lambda} \tau_i,
\qquad\hbox{for $i\in \{0,1,\ldots, n\}$,}
\label{taupY}
\end{equation}
where the action of $W$ on $Q = \ZZ\varepsilon_1+\cdots\ZZ\varepsilon_n + \ZZ\frac12 K$ is as defined in
\eqref{lvl0onaZ}. 
Recall also the action of the group $W$ (generated by $s_0,\ldots, s_n$) on $\ZZ^n$ given in~\eqref{WonZn}. 

The relation~\eqref{taupY} is the reason that the electronic Macdonald polynomials $E_\mu$ can be defined by recursive relations:
For $\mu\in \ZZ^n$ define $\widehat{E}_\mu$ by
\begin{enumerate}
\item[(E0)] $\widehat{E}_{(0,\ldots, 0)} = 1$,
\item[(E1)] If $\mu_1 \le 0$ then define $\widehat{E}_{s_0\mu} = 
\tau_0 \widehat{E}_\mu$,
\item[(E2)] If $i\in \{1, \ldots, n-1\}$ and $\mu_i>\mu_{i+1}$ then define
$\widehat{E}_{s_i\mu} = 
\tau_i \widehat{E}_\mu$,
\item[(E3)] 
If $\mu_n>0$ then define $\widehat{E}_{s_n\mu} =  
\tau_n \widehat{E}_\mu$.
\end{enumerate}

Let $\mu = (\mu_1, \ldots, \mu_n)\in \ZZ^n$.
Let $v_\mu\in W_{\mathrm{fin}}$ be the minimal length signed permutation such that
$v_\mu\mu$ is weakly increasing with all entries $\le0$ (see $\eqref{vmuvals}$).  Let $u_\mu$ be the minimal
length element of $W$ such that $u_\mu(0,\ldots,0) = (\mu_1,\ldots, \mu_n)$ (see~\eqref{umudefn}).
The creation formula for $\widehat{E}_\mu$ is  (see also~\cite[\S4.5]{CR24} and~\cite[(5.101.3)]{Mac03})
\begin{equation}
\widehat{E}_\mu = \tau_{u_\mu} \cdot 1 \qquad\hbox{and}\qquad
E_\mu 
= t^{-\frac12\ell_s(v_\mu^{-1})}t_n^{-\frac12\ell_d(v_\mu^{-1})}
\widehat{E}_\mu,
\label{Ecr}
\end{equation}
where
$\tau_{u_\mu} = \tau_{i_1}\cdots \tau_{i_\ell}$ if $u_\mu = s_{i_1}\cdots s_{i_\ell}$ is a reduced word for $u_\mu$.

\begin{remark} (\textbf{Stabilizers $W_\mu$})
As in 
\eqref{relMac}, let $W_{\mathrm{fin}}$ be the group of signed permutations (generated by $s_1, \ldots, s_n$)
and
$$
E^z_\mu = 
t^{\frac12(\ell_s(v_\mu^{-1})-\ell_s(zv_\mu^{-1}))}t_n^{\frac12(\ell_d(v_\mu^{-1})-\ell_d(zv_\mu^{-1}))}
T_z E_\mu.
$$
where the normalization constant 
$t^{\frac12(\ell_s(v_\mu^{-1})-\ell_s(zv_\mu^{-1}))}t_n^{\frac12(\ell_d(v_\mu^{-1})-\ell_d(zv_\mu^{-1}))}$ 
is determined by requiring the coefficient of $x^{z\mu}$ to be 1.
If $\mu\in \ZZ^n$ is such that $\mu=s_i\mu$ for some $i\in \{1, \ldots, n-1\}$  then 
$0=\tau_i E_\mu = (T_i-t^{\frac12})E_\mu$ so that $T_iE_\mu = t^{\frac12}E_\mu$ (see~\cite[(5.4.2)]{Mac03}).
Similarly, if $\mu\in \ZZ^n$ is such that  $\mu=s_n\mu$ then $T_nE_\mu = t_n^{\frac12}E_\mu$.  
These equalities imply
that if 
$$\mu\in \ZZ^n\qquad\hbox{and}\qquad W_\mu = \{ v\in W_{\mathrm{fin}}\ |\ v\mu=\mu\}$$
and 
$$z\in W_{\mathrm{fin}}\quad\hbox{and}\quad
z=yv\quad\hbox{with}\quad v\in W_\mu\quad\hbox{and}\quad\ell(z)=\ell(y)+\ell(v)$$
then
\begin{equation}
\widehat{E}^z_\mu 
= t^{\frac12\ell_{s}(v)} t_{n}^{\frac{1}{2}\ell_{d}(v)} \widehat{E}^{y}_\mu.
\label{stabfactor}
\end{equation}
\qed
\end{remark}

\subsection{Creation via divided-difference operators}\label{divided different op}

Following~\eqref{cfcndefn},~\eqref{Fdefn} and~\eqref{Ysroots}, let
$$
\begin{array}{c} 
C_{\alpha_n} = \dfrac{(1-t_0^{\frac12}t_n^{\frac12}Y_n^{-1})(1+t_n^{\frac12}t_0^{-\frac12}Y_n^{-1})}{(1-Y_n^{-2})},
\qquad
F^-_{\alpha_0}
= \dfrac{((u_n^{-\frac12}-u_n^{\frac12})
+(u_0^{-\frac12}-u_0^{\frac12}) q^{-\frac12}Y_1^{-1}) qY_1^2 }
{1- q Y_1^2 }\\[10pt]
\hbox{and}\qquad
C_{\alpha_i} = t^{-\frac12}\dfrac{1-tY_i^{-1}Y_{i+1} }{ 1-Y_i^{-1}Y_{i+1} },
\qquad\hbox{for $i\in \{1, \ldots, n-1\}$.}
\end{array}
$$

Proposition~\ref{cformsteps} provides a recursive procedure for building $\widehat{E}_\mu$ from $\widehat{E}_0=1$
via the divided-difference operators $\partial_i$ used in Schubert calculus.

\begin{prop} \label{cformsteps}
(Creation formula steps)
Recall the notation for the operators  $\partial_i$ defined in~\eqref{ddivopdefn}, the evaluation homomorphisms $\ev_\mu$ defined in 
\eqref{evhomdefn} and the definition of $\widehat{E}_\mu$ in~\eqref{Ecr}.  Let $\mu = (\mu_1, \ldots, \mu_n)\in \ZZ^n$.
\item[(a)]
If $\mu_i > \mu_{i+1} > 0$ or $0 > \mu_i > \mu_{i+1}$ for some $i\in \{1,\ldots, n-1\}$ then\quad
$$\widehat{E}_{s_i\mu} = \ev_\mu(C_{\alpha_i})\widehat{E}_\mu 
- (t^{-\frac12} x_{i+1}-t^{\frac12} x_i)\partial_i \widehat{E}_\mu.
$$
\item[(b)]
If $\mu_n>0$ then\quad
$$\widehat{E}_{s_n\mu} = \ev_\mu(C_{\alpha_n}) \widehat{E}_\mu
- t_n^{-\frac12} (1-t_n^{\frac12}u_n^{\frac12}x_n)(1+t_n^{\frac12}u_n^{-\frac12}x_n)\partial_n \widehat{E}_\mu.
$$
\item[(c)]
If $\mu_1\le 0$ then 
$$
\widehat{E}_{s_0\mu} =
\ev_\mu(F^-_{\alpha_0})\widehat{E}_\mu  + t_0^{-\frac12} \ev_\mu(Y_1) x_1 \widehat{E}_\mu -
\ev_\mu(Y_1)
\big(t_0^{-\frac12} x_{1}^{3} + 
(u_0^{-\frac12} - u_0^{\frac12}) q^{\frac12}x_{1}^{2} - t_0^{\frac12} q x_1\big) \partial_{0}\widehat{E}_\mu.
$$
\end{prop}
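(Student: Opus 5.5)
The plan is to start from the recursions (E1)--(E3) and write each $\tau_i$ applied to $\widehat{E}_\mu$ as an operator expression, then substitute the divided-difference forms of the $T_i$ from the Remark following~\eqref{DGaction}. The key simplification is that $\widehat{E}_\mu$ is a $Y$-eigenvector with eigenvalues given by $\ev_\mu$ (see~\eqref{Eeig} and~\eqref{evhomdefn}). Consequently any rational function $f(Y)$ acting on $\widehat{E}_\mu$ may be replaced by the scalar $\ev_\mu(f(Y))$. I would check beforehand that the hypotheses in (a), (b), (c) guarantee that the denominators of $C_{\alpha_i}$, $F^\pm_{\alpha_i}$ do not vanish under $\ev_\mu$. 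This holds because $\mu_i\neq \mu_{i+1}$ (respectively $\mu_n\neq0$), so the relevant eigenvalues differ by nontrivial powers of $q$ and $t$.

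For (a), I write $\tau_i\widehat{E}_\mu = (T_i + F^+_{\alpha_i})\widehat{E}_\mu = T_i\widehat{E}_\mu + \ev_\mu(F^+_{\alpha_i})\widehat{E}_\mu$. By the Remark, $T_i = t^{\frac12} - t^{-\frac12}(x_{i+1}-tx_i)\partial_i = t^{\frac12} - (t^{-\frac12}x_{i+1} - t^{\frac12}x_i)\partial_i$. Using~\eqref{bigC}, $t^{\frac12}+F^+_{\alpha_i} = C_{\alpha_i}$, which gives the formula.

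Part (b) is identical in structure. Here $T_n = t_n^{\frac12} - t_n^{-\frac12}(1+t_n^{\frac12}u_n^{-\frac12}x_n)(1-t_n^{\frac12}u_n^{\frac12}x_n)\partial_n$, and $t_n^{\frac12} + F^+_{\alpha_n} = C_{\alpha_n}$ by~\eqref{bigC}. The one thing to confirm is that the parameter assignment~\eqref{cortparams} ($t_{\alpha_n}=t_n$, $u_{\alpha_n}=t_0$) reproduces the displayed $C_{\alpha_n}$. The condition $\mu_n>0$ is the one used in (E3), and the sign ordering in (a) ensures $s_i$ increases length as in (E2).

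Part (c) is the main obstacle, because $\tau_0$ involves $T_0^\vee$ rather than $T_0$. I would use the second form $\tau_0 = (T_0^\vee)^{-1} + F^-_{\alpha_0}$ with $(T_0^\vee)^{-1} = X_1T_1\cdots T_n\cdots T_1$ from~\eqref{TiveeonKX}. Next I would rewrite $T_1\cdots T_n\cdots T_1$ using~\eqref{Yjdefn}: since $Y_1 = T_0T_1\cdots T_n\cdots T_1$, we have $T_1\cdots T_n\cdots T_1 = T_0^{-1}Y_1$. Hence $(T_0^\vee)^{-1}\widehat{E}_\mu = \ev_\mu(Y_1)\,x_1T_0^{-1}\widehat{E}_\mu$. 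Then I would expand $T_0^{-1} = T_0 - (t_0^{\frac12}-t_0^{-\frac12})$ via the quadratic relation and insert the Remark's formula $T_0 = t_0^{\frac12} - t_0^{-\frac12}(x_1+q^{\frac12}t_0^{\frac12}u_0^{-\frac12})(x_1-q^{\frac12}t_0^{\frac12}u_0^{\frac12})\partial_0$. This yields $T_0^{-1} = t_0^{-\frac12} - t_0^{-\frac12}(\cdots)\partial_0$.

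Multiplying by $x_1$ and expanding the cubic gives
\[
x_1(x_1+q^{\frac12}t_0^{\frac12}u_0^{-\frac12})(x_1-q^{\frac12}t_0^{\frac12}u_0^{\frac12}) = x_1^3 + q^{\frac12}t_0^{\frac12}(u_0^{-\frac12}-u_0^{\frac12})x_1^2 - qt_0x_1 .
\]
After multiplying by $t_0^{-\frac12}$, this matches the displayed coefficient $t_0^{-\frac12}x_1^3 + (u_0^{-\frac12}-u_0^{\frac12})q^{\frac12}x_1^2 - t_0^{\frac12}qx_1$. The term $t_0^{-\frac12}\ev_\mu(Y_1)x_1\widehat{E}_\mu$ comes from the constant part of $T_0^{-1}$. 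The remaining care is bookkeeping: ensuring $\partial_0$ acts before the multiplication by $x_1$ (it does, since $X_1$ is on the left), and ensuring $F^-_{\alpha_0}$ passes to its evaluation. The latter requires checking that the displayed $F^-_{\alpha_0}$ agrees with~\eqref{Fdefn} under $t_{\alpha_0}=u_n$, $u_{\alpha_0}=u_0$ and $Y^{\alpha_0}=q^{-\frac12}Y_1^{-1}$.
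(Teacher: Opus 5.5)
Your proposal is correct and follows the paper's proof essentially step for step. For (a) and (b) you combine $\tau_i=T_i+F^+_{\alpha_i}$ with the divided-difference form of $T_i$ and $t_\alpha^{\frac12}+F^+_\alpha=C_\alpha$; the paper phrases the same identity as $\tau_i=C_{\alpha_i}-c^X_{\alpha_i}(1-\xi_{s_i})$ via~\eqref{cplusc}. For (c) you use $(T_0^\vee)^{-1}=X_1T_0^{-1}Y_1$, the quadratic relation $T_0^{-1}=T_0-(t_0^{\frac12}-t_0^{-\frac12})$ and the cubic expansion, exactly as the paper does, and you correctly write $X_1T_0^{-1}Y_1$ where the paper's text misprints $T_0^\vee$ for $T_0$.

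One small correction to your side remark on denominators: in (b) and (c), $1-Y_n^{-2}$ and $1-qY_1^2$ never vanish under $\ev_\mu$ for any $\mu$. The reason is that $\ev_\mu(Y_j)$ always carries a factor $(t_0^{\frac12}t_n^{\frac12})^{\pm1}$; it is not because $\mu_n\neq 0$.
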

\begin{proof}
To streamline the notation, let
$$c_{\alpha_0}^X = t_0^{-\frac12} 
\frac{(1-t_0^{\frac12}u_0^{\frac12}q^{\frac12}x_1^{-1})
(1+t_0^{\frac12}u_0^{-\frac12}q^{\frac12}x_1^{-1})} {1-qx_1^{-2}},
\qquad
c_{\alpha_n}^X =  t_n^{-\frac12} \frac{(1-t_n^{\frac12}u_n^{\frac12} x_n)(1+t_n^{\frac12}u_n^{-\frac12}x_n)}{(1-x_n^2)},
$$
$$\hbox{and}\qquad
c_{\alpha_i}^X = t^{-\frac12}\frac{1-tx_ix_{i+1}^{-1}}{1-x_ix_{i+1}^{-1}},\qquad\hbox{for $i\in \{1, \ldots, n-1\}$.} 
$$
Letting $t_{\alpha_0} = t_0$, $t_{\alpha_n} = t_n$ and $t_{\alpha_i} = t$ for $i\in \{1, \ldots, n-1\}$, 
the formulas in~\eqref{DGaction} are
$$T_i = t_{\alpha_i}^{\frac12} - c_{\alpha_i}^X(1-\xi_{s_i}), 
\qquad\hbox{for $i\in \{0,1,\ldots, n\}$.}
$$
For $i\in \{1,\ldots, n-1\}$:  
By~\eqref{tauvee},~\eqref{Fdefn}, and~\eqref{cortparams}, 
$\tau_i = T_i+ F^+_{\alpha_i} = T_i + (t^{-\frac12}- C_{-\alpha_i})$.  Then~\eqref{cplusc} gives
\begin{align*}
\tau_i 
&= T_i + (t^{-\frac12} - C_{-\alpha_i})
=t^{\frac12}-c_{\alpha_i}^X (1-\xi_{s_i}) + t^{-\frac12} - C_{-\alpha_i}
\\
&=C_{\alpha_i}+C_{-\alpha_i} -  c_{\alpha_i}^X (1-\xi_{s_i}) - C_{-\alpha_i}
= C_{\alpha_i} -  c_{\alpha_i}^X (1-\xi_{s_i}).
\end{align*}
In view of the recursive formula in (E2) and the definition of the operator $\partial_i$, this gives the formula in (a).

\smallskip\noindent
For $i=n$:
By~\eqref{tauvee} and~\eqref{Fdefn}, 
$\tau_n = T_n+ F^+_{\alpha_n} = T_n + (t^{-\frac12}_{\alpha_n}- C_{-\alpha_n})$.  By~\eqref{cplusc}
\begin{align*}
\tau_n 
&= T_n+(t^{-\frac12}_{\alpha_n} - C_{-\alpha_n}) 
=t^{\frac12}_{\alpha_n}-c_{\alpha_n}^X (1-\xi_{s_n}) + t^{-\frac12}_{\alpha_n} - C_{-\alpha_n}
\\
&=C_{\alpha_n}+C_{-\alpha_n} -  c_{\alpha_n}^X (1-\xi_{s_n}) - C_{-\alpha_n}
= C_{\alpha_n}  -  c_{\alpha_n}^X (1-\xi_{s_n}).
\end{align*}
In view of the recursive formula in (E3) and the definition of the operator $\partial_n$, this gives the formula in (b).

\smallskip\noindent
For $i=0$: By~\eqref{tauvee} and~\eqref{Fdefn} and~\eqref{Yjdefn}, 
\begin{align*}
\tau_0 
&= (T_0^{\vee})^{-1}+F^-_{\alpha_0} 
=  X_1T_1\cdots T_n\cdots T_1 +F^-_{\alpha_0} 
= X_1 (T_0^{\vee})^{-1}Y_1 +F^-_{\alpha_0}.
\end{align*}
By~\eqref{DHeckedefnI}, $(T_0^{\vee})^{-1} = (T_0^{\vee}) - (t_0^{\frac12}-t_0^{-\frac12})$, and so
\begin{align*}
\tau_0 
&= X_1  (T_0-(t_0^{\frac12}-t_0^{-\frac12})) Y_1 + F^-_{\alpha_0}  
= X_1 (t_0^{\frac12}-c_{\alpha_0}^X(1-\xi_{s_0}) -(t_0^{\frac12}-t_0^{-\frac12})) Y_1 + F^-_{\alpha_0}  \\
&= X_1 (t_0^{-\frac12}-c_{\alpha_0}^X(1-\xi_{s_0})) Y_1 + F^-_{\alpha_0}.
\end{align*}
Thus,
$\tau_0\widehat{E}_\mu
= \Big(x_1 (t_0^{-\frac12}-c_{\alpha_0}^X(1-\xi_{s_0})) \ev_\mu(Y_1) + \ev_\mu(F^-_{\alpha_0})\Big) \widehat{E}_\mu$
and
\begin{align*}
x_1(t_0^{-\frac12}-c_{\alpha_0}^X(1-\xi_{s_0})) \ev_\mu(Y_1) 
&= t_0^{-\frac12} \ev_\mu(Y_1) x_1 \Big( 1 -
\frac{(1-t_0^{\frac12}u_0^{\frac12}q^{\frac12}x_1^{-1})
(1+t_0^{\frac12}u_0^{-\frac12}q^{\frac12}x_1^{-1})}
{1-qx_1^{-2}}
(1-\xi_{s_0})\Big)  \\[1ex]
&= t_0^{-\frac12} \ev_\mu(Y_1) x_1 \Big( 1 -
(x_{1}-t_0^{\frac12}u_0^{\frac12}q^{\frac12})
(x_{1}+t_0^{\frac12}u_0^{-\frac12}q^{\frac12})\partial_{0}\Big)   \\
&= t_0^{-\frac12} \ev_\mu(Y_1) x_1 -
t_0^{-\frac12} \ev_\mu(Y_1) x_1 (x_{1}-t_0^{\frac12}u_0^{\frac12}q^{\frac12})
(x_{1}+t_0^{\frac12}u_0^{-\frac12}q^{\frac12}) \partial_{0}  \\
&= t_0^{-\frac12} \ev_\mu(Y_1) x_1 -
\ev_\mu(Y_1)
(t_0^{-\frac12} x_{1}^{3} + 
(u_0^{-\frac12} - u_0^{\frac12}) q^{\frac12}x_{1}^{2} - t_0^{\frac12} q x_1) \partial_{0} 
\end{align*}
So $\tau_0\widehat{E}_\mu = \ev_\mu(F^-_{\alpha_0})\widehat{E}_\mu  + t_0^{-\frac12} \ev_\mu(Y_1) x_1 \widehat{E}_\mu -
\ev_\mu(Y_1)
\big(t_0^{-\frac12} x_{1}^{3} + 
(u_0^{-\frac12} - u_0^{\frac12}) q^{\frac12}x_{1}^{2} - t_0^{\frac12} q x_1\big) \partial_{0}\widehat{E}_\mu$, and this gives the formula in (c).
\end{proof}

\subsection{Alcove walk steps}\label{alcove walk steps}

Proposition~\ref{alcwksteps} provides a recursive method of computing the (unnormalized) relative Macdonald polynomial
$\widehat{E}^z_\mu$, one simple reflection at a time.
Iterating these recursive steps provides the (uncompressed) set-valued tableaux formula for relative Macdonald polynomials given in Theorem~\ref{thm:USVformula}.

To set the stage for the statement of Proposition~\ref{alcwksteps} recall the following notations.
From~\eqref{Fdefn},~\eqref{Ysroots} and~\eqref{cortparams},
\begin{align*}
F^+_{\alpha_0}
=
\frac{(u_n^{-\frac12}-u_n^{\frac12})
+(u_0^{-\frac12}-u_0^{\frac12}) q^{\frac12}Y_1 }
{1-qY_1^2 },
\qquad
F^-_{\alpha_0}
=
\frac{((u_n^{-\frac12}-u_n^{\frac12})
+(u_0^{-\frac12}-u_0^{\frac12}) q^{-\frac12}Y_1^{-1}) qY_1^2 }
{1- q Y_1^2 },
\end{align*}
\begin{align*}
F^+_{\alpha_n}
=\frac{(t_n^{-\frac12}-t_n^{\frac12})+(t_0^{-\frac12}-t_0^{\frac12})Y_n^{-1} }
{1-Y_n^{-2}},
\qquad
F^-_{\alpha_n}
=\frac{( (t_n^{-\frac12}-t_n^{\frac12})+(t_0^{-\frac12}-t_0^{\frac12})Y_n)Y_n^{-2} }
{1-Y_n^{-2}}
\end{align*}
and, for $i\in \{1, \ldots, n-1\}$,
$$F^+_{\alpha_i} = t^{-\frac12}\frac{(1-t)} {1-Y_i^{-1}Y_{i+1}}
\qquad\hbox{and}\qquad
F^-_{\alpha_i} = t^{-\frac12}\frac{(1-t)Y_i^{-1}Y_{i+1}} {1-Y_i^{-1}Y_{i+1}}.
$$
As in~\eqref{evhomdefn}, 
the homomorphism
$\ev_\nu\colon \KK[Y_1^{\pm1}, \ldots, Y_n^{\pm1}] \longrightarrow  \KK$ is given by
\begin{equation*}
\ev_\nu(Y_j) = q^{-\nu_j}t^{-v_\nu(j)} (t^{\frac12}_0t^{\frac12}_n t^n)^{\mathrm{sgn}(v_\nu(j))},
\quad\hbox{for $j\in \{1, \ldots, n\}$.}
\end{equation*}
Let $z\in W_{\mathrm{fin}}$ and let $i\in \{1, \ldots, n\}$.  Viewing $z$ as a signed permutation then
$$\ell(zs_i)=\begin{cases}
\ell(z)+1, \quad\hbox{if $z(i) \prec z(i+1)$,} \\
\ell(z)-1, \quad\hbox{if $z(i) \succ z(i+1)$.}
\end{cases}
$$
where the order $\prec$ on $\{1, \ldots, n, -n, \ldots, -1\}$ is given by 
$1\prec 2\prec \cdots \prec n \prec -n \prec \cdots \prec -2 \prec -1$ (see Remark~\ref{sgnorder}).

\begin{prop}\label{alcwksteps}

Let $z\in W_{\mathrm{fin}}$, $\mu= (\mu_1, \ldots, \mu_n)\in \ZZ^n$ and let $\widehat{E}^z_\mu = T_z \widehat{E}_\mu$.
\begin{itemize}
\item[(a)] If 
$\mu_1\in \ZZ_{>0}$ 
then
$$\widehat{E}^z_\mu = \begin{cases}
x_{\vert z(1)\vert} \widehat{E}^{z\overline{s_0}}_\nu+ \ev_\nu(F_{\alpha_0}^-) \widehat{E}^z_\nu, &\hbox{if $z(1)>0$,} \\
x^{-1}_{\vert z(1)\vert} \widehat{E}^{z\overline{s_0}}_\nu+ \ev_\nu(F_{\alpha_0}^+) \widehat{E}^z_\nu, &\hbox{if $z(1)<0$,} 
\end{cases}
\quad\hbox{where}\quad
\nu = (-(\mu_1-1), \mu_2, \ldots, \mu_n)
$$
and $\overline{s_0} = s_1\cdots s_{n-1} s_n s_{n-1} \cdots s_1\in W_{\mathrm{fin}}$ is the 
signed permutation  $\overline{s_0}\in W_{\mathrm{fin}}$ that switches $1$ and $-1$.
\item[(b)] 
If  $\mu_i < \mu_{i+1}$ for some $i\in \{1, \ldots, n-1\}$ then 
$$\widehat{E}^z_\mu = \begin{cases}
\widehat{E}^{zs_i}_\nu+ \ev_\nu(F_{\alpha_i}^-) \widehat{E}^z_\nu, &\hbox{if $z(i)\succ z(i+1)$,} \\
\widehat{E}^{zs_i}_\nu+ \ev_\nu(F_{\alpha_i}^+) \widehat{E}^z_\nu, &\hbox{if $z(i)\prec z(i+1)$,} 
\end{cases}
\quad\hbox{where}\quad
\nu = (\mu_1, \ldots, \mu_{i-1}, \mu_{i+1}, \mu_i, \mu_{i+2}, \ldots, \mu_n).
$$
\item[(c)] 
If  $\mu = (\mu_1, \ldots, \mu_{n-1}, -\mu_n)$ with $\mu_n>0$
then
$$\widehat{E}^z_\mu = \begin{cases}
\widehat{E}^{zs_n}_\nu+ \ev_\nu(F_{\alpha_n}^-) \widehat{E}^z_\nu, &\hbox{if $z(n)<0$,} \\
\widehat{E}^{zs_n}_\nu+ \ev_\nu(F_{\alpha_n}^+) \widehat{E}^z_\nu, &\hbox{if $z(n)>0$,} 
\end{cases}
\quad\hbox{where}\quad
\nu = (\mu_1, \ldots, \mu_{n-1}, \mu_n).
$$
\end{itemize}
\end{prop}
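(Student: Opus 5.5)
In each case I would write $\mu = s_i\nu$, where $\nu$ is the smaller weight (smaller in the sense that $\widehat{E}_\mu = \tau_i \widehat{E}_\nu$ by the recursions (E1)--(E3)). For $i=0$ this reads $\mu = s_0\nu$ with $\nu_1 = -(\mu_1-1)\le 0$. For $1\le i\le n-1$ we have $\nu_i>\nu_{i+1}$. For $i=n$ we have $\nu_n>0$.

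Step 1: rewrite $\widehat{E}_\mu$. The two expressions for $\tau_i$ in~\eqref{tauvee} give
$$\widehat{E}_\mu = \tau_i\widehat{E}_\nu = T_i\widehat{E}_\nu + \ev_\nu(F^+_{\alpha_i})\widehat{E}_\nu = T_i^{-1}\widehat{E}_\nu + \ev_\nu(F^-_{\alpha_i})\widehat{E}_\nu.$$
This uses that $\widehat{E}_\nu$ is a $Y$-eigenvector, so $F^\pm_{\alpha_i}$ (a function of $Y$) acts by $\ev_\nu$. For $i=0$ the same holds with $T_0^\vee$ in place of $T_i$.

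Step 2: apply $T_z$ and move it past $T_i^{\pm1}$.

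For $1\le i\le n$: if $\ell(zs_i)=\ell(z)+1$, i.e.\ $z(i)\prec z(i+1)$ (for $i=n$, $z(n)>0$), then $T_zT_i = T_{zs_i}$. Using the $F^+$ form gives
$$T_z\widehat{E}_\mu = \widehat{E}^{zs_i}_\nu + \ev_\nu(F^+_{\alpha_i})\widehat{E}^z_\nu.$$
If instead $\ell(zs_i)=\ell(z)-1$, then $T_zT_i^{-1} = T_{zs_i}$, and the $F^-$ form gives the other case. This settles (b) and (c), after translating the length criterion into the order $\prec$ via Remark~\ref{sgnorder}.

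For $i=0$ we have $T_0^\vee = (X_1T_1\cdots T_n\cdots T_1)^{-1}$ and $(T_0^\vee)^{-1} = X_1T_{\overline{s_0}}$, where $T_{\overline{s_0}} = T_1\cdots T_n\cdots T_1$.

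If $z(1)>0$, use the $F^-$ form: $T_z\widehat{E}_\mu = T_zX_1T_{\overline{s_0}}\widehat{E}_\nu + \ev_\nu(F^-_{\alpha_0})\widehat{E}^z_\nu$. The key identity needed is
$$T_zX_1T_{\overline{s_0}} = X_{z(1)}T_{z\overline{s_0}}\quad\text{when } z(1)>0.$$
If $z(1)<0$, use the $F^+$ form, $T_0^\vee = T_{\overline{s_0}}^{-1}X_1^{-1}$, and aim for
$$T_zT_{\overline{s_0}}^{-1}X_1^{-1} = X_{|z(1)|}^{-1}T_{z\overline{s_0}}\quad\text{when } z(1)<0.$$
Here $\ell(z\overline{s_0}) = \ell(z)-\ell(\overline{s_0})$ in the second case and $\ell(z)+\ell(\overline{s_0})$ in the first. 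This matches the length-additivity behaviour of $z\overline{s_0}$, which is governed by the sign of $z(1)$.

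Step 3, the main obstacle: the identities for $i=0$. I would prove them from the DAHA relations in Appendix~\ref{section:DAHA}, specifically the Bernstein-type relations $T_jX_jT_j = X_{j+1}$ for $1\le j\le n-1$ and $T_nX_n^{-1}T_n = X_n$, together with $T_jX_k = X_kT_j$ for $k\ne j,j+1$ (and for $k\neq n$ when $j=n$). The idea is to factor $z$ through the shortest coset representative that carries $1$ to $z(1)$, namely $s_{k-1}\cdots s_1$ when $z(1)=k>0$, or $s_k\cdots s_n\cdots s_1$ when $z(1)=-k$.

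In the case $z(1)=k>0$, factor $z = y\,s_{k-1}\cdots s_1$ with $y$ fixing $k$ and the lengths adding. Then $T_{s_{k-1}\cdots s_1}X_1T_1\cdots T_{k-1} = X_k$. To expose $T_1\cdots T_{k-1}$ inside $T_{\overline{s_0}}$ one has to be careful with the braid relations. I expect to need induction on $\ell(z)$, peeling off one simple reflection at a time and checking each case: $z(1)$ moved between positive values, moved across to negative values through $s_n$, or untouched. The commutation relations and the quadratic relation handle the bookkeeping.

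Finally, the $X$-factor acting on $\widehat{E}^{z\overline{s_0}}_\nu$ is $x_{|z(1)|}^{\pm1}$, which gives (a).
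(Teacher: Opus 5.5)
Your Steps 1 and 2 are the paper's argument and already settle (b) and (c). The two identities you isolate for (a) are $T_z(T_0^{\vee})^{-1}=X_{z(1)}T_{z\overline{s_0}}$ for $z(1)>0$ and $T_zT_0^{\vee}=X_{|z(1)|}^{-1}T_{z\overline{s_0}}$ for $z(1)<0$. These are exactly the hexagon basis lemma (Proposition~\ref{xzsicross}) that the paper invokes.

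The gap is in Step 3, which is the only nontrivial part.

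\begin{itemize}
\item Your factorization $z=y\,s_{k-1}\cdots s_1$ with $y(k)=k$ and lengths adding is on the wrong side and fails in general. Take $z=s_1s_2$: then $z(1)=2$ forces $y=zs_1=s_1s_2s_1$, and $\ell(y)+1=4\neq 2=\ell(z)$.
\item Even when the lengths do add, you would next need $T_y$ to commute with $X_k$. This is false in general, because the stabilizer of $k$ is not a standard parabolic subgroup; for $n\ge 3$, $T_1T_2T_1X_2\neq X_2T_1T_2T_1$.
\item Your side remark $\ell(z\overline{s_0})=\ell(z)\pm\ell(\overline{s_0})$ holds only when $|z(1)|=1$. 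For instance $\ell(s_1\overline{s_0})=2n-2$, not $2n$.
\item $T_nX_n^{-1}T_n=X_n$ is not a relation in this DAHA. The $T_n$--$X_n$ relation carries a $u_n$-term (second formula of the translation relations in the appendix). Even at $u_n=1$, the paper's conventions give $T_nX_nT_n=X_n^{-1}$ instead.
\end{itemize}

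So the induction on $\ell(z)$ you anticipate rests on incorrect ingredients, and it is never actually carried out.

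The repair needs no induction and no relation at the $n$ node; it is the paper's proof of Proposition~\ref{xzsicross}. Let $W'_{\mathrm{fin}}=\langle s_2,\dots,s_n\rangle$, the stabilizer of $1$, and let $b_k=s_{k-1}\cdots s_1$ and $b_{-k}=s_k\cdots s_n\cdots s_1$ be the minimal representatives of $W_{\mathrm{fin}}/W'_{\mathrm{fin}}$. Factor on the other side: $z=b_{\pm k}z'$ with $z'\in W'_{\mathrm{fin}}$, so that $T_z=T_{b_{\pm k}}T_{z'}$.

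\begin{itemize}
\item Since $T_0^{\vee}$ has no bond to $T_2,\dots,T_n$ in the Dynkin diagram, $T_{z'}$ commutes with $T_0^{\vee}$.
\item Since $\overline{s_0}=b_k^{-1}b_{-k}$ commutes with $z'$, we get $z\overline{s_0}=b_{\mp k}z'$, again with lengths adding.
\item You already observed that $T_1\cdots T_{k-1}$ is literally the initial segment of $T_{\overline{s_0}}=T_1\cdots T_n\cdots T_1$. Combined with $X_{j+1}=T_jX_jT_j$, this gives $X_k=T_{b_k}(T_0^{\vee})^{-1}T_{b_{-k}}^{-1}$.
\end{itemize}

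Hence, when $z(1)=k$,
$T_z(T_0^{\vee})^{-1}=T_{b_k}(T_0^{\vee})^{-1}T_{z'}=X_kT_{b_{-k}}T_{z'}=X_kT_{z\overline{s_0}}$.
When $z(1)=-k$,
$T_zT_0^{\vee}=T_{b_{-k}}T_0^{\vee}T_{z'}=X_k^{-1}T_{b_k}T_{z'}=X_k^{-1}T_{z\overline{s_0}}$.

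The two identities are equivalent under $z\mapsto z\overline{s_0}$. So the ``crossing through $s_n$'' case you were worried about never needs separate treatment.
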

\begin{proof}
(a) By definition of $\tau_0$,
\begin{align*}
\widehat{E}^z_\mu &= T_z \tau_0 \widehat{E}_\nu
=\begin{cases}
T_z((T_0)^{-1}+F_{\alpha_0}^-)\widehat{E}_\nu, &\hbox{if $z(1)>0$,} \\
T_z(T_0 +F_{\alpha_0}^+)\widehat{E}_\nu, &\hbox{if $z(1)<0$.}
\end{cases}
\end{align*}
Then, by Proposition~\ref{xzsicross},
\begin{align*}
\widehat{E}^z_\mu 
&= \begin{cases}
x_{z(1)}T_{z\overline{s_0}} \widehat{E}_\nu + \ev_\nu(F^-_{\alpha_0}) T_z\widehat{E}_\nu, &\hbox{if $z(1)>0$,} \\
x_{z(1)}T_{z\overline{s_0}} \widehat{E}_\nu + \ev_\nu(F^+_{\alpha_0}) T_z \widehat{E}_\nu, &\hbox{if $z(1)<0$,} 
\end{cases}
\end{align*}
(b) If $T_z T_i = T_{zs_i}$ then $\ell(zs_i) = \ell(z)+1$ and $z(i)\prec z(i+1)$.  
If $T_zT_i^{-1} = T_{zs_i}$  then $\ell(zs_i) = \ell(z)-1$ and 
$z(i)\succ z(i+1)$.  Therefore
\begin{align*}
\widehat{E}^z_\mu 
&= T_z \tau_i \widehat{E}_\nu
=\begin{cases}
T_z((T_i)^{-1}+F_{\alpha_i}^-)\widehat{E}_\nu, &\hbox{if $z(i)\succ z(i+1)$,} \\
T_z(T_i +F_{\alpha_i}^+)\widehat{E}_\nu, &\hbox{if $z(i)\prec z(i+1)$,}
\end{cases}
\end{align*}
which gives
\begin{align*}
\widehat{E}^z_\mu 
&= \begin{cases}
T_{zs_i} \widehat{E}_\nu + \ev_\nu(F^-_{\alpha_i}) T_z\widehat{E}_\nu, &\hbox{if $z(i)\succ z(i+1)$,} \\
T_{zs_i} \widehat{E}_\nu + \ev_\nu(F^+_{\alpha_i}) T_z \widehat{E}_\nu, &\hbox{if $z(i)\prec z(i+1)$.} 
\end{cases}
\end{align*}
(c) If $T_z T_n = T_{zs_n}$ then $\ell(zs_n)=\ell(z)+1$ and $z(n)>0$.  If $T_zT_n^{-1} = T_{zs_n}$  then $\ell(zs_n)=\ell(z)-1$ and $z(n)<0$.  Therefore
\begin{align*}
\widehat{E}^z_\mu 
&= T_z \tau_n \widehat{E}_\nu
=\begin{cases}
T_z((T_n)^{-1}+F_{\alpha_n}^-)\widehat{E}_\nu, &\hbox{if $z(n)<0$,} \\
T_z(T_n +F_{\alpha_n}^+)\widehat{E}_\nu, &\hbox{if $z(n)>0$,}
\end{cases}
\end{align*}
and we conclude that
\[
\widehat{E}^z_\mu 
= \begin{cases}
T_{zs_n} \widehat{E}_\nu + \ev_\nu(F^-_{\alpha_n}) T_z\widehat{E}_\nu, &\hbox{if $z(n)<0$,} \\
T_{zs_n} \widehat{E}_\nu + \ev_\nu(F^+_{\alpha_n}) T_z \widehat{E}_\nu, &\hbox{if $z(n)>0$.} 
\end{cases}
\]
\end{proof}

\subsection{The (uncompressed) set-valued tableaux formula}
\label{section:usvtableaux}

This subsection establishes the alcove walk formula~\cite[Theorem 3.1]{RY08} for the relative Koornwinder polynomial $E^z_\mu$ 
in a set-valued tableaux form.

\subsubsection{Alcove walks and set-valued tableaux}

Let $z\in W_{\mathrm{fin}}$ and let $\mu = (\mu_1, \ldots ,\mu_n)\in \ZZ^n$, and recall the box-greedy reduced word 
$u^\square_\mu = s_{i_\ell}\cdots s_{i_1}$ from Section~\ref{section:bgrw}, 
$$u_\mu^\square = s_{i_\ell}\cdots s_{i_1} = \prod_{b\in \mu} d_{u_\mu(b)}.
$$
An \emph{alcove walk 
of type $(z, u^\square_\mu)$} is 
$$
\hbox{a sequence $p=(p_{\ell+1},p_\ell, \ldots, p_1)$ in $W$ \quad such that\quad
$p_{\ell+1} = z$ and $p_k\in \{ p_{k+1}, p_{k+1}s_{i_k}\}$.}
$$
An \emph{uncompressed set-valued tableau (USV-tableau)} is 
\begin{equation}
\hbox{an assignment of a subset $T(b)\subseteq [0,u_\mu(b)]$ to each box $b=(r,c)$ in $\mu$, }
\label{USVdef}
\end{equation}
where $u_\mu(b)$ is defined in~\eqref{boxfactor} and
\begin{align*}
[0,k] &= \{0,1,\ldots, k\}, &&\hbox{for $k\in \{0, \ldots, n-1\}$, and}
\\
[0, -k] &= \{0,1,\ldots, n-1, -n, -(n-1), \ldots, -k\}, &&\hbox{for $k\in \{1,\ldots, n\}$.}
\end{align*}

Constructing an alcove walk amounts to choosing one of the  $2^\ell$ ways to 
omit ($p_{k} = p_{k+1}$) or include ($p_{k} = p_{k+1}s_{i_{k}}$) each letter of $u_{\mu}^{\square}$.
The set $[0, u_\mu(b)]$ indexes the letters of $d_{u_\mu(b)}$ and the subset $T(b)$ specifies the factors in the subword $d_{u_{\mu}(b)}$ to omit to specify the alcove walk $p$.  Thus the USV-tableau $T$ is the same data as an alcove walk $p$.  We often identify the
alcove walk $p$ and the corresponding uncompressed set-valued tableau $T$.

\begin{example} \label{AWexample}
An example of a USV-tableau of shape $\mu = (0,2,3,-1,1)$ is
$$
T=\begin{matrix}
\begin{tikzpicture}[scale = 0.6, baseline = 0.6*-3.5cm]
\foreach \x\y in {1/0, 4/0}{
    \draw[thin] (\y+0.025, -\x-0.025) rectangle (0.975+\y, -\x-.975);
    \draw (0.5+\y,-\x-0.5) node (\x\y) {};
}
\foreach \x\y in {4/-1}{
    \pgfmathsetmacro{\boxwd}{3};
    \draw[thin] (\boxwd*\y-0.275, -\x-0.025) rectangle (\boxwd + \boxwd*\y - 0.325, -\x-.975);
    \draw (0.5*\boxwd + \boxwd*\y - 0.3,-\x-0.5) node (\x\y) {};
}
\foreach \x\y in {2/1, 2/2, 3/1, 3/2, 3/3, 5/1}{
    \pgfmathsetmacro{\boxwd}{3};
    \draw[thin] (\boxwd*\y - \boxwd + 1.325, -\x-0.025) rectangle (\boxwd*\y + 1.325, -\x-.975);
    \draw (\boxwd*\y + 1.3 - 0.5*\boxwd,-\x-0.5) node (\x\y) {};
}
\node at (21) {$\emptyset$};
\node at (22) {$\{-5, 0\}$};
\node at (31) {$\{1\}$};
\node at (32) {$\{3\}$};
\node at (33) {$\{-3, 2\}$};
\node at (4-1) {$\{0\}$};
\node at (51) {$\{3, 1\}$};
\draw (-0.15, -0.75) -- (-0.15, -6.25);
\draw (1.15, -0.75) -- (1.15, -6.25);
\end{tikzpicture}
\end{matrix}
$$
which is identified with
$$
\begin{matrix}
\begin{tikzpicture}[scale = 0.6, baseline = 0.6*-3.5cm]
\foreach \x\y in {1/0, 4/0}{
    \draw[thin] (\y+0.025, -\x-0.025) rectangle (0.975+\y, -\x-.975);
    \draw (0.5+\y,-\x-0.5) node (\x\y) {};
}
\foreach \x\y in {4/-1}{
    \pgfmathsetmacro{\boxwd}{5};
    \draw[thin] (\boxwd*\y-0.275, -\x-0.025) rectangle (\boxwd + \boxwd*\y - 0.325, -\x-.975);
    \draw (0.5*\boxwd + \boxwd*\y - 0.3,-\x-0.5) node (\x\y) {};
}
\foreach \x\y in {2/1, 2/2, 3/1, 3/2, 5/1}{
    \pgfmathsetmacro{\boxwd}{5};
    \draw[thin] (\boxwd*\y - \boxwd + 1.325, -\x-0.025) rectangle (\boxwd*\y + 1.325, -\x-.975);
    \draw (\boxwd*\y + 1.3 - 0.5*\boxwd,-\x-0.5) node (\x\y) {};
}
\foreach \x\y in {3/3}{
    \pgfmathsetmacro{\boxwd}{5};
    \pgfmathsetmacro{\newboxwd}{6.2};
    \draw[thin] (\boxwd*\y - \boxwd + 1.325, -\x-0.025) rectangle (\boxwd*\y - \boxwd + \newboxwd + 1.325 , -\x-.975);
    \draw (\boxwd*\y - \boxwd + 1.3 + 0.5*\newboxwd,-\x-0.5) node (\x\y) {};
}
\node at (21) {$ s_{1}s_{0}$};
\node at (22) {$ s_{4}\cancel{s_{5}}s_{4}s_{3}s_{2}s_{1}\cancel{s_{0}}$};
\node at (31) {$ s_{2}\cancel{s_{1}}s_{0}$};
\node at (32) {$ s_{4}s_{5}s_{4}\cancel{s_{3}}s_{2}s_{1}s_{0}$};
\node at (33) {$ s_{1}s_{2}\cancel{s_{3}}s_{4}s_{5}s_{4}s_{3}\cancel{s_{2}}s_{1}s_{0}$};
\node at (4-1) {$ s_{5}s_{4}s_{3}s_{2}s_{1}\cancel{s_{0}}$};
\node at (51) {$ s_{4}\cancel{s_{3}}s_{2}\cancel{s_{1}}s_{0}$};
\draw (-0.15, -0.75) -- (-0.15, -6.25);
\draw (1.15, -0.75) -- (1.15, -6.25);
\end{tikzpicture}
\end{matrix}
$$
and the alcove walk (for $z=1$)
\begin{multline*}
p = (1, s_1, s_1s_0, s_1s_0s_2, s_1s_0s_2, s_1s_0s_2s_0, s_1s_0s_2s_0s_4, s_1s_0s_2s_0s_4, s_1s_0s_2s_0s_4s_2, \ldots,  \\
s_{1}s_{0}s_{2}s_{0}s_{4}s_{2}s_{0}s_{5}s_{4}s_{3}s_{2}s_{1}s_{4}s_{4}s_{3}s_{2}s_{1}s_{4}s_{5}s_{4}s_{2}s_{1}s_{0}s_{1}s_{2}s_{4}s_{5}s_{4}s_{3}s_{1}s_{0}).
\end{multline*}
\end{example}
Note that we read the boxes in the set-valued tableau in increasing spiral order, the $s_{i_k}$ in each box from left to right, and we write the entries in the alcove walk $p$ from left to right. Moreover, an entry in $p$ is either equal to the previous entry or contains one more $s_{i_k}$ on the right.

\subsubsection{Weights for set-valued tableaux}

Let $\mu\in \ZZ^n$ and $z\in W_{\mathrm{fin}}$ and let $T$ be a USV-tableau of shape $\mu$ and let
$p=(p_{\ell+1},p_\ell, \ldots, p_1)$ be the corresponding alcove walk with $p_{\ell+1}=z$.
The \emph{permutation sequence of $T$} is the sequence $(z_{\ell+1}, \ldots, z_1)$ of elements of 
$W_{\mathrm{fin}}$ given by
\begin{equation}
(z_{\ell+1}, z_\ell, \ldots, z_1) = (\overline{p_{\ell+1}}, \overline{p_\ell}, \ldots, \overline{p_1}),
\label{permseq}
\end{equation}
where $\overline{\phantom{T}}\colon W \longrightarrow  W_{\mathrm{fin}}$ is the homomorphism defined in~\eqref{Wtofin}.
Recall from~\eqref{crtseq} that the coroot sequence of the box-greedy reduced word $u_\mu^\square=s_{i_\ell}\cdots s_{i_1}$ 
is the sequence
$(\beta_\ell,\ldots, \beta_1)$ given by
\begin{equation*}
\beta_\ell = 
s_{i_1}s_{i_2}\cdots s_{i_{\ell-1}}\alpha_{i_\ell}, \quad
\beta_{\ell-1} = 
s_{i_1}s_{i_2}\cdots s_{i_{\ell-2}}\alpha_{i_{\ell-1}}, \quad \ldots \quad
\beta_2 = s_{i_1}\alpha_{i_2}, \quad
\beta_1 = \alpha_{i_1}.
\end{equation*}
For $k\in \{1, \ldots, \ell\}$ define 
\begin{align}
\wt_T(k) = \wt_T^{z_k}(\cancel{s_{i_k}}) 
&= \begin{cases}
F^-_{\beta_k}, &\hbox{if $i_k=0$ and $z_k(1)>0$,} \\
F^+_{\beta_k}, &\hbox{if $i_k=0$ and $z_k(1)<0$,} \\
F^+_{\beta_k}, &\hbox{if $i_k\in \{1, \ldots, n-1\}$ and $z_k(i_k)\prec z_k(i_{k+1})$}, \\
F^-_{\beta_k}, &\hbox{if $i_k\in \{1, \ldots, n-1\}$ and $z_k(i_k)\succ z_k(i_{k+1})$}, \\
F^+_{\beta_k}, &\hbox{if $i_k=n$ and $z_k(n)>0$}, \\
F^-_{\beta_k}, &\hbox{if $i_k=n$ and $z_k(n)<0$},
\end{cases}
\\
\hbox{and}\qquad
\wt_T(k) =\wt_T^{z_k}(s_{i_k}) &= \begin{cases}
x_{z_k(1)}, &\hbox{if $i_k=0$,} \\
1, &\hbox{otherwise,}
\end{cases}
\label{wtTzk}
\end{align}
depending on whether $s_{i_k}$ is crossed-out or not crossed-out in $T$.
Define the \emph{weight of the USV-tableaux $T$} by 
$$\mathrm{wt}(T) = 
t^{\frac12\ell_s(z_{\mathrm{fin}})} t_n^{\frac12\ell_d(z_{\mathrm{fin}})}
\ev_0\Big(\prod_{k=1}^r\wt_{T}(k)\Big) 
$$
where $z_{\mathrm{fin}}=z_1$ is the last element of the permutation sequence of $T$.

\begin{remark}
Using the action of $W$ on $\ZZ^n$ defined in~\eqref{WonZn}, the \emph{end of $p$} is 
$$
\mathrm{end}(p)= p_{\mathrm{fin}}(0,0,\ldots, 0) \in \ZZ^{n},
$$
where $p_{\mathrm{fin}}=p_1$ is the last element of the alcove walk $p=(p_{\ell+1}, \ldots, p_1)$.
If $\mathrm{end}(p) = (a_1, \ldots, a_n)$ and 
$x^{T} = x^{\mathrm{end}(p)} = x_1^{a_1}\cdots x_n^{a_n}$ then
$$
\mathrm{wt}(T) = 
t^{\frac12\ell_s(z_{\mathrm{fin}})} t_n^{\frac12\ell_d(z_{\mathrm{fin}})} \ev_0\Big(\prod_{\cancel{s_{i_k}}} \wt^{z_k}_{T}(\cancel{s_{i_k}})\Big)x^T, \qquad \hbox{ with } \quad \prod_{s_{i_k}}\wt_T(s_{i_k} ) = x^T,
$$
where the first product is taken over the crossed-out letters in $T$ and the second product is taken over the not crossed-out letters of $T$.
\qed
\end{remark}

Define the \emph{normalization factor} for the pair $(z,\mu)$ by
\begin{equation}
\mathrm{nf}(z,\mu)
=t^{\frac12(\ell_s(v_\mu^{-1})-\ell_s(zv_\mu^{-1}))}t_n^{\frac12(\ell_d(v_\mu^{-1})-\ell_d(zv_\mu^{-1}))}.
\label{nfzmu}
\end{equation}

\begin{thm} 
\label{thm:USVformula}
Let $\mu\in \ZZ^n$ and $z\in W_{\mathrm{fin}}$. 
Then 
$$E^z_\mu = \mathrm{nf}(z,\mu)\Big(
\sum_{T} \wt(T)\Big),
\qquad
\hbox{where the sum is over USV-tableaux $T$ for $\mu$}.
$$
\end{thm}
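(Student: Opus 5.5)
The plan is to prove the unnormalized identity
$$\widehat{E}^z_\mu = T_z\widehat{E}_\mu = t^{\frac12\ell_s(v_\mu^{-1})}t_n^{\frac12\ell_d(v_\mu^{-1})}E^{z}_\mu\cdot\mathrm{nf}(z,\mu)^{-1}\cdot(\hbox{const})$$
in the form $T_z\tau_{u_\mu}\cdot 1=\sum_T(\hbox{unnormalized weight of }T)$. The statement then follows by matching the constants in \eqref{Ecr}, \eqref{relMac} and \eqref{nfzmu}. The factor $t^{\frac12\ell_s(z_{\mathrm{fin}})}t_n^{\frac12\ell_d(z_{\mathrm{fin}})}$ in $\wt(T)$ comes from the final step $T_{z_{\mathrm{fin}}}\cdot 1=t^{\frac12\ell_s(z_{\mathrm{fin}})}t_n^{\frac12\ell_d(z_{\mathrm{fin}})}$, because $T_i\cdot1=t_{\alpha_i}^{\frac12}$ on constants by \eqref{DGaction}. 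The factor $t^{-\frac12\ell_s(v_\mu^{-1})}\cdots$ from \eqref{Ecr}, multiplied by the relative normalization in \eqref{relMac}, gives exactly $\mathrm{nf}(z,\mu)$ times $t^{-\frac12\ell_s(zv_\mu^{-1})}\cdots$. I would check this bookkeeping once, carefully, at the end.

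The core is an induction on $\ell=\ell(u_\mu)$ using the box-greedy reduced word $u_\mu^\square=s_{i_\ell}\cdots s_{i_1}$, which is reduced by Proposition~\ref{prop:boxgreedycoroot}. I would write $\mu^{(k)}=s_{i_k}\cdots s_{i_1}(0,\ldots,0)$, so that $\widehat{E}_{\mu^{(k)}}=\tau_{i_k}\widehat{E}_{\mu^{(k-1)}}$ by (E1)–(E3), with each step length-increasing. Peeling off the leftmost letter $s_{i_\ell}$, Proposition~\ref{alcwksteps} expresses $\widehat{E}^{p_{\ell+1}}_{\mu^{(\ell)}}$ as a sum of two terms. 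The first is $\widehat{E}^{p_{\ell+1}s_{i_\ell}}_{\mu^{(\ell-1)}}$, with a factor $x_{z(1)}$ when $i_\ell=0$; this is the case where the letter is kept. The second is $\ev_{\mu^{(\ell-1)}}(F^\pm_{\alpha_{i_\ell}})\widehat{E}^{p_{\ell+1}}_{\mu^{(\ell-1)}}$, the crossed-out case. The choice of sign $\pm$ is governed by the same conditions on $z_k=\overline{p_{k+1}}$ that appear in \eqref{wtTzk}. Iterating down to $\mu^{(0)}=0$ gives a sum over alcove walks $p$, which by the discussion after \eqref{USVdef} are the same as USV-tableaux $T$. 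Each summand is a product of the scalars $\ev_{\mu^{(k-1)}}(F^\pm_{\alpha_{i_k}})$, the monomials $x_{z_k(1)}$, and $T_{z_{\mathrm{fin}}}\cdot 1$.

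Two identifications remain. The first is that the scalars pulled out at step $k$ can be moved past the later $x$-factors and $T$'s, since they are constants. The monomials, on the other hand, multiply on the left in the order produced, which gives $x^T$; here I would use that $x_{-j}$ means $x_j^{-1}$, as in case (a). The second, which I expect to be the main obstacle, is showing that
$$\ev_{\mu^{(k-1)}}(F^\pm_{\alpha_{i_k}})=\ev_0(F^\pm_{\beta_k}),\qquad \beta_k=s_{i_1}\cdots s_{i_{k-1}}\alpha_{i_k}.$$
I would prove the equivalent statement $\ev_{w\cdot 0}(f(Y))=\ev_0((w^{-1}f)(Y))$ for $w=s_{i_{k-1}}\cdots s_{i_1}$ and $f\in\KK(Y)$, where $W$ acts on $\KK[Y]$ through its action on $Q$. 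This follows from \eqref{taupY}: for reduced $w$ we have $\tau_w\widehat{E}_0=\widehat{E}_{w\cdot0}$, so $f(Y)\tau_w\widehat{E}_0=\tau_w (w^{-1}f)(Y)\widehat{E}_0=\ev_0(w^{-1}f)\tau_w\widehat{E}_0$. Comparing this with the eigenvalue property \eqref{Eeig} of $E_{w\cdot 0}$ gives the claim. The argument needs $\widehat{E}_{w\cdot0}\ne0$, which holds because its leading coefficient is a nonzero normalization.

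Finally, I need the parameter labels to be consistent: $t_{\beta_k}=t_{\alpha_{i_k}}$ and $u_{\beta_k}=u_{\alpha_{i_k}}$ by \eqref{crtparext}, so that $w^{-1}F^\pm_{\alpha_{i_k}}=F^\pm_{\beta_k}$ holds literally, with $F^\pm_\alpha$ defined through $C_{-\alpha}$ in \eqref{cfcndefn}. With that, each summand equals $\wt(T)$ up to the global normalization, which completes the argument.
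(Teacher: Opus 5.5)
Your route is the paper's: expand $T_z\tau_{i_\ell}\cdots\tau_{i_1}\cdot 1$ one letter of $u_\mu^\square$ at a time using Proposition~\ref{alcwksteps}, and stop at $T_{z_{\mathrm{fin}}}\cdot 1=t^{\frac12\ell_s(z_{\mathrm{fin}})}t_n^{\frac12\ell_d(z_{\mathrm{fin}})}$. Your intertwiner argument for $\ev_{\mu^{(k-1)}}(F^\pm_{\alpha_{i_k}})=\ev_0(F^\pm_{\beta_k})$ is correct, and it justifies a step the paper uses without comment.

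One indexing point. Your $z_k=\overline{p_{k+1}}$ is shifted from \eqref{permseq}, where $z_k=\overline{p_k}$. This makes no difference for crossed-out letters. For a kept $s_0$, however, Proposition~\ref{alcwksteps}(a) produces $x_{\overline{p_{k+1}}(1)}$, which is your reading. You should state explicitly that this is how \eqref{wtTzk} has to be read.

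The step that fails is the final matching of constants. The factor in \eqref{relMac} is exactly $\mathrm{nf}(z,\mu)$, so combining \eqref{relMac} with \eqref{Ecr} gives
\[
E^z_\mu=\mathrm{nf}(z,\mu)\,t^{-\frac12\ell_s(v_\mu^{-1})}t_n^{-\frac12\ell_d(v_\mu^{-1})}\,\widehat{E}^z_\mu
=t^{-\frac12\ell_s(zv_\mu^{-1})}t_n^{-\frac12\ell_d(zv_\mu^{-1})}\sum_T\wt(T).
\]
This is not ``$\mathrm{nf}(z,\mu)$ times $t^{-\frac12\ell_s(zv_\mu^{-1})}\cdots$'', and it is not $\mathrm{nf}(z,\mu)\sum_T\wt(T)$. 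The factor $t^{\frac12\ell_s(z_{\mathrm{fin}})}t_n^{\frac12\ell_d(z_{\mathrm{fin}})}$ inside $\wt(T)$ depends on $T$, so it cannot absorb the global constant $t^{-\frac12\ell_s(v_\mu^{-1})}t_n^{-\frac12\ell_d(v_\mu^{-1})}$.

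Concretely, take $\mu=\varepsilon_1$ and $z=1$. There are two tableaux, and
\[
\sum_T\wt(T)=t^{n-1}t_n^{\frac12}x_1+\ev_0(F^-_{\alpha_0})=\widehat{E}_{\varepsilon_1}.
\]
But $\mathrm{nf}(1,\varepsilon_1)=1$, and $E_{\varepsilon_1}$ has coefficient $1$ on $x_1$ (Proposition~\ref{Emu1box}(a)). So the displayed identity is off by $t^{-(n-1)}t_n^{-\frac12}$.

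The paper's proof passes over this just as quickly (``accounting for the normalization factor''). Its worked Examples~\ref{AWexcont} and~\ref{CSVex2}, however, do carry the extra factor $t^{-\frac{21}{2}}t_n^{-\frac32}$. The bookkeeping you deferred must therefore end with the corrected constant displayed above; your assertion that the constants match the formula as stated is false.
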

\begin{proof}
Let $u_\mu^\square = s_{i_\ell}\cdots s_{i_2}s_{i_1}$.
The proof is by induction on $\ell$ (the sequence of steps in the box greedy reduced word $u_\mu^\square$).
The base case is the case $\mu = (0, \ldots, 0)$ for which $u_\mu = 1$ and 
$$\widehat{E}^z_{(0, \ldots, 0)}=t^{\frac12\ell_s(z_{\mathrm{fin}})}t_n^{\frac12\ell_d(z_{\mathrm{fin}})}.$$
The induction step is Proposition~\ref{alcwksteps}. 
The result of the induction is
$$\widehat{E}^z_\mu 
= \sum_T \wt(T)\widehat{E}^{z_{\mathrm{fin}}}_{(0,\ldots,0)}
= \sum_T \wt(T) t^{\frac12\ell_s(z_{\mathrm{fin}})}t_n^{\frac12\ell_d(z_{\mathrm{fin}})}
$$
and the final formula for  $E^z_\mu$ is the result of accounting for the normalization factor in~\eqref{relMac}.
\end{proof}

\begin{example} \label{AWexcont}
The permutation sequence for the USV-tableau from Example~\ref{AWexample} is
given in the diagram below. 
For each $\cancel{s_{i_k}}$ in $T$ the diagram indicates the value of $\wt_T^{z_k}(\cancel{s_{i_k}})$.

The term in $E_\mu$ corresponding to the tableau $T$ is
$$t^{\frac{15}{2}-\frac{21}{2}}t_n^{\frac{2}{2}-\frac32} \ev_0\left( \begin{array}{l}
F^+_{-\varepsilon_{1}-\varepsilon_{2}+4K}
F^+_{-\varepsilon_{4}-\varepsilon_5+K}
F^-_{-\varepsilon_{4}-\varepsilon_{1}+3K}
F^-_{-\varepsilon_3+\hbox{$\frac12$}K}
F^+_{-\varepsilon_{2}+K}
\\
F^-_{-\varepsilon_{2}+\hbox{$\frac12 $}K}
\cdot F^-_{-\varepsilon_{1}-\varepsilon_4+2K}
F^-_{-\varepsilon_{1}+\varepsilon_4+K}
F^+_{-\varepsilon_{1}-\varepsilon_3+K}
\end{array}
\right)
x_2^{-1}x_2x_2^{-1}x_2x_4,
$$
where the normalization is computed by noting that $E_\mu = 
t^{\frac12\ell_s(v_\mu^{-1})}t_n^{\frac12\ell_d(v_\mu^{-1})}\widehat{E}_\mu = t^{\frac{21}{2}}t_n^{\frac32}\widehat{E}_\mu$,
since 
$$v_\mu = (5\overline{2}\overline{1}3\overline{4}) = (s_4s_5s_4)(s_1s_2s_3s_4s_5s_4s_3s_2s_1)
(s_2s_3s_4s_5s_4s_3s_2s_1)(s_4s_3s_2s_1),
$$
and, since $z_{\mathrm{fin}} = (\overline{3}\overline{4}125) = (s_3s_4s_5s_4s_3s_2s_1)(s_4s_5s_4s_3s_2)$ then
\[
\widehat{E}^{z_{\mathrm{fin}}}_{(0,0,0,0,0)} = t^{\frac12\ell_s(z_{\mathrm{fin}})}t_n^{\frac12\ell_d(z_{\mathrm{fin}})}
= t^{\frac{15}{2}}t_n^{\frac22}.
\]
\[
\begin{tikzpicture}[scale = 0.65]
\foreach \x\y in {1/0}{
    \draw[thin] (\y+0.025, -\x-0.025) rectangle (0.975+\y, -\x-.975);
    \draw (0.5+\y,-\x-0.5) node (\x\y) {};
}
\foreach \x\y in {2/1, 2/2}{
    \pgfmathsetmacro{\boxwd}{3.5};
    \draw[thin] (\boxwd*\y+0.025 - \boxwd + 1.3, 10-6*\x-0.025) rectangle (\boxwd*\y + 1.275, 10-6*\x-5.975);
    \draw (\boxwd*\y + 1.3 - 0.5*\boxwd,10-6*\x-3) node (\x\y) {};
}
\foreach \x\y in {3/1, 3/2, 3/3}{
    \pgfmathsetmacro{\boxwd}{3.5};
    \draw[thin] (\boxwd*\y - \boxwd + 1.325, 16-8*\x-0.025) rectangle (\boxwd*\y + 1.275, 16-8*\x-7.975);
    \draw (\boxwd*\y - 0.5*\boxwd + 1.3,16-8*\x-4) node (\x\y) {};
}
\foreach \x\y in {4/0}{
    \pgfmathsetmacro{\boxwd}{3.5};
    \draw[thin] (\y+0.025, 4-5*\x-0.025) rectangle (0.975+\y, 4-5*\x-4.975);
    \draw (0.5+\y,4-5*\x-2.5) node (\x\y) {};
}
\foreach \x\y in {4/-1}{
    \pgfmathsetmacro{\boxwd}{3.5};
    \draw[thin] (\boxwd*\y -0.275, 4-5*\x-0.025) rectangle (\boxwd+\boxwd*\y-0.325, 4-5*\x-4.975);
    \draw (0.5*\boxwd+\boxwd*\y-0.3,4-5*\x-2.5) node (\x\y) {};
}
\foreach \x\y in {5/1}{
    \pgfmathsetmacro{\boxwd}{3.5};
    \draw[thin] (\boxwd*\y+1.325 - \boxwd, 1.5-4.5*\x-0.025) rectangle (\boxwd*\y + 1.325, 1.5-4.5*\x-4.475);
    \draw (\boxwd*\y - 0.5*\boxwd + 1.3,1.5-4.5*\x-2.25) node (\x\y) {};
}
\node at (21) {$\begin{array}{l}
s_1(21345) \\
s_0(\overline{2}1345)
\end{array}$};
\node at (22) {$\begin{array}{l}
s_4(\overline{4}\overline{2}153) \\
\cancel{s_5}\  F^+_{\beta(2,2)_{-5}} \\
s_4(\overline{4}\overline{2}135) \\
s_3(\overline{4}\overline{2}315) \\
s_2(\overline{4}3\overline{2}15) \\
s_1(3\overline{4}\overline{2}15) \\
\cancel{s_0}\ F^-_{\beta(2,2)_1}
\end{array}$};
\node at (31) {$\begin{array}{l}
s_2(\overline{2}3145) \\
\cancel{s_1}\; F^+_{\beta(3,1)_2} \\
s_0(23145)
\end{array}$};
\node at (32) {$ \begin{array}{l}
s_4(3\overline{4}\overline{2}51) \\
s_5(3\overline{4}\overline{2}5\overline{1}) \\
s_4(3\overline{4}\overline{2}\overline{1}5) \\
\cancel{s_3}\ F^+_{\beta(3,2)_4} \\
s_2(3\overline{2}\overline{4}\overline{1}5) \\
s_1(\overline{2}3\overline{4}\overline{1}5) \\
s_0(23\overline{4}\overline{1}5) 
\end{array}$};
\node at (33) {$\begin{array}{l}
s_1(32\overline{4}\overline{1}5)  \\
s_2(3\overline{4}2\overline{1}5)  \\
\cancel{s_3}\ F^+_{\beta(3,3)_{-3}} \\
s_4(3\overline{4}25\overline{1})  \\
s_5(3\overline{4}251)  \\
s_4(3\overline{4}215)  \\
s_3(3\overline{4}125)  \\
\cancel{s_2}\ F^-_{\beta(3,3)_3} \\
s_1(\overline{4}3125) \\
s_0(4\overline{3}125) 
\end{array}$};
\node at (4-1) {$\begin{array}{l}
s_5(\overline{2}135\overline{4}) \\
s_4(\overline{2}13\overline{4}5) \\
s_3(\overline{2}1\overline{4}35) \\
s_2(\overline{2}\overline{4}135) \\
s_1(\overline{4}\overline{2}135) \\
\cancel{s_0}\ F^+_{\beta(4,-1)_1}
\end{array}$};
\node at (51) {$ \begin{array}{l}
s_4(23154) \\
\cancel{s_3}\ F^+_{\beta(5,1)_4} \\
s_2(21354) \\
\cancel{s_1}\ F^-_{\beta(5,1)_2} \\
s_0(\overline{2}1354)
\end{array}$};
\draw (-0.15, -0.75) -- (-0.15, -25.75);
\draw (1.15, -0.75) -- (1.15, -25.75);
\end{tikzpicture}
\]
\end{example}

\section{Compression}
\label{section:compression}

The goal of this section is to state and prove Theorem~\ref{CSVformula}, which is the compressed set-valued (CSV) tableaux formula. 
To do so, we first divide the box-greedy reduced word into compression sections, which are defined in Section~\ref{set-valued tab}.   Section~\ref{weights} describes the weights assigned to each compression section and packages the resulting formula for the Koornwinder polynomials as a sum over compressed set-valued tableaux.
Section~\ref{weights} also contains a high-level proof of the CSV-tableaux formula, which is exactly analogous to the proof of the USV-tableaux formula except that the induction is on the sequence of compression sections instead of  on the sequence of
alcove walk steps.

The remainder of the section verifies, in several cases, that the weight of each compression section is given by one of the formulas in Section~\ref{subsec:weight functions}.
In
Section~\ref{0-gap} we verify the weights for the across-the-$0$-gap compression sections and in Section~\ref{section:around_the_end_z1} and~\ref{subsect: around the end general} we verify the weights the around-the-end compression sections.
The results in these sections are the compression section analogues of the alcove walk steps that are derived in Section~\ref{section:usvtableaux}.

\subsection{Compression sections and compressed set-valued tableaux}\label{set-valued tab}

For $i\in \{1, \ldots, n\}$ and $\ell\in \{i, \ldots, n, -n, \ldots, -i\}$ define
\begin{equation}
d_{\ell,i} = \begin{cases}
s_{\ell-1}\cdots s_i, &\hbox{if $\ell\in \{i, \ldots, n\}$,} \\
s_{-\ell} \cdots s_n\cdots s_i, &\hbox{if $\ell\in \{-n, \ldots, -i\}$.}
\end{cases}
\label{dilfactor}
\end{equation}
This notation generalizes the notation $d_\ell = d_{\ell,0}$ used in~\eqref{didefn} and~\eqref{boxgreedyredwd}.

Let $\mu \in \ZZ^{n}$ and fix a box $b = (r,c)\in \mu$.
A \emph{compression section} is a subword in the word $d_{u_{\mu}(r, c)}$, which is a factor in~\eqref{boxgreedyredwd}, that is of one of the following three types of distinguished forms: 

\medskip\noindent
\begin{itemize}
\item[(CS1)]  If $c\ne1$ and $-i = u_\mu(r,c)$ then 
the \emph{\color{blue} around-the-end compression section} of $d_{u_{\mu}(r, c)}$ is the subword 
$$S={\color{blue} s_i\cdots s_n\cdots s_i} \quad \hbox{ in } \quad d_{u_{\mu}(r, c)} = d_{-i} = {\color{blue} s_i\cdots s_n\cdots s_i} s_{i-1}\cdots s_2 s_1s_0.$$

\item[(CS2)] 
Let  $(l(a_1), \ldots, l(a_{|u_\mu(r,c)|}))$ be the sequence of leg lengths of the attacking boxes
$a_1, \ldots, a_{|u_\mu(r,c)|}$ for the box $b=(r,c)$.
A \emph{\color{red}$0$-gap compression section} of $d_{u_\mu(r,c)}$ is a subword 
$$S={\color{red}s_{m-1} \cdots s_{i}}, \quad\hbox{where}\quad
\begin{array}{l}
\hbox{$i<m$ and $l(a_m)\ne 0$ and $l(a_{i-1})\ne 0$} \\
\hbox{and $l(a_{m-1}) = l(a_{m-2}) = \cdots = l(a_{i}) = 0$.}
\end{array}
$$

\medskip\noindent
\item[(CS3)] In all other cases the compression section consists
of the single factor $S=s_i$. It is convenient to refer to this case as an \emph{uncompressed step}.  
\end{itemize}

In the filled box diagram in Example~\ref{CSVex}, the uncompressed steps are depicted in black, the around-the-end compression sections in blue and the $0$-gap compression sections in red.

\medskip\noindent
With definitions as in (CS1), (CS2), (CS3), the box greedy reduced word is factored as a product of compression sections
\begin{equation}
u_\mu^\square = S_L\cdots S_1.
\label{CSfact}
\end{equation}

A \emph{CSV-tableau (compressed set-valued tableau)} is a USV-tableau (as in Section~\ref{section:usvtableaux}) such that
\begin{equation}
\begin{array}{l}
\hbox{if $S$ is a compression section and $s_j$ is crossed out in $S$}\\
\hbox{then every factor before $s_j$ in $S$ is also crossed out.}
\end{array}
\label{CSVcond}
\end{equation}

\begin{example} \label{CSVex} If $\mu = (0,2,3,-1,1)$ then $u_\mu^\square$ is a product of 25 compression sections.
A CSV-tableau of shape $\mu = (0,2,3,-1,1)$ is 

$$
\begin{matrix}
\begin{tikzpicture}[scale = 0.6, baseline = 0.6*-3.5cm]
\foreach \x\y in {1/0, 4/0}{
    \draw[thin] (\y+0.025, -\x-0.025) rectangle (0.975+\y, -\x-.975);
    \draw (0.5+\y,-\x-0.5) node (\x\y) {};
}
\foreach \x\y in {4/-1}{
    \pgfmathsetmacro{\boxwd}{5};
    \draw[thin] (\boxwd*\y-0.275, -\x-0.025) rectangle (\boxwd + \boxwd*\y - 0.325, -\x-.975);
    \draw (0.5*\boxwd + \boxwd*\y - 0.3,-\x-0.5) node (\x\y) {};
}
\foreach \x\y in {2/1, 2/2, 3/1, 3/2, 5/1}{
    \pgfmathsetmacro{\boxwd}{5};
    \draw[thin] (\boxwd*\y - \boxwd + 1.325, -\x-0.025) rectangle (\boxwd*\y + 1.325, -\x-.975);
    \draw (\boxwd*\y + 1.3 - 0.5*\boxwd,-\x-0.5) node (\x\y) {};
}
\foreach \x\y in {3/3}{
    \pgfmathsetmacro{\boxwd}{5};
    \pgfmathsetmacro{\newboxwd}{6.2};
    \draw[thin] (\boxwd*\y - \boxwd + 1.325, -\x-0.025) rectangle (\boxwd*\y - \boxwd + \newboxwd + 1.325 , -\x-.975);
    \draw (\boxwd*\y - \boxwd + 1.3 + 0.5*\newboxwd,-\x-0.5) node (\x\y) {};
}
\node at (21) {$ s_{1}s_{0}$};
\node at (22) {$ {\color{blue}\cancel{s_{4}} \cancel{s_{5}}s_{4}} s_{3} {\color{red}s_{2} s_{1}} \cancel{s_{0}}$};
\node at (31) {$ s_{2} \cancel{s_{1}} s_{0}$};
\node at (32) {$ {\color{blue} s_{4}s_{5}s_{4}} {\color{red} \cancel{s_{3}}s_{2}s_{1}} s_{0}$};
\node at (33) {$ {\color{blue} \cancel{s_{1}} \cancel{s_{2}} \cancel{s_{3}} \cancel{s_{4}}\cancel{s_{5}} \cancel{s_{4}}\cancel{s_{3}}\cancel{s_{2}}s_{1}} s_{0}$};
\node at (4-1) {$ s_{5}s_{4}s_{3}s_{2}s_{1} \cancel{s_{0}}$};
\node at (51) {$ s_{4} \cancel{s_{3}} s_{2} \cancel{s_{1}} s_{0}$};
\draw (-0.15, -0.75) -- (-0.15, -6.25);
\draw (1.15, -0.75) -- (1.15, -6.25);
\end{tikzpicture}
\end{matrix}
$$
\end{example}

\subsection{Compression section weights and the compressed set-valued tableaux formula}\label{weights}

Let $\mu\in \ZZ^n$ and $z\in W_{\mathrm{fin}}$ and let $T$ be a CSV-tableau of shape $\mu$
and let $p=(p_{\ell+1}, p_\ell, \ldots, p_1)$ be the corresponding alcove walk with $p_{\ell+1}=z$.  The box greedy reduced word
has two forms:
\begin{equation}
u_\mu^\square = s_{i_\ell}\cdots s_{i_1} = S_L\cdots S_1,
\label{Cfact}
\end{equation}
where the first product is a product of simple reflections and the second product is a product of compression sections.
The \emph{compressed permutation sequence} of $T$ is the subsequence $(z_{S_L},\cdots, z_{S_1})$
of the permutation sequence $(z_{\ell+1}, \ldots, z_1)$ of $p$ (see~\eqref{permseq}) given by setting, for a compression section $S$,
$$z_S = z_{k+1},\quad\hbox{if the first letter of $S$ is the $s_{i_{k}}$ in $u_\mu^\square = s_{i_\ell}\cdots s_{i_1}$.}
$$

Each compression section $S$ in $u_\mu^\square$ has a corresponding root sequence
$$\beta_S = (\beta_k, \ldots, \beta_j),
\qquad\hbox{where $S=s_{i_k}\cdots s_{i_j}$ as a subword of $u_\mu^\square = s_{i_\ell}\cdots s_{i_1}$.}
$$
The full root sequence of $u_\mu^\square$ is the concatenation of the root sequences of the compression sections,
$$\beta = (\beta_\ell, \ldots, \beta_1)=(\beta_{S_L},\ldots, \beta_{S_1}),
$$
Each compression section $S$ in $T$ gets a weight $\mathrm{cwt}^{z_S}_T(S)$ defined as follows.

\begin{itemize}
\item[(CS1)] 
Let $S={\color{blue} s_i\cdots s_n\cdots s_{i}}$ be an around-the-end compression section in box $b$ of $\mu$ with permutation $z_S$.
Let $W_{[i+1,-(i+1)]}$ be the subgroup of $W_{\mathrm{fin}}$ generated by $\{s_{i+1}, \ldots, s_n\}$ 
and let $y,v\in W_{\mathrm{fin}}$ be such that
$$\hbox{$z_S = yv$ with $v\in W_{[i+1,-(i+1)]}$ and $\ell(z_S) = \ell(y)+\ell(v).$}$$
Then $0< y(i+1) < \cdots < y(n)$. We define $j$ depending on where $y(i)$ appears with respect to the other entries as follows
$$j = \begin{cases}
i, &\hbox{if $0<{\color{blue} y(i)} < y(i+1)<\cdots < y(n)$, } \\
m, &\hbox{if $0<y(i+1)<\cdots < y(m)  < {\color{blue} y(i) } < y(m+1) < \cdots < y(n)$,} \\
n, &\hbox{if $0<y(i+1)<\cdots  <y(n)< {\color{blue} y(i)}$,} \\
-n, &\hbox{if $0<y(i+1)<\cdots  <y(n)< {\color{blue} -y(i)}$,} \\
-m, &\hbox{if $0<y(i+1)<\cdots < y(m) < {\color{blue} -y(i)} < y(m+1) < \cdots < y(n)$,} \\
-i, &\hbox{if $0 < {\color{blue} -y(i)} < y(i+1)<\cdots <y(n)$.}
\end{cases}
$$
For $k\in \{i, \ldots, m\}$ let $l = v(k)$ and $u^{(k)} = d_{k,i}^{-1}v^{-1}d_{l,i}$ and define
$$\mathrm{covid}(z_S,k) = \ell_s(v)-\ell_s(u^{(k)}). 
$$
Denote the subsequence of the root sequence corresponding to the factors in $S$ by
$$\beta(b)_S = (\beta(b)_{-i}, \ldots, \beta(b)_i).$$
Letting $A^{(i,j,-i)}_{\pm \ell}$ be as defined in~\eqref{AgencA}-\eqref{AgencD}, define 
\begin{align}
\mathrm{cwt}_T(S) = \mathrm{cwt}^{z_S}_T({\color{blue} \cancel{s_i}\cdots\cancel{s_{k}} s_{k+1} \cdots s_n\cdots s_i})
&= t^{\frac12\mathrm{covid}(z_S,k)}t_n^{\frac12} A_{v(k)}^{(i,j,-i)}(\beta(b)_S)
\qquad\hbox{and} \nonumber \\
\mathrm{cwt}_T(S) = \mathrm{cwt}^{z_S}_T({\color{blue} \cancel{s_i}\cdots \cancel{s_n} \cdots \cancel{s_{k}} s_{k-1} \cdots s_i})
&= t^{\frac12\mathrm{covid}(z_S,k)} A_{v(k)}^{(i,j,-i)}(\beta(b)_S).
\label{CS1wt}
\end{align}

\item[(CS2)] 
Let $S={\color{red} s_{m-1}\cdots s_{i}}$ be a $0$-gap compression section in box $b$ of $\mu$ with permutation $z_S$.
Let $W_{[i,m-1]}$ be the subgroup of $W_{\mathrm{fin}}$ generated by $\{s_i, \ldots, s_{m-2} \}$ and let $y,v\in W_{\mathrm{fin}}$ be such that
$$\hbox{$z_S = yv$ with $v\in W_{[i,m-1]}$ and $\ell(z_S) = \ell(y)+\ell(v).$}$$
Then $y(i)\prec y(i+1)\prec \cdots \prec y(m-1)$. Let
$$j \in \{i, i+1, \ldots, m\}\quad\hbox{be such that}\quad
y(j) \prec {\color{blue}y(m)} \prec y(j+1)
$$
with $j=i$ if ${\color{blue} y(m)}\prec y(i)$ and $j=m$ if $y(m-1)\prec {\color{blue} y(m)}$.
For $k\in \{i, \ldots, m\}$ let $l = v(k)$ and $u^{(k)} = d_{k,i}^{-1}v^{-1}d_{l,i}$ and define
$$\mathrm{covid}(z_S,k) = \ell_s(v)-\ell_s(u^{(k)}).$$
Denote the subsequence of the root sequence $\beta(b)$ consisting of the coroots  corresponding to the factors in $S$ by
$$\beta(b)_S = (\beta(b)_{m-1}\cdots \beta(b)_i).$$ 
Letting $A^{(i,j,m)}_\ell(\beta(b)_S)$ be as defined in~\eqref{AEwts}, define
\begin{equation}
\mathrm{cwt}_T(S) = \mathrm{cwt}^{z_S}_T({\color{red} \cancel{s_{m-1}}\cdots \cancel{s_k} s_{k-1} \cdots s_{i}}) = 
t^{\frac12\mathrm{covid}(z_S,k)}A^{(i,j,m)}_{v(k)}(\beta(b)_S).
\label{CS2wt}
\end{equation}

\item[(CS3)] In the case that $S = s_i$ is an uncompressed step define
\begin{equation}
\mathrm{cwt}_T(S) = \mathrm{cwt}_T^{z_S}(s_i) = \wt_T^{z_S}(s_i)
\qquad\hbox{and}\qquad
\mathrm{cwt}_T(S) = \mathrm{cwt}_T^{z_S}(\cancel{s_i}) = \wt_T^{z_S}(\cancel{s_i})
\label{CS3wt}
\end{equation}
where $\wt_T^{z_S}$ is as defined in~\eqref{wtTzk}.
\end{itemize}

Using the weights for each compression section defined~\eqref{CS1wt}-\eqref{CS3wt}, 
the \emph{compressed weight of the CSV-tableau $T$} is defined by
$$\mathrm{cwt}(T) = t^{\frac12\ell_s(z_{\mathrm{fin}}) }t_n^{\frac12\ell_d(z_{\mathrm{fin}})} \ev_0\Big(\prod_{k=1}^L \mathrm{cwt}(S_k)\Big),
$$
where $z_{\mathrm{fin}}=z_{S_L}=z_1$ is the last element of the permutation sequence of $T$.

Theorem~\ref{CSVformula} gives the compressed set-valued tableaux formula for the relative Koornwinder
polynomial $E^z_\mu$.

\begin{thm} 
\label{CSVformula}
Let $z\in W_{\mathrm{fin}}$ and let $\mu\in \ZZ^n$.  Let $\mathrm{nf}(z,\mu)$ be as defined in~\eqref{nfzmu}.  
Then
$$
E^z_\mu = \mathrm{nf}(z,\mu)\Big(\sum_T \mathrm{cwt}(T)\Big),
\qquad\hbox{where the sum is over CSV-tableaux $T$ for $\mu$.}
$$
\end{thm}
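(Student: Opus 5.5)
The plan is to mirror the proof of Theorem~\ref{thm:USVformula}, replacing induction on single letters of $u_\mu^\square = s_{i_\ell}\cdots s_{i_1}$ by induction on the compression sections in the factorization $u_\mu^\square = S_L\cdots S_1$ of~\eqref{CSfact}. Concretely, for each $k$ let $w_k = S_k\cdots S_1$, and let $\nu^{(k)} = w_k(0,\ldots,0)\in\ZZ^n$. Then $\widehat{E}_{\nu^{(k)}} = \tau_{S_k}\widehat{E}_{\nu^{(k-1)}}$, where $\tau_S$ is the product of the $\tau_i$ over the letters of $S$. The inductive claim is that for every $z\in W_{\mathrm{fin}}$,
$$T_z\widehat{E}_{\nu^{(k)}} = \sum_{T}\Big(\prod_{j\le k}\mathrm{cwt}^{z_{S_j}}_T(S_j)\Big) T_{z'}\widehat{E}_{(0,\ldots,0)},$$
where the sum is over CSV-fillings of the first $k$ sections and $z'$ is the resulting final permutation. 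Two points make this well defined. First, the coroot evaluations at $\nu^{(k-1)}$ can be rewritten as $\ev_0$ of the coroots $\beta_S$, exactly as in Theorem~\ref{thm:USVformula}; this uses $\tau_iY^\lambda = Y^{s_i\lambda}\tau_i$ from~\eqref{taupY} and Proposition~\ref{prop:boxgreedycoroot}. Second, the base case $\widehat{E}^{z}_{0} = t^{\frac12\ell_s(z)}t_n^{\frac12\ell_d(z)}$ supplies the prefactor in $\mathrm{cwt}(T)$. Multiplying by $\mathrm{nf}(z,\mu)$ as in~\eqref{relMac} then gives the theorem.

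The whole content therefore reduces to a local statement, one for each section type. For a section $S$ and a permutation $z_S$, we need
$$T_{z_S}\tau_S\widehat{E}_\nu=\sum \mathrm{cwt}^{z_S}(S')\, T_{z'}\widehat{E}_\nu,$$
where the sum runs over the admissible crossing patterns $S'$ of $S$ (those in which the crossed letters form an initial segment, as in~\eqref{CSVcond}).
\begin{itemize}
\item For (CS3) this is exactly Proposition~\ref{alcwksteps}.
\item For (CS2) and (CS1) I would proceed in two steps. First, expand $T_{z_S}\tau_S$ fully by iterating Proposition~\ref{alcwksteps}, producing the $2^{|S|}$ uncompressed terms. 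Then group these terms according to the final element $z'$.
\end{itemize}

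The key structural step is the factorization $z_S = yv$, where $v$ lies in the parabolic subgroup $W_{[i,m-1]}$ or $W_{[i+1,-(i+1)]}$. Because $\widehat{E}_\nu$ is an eigenvector of $T_{s_j}$ for the relevant $j$ (by~\eqref{stabfactor}; these indices lie in the stabilizer, since the relevant legs are zero or the entries coincide), every resulting $T_{z''}\widehat{E}_\nu$ can be rewritten as a power of $t$ or $t_n$ times $T_{yd_{l,i}}\widehat{E}_\nu$. This is where $\mathrm{covid}(z_S,k)$ comes from. After this rewriting, the distinct final permutations are indexed by $k$ (equivalently by $v(k)$), which explains why only the initial-segment crossings survive.

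It then remains to show that the collected coefficient of each surviving term equals $A^{(i,j,m)}_{v(k)}$ or $A^{(i,j,-i)}_{\pm v(k)}$. For (CS2) I expect a telescoping argument: the coroots in the section all evaluate equally under $\ev_\nu$, since the legs are $0$, so consecutive $F^\pm$ and $C$ factors collapse via~\eqref{bigC} and identities like Example~\ref{CFshrink}. Here the position $j$ of $y(m)$ decides whether an $F^+$ or an $F^-$ appears.

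The main obstacle is (CS1), the around-the-end section. There the section passes through $s_n$, so the $t_n$ and $u$-parameters enter, and the case split over the six positions of $y(i)$ relative to $y(i+1),\dots,y(n)$ produces the four cases~\eqref{AgencA}--\eqref{AgencD}. My first attempt would handle $v=1$ (the definitions~\eqref{Adef1A}--\eqref{Adef1C} for $A^{(i)}_\ell$) directly, by induction on $n-i$: peel off the outer pair $s_i\,\cdots\,s_i$ and use $C_\alpha = t^{\frac12}_\alpha+F^+_\alpha$ to combine terms. I would then obtain the general $v$ by moving $T_v$ through using the quadratic relation $T_s^{-1}=T_s-(t^{\frac12}-t^{-\frac12})$. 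This is the source of the correction terms $(t^{\frac12}-t^{-\frac12})t^{\cdots}A^{(i)}$, and of the $t(t_n^{\frac12}-t_n^{-\frac12})$ term when $v$ involves $s_n$.
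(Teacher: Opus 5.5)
Your plan is essentially the paper's proof. It inducts over the compression sections $S_L\cdots S_1$, takes the local expansions from Proposition~\ref{0gap} (induction on $m-i$) for (CS2) and from Propositions~\ref{0end} and~\ref{proposition:GeneralCaseAroundTheEnd} for (CS1) (peel off the outer $s_i\cdots s_i$, then apply $T_{m-1}$, $T_n$ or $T_m$ one at a time via the quadratic relation, as in Lemmas~\ref{Tred} and~\ref{Arec}), and uses \eqref{stabfactor} to produce the $\mathrm{covid}$ exponents. One correction: the coroots of a $0$-gap section do \emph{not} evaluate equally under $\ev_\nu$; instead $\nu_{i+1}=\cdots=\nu_m$ forces $\ev_\nu(Y_{k+1})=t^{-1}\ev_\nu(Y_k)$ for $i<k<m$, and it is exactly this $t$-shift (as in Example~\ref{CFshrink}) that gives the telescoping identity $\ev_\nu\bigl(F^\pm_{\varepsilon_i-\varepsilon_{m-1}}+t^{\frac12}F^-_{\varepsilon_i-\varepsilon_m}F^+_{\varepsilon_i-\varepsilon_{m-1}}\bigr)=\ev_\nu\bigl(F^\pm_{\varepsilon_i-\varepsilon_m}\bigr)$.
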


\begin{proof}
The proof is similar to the proof of the USV-tableaux formula except that the induction is on the sequence of compression sections (the USV-tableaux case was an induction on the sequence of steps).

The induction step is as follows. 
Using the notation $d_{\ell,i}$ defined in~\eqref{dilfactor},
Proposition~\ref{0gap} (for $m\in \{i+1, \ldots, n\}$) and
Proposition~\ref{proposition:GeneralCaseAroundTheEnd} (for $m=-i$) 
provide
$$A^{(i,j,m)}_\ell\in \CC(Y_1, \ldots, Y_n) \qquad\hbox{such that}\qquad
\widehat{E}^{y}_\mu 
=\sum_{\ell=i}^m \ev_\nu(A_\ell^{(i,j,m)}) \widehat{E}^{yd_{\ell,i}}_\nu.
$$
Now use the notations $z=yv$, $k=v^{-1}(\ell)$ and $u^{(k)} = d_{k,i}^{-1}v^{-1}d_{\ell,i}$ introduced in the 
definition of the compression section weights in~\eqref{CS1wt} and~\eqref{CS2wt}.
By~\eqref{stabfactor}, $\widehat{E}^{yv}_\mu = t^{\frac12\ell(v)} \widehat{E}^{y}_\mu$ 
and $\widehat{E}^{zd_{k,i}u^{(k)}}_\nu = t^{-\frac12\ell(u^{(k)} ) } \widehat{E}^{zd_{k,i} }_\mu$,
so that 
\begin{align*}
\widehat{E}^z_\mu 
&= \widehat{E}^{yv}_\mu 
=t^{\frac12\ell(v)} \widehat{E}^{y}_\mu 
=t^{\frac12\ell(v)} \sum_{\ell=i}^m \ev_\nu(A_\ell^{(i,j,m)}) \widehat{E}^{y d_{\ell,i}}_\nu 
=t^{\frac12\ell(v)} \sum_{k=i}^m \ev_\nu(A_{v(k)}^{(i,j,m)}) \widehat{E}^{z d_{k,i} u^{(k)}}_\nu 
\\
&=\sum_{k=i}^m \ev_\nu(A_{v(k)}^{(i,j,m)}) t^{\frac12(\ell(v)-\ell(u^{(k)}))} \widehat{E}^{z d_{k,i} }_\nu 
=\sum_{k=i}^m \ev_\nu(A_{v(k)}^{(i,j,m)}) t^{\frac12\mathrm{covid}(z,k)} \widehat{E}^{zd_{k,i}}_\nu
\end{align*}
where the sum changes from a sum over $\ell$ to a sum over $k$ by the relation $k=v^{-1}(\ell)$.
Since
$$\ev_\nu(A_{v(k)}^{(i,j,m)})=\ev_0(A_{v(k)}^{(i,j,m)}(\beta(b)_S))$$
then
\begin{align*}
\widehat{E}^{z}_\mu 
&=\sum_{k=i}^m t^{\frac12\mathrm{covid}(z,k)} \ev_\nu(A_{v(k)}^{(i,j,m)})  \widehat{E}^{zd_{k,i}}_\nu 
=\sum_{k=i}^m t^{\frac12\mathrm{covid}(z,k)} \ev_0(A_{v(k)}^{(i,j,m)}(\beta(b)_S))  \widehat{E}^{zd_{k,i}}_\nu 
\end{align*}
Thus the induction gives
$$\widehat{E}^z_\mu 
= \sum_T \mathrm{cwt}(T)\widehat{E}^{z_{\mathrm{fin}}}_{(0,\ldots,0)}
= \sum_T \mathrm{cwt}(T) t^{\frac12\ell_s(z_{\mathrm{fin}})}t_n^{\frac12\ell_d(z_{\mathrm{fin}})}.
$$
The final formula for  $E^z_\mu$ is the result of accounting for the normalization factor in~\eqref{relMac}.

\end{proof}

\begin{example} \label{CSVex2}
Let $\mu = (0,2,3,-1,1)$ and let $Q$ be the CSV-tableau of Example~\ref{CSVex}.  
The length of $u^\square_\mu$ is $r=40$ and
$$z_{\mathrm{fin}}= \overline{5}2\overline{4}\overline{1}3 = 
s_{5} s_{4} s_{5} \; s_{1} s_{2} s_{3} s_{4} s_{5} \; s_{2} s_{3} s_{4} \; s_{1}.
$$
So
$$t^{\frac12 \ell_s(z_{\mathrm{fin}})}t_n^{\frac12\ell_d(z_{\mathrm{fin}})}
=t^{\frac{9}{2}}t_n^{\frac{3}{2}}
\qquad\hbox{and, as in Example~\ref{AWexcont},}\quad
t^{\frac12\ell_s(v_\mu^{-1})}t_n^{\frac12\ell_d(v_\mu^{-1})} = t^{\frac{21}{2}}t_n^{\frac32}.
$$
The term in $E_\mu$ coming from the CSV-tableau $T$ is 
$$t^{\frac{9}{2}-\frac{21}{2}}t_n^{\frac{3}{2}-\frac32}\ev_0\left(
\begin{array}{l}
F^{-}_{-\varepsilon_1-\varepsilon_2+4K}
\cdot  F^+_{-\varepsilon_4-\varepsilon_5+K}
\cdot  F^-_{-\varepsilon_4-\varepsilon_1+3K}
\cdot  F^{+}_{-\varepsilon_3+\frac12K} \\
\cdot  {\color{blue} t^{\frac{0}{2}} t_{n}^{\frac{0}{2}} A^{(4, 4, -4)}_{5}(\beta(2,2)_{-4},\beta(2,2)_{-5},\beta(2,2)_{5})}
 \cdot {\color{red} 1} 
\cdot  F_{-\varepsilon_{2} + \frac{1}{2}K}^{-} \\
\cdot {\color{blue} t^{\frac{0}{2}} t_{n}^{\frac{0}{2}} A^{(4,4,-4)}_{-4}(\beta(3,2)_{-4},\beta(3,2)_{-5},\beta(3,2)_5)}
\cdot {\color{red} t^{\frac{0}{2}}  F_{-\varepsilon_{1} - \varepsilon_{2} + 2K}^{+}} \\
\cdot  {\color{blue} t^{\frac{3}{2}} t_{n}^{\frac{0}{2}} A^{(1,2,-1)}_{5}(\beta(3,3)_{-1},\ldots, \beta(3,3)_2)  }
\end{array}
\right)
x_5.
$$
We show some selected steps in the derivation of this term in $E_\mu$. 
The permutation sequence of $T$ is 
$$\begin{array}{c|c|ccccc}
&\boxed{\phantom{T} } & \\
&\boxed{\phantom{T} } &\boxed{
\begin{array}{l}
s_1(21345) \\
s_0(\overline{2}1345)
\end{array}
} 
&\boxed{
\begin{array}{l}
{\color{blue}{\cancel{s_4}}}\ A^{(4,4,-4)}_5\\
{\color{blue}{\cancel{s_5}}} \\
{\color{blue}{s_4}}(\overline{4}\overline{2}153) \\
s_3(\overline{4}\overline{2}513) \\
{\color{red}{s_2}\ 1}
\\
{\color{red}{s_1}}(5\overline{4}\overline{2}13) \\
\cancel{s_0}\ F^-_{-\varepsilon_2+\frac12K}
\end{array}
}
\\
&\boxed{\phantom{T} } &\boxed{
\begin{array}{l}
s_2(\overline{2}3145) \\
\cancel{s_1}F^-_{-\varepsilon_1-\varepsilon_1+4K} \\
s_0(23145)
\end{array}
}
&\boxed{
\begin{array}{l}
{\color{blue}{s_4}}\ A_{-4}^{(4,4,-4)}
\\
{\color{blue}{s_5}} 
\\
{\color{blue}{s_4}}(5\overline{4}\overline{2}\overline{1}3) \\
{\color{red}{\cancel{s_3}}\ F^+_{-\varepsilon_1-\varepsilon_2+2K}} \\
{\color{red}{s_2}} 
\\
{\color{red}{s_1}}(\overline{2}5\overline{4}\overline{1}3) \\
s_0(25\overline{4}\overline{1}3) 
\end{array}
}
&\boxed{
\begin{array}{l}
{\color{blue}{\cancel{s_1}}\ A_5^{(1,2,-1)} }  \\
{\color{blue}{\cancel{s_2}}} \\
{\color{blue}{\cancel{s_3}}} \\
{\color{blue}{\cancel{s_4}}}  \\
{\color{blue}{\cancel{s_5}}} \\
{\color{blue}{\cancel{s_4}}}  \\
{\color{blue}{\cancel{s_3}}}  \\
{\color{blue}{\cancel{s_2}}} (25\overline{4}\overline{1}3) \\
{\color{blue}{s_1}} (52\overline{4}\overline{1}3) \\
s_0(\overline{5}2\overline{4}\overline{1}3) 
\end{array}
} 
\\
\boxed{\begin{array}{l}
s_5(\overline{2}135\overline{4}) \\
s_4(\overline{2}13\overline{4}5) \\
s_3(\overline{2}1\overline{4}35) \\
s_2(\overline{2}\overline{4}135) \\
s_1(\overline{4}\overline{2}135) \\
\cancel{s_0}F^+_{-\varepsilon_3+\frac12 K}
\end{array}
}
&\boxed{\phantom{T} }  \\
&\boxed{\phantom{T} } &\boxed{
\begin{array}{l}
s_4(23154) \\
\cancel{s_3}F^+_{-\varepsilon_4-\varepsilon_5+K} \\
s_2(21354) \\
\cancel{s_1}F^-_{-\varepsilon_4-\varepsilon_1+3K} \\
s_0(\overline{2}1354)
\end{array}
}
\\
\end{array}
$$

\noindent
$\bullet$ Start from the first entry of box $(2,1)$ which has $z=(12345)$.
Noting that $1\prec 2$, Proposition~\ref{alcwksteps} gives
$$\widehat{E}_{(0,2,3,-1,1)} = \widehat{E}^{(12345)}_{(0,2,3,-1,1)}
=\widehat{E}^{(12345)\cdot s_1}_{(2,0,3,-1,1)} 
+ \ev_{(2,0,3,-1,1)}(F^+_{\varepsilon_1-\varepsilon_2})\widehat{E}^{(12345)}_{(2,0,3,-1,1)}. 
$$
Since $s_1$ is not canceled in box $(2,1)$, we focus on the first term 
$\widehat{E}^{(12345)\cdot s_1}_{(2,0,3,-1,1)}=\widehat{E}^{(21345)}_{(2,0,3,-1,1)}$.

\noindent
$\bullet$  Now we are at the second step in box $(2,1)$ for which Proposition~\ref{alcwksteps} gives (note $2>0$)
$$\widehat{E}^{(21345)}_{(2,0,3,-1,1)}  = 
x_2\widehat{E}^{(21345)\cdot \overline{s_0}}_{(-1,0,3,-1,1)}
+\ev_{(-1,0,3,-1,1)}(F^-_{-\varepsilon_1+K})\widehat{E}^{(21345)}_{(-1,0,3,-1,1)}.
$$
Since $s_0$ is not canceled in box $(2,1)$, we focus on the first term.

\noindent
$\bullet$ The second step in box $(3,1)$ showcases what happens when an $s_i$ is canceled in $T$. 
At the second step in box $(3,1)$, the permutation in the compressed permutation sequence is $z_S =  (\overline{2}1345)\cdot s_2
=(\overline{2}3145)$ and we are at a term of the form
$x_2\widehat{E}^{(\overline{2}3145)}_{(-1,3,0,-1,1)}$. 
Applying Proposition~\ref{alcwksteps}, this expands to (note $-2\succ 1$)
\begin{align*}
x_2\widehat{E}^{(\overline{2}3145)}_{(-1,3,0,-1,1)}
&=x_2\widehat{E}^{(\overline{2}3145)\cdot s_1}_{(3,-1,0,-1,1)}
+\ev_{(3,-1,0,-1,1)}(F^-_{\varepsilon_1-\varepsilon_2}) x_2\widehat{E}^{(\overline{2}3145)}_{(3,-1,0,-1,1)}
\end{align*}
and the $\cancel{s_1}$ in box $(3,1)$ focuses on the second term, which is
\begin{align*}
\ev_{(3,-1,0,-1,1)}(F^-_{\varepsilon_1-\varepsilon_2}) x_2\widehat{E}^{(\overline{2}3145)}_{(3,-1,0,-1,1)}
&=
\ev_{(0,0,0,0,0)}(F^-_{-\varepsilon_1-\varepsilon_2+4K}) x_2\widehat{E}^{(\overline{2}3145)}_{(3,-1,0,-1,1)},
\end{align*}
since
$\ev_{(3,-1,0,-1,1)}(F^-_{\varepsilon_1-\varepsilon_2})
=\ev_0(F^-_{u_{(3,-1,0,-1,1)}^{-1}(\varepsilon_1-\varepsilon_2)})$
and
\begin{align*}
u_{(3,-1,0,-1,1)}^{-1}(\varepsilon_1-\varepsilon_2)
&=v_{(3,-1,0,-1,1)}h_{(-3,1,0,1,-1)}(\varepsilon_1-\varepsilon_2)
=v_{(3,-1,0,-1,1)}(\varepsilon_1-\varepsilon_2+(3+1)K)
\\
&=(\overline{1}253\overline{4})(\varepsilon_1-\varepsilon_2+4K)
=-\varepsilon_1-\varepsilon_2+4K.
\end{align*}

\noindent
$\bullet$ Continuing the computation step by step, if the corresponding $s_i$ appears in the CSV-tableau we focus on the first term in the
expansion from Proposition~\ref{alcwksteps}, or on the second term, if $\cancel{s_i}$ appears in the CSV-tableau. 
The steps with $\cancel{s_i}$ involve evaluating a fold function $\ev_\nu(F_{\alpha}^{\pm})$, which gets rewritten in the
form $\ev_0(F^\pm_{\beta(r,c)_j})$.

\noindent
$\bullet$ The step in box $(2,2)$ provides an illustration of how to deal with an \textcolor{blue}{around-the-end compression section}. 
At this point we are at a term of the form
$$
x_2\ev_0(F^-_{-\varepsilon_1-\varepsilon_2+4K} F^+_{-\varepsilon_4-\varepsilon_5+K}
F^-_{-\varepsilon_4-\varepsilon_1+3K} F^+_{-\varepsilon_3+\frac12K}) \widehat{E}^{(\overline{4}\overline{2}135)}_{(0,0,-2,-1,0)}
$$
which, using Proposition~\ref{0end}, is equal to (here $z_S=(\overline{4}{\overline{2}135})$ is already minimal length with respect to $W_{[5]}$ and $3<5$ so $j=4$) 
\begin{align*}
x_2\ev_0(&F^-_{-\varepsilon_1-\varepsilon_2+4K} F^+_{-\varepsilon_4-\varepsilon_5+K}
F^-_{-\varepsilon_4-\varepsilon_1+3K} F^+_{-\varepsilon_3+\frac12K})
\\
&\cdot \left( \begin{array}{l}
\ev_{(0,0,-2,1,0)}(A^{(4,4,-4)}_{-4})\widehat{E}^{ys_4s_5s_4}_{(0,0,-2,1,0)}
+\ev_{(0,0,-2,1,0)}(A^{(4,4,-4)}_{-5})\widehat{E}^{ys_5s_4}_{(0,0,-2,1,0)}
\\
+\ev_{(0,0,-2,1,0)}(A^{(4,4,-4)}_{5})\widehat{E}^{ys_4}_{(0,0,-2,1,0)}
+\ev_{(0,0,-2,1,0)}(A^{(4,4,-4)}_{4})\widehat{E}^{y}_{(0,0,-2,1,0)}
\end{array}
\right)
\end{align*}
The compression sequence $\textcolor{blue}{\cancel{s_4}\cancel{s_5}s_4}$ in box $(2,2)$ focuses on the third term which is
$$x_2\ev_0(F^-_{-\varepsilon_1-\varepsilon_2+4K} F^+_{-\varepsilon_4-\varepsilon_5+K}
F^-_{-\varepsilon_4-\varepsilon_1+3K} F^+_{-\varepsilon_3+\frac12K})
\ev_{(0,0,-2,1,0)}(A^{(4,4,-4)}_{5})\widehat{E}^{(\overline{4}\overline{2}153)}_{(0,0,-2,1,0)}.
$$

\noindent
$\bullet$
The \textcolor{red}{$0$-gap compression sections} are dealt with similarly, and so we do not illustrate them. 

\noindent
$\bullet$
The step in box $(2,2)$ corresponding to the compression section ${\color{blue} \cancel{s_{1}} \cancel{s_{2}} \cancel{s_{3}} \cancel{s_{4}}\cancel{s_{5}} \cancel{s_{4}}\cancel{s_{3}}\cancel{s_{2}}s_{1}}$ is a case where $z_S$ is not of minimal length. At this point
the term being considered is
\begin{align*}
&x_2\ev_0(F^-_{-\varepsilon_1-\varepsilon_2+4K} F^+_{-\varepsilon_4-\varepsilon_5+K}
F^-_{-\varepsilon_4-\varepsilon_1+3K} F^+_{-\varepsilon_3+\frac12K})
\ev_{(0,0,-2,1,0)}(A^{(4,4,-4)}_{5})\cdot 1 \cdot \ev_0(F^-_{-\varepsilon_2+\frac12 K})
\\
&\quad\cdot 
\ev_{(0,0,0,2,0)}(A^{(4,4,-4)}_{-4})
\ev_{(2,0,0,0,0)}(A^{(1,4,4)}_3) 
\widehat{E}^{(25\overline{4}\overline{1}3)}_{(-1,0,0,0,0)}
\end{align*}
corresponding to $(\overline{2}5\overline{41}3)\cdot\overline{s_0}$ in box $(3,2)$.

We want to evaluate  
$\widehat{E}^{(25\overline{4}\overline{1}3)}_{(-1,0,0,0,0)}$.
Here $z_S=(25\overline{4}\overline{1}3)$ is not minimal length with respect to
$W_{[2,5]}$ and has
$$z_S = (25\overline{4}\overline{1}3)
= (21345)\cdot(15\overline{4}\overline{2}3) = yv,
\quad\hbox{with $y=s_1$ and $v = s_4s_5s_3s_4s_2s_3s_4s_5s_4$}
$$
and
$$\widehat{E}^{(25\overline{4}\overline{1}3)}_{(-1,0,0,0,0)} = t^{\frac72}t_n^{\frac22} \widehat{E}^{(21345)}_{(-1,0,0,0,0)}.$$
Since $y$ has $1< {\color{blue}2} <3<4<5$ then $j=2$ and Proposition~\ref{0end} gives
\begin{align*}
\widehat{E}^{(21345)}_{(-1,0,0,0,0)}
=
\left(\begin{array}{l}
\ev_{(1,0,0,0,0)}(A^{(1,2,-1)}_{-1}) \widehat{E}^{(21345)\cdot s_1s_2s_3s_4s_5s_4s_3s_2s_1}_{(1,0,0,0,0)}
+\ev_{(1,0,0,0,0)}(A^{(1,2,-1)}_{-2}) \widehat{E}^{(21345)\cdot s_2s_3s_4s_5s_4s_3s_2s_1}_{(1,0,0,0,0)}
\\
\ev_{(1,0,0,0,0)}(A^{(1,2,-1)}_{-3}) \widehat{E}^{(21345)\cdot s_3s_4s_5s_4s_3s_2s_1}_{(1,0,0,0,0)}
+\ev_{(1,0,0,0,0)}(A^{(1,2,-1)}_{-4}) \widehat{E}^{(21345)\cdot s_4s_5s_4s_3s_2s_1}_{(1,0,0,0,0)}
\\
\ev_{(1,0,0,0,0)}(A^{(1,2,-1)}_{-5}) \widehat{E}^{(21345)\cdot s_5s_4s_3s_2s_1}_{(1,0,0,0,0)}
+\ev_{(1,0,0,0,0)}(A^{(1,2,-1)}_{5}) \widehat{E}^{(21345)\cdot s_4s_3s_2s_1}_{(1,0,0,0,0)}
\\
\ev_{(1,0,0,0,0)}(A^{(1,2,-1)}_{4}) \widehat{E}^{(21345)\cdot s_3s_2s_1}_{(1,0,0,0,0)}
+\ev_{(1,0,0,0,0)}(A^{(1,2,-1)}_{3}) \widehat{E}^{(21345)\cdot s_2s_1}_{(1,0,0,0,0)}
\\
\ev_{(1,0,0,0,0)}(A^{(1,2,-1)}_{2}) \widehat{E}^{(21345)\cdot s_1}_{(1,0,0,0,0)}
+\ev_{(1,0,0,0,0)}(A^{(1,2,-1)}_{1}) \widehat{E}^{(21345)}_{(1,0,0,0,0)}
\end{array}
\right)
\end{align*}
Focus on the term of $\widehat{E}^{(25\overline{4}\overline{1}3)}_{(-1,0,0,0,0)}$ with $\ev_{(1,0,0,0,0)}(A^{(1,2,-1)}_{5}) $ which is equal to 
(note $5>0$)
\begin{align*}
t^{\frac72}t_n &\ev_{(1,0,0,0,0)}(A^{(1,2,-1)}_{5}) \widehat{E}^{(52134)}_{(1,0,0,0,0)}
\\
&=t^{\frac72}t_n \ev_{(1,0,0,0,0)}(A^{(1,2,-1)}_{5})
(x_5  \widehat{E}^{(52134)\cdot \overline{s_0}}_{(0,0,0,0,0)}
+\ev_0(F^-_{-\varepsilon_1+\frac12 K})\widehat{E}^{(52134)}_{(0,0,0,0,0)}
\\
&= t^{\frac72}t_n \ev_{(1,0,0,0,0)}(A^{(1,2,-1)}_{5})
(x_5  \widehat{E}^{(\overline{5}21345)}_{(0,0,0,0,0)})
+\ev_0(F^-_{-\varepsilon_1+\frac12 K})t^{\frac52})
\\
&=t^{\frac72}t_n \ev_{(1,0,0,0,0)}(A^{(1,2,-1)}_{5})
(x_5  t^{\frac52}t_n^{\frac12}
+t^{\frac52} \ev_0(F^-_{-\varepsilon_1+\frac12 K}))
\\
&= \ev_{(1,0,0,0,0)}(A^{(1,2,-1)}_{5})
(x_5  t^{\frac{12}{2}}t_n^{\frac32}
+t^{\frac{12}{2}} t_n^{\frac22} \ev_0(F^-_{-\varepsilon_1+\frac12 K})).
\end{align*}

Alternatively, use the formula in~\eqref{CS1wt}:
For this term $z=z_S=(25\overline{4}\overline{1}3)$ and $\ell = 5$ and 
$v = (15\overline{4}\overline{2}3)$ and $k=v^{-1}(\ell) = v^{-1}(5) = 2$ and
$$u^{(2)} = d^{-1}_{2,1}v^{-1}d_{5,1},
\quad\hbox{where
$d^{-1}_{2,1}=s_1$ and $v^{-1}=(1\overline{4}5\overline{3}2)$ and $d_{5,1} = s_4s_3s_2s_1$.}
$$
So $u^{(2)}=s_1\cdot (1\overline{4}5\overline{3}2)\cdot s_4s_3s_2s_1 = 
s_1\cdot (21\overline{4}5\overline{3})= (12\overline{4}5\overline{3})$ and $\ell_s(u^{(2)})=4$ and 
$$\mathrm{covid}_s(z_S,2) = \ell_s(v)-\ell_s(u^{(2)})
=7-4=3.
$$
and, with $\beta(b)_S = \beta(3,3)_S = (\beta(3,3)_{-1}, \ldots, \beta(3,3)_2)$ as in 
Example~\ref{ex023m11},
$$\mathrm{cwt}_T(S) = t^{\frac12\mathrm{covid}_s(z_S,k)}A_{v(k)}^{(i,j,-i)}(\beta(b)_S)
=t^{\frac32}\ev_0(A^{(1,2,-1)}_5)(\beta(3,3)_S).$$
With $\nu=(1,0,0,0,0)$ and $z=z_S=(25\overline{4}\overline{1}3)$ so that $z_Sd_{2,1} = z_Ss_1 = (52\overline{4}\overline{1}3)$ then
\begin{align*}
&\ev_\nu(A^{(1,2,-1)}_{v(2)})t^{\frac12(\ell(v)-\ell(u^{(2)}))}E^{zd_{2,1}}_\nu 
= 
\ev_\nu(A^{(1,2,-1)}_5)t^{\frac32} E^{(52\overline{4}\overline{1}3)}_{(1,0,0,0,0)}
\\
&\qquad
= \ev_0(A^{(1,2,-1)}_5(\beta(3,3)_S) t^{\frac32} (x_5E^{(\overline{5}2\overline{4}\overline{1}3)}_{(0,0,0,0,0)}
+\ev_0(F^-_{-\varepsilon_1+\frac12 K})E^{(52\overline{4}\overline{1}3)}_{(0,0,0,0,0))})
\\
&\qquad
= \ev_0(A^{(1,2,-1)}_5(\beta(3,3)_S) t^{\frac32} (x_5t^{\frac92}t_n^{\frac32}
+\ev_0(F^-_{-\varepsilon_1+\frac12 K})t^{\frac92}t_n^{\frac22}).
\\
&\qquad
= \ev_0(A^{(1,2,-1)}_5(\beta(3,3)_S)  (x_5t^{\frac{12}{2}}t_n^{\frac32}
+\ev_0(F^-_{-\varepsilon_1+\frac12 K})t^{\frac{12}{2}}t_n^{\frac22}).
\end{align*}
\end{example}

\begin{remark}
The CSV formula coarsens the USV formula.  The term in the CSV formula corresponding to a CSV-tableau $T$ is the
sum of the terms $T'$ in the USV formula such that the USV-tableau $T'$ maps to the CSV-tableau $T$ under the conversion in~\eqref{USVtoCSV} below. 

Recalling the factorization $u_\mu^\square = s_{i_\ell}\cdots s_{i_1} = S_L\cdots S_1$
in~\eqref{Cfact}, if $S=S_j$ is a compression section in $u_\mu^\square$ let $\nu_S = S_{j-1}\cdots S_1\cdot(0,\ldots,0)$.
In other words $u_{\nu_S}^\square = S_{j-1}\cdots S_1$.
Let $T'_S$ be the product of the noncrossed out $s_i$ in $S$ in the USV-tableau $T'$ and let
$W_{\nu_S} = \langle s_i\in \{s_1, \ldots, s_n\} \ |\ s_i\nu_S = \nu_S\rangle$ be the subgroup of $W_{\mathrm{fin}}$ generated by the simple reflections that fix $\nu_S$.  The CSV-tableau $T$ corresponding to $T'$ is given by making
\begin{equation}
\hbox{$T_S$ be the minimal length representative of the coset $T'_SW_{\nu_S}$.}
\label{USVtoCSV}
\end{equation}
If $T'_S$ is a compression section in $T'$ which already satisfies~\eqref{CSVcond} then the conversion in~\eqref{USVtoCSV} gives 
$T_S = T'_S$.

To give an example, the USV-tableau
\[
T' = 
\begin{tikzpicture}[scale = 0.6, baseline = 0.6*-3.5cm]
\foreach \x\y in {1/0, 4/0}{
    \draw[thin] (\y+0.025, -\x-0.025) rectangle (0.975+\y, -\x-.975);
    \draw (0.5+\y,-\x-0.5) node (\x\y) {};
}
\foreach \x\y in {4/-1}{
    \pgfmathsetmacro{\boxwd}{5};
    \draw[thin] (\boxwd*\y-0.275, -\x-0.025) rectangle (\boxwd + \boxwd*\y - 0.325, -\x-.975);
    \draw (0.5*\boxwd + \boxwd*\y - 0.3,-\x-0.5) node (\x\y) {};
}
\foreach \x\y in {2/1, 2/2, 3/1, 3/2, 5/1}{
    \pgfmathsetmacro{\boxwd}{5};
    \draw[thin] (\boxwd*\y - \boxwd + 1.325, -\x-0.025) rectangle (\boxwd*\y + 1.325, -\x-.975);
    \draw (\boxwd*\y + 1.3 - 0.5*\boxwd,-\x-0.5) node (\x\y) {};
}
\foreach \x\y in {3/3}{
    \pgfmathsetmacro{\boxwd}{5};
    \pgfmathsetmacro{\newboxwd}{6.2};
    \draw[thin] (\boxwd*\y - \boxwd + 1.325, -\x-0.025) rectangle (\boxwd*\y - \boxwd + \newboxwd + 1.325 , -\x-.975);
    \draw (\boxwd*\y - \boxwd + 1.3 + 0.5*\newboxwd,-\x-0.5) node (\x\y) {};
}
\node at (21) {$ s_{1}s_{0}$};
\node at (22) {$ {\color{blue}s_{4} \cancel{s_{5}} {s_{4}}} s_{3} {\color{red}s_{2} s_{1}} \cancel{s_{0}}$};
\node at (31) {$ s_{2} \cancel{s_{1}} s_{0}$};
\node at (32) {$ {\color{blue} s_{4}s_{5}s_{4}} {\color{red} \cancel{s_{3}}s_{2}s_{1}} s_{0}$};
\node at (33) {$ {\color{blue} {s_{1}} {s_{2}} \cancel{s_{3}} {s_{4}}{s_{5}} {s_{4}}{s_{3}}\cancel{s_{2}}s_{1}} s_{0}$};
\node at (4-1) {$ s_{5}s_{4}s_{3}s_{2}s_{1} \cancel{s_{0}}$};
\node at (51) {$ s_{4} \cancel{s_{3}} s_{2} \cancel{s_{1}} s_{0}$};
\draw (-0.15, -0.75) -- (-0.15, -6.25);
\draw (1.15, -0.75) -- (1.15, -6.25);
\end{tikzpicture}
\]
from Example~\ref{AWexample} is not a CSV-tableau because the around-the-end (blue) compression sections in boxes $(2, 2)$ and $(3, 3)$
do not satisfy~\eqref{CSVcond}.
The corresponding CSV-tableaux is 
\[
T = 
\begin{tikzpicture}[scale = 0.6, baseline = 0.6*-3.5cm]
\foreach \x\y in {1/0, 4/0}{
    \draw[thin] (\y+0.025, -\x-0.025) rectangle (0.975+\y, -\x-.975);
    \draw (0.5+\y,-\x-0.5) node (\x\y) {};
}
\foreach \x\y in {4/-1}{
    \pgfmathsetmacro{\boxwd}{5};
    \draw[thin] (\boxwd*\y-0.275, -\x-0.025) rectangle (\boxwd + \boxwd*\y - 0.325, -\x-.975);
    \draw (0.5*\boxwd + \boxwd*\y - 0.3,-\x-0.5) node (\x\y) {};
}
\foreach \x\y in {2/1, 2/2, 3/1, 3/2, 5/1}{
    \pgfmathsetmacro{\boxwd}{5};
    \draw[thin] (\boxwd*\y - \boxwd + 1.325, -\x-0.025) rectangle (\boxwd*\y + 1.325, -\x-.975);
    \draw (\boxwd*\y + 1.3 - 0.5*\boxwd,-\x-0.5) node (\x\y) {};
}
\foreach \x\y in {3/3}{
    \pgfmathsetmacro{\boxwd}{5};
    \pgfmathsetmacro{\newboxwd}{6.2};
    \draw[thin] (\boxwd*\y - \boxwd + 1.325, -\x-0.025) rectangle (\boxwd*\y - \boxwd + \newboxwd + 1.325 , -\x-.975);
    \draw (\boxwd*\y - \boxwd + 1.3 + 0.5*\newboxwd,-\x-0.5) node (\x\y) {};
}
\node at (21) {$ s_{1}s_{0}$};
\node at (22) {$ {\color{blue} \cancel{s_{4}} \cancel{s_{5}} \cancel{s_{4}}} s_{3} {\color{red}s_{2} s_{1}} \cancel{s_{0}}$};
\node at (31) {$ s_{2} \cancel{s_{1}} s_{0}$};
\node at (32) {$ {\color{blue} s_{4}s_{5}s_{4}} {\color{red} \cancel{s_{3}}s_{2}s_{1}} s_{0}$};
\node at (33) {$ {\color{blue} \cancel{s_{1}} \cancel{s_{2}} \cancel{s_{3}} \cancel{s_{4}} \cancel{s_{5}} \cancel{s_{4}} \cancel{s_{3}} {s_{2}}s_{1}} s_{0}$};
\node at (4-1) {$ s_{5}s_{4}s_{3}s_{2}s_{1} \cancel{s_{0}}$};
\node at (51) {$ s_{4} \cancel{s_{3}} s_{2} \cancel{s_{1}} s_{0}$};
\draw (-0.15, -0.75) -- (-0.15, -6.25);
\draw (1.15, -0.75) -- (1.15, -6.25);
\end{tikzpicture}
\]
In this example, $T'_{{\color{blue}s_{4} \cancel{s_{5}} {s_{4}}}}$ produces $T_{\color{blue}\cancel{s_{4}} \cancel{s_{5}} \cancel{s_{4}}}$
since 
$s_{4}s_{4} = 1$, and 
\[
T'_{{\color{blue} {s_{1}} {s_{2}} \cancel{s_{3}} {s_{4}}{s_{5}} {s_{4}}{s_{3}}\cancel{s_{2}}s_{1}}}
\qquad\hbox{produces}\qquad
T_{{\color{blue}
  \cancel{s_{1}} \cancel{s_{2}} \cancel{s_{3}} \cancel{s_{4}}\cancel{s_{5}} \cancel{s_{4}}\cancel{s_{3}}s_{2}s_{1}
}  }
\]
since, for this compression section, $\nu_S = (1, 0, 0, 0, 0)$ and $W_{\nu_S}= \langle s_2,s_3,s_4,s_5\rangle$ and
$${s_{1}} {s_{2}}  {s_{4}}{s_{5}} {s_{4}}{s_{3}}s_{1}W_{\nu_S} = {s_{1}} {s_{2}}s_{1}W_{\nu_S} = s_{2}s_{1}s_{2}W_{\nu_S} = s_{2}s_{1}W_{\nu_S}.$$
\qed
\end{remark}

\subsection{Across-the-0-gap compression weights}\label{0-gap}

Proposition~\ref{0gap} provides the weight formula for $0$-gap compression sequences.  
It is a translation of~\cite[Lemma 4.2]{GR21} to the Koornwinder case. Example~\ref{0gapexample} illustrates 
Proposition~\ref{0gap}. 

Suppose that $i, m\in \{1, \ldots, n\}$ with $i< m-1$ and let $W_{[i,m-1]}$ be the subgroup of $W_{\mathrm{fin}}$ generated
by $s_i, \ldots, s_{m-2}$.  An element $y \in W_{\mathrm{fin}}$ is the minimal length representative of the coset $yW_{[i,m-1]}$ if and only if
$$y(i) \prec y(i+1) \prec \cdots \prec y(m-1).$$ 
where $\prec$ is the order on $\{1, \ldots, n, -n, \ldots, -1\}$ given by
$1\prec 2\prec \cdots \prec n \prec -n \prec \cdots \prec -2\prec -1$ (see Remark~\ref{sgnorder}).
The coroot sequence for the word $s_{m-1}\cdots s_i$ is the coroot sequence $\beta = (\beta_m, \ldots, \beta_{i+1})$ given by
\begin{equation}
\beta_m = \varepsilon_{m-1}-\varepsilon_m, \quad\ldots\ , \quad
\beta_{i+2} = \varepsilon_{i+1}-\varepsilon_m, \quad \beta_{i+1} = \varepsilon_i-\varepsilon_m.
\label{0gapcrtseq}
\end{equation}

\begin{prop} \label{0gap} 
Let $i, m\in \{1, \ldots, n\}$ with $i< m-1$.
Let 
\begin{align*}
\mu &= (\mu_1, \ldots, \mu_{i-1}, 0, \ldots, 0, \mu_m, \mu_{m+1}, \ldots, \mu_n)\quad\hbox{with $\mu_m\in \ZZ_{>0}$ \quad and let } \\
\nu &= (\mu_1, \ldots, \mu_{i-1}, \mu_m, 0, \ldots, 0, \mu_{m+1}, \ldots, \mu_n),
\quad\hbox{so that $\mu =s_{m-1}\cdots s_{i+1}s_i\nu$.}
\end{align*}
Let $y \in W_{\mathrm{fin}}$ such that $y(i) \prec y(i+1) \prec \cdots \prec y(m-1)$
and let 
$$j \in \{i, i+1, \ldots, m\}\quad\hbox{be such that}\quad
y(j) \prec {\color{blue}y(m)} \prec y(j+1)
$$
with $j=i$ if ${\color{blue} y(m)}\prec y(i)$ and $j=m$ if $y(m-1)\prec {\color{blue} y(m)}$.
Let $A^{(i,j,m)}_\ell=A^{(i,j,m)}_\ell(\beta)$ as defined in~\eqref{AEwts} where
$\beta = (\beta_m, \ldots, \beta_{i+1})$ is the coroot sequence given in~\eqref{0gapcrtseq}.
Then
$$\widehat{E}^y_\mu 
= 
\sum_{\ell = i}^m \ev_\nu(A^{(i,j,m)}_\ell) \widehat{E}^{ys_{\ell-1}\cdots s_i}_\nu,
\quad\hbox{where}\quad
A^{(i,j,m)}_\ell = \begin{cases}
t^{-\frac12(m-\ell-1)}F^+_{\varepsilon_i-\varepsilon_m}, &\hbox{if $\ell\in \{i,\ldots, j-1\}$,}  \\
t^{-\frac12(m-\ell-1)}F^-_{\varepsilon_i-\varepsilon_m}, &\hbox{if $\ell\in \{j, \ldots, m-1\}$,}  \\
1, &\hbox{if $\ell=m$.}
\end{cases}
$$
\end{prop}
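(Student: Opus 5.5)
The plan is to expand $\widehat{E}^y_\mu$ one simple reflection at a time using Proposition~\ref{alcwksteps}(b), and then collapse the resulting $2^{m-i}$ terms with the stabilizer identity~\eqref{stabfactor}. Write $\mu = s_{m-1}\cdots s_i\nu$ and set $\nu^{(\ell)} = s_{\ell-1}\cdots s_i\nu$ for $\ell\in\{i,\ldots,m\}$. Then $\nu^{(\ell)}$ has $\mu_m$ in position $\ell$, zeros in positions $i,\ldots,\ell-1,\ell+1,\ldots,m$, so $\nu^{(m)}=\mu$ and $\nu^{(i)}=\nu$. Passing from $\nu^{(\ell+1)}$ to $\nu^{(\ell)}$ uses $\tau_\ell$ with $0=\mu_\ell<\mu_{\ell+1}=\mu_m$, so Proposition~\ref{alcwksteps}(b) applies at each step. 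The fold function produced at this step is $\ev_{\nu^{(\ell)}}(F^\pm_{\alpha_\ell})$. By the $W$-equivariance used in the proof of Proposition~\ref{prop:boxgreedycoroot} (equivalently, by moving $\tau_{\ell-1}\cdots\tau_i$ past $Y^{\alpha_\ell}$ with~\eqref{taupY}), this equals $\ev_\nu(F^\pm_{s_i\cdots s_{\ell-1}\alpha_\ell}) = \ev_\nu(F^\pm_{\varepsilon_i-\varepsilon_m})$. So every fold factor in this section is the same function, $F^\pm_{\beta_{i+1}}$, which explains why only $\beta_{i+1}$ appears in~\eqref{AEwts}.

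I would then prove by descending induction on $\ell$ a formula for $\widehat{E}^{x}_{\nu^{(\ell+1)}}$, where $x$ is $y$ times a product of some of $s_{m-1},\ldots,s_{\ell+1}$. The key point is the following. If at step $\ell$ one crosses out $s_\ell$, the term becomes $F^\pm\,\widehat{E}^{x}_{\nu^{(\ell)}}$. Since $\nu^{(\ell)}$ has zeros in positions $\ell+1,\ldots,m$, it is fixed by $s_{\ell+1},\ldots,s_{m-1}$. Thus every later crossed or uncrossed $s_k$ with $k<\ell$, and the earlier choices above $\ell$, interact with the stabilizer through~\eqref{stabfactor}: $\widehat{E}^{xs_k}_{\nu'}=t^{\pm\frac12}\widehat{E}^{x}_{\nu'}$ when $s_k\nu'=\nu'$. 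The cleaner way to organise this is the following. After the first crossed-out letter, say at position $\ell$, the remaining word $s_{\ell-1}\cdots s_i$ must still be applied to $\nu^{(\ell)}$, which does move $\mu_m$. So instead I would group by the final permutation $yd_{\ell,i}$ reached and sum all alcove-walk contributions landing there, exactly as in~\cite[Lemma 4.2]{GR21}.

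Concretely, I would first treat the two-step case $m=i+2$ by hand. I would verify the identity using $\ev$ relations like Example~\ref{CFshrink}, namely $\ev_\nu(Y_{k+1})=t^{-1}\ev_\nu(Y_k)$ on the block of equal (zero) entries, together with $C_\alpha=t^{1/2}+F^+_\alpha=t^{-1/2}+F^-_\alpha$ from~\eqref{bigC}. Then I would run the induction on $m-i$, peeling off $s_{m-1}$. The uncrossed branch gives $\widehat{E}^{ys_{m-1}}_{\nu^{(m-1)}}$, to which the induction hypothesis applies with $y$ replaced by $ys_{m-1}$; its relative position index $j$ changes in a controlled way. The crossed branch gives $\ev(F^\pm)\widehat{E}^y_{\nu^{(m-1)}}$, and further expansions of this term telescope via the stabilizer factors $t^{\pm\frac12}$. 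Here the sign $\pm$ in $F^\pm$ is determined by whether $y(\ell)\prec y(m)$ or $\succ$, since the walk keeps the letter $y(m)$ moving left through $y(m-1),\ldots,y(i)$. This is exactly why the cutoff is at $j$.

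The main obstacle is the bookkeeping of the powers $t^{-\frac12(m-\ell-1)}$. I must check that the contributions of walks that cross out a letter and then continue collapse, via~\eqref{stabfactor} and the $\ev_\nu$ identities on the zero block, to a single monomial factor rather than leaving a residual sum of products of $C$'s. I would attack this first in the $m=i+2$ case and use the resulting identity, of the form $\ev_\nu(C\,F^\pm)=t^{-\frac12}\ev_\nu(F^\pm)$ as in Example~\ref{CFshrink}, as the inductive lemma.
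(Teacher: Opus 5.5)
Your skeleton matches the paper's proof. It inducts on $m-i$ and peels off $s_{m-1}$ with Proposition~\ref{alcwksteps}(b). It applies the induction hypothesis to $ys_{m-1}$ on the uncrossed branch, and to $y$ on the crossed branch, where $y(m-1)$ is the top entry so all signs are $+$. It then rewrites $\widehat{E}^{ys_{m-1}s_{\ell-1}\cdots s_i}_\nu=t^{\mp\frac12}\widehat{E}^{ys_{\ell-1}\cdots s_i}_\nu$ by \eqref{stabfactor}, since $\nu_{m-1}=\nu_m=0$.

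However, the claim you use to explain why only $\beta_{i+1}$ survives is false. Your own formula gives $\ev_{\nu^{(\ell)}}(F^\pm_{\alpha_\ell})=\ev_\nu(F^\pm_{s_i\cdots s_{\ell-1}\alpha_\ell})=\ev_\nu(F^\pm_{\varepsilon_i-\varepsilon_{\ell+1}})$, because $s_i\cdots s_{\ell-1}(\varepsilon_\ell-\varepsilon_{\ell+1})=\varepsilon_i-\varepsilon_{\ell+1}$. Only the first step $\ell=m-1$ produces $\varepsilon_i-\varepsilon_m$. These are genuinely different values: $\ev_\nu(Y_{k+1})=t^{-1}\ev_\nu(Y_k)$ on the zero block gives $\ev_\nu(Y_i^{-1}Y_{\ell+1})=t^{m-\ell-1}\ev_\nu(Y_i^{-1}Y_m)$. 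In particular, the induction hypothesis for the shorter section $s_{m-2}\cdots s_i$ hands you $F^\pm_{\varepsilon_i-\varepsilon_{m-1}}$, not $F^\pm_{\varepsilon_i-\varepsilon_m}$.

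This is not cosmetic: with a single root, the computation you plan to do first does not close up. Take $m=i+2$ and $y(m)\prec y(i)$. The coefficient of $\widehat{E}^y_\nu$ is $t^{-\frac12}F^-_{\varepsilon_i-\varepsilon_{i+1}}+F^-_{\varepsilon_i-\varepsilon_{i+2}}F^+_{\varepsilon_i-\varepsilon_{i+1}}$, evaluated at $\nu$. If both roots were the same $\beta$, this would be $F^-_\beta(t^{-\frac12}+F^+_\beta)\neq t^{-\frac12}F^-_\beta$. Likewise $\ev_\nu(C_\beta F^\pm_\beta)\neq t^{-\frac12}\ev_\nu(F^\pm_\beta)$, so your proposed inductive lemma fails for a single root.

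The missing ingredient is the two-root identity
\[
\ev_\nu\big(C_{\varepsilon_i-\varepsilon_m}F^+_{\varepsilon_i-\varepsilon_{m-1}}\big)=t^{-\frac12}\ev_\nu\big(F^+_{\varepsilon_i-\varepsilon_m}\big).
\]
It holds because $t\,\ev_\nu(Y_m)=\ev_\nu(Y_{m-1})$ makes the numerator $1-tY_i^{-1}Y_m$ of $C_{\varepsilon_i-\varepsilon_m}$ cancel the denominator of $F^+_{\varepsilon_i-\varepsilon_{m-1}}$. Together with $C=t^{\frac12}+F^+=t^{-\frac12}+F^-$, it yields
\[
\ev_\nu\big(F^\pm_{\varepsilon_i-\varepsilon_{m-1}}+t^{\frac12}F^-_{\varepsilon_i-\varepsilon_m}F^+_{\varepsilon_i-\varepsilon_{m-1}}\big)=\ev_\nu\big(F^\pm_{\varepsilon_i-\varepsilon_m}\big).
\]
This is exactly what merges $t^{-\frac12}A^{(i,j,m-1)}_\ell$ from the uncrossed branch with $F^-_{\varepsilon_i-\varepsilon_m}A^{(i,m-1,m-1)}_\ell$ from the crossed branch into $A^{(i,j,m)}_\ell$. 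When $y(m-1)\prec y(m)$, the first fold factor is $F^+_{\varepsilon_i-\varepsilon_m}$ and the stabilizer factor is $t^{\frac12}$. The merge is then $(t^{\frac12}+F^+_{\varepsilon_i-\varepsilon_m})F^+_{\varepsilon_i-\varepsilon_{m-1}}=C_{\varepsilon_i-\varepsilon_m}F^+_{\varepsilon_i-\varepsilon_{m-1}}$, which is the same identity. With the fold factors corrected and this identity as your inductive lemma, your plan becomes the paper's proof.
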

\begin{proof}

The proof is by induction on $m - i$.  
The base case $m = i+1$ is Proposition~\ref{alcwksteps}(b):
$$
\widehat{E}^y_\mu = \begin{cases}
\widehat{E}^{ys_{m-1}}_\nu+ \ev_\nu(F_{\varepsilon_{m-1}-\varepsilon_m}^-) \widehat{E}^y_\nu, &\hbox{if $y(m)\prec y(m-1)$,} \\
\widehat{E}^{ys_{m-1}}_\nu+ \ev_\nu(F_{\varepsilon_{m-1}-\varepsilon_m}^+) \widehat{E}^y_\nu, &\hbox{if $y(m-1)\prec y(m)$.} 
\end{cases}
$$
The induction step is as follows. By Proposition~\ref{alcwksteps}(b),
\begin{equation*}
\widehat{E}^y_\mu
= \widehat{E}^{ys_{m-1}}_{s_{m-1}\mu} 
+ \ev_{s_{m-2}\cdots s_i\nu}(F^-_{\varepsilon_{m-1}-\varepsilon_m})\widehat{E}^y_{s_{m-1}\mu}
= \widehat{E}^{ys_{m-1}}_{s_{m-1}\mu} 
+ \ev_\nu(F^-_{\varepsilon_i-\varepsilon_m})\widehat{E}^y_{s_{m-1}\mu}.
\end{equation*}
where the second equality is from
$$
\ev_{s_{m-1}\mu}(F^-_{\varepsilon_{m-1}-\varepsilon_m})
=\ev_{s_{m-2}\cdots s_i\nu}(F^-_{\varepsilon_{m-1} - \varepsilon_m})
= \ev_\nu(s_i\cdots s_{m-2}F^-_{\varepsilon_{m-1} - \varepsilon_m})
= \ev_\nu(F^-_{\varepsilon_i - \varepsilon_m}).
$$
Since
$s_{m-1}\mu = (\mu_1,\ldots, \mu_{i-1}, 0, \ldots, 0, \mu_m, 0, \mu_{m+1}, \ldots, \mu_n)$ with $\mu_m>0$ and
$$
ys_{m-1}(i) \prec \cdots \prec   ys_{m-1}(j) \prec {\color{blue} ys_{m-1}(m-1)} \prec ys_{m-1}(j+1)
\prec \cdots \prec ys_{m-1}(m-2). 
$$
Then, by induction,
\begin{align*}
\widehat{E}^{ys_{m-1}}_{s_{m-1}\mu} 
&= \widehat{E}_\nu^{ys_{m-1}s_{m-2}\cdots s_i}  
+\sum_{\ell = i}^{m-2}
\ev_\nu(A_\ell^{(i,j,m-1)})
\widehat{E}_\nu^{ys_{m-1}s_{\ell-1}\cdots s_i} \qquad\hbox{and}\\
\ev_\nu(F^-_{\varepsilon_i-\varepsilon_m})\widehat{E}^y_{s_{m-1}\mu}
&= \ev_\nu(F^-_{\varepsilon_i-\varepsilon_m}) \widehat{E}_\nu^{ys_{m-2}\cdots s_i}
+\sum_{\ell = i}^{m-2}
\ev_\nu(F^-_{\varepsilon_i-\varepsilon_m} A_\ell^{(i,m-1,m-1)})
\widehat{E}_\nu^{ys_{\ell-1}\cdots s_i},
\end{align*}
since $y(i)\prec \cdots \prec y(m-2)\prec {\color{blue}  y(m-1)}$.

If $\ell\in \{i, \ldots, m-2\}$ then
$\widehat{E}_\nu^{ys_{m-1}s_{\ell-1}\cdots s_i} 
= \widehat{E}_\nu^{ys_{\ell-1}\cdots s_is_{m-1}}
= T_{ys_{\ell-1}\cdots s_is_{m-1}} \widehat{E}_\nu
= T_{ys_{\ell-1}\cdots s_i}T_{s_{m-1}}^{-1}\widehat{E}_\nu$,
and since $\nu_{m-1}=\nu_m$ then $T_{s_{m-1}}^{-1}\widehat{E}_\nu = t^{-\frac12}\widehat{E}_\nu$. 
So
\begin{align*}
\widehat{E}_\nu^{ys_{m-1}s_{\ell-1}\cdots s_i} 
= T_{zs_{\ell-1}\cdots s_i}t^{-\frac12}\widehat{E}_\nu
= t^{-\frac12}\widehat{E}_\nu^{ys_{\ell-1}\cdots s_i}
\end{align*}
and
\begin{align*}
&\widehat{E}^y_\mu
= \widehat{E}^{ys_{m-1}}_{s_{m-1}\mu} 
+ \ev_\nu(F^-_{\varepsilon_i-\varepsilon_m})\widehat{E}^y_{s_{m-1}\mu} \\
&= \widehat{E}_\nu^{ys_{m-1}s_{m-2}\cdots s_i}  +\ev_\nu(F^-_{\varepsilon_i-\varepsilon_m}) \widehat{E}_\nu^{ys_{m-2}\cdots s_i} 
+\sum_{\ell=i}^{m-2} 
\ev_\nu\big(t^{-\frac12}A_\ell^{(i,j,m-1)} +F^-_{\varepsilon_i-\varepsilon_m} A_\ell^{(i,m-1,m-1)} \big)
\widehat{E}_\nu^{ys_{\ell-1}\cdots s_i}.
\end{align*}
If $\ell\in \{i, \ldots, j-1\}$ then $A_\ell^{(i,j,m-1)} = A_\ell^{(i,m-1,m-1)}
= t^{-\frac12(m-\ell-2)} F^+_{\varepsilon_i-\varepsilon_{m-1}}$
and
\begin{align*}
\ev_\nu(t^{-\frac12}&A_\ell^{(i,j,m-1)} +F^-_{\varepsilon_i-\varepsilon_m} A_\ell^{(i,m-1,m-1)})
= t^{-\frac12(m-\ell-1)} \ev_\nu(
F^+_{\varepsilon_i-\varepsilon_{m-1}} + t^{\frac12} F^-_{\varepsilon_i-\varepsilon_m} F^+_{\varepsilon_i-\varepsilon_{m-1}} ).
\end{align*}
If $\ell\in \{j, \ldots, m-2\}$ then $A_\ell^{(i,j,m-1)} =t^{-\frac12(m-\ell-2)} F^-_{\varepsilon_i-\varepsilon_{m-1}}$ and
$A_\ell^{(i,j,m-1)} =t^{-\frac12(m-(\ell-2) ) } F^+_{\varepsilon_i-\varepsilon_{m-1}}$
and
\begin{align*}
\ev_\nu(t^{-\frac12}&A_\ell^{(i,j,m-1)} +F^-_{\varepsilon_i-\varepsilon_m} A_\ell^{(i,m-1,m-1)})
= t^{-\frac12(m-\ell-1)} \ev_\nu(F^-_{\varepsilon_i-\varepsilon_{m-1}}
+t^{\frac12}F^-_{\varepsilon_i-\varepsilon_m} F^+_{\varepsilon_i-\varepsilon_{m-1}}))
\end{align*}
Since $\nu_{m-1}=\nu_m$ then $\ev_\nu(Y_m) 
= \ev_\nu(T_{m-1}^{-1}Y_{m-1}T_{m-1}^{-1}) = \ev_\nu(t^{-1}Y_{m-1})$. So
\begin{align*}
\ev_\nu&(F^-_{\varepsilon_i-\varepsilon_{m-1}}
+t^{\frac12}F^-_{\varepsilon_i-\varepsilon_m}F^+_{\varepsilon_i - \varepsilon_{m-1}}) 
= \ev_\nu\Big(
\frac{t^{-\frac12}(1-t)Y_i^{-1}Y_{m-1}}{(1-Y_i^{-1}Y_{m-1})}+
t^{\frac12}
\frac{t^{-\frac12}(1-t)Y_i^{-1}Y_m}{(1-Y_i^{-1}Y_m)}
\frac{t^{-\frac12}(1-t)}{(1-Y_i^{-1}Y_{m-1})}
\Big)
\\
&= \ev_\nu\Big(
\frac{t^{-\frac12}(1-t)Y_i^{-1}tY_m}{(1-Y_i^{-1}Y_{m-1})}+
\frac{(1-t)Y_i^{-1}Y_m}{(1-Y_i^{-1}Y_m)}
\frac{t^{-\frac12}(1-t)}{(1-Y_i^{-1}Y_{m-1})}
\Big)
=\ev_\nu(F^-_{\varepsilon_i-\varepsilon_m})
\end{align*}
and
\begin{align*}
\ev_\nu(&F^+_{\varepsilon_i - \varepsilon_{m-1}}
+t^{\frac12}F^-_{\varepsilon_i-\varepsilon_m}F^+_{\varepsilon_i - \varepsilon_{m-1}}) 
= \ev_\nu\Big(
\frac{t^{-\frac12}(1-t)}{(1-Y_i^{-1}Y_{m-1})}+
t^{\frac12}
\frac{t^{-\frac12}(1-t)Y_i^{-1}Y_m}{(1-Y_i^{-1}Y_m)}
\frac{t^{-\frac12}(1-t)}{(1-Y_i^{-1}Y_{m-1})}
\Big)
\\
&= \ev_\nu\Big(
\frac{t^{-\frac12}(1-t)}{(1-Y_i^{-1}Y_{m-1})}\Big(
\frac{1-Y_i^{-1}Y_m+Y_i^{-1}Y_m -tY_i^{-1}Y_m}{(1-Y_i^{-1}Y_m)}
\Big)\Big)
=  \ev_\nu(F^+_{\varepsilon_i-\varepsilon_m}).
\end{align*}
So
$\ev_\nu(t^{-\frac12}A_\ell^{(i,j,m-1)} +F^-_{\varepsilon_i-\varepsilon_m} A_\ell^{(i,m-1,m-1)})
= \ev_\nu(A_\ell^{(i,j,m)})$ and the result follows.
\end{proof}

\begin{example} \label{0gapexample}
Let $\mu = (2, 0, 0, 0,4,6)$ 
and $\nu = (2,4,0,0,0,6)$. \hfil\break
In the case that $y(2)\prec y(3)\prec y(4)\prec {\color{blue} y(5)}$ then
\begin{align*}
\widehat{E}^y_\mu 
&=  \ev_\nu(A^{(2,5,5)}_5)\widehat{E}^{ys_4s_3s_2}_\nu
+ \ev_\nu(A^{(2,5,5)}_4) \widehat{E}^{ys_3s_2}_\nu
+ \ev_\nu(A^{(2,5,5)}_3)\widehat{E}^{ys_2}_\nu 
+ \ev_\nu(A^{(2,5,5)}_2)\widehat{E}^{y}_\nu \\
&= \widehat{E}^{ys_4s_3s_2}_\nu
+ \ev_\nu(F^+_{\varepsilon_2-\varepsilon_5}) \widehat{E}^{ys_3s_2}_\nu
+ \ev_\nu(t^{-\frac12} F^+_{\varepsilon_2-\varepsilon_5})\widehat{E}^{ys_2}_\nu 
+ \ev_\nu(t^{-\frac22}F^+_{\varepsilon_2-\varepsilon_5} )\widehat{E}^{y}_\nu.
\end{align*}
In the case that $y(2)\prec y(3)\prec {\color{blue}y(5)} \prec y(4)$ then
\begin{align*}
\widehat{E}^y_\mu 
&=  \ev_\nu(A^{(2,4,5)}_5)\widehat{E}^{ys_4s_3s_2}_\nu
+ \ev_\nu(A^{(2,4,5)}_4) \widehat{E}^{ys_3s_2}_\nu
+ \ev_\nu(A^{(2,4,5)}_3)\widehat{E}^{ys_2}_\nu 
+ \ev_\nu(A^{(2,4,5)}_2)\widehat{E}^{y}_\nu \\
&= \widehat{E}^{ys_4s_3s_2}_\nu
+ \ev_\nu(F^-_{\varepsilon_2-\varepsilon_5}) \widehat{E}^{ys_3s_2}_\nu
+ \ev_\nu( t^{-\frac12}F^+_{\varepsilon_2-\varepsilon_5} )\widehat{E}^{ys_2}_\nu 
+ \ev_\nu( t^{-1}F^+_{\varepsilon_2-\varepsilon_5} )\widehat{E}^{y}_\nu.
\end{align*}
In the case that $y(2)\prec {\color{blue} y(5)}\prec y(3)\prec y(4)$ then
\begin{align*}
\widehat{E}^y_\mu 
&=  \ev_\nu(A^{(2,3,5)}_5)\widehat{E}^{ys_4s_3s_2}_\nu
+ \ev_\nu(A^{(2,3,5)}_4) \widehat{E}^{ys_3s_2}_\nu
+ \ev_\nu(A^{(2,3,5)}_3)\widehat{E}^{ys_2}_\nu 
+ \ev_\nu(A^{(2,3,5)}_2)\widehat{E}^{y}_\nu \\
&= \widehat{E}^{ys_4s_3s_2}_\nu
+ \ev_\nu(F^-_{\varepsilon_2-\varepsilon_5}) \widehat{E}^{ys_3s_2}_\nu
+ \ev_\nu(t^{-\frac12}F^-_{\varepsilon_2-\varepsilon_5})\widehat{E}^{ys_2}_\nu 
+ \ev_\nu( t^{-1}F^+_{\varepsilon_2-\varepsilon_5} )\widehat{E}^{y}_\nu.
\end{align*}
In the case that ${\color{blue} y(5)} \prec y(2) \prec y(3) \prec y(4)$ then
\begin{align*}
\widehat{E}^y_\mu 
&=  \ev_\nu(A^{(2,2,5)}_5)\widehat{E}^{ys_4s_3s_2}_\nu
+ \ev_\nu(A^{(2,2,5)}_4) \widehat{E}^{ys_3s_2}_\nu
+ \ev_\nu(A^{(2,2,5)}_3)\widehat{E}^{ys_2}_\nu 
+ \ev_\nu(A^{(2,2,5)}_2)\widehat{E}^{y}_\nu \\
&= \widehat{E}^{ys_4s_3s_2}_\nu
+ \ev_\nu(F^-_{\varepsilon_2-\varepsilon_5}) \widehat{E}^{ys_3s_2}_\nu
+ \ev_\nu(t^{-\frac12}F^-_{\varepsilon_2-\varepsilon_5})\widehat{E}^{ys_2}_\nu 
+ \ev_\nu( t^{-1}F^-_{\varepsilon_2-\varepsilon_5} )\widehat{E}^{y}_\nu.
\end{align*}
\end{example}

\subsection{Around-the-end compression weights for $y(i)\prec y(i+1)$}
\label{section:around_the_end_z1}

Proposition~\ref{0end} provides the around-the-end compression weights for the case when
$y$ has $0 < y(i) \prec y(i+1)$. Proposition~\ref{0end} 
will be proved in Section~\ref{subsubsect: 0end} after deriving some
computational simplification identities in Section~\ref{subsect: simplication}. Building on Proposition~\ref{0end}, the general case will be treated in 
Proposition~\ref{proposition:GeneralCaseAroundTheEnd} in Section~\ref{subsect: around the end general}.

Suppose that $i\in \{1, \ldots, n\}$ and let $W_{[i+1,-(i+1)]}$ be the subgroup of $W_{\mathrm{fin}}$ generated
by $s_{i+1}, \ldots, s_n$.  An element $y \in W_{\mathrm{fin}}$ is the minimal length representative of the coset
$yW_{[i+1,-(i+1)]}$ if and only if $$0 < y(i+1) < \cdots < y(n).$$ 
The weights $A^{(i)}_{\pm\ell}$ appearing in Proposition~\ref{0end} are the $A^{(i)}_{\pm\ell}(\beta)$ of~\eqref{Adef1A}-\eqref{Adef1C}
for the coroot sequence $\beta = (\beta_{-i}, \ldots, \beta_{-n}, \beta_n, \ldots \beta_{i+1})$ for the word $s_i\cdots s_n\cdots s_i$, given by
\begin{align}
\beta_{-i} = \varepsilon_i+\varepsilon_{i+1}, \ \ldots,\ \beta_{-(n-1)}=\varepsilon_i+\varepsilon_n,
\ \beta_{-n} = \varepsilon_i, \ \beta_n = \varepsilon_i-\varepsilon_n, \ \ldots\ \beta_{i+1}=\varepsilon_i-\varepsilon_{i+1}.
\label{0endcrtseq}
\end{align}

\begin{prop}  \label{0end}
Let $i\in \{1, \ldots, n\}$. Let 
\begin{align*}
\mu &= (\mu_1, \ldots, \mu_{i-1}, -\mu_i, 0, \ldots, 0)
\quad\hbox{with $\mu_1,\ldots, \mu_{i-1} \in \ZZ$ and $\mu_i \in \ZZ_{>0}$, \quad and let} \\
\nu &= (\mu_1, \ldots, \mu_{i-1}, \mu_i, 0, \ldots, 0)
\quad\hbox{so that}\quad
\mu = s_is_{i+1}\cdots s_n \cdots s_{i+1}s_i\nu.
\end{align*}
Let $y\in W_{\mathrm{fin}}$ with $0 < y(i) < y(i+1) < y(i+2) < \cdots < y(n)$ so that $y$ is minimal length in the coset $yW_{[i+1,-(i+1)]}$
and $y(i)<y(i+1)$.
Let $A^{(i)}_{\pm\ell}= A^{(i)}_{\pm\ell}(\beta_{-i}, \ldots, \beta_{-n}, \beta_n, \ldots \beta_{i+1})$ as defined in~\eqref{Adef1A}-\eqref{Adef1C}
where $(\beta_{-i}, \ldots, \beta_{-n}, \beta_n, \ldots \beta_{i+1})$ is the coroot sequence given in~\eqref{0endcrtseq}.
Then
\begin{align*}
\widehat{E}^y_\mu
&= 
\ev_\nu(A^{(i)}_{i})\widehat{E}_\nu^{y}
+
\sum_{\ell=i+1}^n \ev_\nu(A^{(i)}_{\ell})\widehat{E}_\nu^{ys_{\ell-1}\cdots s_i}
+
\sum_{\ell=i}^n \ev_\nu(A^{(i)}_{-\ell})\widehat{E}_\nu^{ys_\ell\cdots s_n\cdots s_i}.
\end{align*}
\end{prop}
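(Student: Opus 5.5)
The approach is to expand $\widehat{E}^y_\mu$ one letter at a time through the word $s_i s_{i+1}\cdots s_n\cdots s_{i+1}s_i$, using Proposition~\ref{alcwksteps}(b),(c). This produces $2^{2(n-i)+1}$ terms indexed by subsets of crossed-out letters. We then collapse them using the stabilizer relation~\eqref{stabfactor}. The key input is that $\nu=(\mu_1,\ldots,\mu_{i-1},\mu_i,0,\ldots,0)$ has $\nu_{i+1}=\cdots=\nu_n=0$. Hence $T_{s_k}\widehat{E}_\nu=t^{\frac12}\widehat{E}_\nu$ for $k\in\{i+1,\ldots,n-1\}$ and $T_{s_n}\widehat{E}_\nu=t_n^{\frac12}\widehat{E}_\nu$, so every surviving word $yw$ with $w$ a subword can be reduced to one of the $2(n-i)+1$ coset representatives $y$, $ys_{\ell-1}\cdots s_i$, $ys_\ell\cdots s_n\cdots s_i$ of $W_{[i+1,-(i+1)]}$. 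The resulting coefficients are then identified with $A^{(i)}_{\pm\ell}$.

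Rather than expanding all at once, I would organize the computation as an induction that peels off the first half of the word.

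\textbf{Step 1.} Apply Proposition~\ref{alcwksteps}(b) to the first letter $s_i$. Since $0<y(i)<y(i+1)$, this gives $\widehat{E}^y_\mu=\widehat{E}^{ys_i}_{\mu'}+\ev(F^+)\widehat{E}^y_{\mu'}$, with $\mu'=s_i\mu$.

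\textbf{Step 2.} Continue through $s_{i+1},\ldots,s_{n-1},s_n$, then back down through $s_{n-1},\ldots,s_i$. At each step, track which of $F^\pm$ occurs by comparing $z(k)$ with $z(k+1)$ in the order $\prec$. Because $y$ is increasing and positive on $\{i,\ldots,n\}$, the branch where nothing has been crossed out always gives $F^+$ on the way out. After a letter has been crossed out, the relative order of the entries is shifted, which is what produces the $F^-_{\beta_{-i}}$ factors.

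\textbf{Step 3.} Rewrite each evaluation $\ev_{\nu'}(F^\pm_{\alpha})$ as $\ev_\nu(F^\pm_{\beta})$ via $\ev_{w\nu}(f)=\ev_\nu(w^{-1}f)$, as in the proof of Proposition~\ref{0gap}. This yields exactly the coroots $\beta_{-i},\ldots,\beta_{-n},\beta_n,\ldots,\beta_{i+1}$ of~\eqref{0endcrtseq}.

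\textbf{Step 4.} Group the terms by final coset representative. Use $\ev_\nu(Y_{k+1})=t^{-1}\ev_\nu(Y_k)$ for $i<k<n$, together with the analogous relation at $n$ involving $t_n$ and $t_0$. The identities needed are the two-factor identities from the proof of Proposition~\ref{0gap}, such as $F^-_{\varepsilon_i-\varepsilon_{m-1}}+t^{\frac12}F^-_{\varepsilon_i-\varepsilon_m}F^+_{\varepsilon_i-\varepsilon_{m-1}}=F^-_{\varepsilon_i-\varepsilon_m}$ under $\ev_\nu$, together with Example~\ref{CFshrink}, which shows $C_{\varepsilon_i+\varepsilon_{k}}F^+_{\varepsilon_i+\varepsilon_{k+1}}$ collapses to $t^{-\frac12}F^+_{\varepsilon_i+\varepsilon_k}$. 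Using these, words that cross out a letter in the second half and keep its partner in the first half merge with shorter words. The net effect is the products $\prod C_{\beta_{-m}}$ and the sums over $s$ appearing in~\eqref{Adef1B}--\eqref{Adef1C}. The $C$'s arise via~\eqref{bigC}: a kept letter combined with a partner contribution gives $t^{\pm\frac12}+F^\mp=C$.

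The easy coefficients come first. For the terms $ys_\ell\cdots s_n\cdots s_i$ with $-\ell$ on the "negative" side, only the crossing of $s_i$ with $s_{i+1}\cdots s_{\ell-1}$ (reabsorbed by stabilizers) matters, giving $A^{(i)}_{-i}=1$ and $A^{(i)}_{-\ell}=t^{-\frac12(\ell-i-1)}F^+_{\beta_{-i}}$. The main obstacle is the positive-side coefficients $A^{(i)}_\ell$ and $A^{(i)}_i$, which receive contributions from many paths through the middle letter $s_n$.

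To control these, I would set up a secondary induction on $n-i$, treating $s_{i+1}\cdots s_n\cdots s_{i+1}$ as the around-the-end word for $i+1$ conjugated by the outer $s_i$'s. The two cases are whether the outer $s_i$ is crossed out; the inductive hypothesis applied to the inner word produces the sums $\sum_s F^+_{\beta_{-s}}\prod C\,F^+_{\beta_n}$. Careful bookkeeping of the $t^{\pm\frac12}$ powers from the stabilizer reductions is the part most likely to need correction.
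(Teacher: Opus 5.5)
Your final paragraph has the right skeleton, and it is the paper's proof. The paper runs a descending induction on $i$ with base case Proposition~\ref{alcwksteps}(c). At each step it expands the outer left $s_i$, applies the case $i+1$ to $\widehat E^{ys_i}_{vs_i\nu}$ and $\widehat E^{y}_{vs_i\nu}$, expands the outer right $s_i$, and reduces with \eqref{stabfactor}. This settles the coefficients of $\widehat E^{ys_\ell\cdots s_n\cdots s_i}_\nu$ and $\widehat E^{ys_{\ell-1}\cdots s_i}_\nu$ for $\ell\ge i+2$; those for $-i$ and $-(i+1)$ are immediate. For $\ell\ge i+2$ the coefficients come out as $\ev_\nu(C_{\varepsilon_i+\varepsilon_{i+1}}\,s_iA^{(i+1)}_{\pm\ell})$. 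Example~\ref{CFshrink} converts these to $\ev_\nu(A^{(i)}_{\pm\ell})$, together with its $F^-$ analogue $\ev_\nu(C_{\varepsilon_i+\varepsilon_{i+1}}F^-_{\varepsilon_i+\varepsilon_{i+2}})=t^{\frac12}\ev_\nu(F^-_{\varepsilon_i+\varepsilon_{i+1}})$, which you did not mention.

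The gap is the step you call the main obstacle and leave to bookkeeping: the coefficients of $\widehat E^{y}_\nu$ and $\widehat E^{ys_i}_\nu$. When the right-hand $s_i$ is crossed out, \eqref{stabfactor} pushes every term of the inner expansion into one of these two cosets. So both coefficients contain the total stabilizer-weighted mass of the inner word,
\[
L=\sum_{\ell=i+1}^n\Big(t^{n-\frac{i+\ell+1}{2}}t_n^{\frac12}\,(s_iA^{(i+1)}_{-\ell})+t^{\frac{\ell-i-1}{2}}\,(s_iA^{(i+1)}_{\ell})\Big).
\]
Nothing reduces to \eqref{Adef1B} until you know $\ev_\nu(L)=\ev_\nu\big(\prod_{m=i+2}^nC_{\varepsilon_i+\varepsilon_m}\,C_{\varepsilon_i}\,\prod_{m=i+2}^nC_{\varepsilon_i-\varepsilon_m}\big)$. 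This is Lemma~\ref{endcombgen}(b). It is not a two-factor collapse like those in Proposition~\ref{0gap} or Example~\ref{CFshrink}. It needs its own induction on $n-i$, through \eqref{ST1} and \eqref{ST2}, and it is what produces the product of $c$-functions, including the factor $C_{\varepsilon_i}$ that carries $t_n$ and $t_0$.

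You also need the swap $F^-_{\varepsilon_i-\varepsilon_{i+1}}+F^+_{\varepsilon_i+\varepsilon_{i+1}}=F^+_{\varepsilon_i-\varepsilon_{i+1}}+F^-_{\varepsilon_i+\varepsilon_{i+1}}$. This is the actual origin of the factor $F^-_{\beta_{-i}}$ in $A^{(i)}_{i+1}$, and your Step~2 account of it is wrong. The leftmost $s_i$ is processed first, with permutation $y$ and $y(i)<y(i+1)$, so every path that crosses it out carries $F^+_{\beta_{-i}}$; no path produces $F^-_{\beta_{-i}}$.

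Two smaller slips:
\begin{itemize}
\item \eqref{bigC} reads $t^{\pm\frac12}+F^{\pm}=C$, not $t^{\pm\frac12}+F^{\mp}=C$.
\item There are $2(n-i+1)$ coset representatives, not $2(n-i)+1$.
\end{itemize}
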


The following gives an example of the statement of Proposition~\ref{0end}.
\begin{example} \label{0endexample}
Let $n=7$ and consider 
$\mu = (-4,-1, -5, 0, 0, 0, 0)$ and $\nu = (-4,-1,5, 0,0,0,0)$, for which $i=3$. 
In this case,
\begin{align*}
\widehat{E}^{y}_\mu
&=\ev_\nu(A^{(3)}_{-3}) \widehat{E}^{ys_3\cdots s_7\cdots s_3}_\nu
+\ev_\nu(A^{(3)}_{-4})\widehat{E}^{ys_4\cdots s_7\cdots s_3}_\nu
+\ev_\nu(A^{(3)}_{-5}) \widehat{E}^{ys_5\cdots s_7\cdots s_3}_\nu
+\ev_\nu(A^{(3)}_{-6}) \widehat{E}^{ys_6 s_7\cdots s_3}_\nu
\\
&\qquad
+\ev_\nu(A^{(3)}_{-7}) \widehat{E}^{y s_7\cdots s_3}_\nu
+\ev_\nu(A^{(3)}_{7})
\widehat{E}^{y s_6\cdots s_3}_\nu
+\ev_\nu(A^{(3)}_{6})
\widehat{E}^{y s_5s_4 s_3}_\nu
+\ev_\nu(A^{(3)}_{5})
\widehat{E}^{y s_4 s_3}_\nu
\\
&\qquad
+\ev_\nu(A^{(3)}_{4})
\widehat{E}^{y s_3}_\nu
+\ev_\nu(A^{(3)}_{3})
\widehat{E}^{y}_\nu\qquad\hbox{is}
\end{align*}
\begin{align*}
\widehat{E}^{y}_\mu
&=\widehat{E}^{ys_3\cdots s_7\cdots s_3}_\nu
+\ev_\nu(F^+_{\varepsilon_3+\varepsilon_4}) \widehat{E}^{ys_4\cdots s_7\cdots s_3}_\nu
+\ev_\nu(t^{-\frac12} F^+_{\varepsilon_3+\varepsilon_4}) \widehat{E}^{ys_5\cdots s_7\cdots s_3}_\nu
\\
&\qquad
+\ev_\nu(t^{-\frac22} F^+_{\varepsilon_3+\varepsilon_4})
\widehat{E}^{ys_6 s_7\cdots s_3}_\nu
+\ev_\nu(t^{-\frac32} F^+_{\varepsilon_3+\varepsilon_4})
\widehat{E}^{y s_7\cdots s_3}_\nu
\\
&\qquad
+\ev_\nu(t^{\frac32} F^-_{\varepsilon_3+\varepsilon_4} F^+_{\varepsilon_3} + C_{\varepsilon_3}C_{\varepsilon_3+\varepsilon_4}C_{\varepsilon_3+\varepsilon_5} C_ {\varepsilon_3+\varepsilon_6} F^+_{\varepsilon_3-\varepsilon_7}))
\widehat{E}^{y s_6\cdots s_3}_\nu
\\
&\qquad
+\ev_\nu(t^{\frac22} F^-_{\varepsilon_3+\varepsilon_4} F^+_{\varepsilon_3} + t^{\frac22} F^-_{\varepsilon_3+\varepsilon_4} F^+_{\varepsilon_3+\varepsilon_7} C_{\varepsilon_3} F^+_{\varepsilon_3-\varepsilon_7} + 
t^{-\frac12} C_{\varepsilon_3}C_{\varepsilon_3+\varepsilon_4}
C_{\varepsilon_3+\varepsilon_5} C_{\varepsilon_3+\varepsilon_7} F^+_{\varepsilon_3-\varepsilon_7})
\widehat{E}^{y s_5s_4 s_3}_\nu
\\
&\qquad
+\ev_\nu\left(
\begin{array}{l}
t^{\frac12}F^-_{\varepsilon_3+\varepsilon_4} F^+_{\varepsilon_3} 
+ t^{\frac12}F^-_{\varepsilon_3+\varepsilon_4} F^+_{\varepsilon_3+\varepsilon_7} C_{\varepsilon_3} F^+_{\varepsilon_3-\varepsilon_7} 
+ t^{\frac12}F^-_{\varepsilon_3+\varepsilon_4}  F^+_{\varepsilon_3+\varepsilon_6}  
C_{\varepsilon_3+\varepsilon_7}C_{\varepsilon_3} 
t^{-\frac12} F^+_{\varepsilon_3-\varepsilon_7}  \\ + 
C_{\varepsilon_3} C_{\varepsilon_3+\varepsilon_4} C_{\varepsilon_3+\varepsilon_6} C_{\varepsilon_3+\varepsilon_7} 
t^{-\frac22} F^+_{\varepsilon_3-\varepsilon_7}
\end{array}
\right)
\widehat{E}^{y s_4 s_3}_\nu
\\
&\qquad
+\ev_\nu\left( 
\begin{array}{l}
F^-_{\varepsilon_3+\varepsilon_4} F^+_{\varepsilon_3} + 
F^-_{\varepsilon_3+\varepsilon_4} F^+_{\varepsilon_3+\varepsilon_7} C_{\varepsilon_3} F^+_{\varepsilon_3-\varepsilon_7} +F^-_{\varepsilon_3+\varepsilon_4} F^+_{\varepsilon_3+\varepsilon_6} 
C_{\varepsilon_3+\varepsilon_7} C_{\varepsilon_3} t^{-\frac12} F^+_{\varepsilon_3-\varepsilon_7} \\
+F^-_{\varepsilon_3+\varepsilon_4} F^+_{\varepsilon_3+\varepsilon_5} 
C_{\varepsilon_3+\varepsilon_6} C_{\varepsilon_3+\varepsilon_7} C_{\varepsilon_3} t^{-\frac22}  F^+_{\varepsilon_3-\varepsilon_7}
+C_{\varepsilon_3+\varepsilon_5} C_{\varepsilon_3+\varepsilon_6} C_{\varepsilon_3+\varepsilon_7} 
C_{\varepsilon_3} t^{-\frac32}
F^+_{\varepsilon_3-\varepsilon_7}
\end{array}
\right)
\widehat{E}^{y s_3}_\nu
\\
&\qquad
+\ev_\nu
\left(
\begin{array}{l}
F^+_{\varepsilon_3}
+F^+_{\varepsilon_3+\varepsilon_7} C_{\varepsilon_3} F^+_{\varepsilon_3-\varepsilon_7}
+F^+_{\varepsilon_3+\varepsilon_6} 
C_{\varepsilon_3+\varepsilon_7} C_{\varepsilon_3} 
t^{-\frac12}F^+_{\varepsilon_3-\varepsilon_7}
\\[0.2cm]
+F^+_{\varepsilon_3+\varepsilon_5} 
C_{\varepsilon_3+\varepsilon_6} C_{\varepsilon_3+\varepsilon_7} C_{\varepsilon_3} 
t^{-\frac22} F^+_{\varepsilon_3-\varepsilon_7}
\\[0.2cm]
+F^+_{\varepsilon_3+\varepsilon_4} C_{\varepsilon_3+\varepsilon_5} C_{\varepsilon_3+\varepsilon_6} C_{\varepsilon_3+\varepsilon_7} 
C_{\varepsilon_3}
t^{-\frac32} F^+_{\varepsilon_3-\varepsilon_7}
\end{array}\right) 
\widehat{E}^{y}_\nu.
\end{align*}
\end{example}

\subsubsection{Computational simplification lemmas}\label{subsect: simplication}

\begin{lemma} \label{endcombgen}
Let $i\in \{ 1, \ldots, n-2\}$
and $\nu =(\nu_1, \ldots, \nu_i, 0, \ldots, 0)\in \ZZ^n$ and let $\ell\in \{i+2, \ldots, n\}$.  
Let $A^{(i)}_{\pm\ell}= A^{(i)}_{\pm\ell}(\beta_{-i}, \ldots, \beta_{-n}, \beta_n, \ldots \beta_{i+1})$ as defined in~\eqref{Adef1A}-\eqref{Adef1C}
where $\beta_{-i}, \ldots, \beta_{-n}, \beta_n, \ldots \beta_{i+1}$  are as given in~\eqref{0endcrtseq}.
\item[(a)]
$\displaystyle{
\ev_\nu\Big(A^{(i)}_{-\ell}\Big) = \ev_\nu\Big(C_{\varepsilon_i+\varepsilon_{i+1}}(s_iA^{(i+1)}_{-\ell}) \Big)
\quad\hbox{and}\quad
\ev_\nu\Big(A^{(i)}_{\ell} \Big)
= \ev_\nu\Big(C_{\varepsilon_i+\varepsilon_{i+1}}(s_iA^{(i+1)}_{\ell})\Big).
}$
\item[(b)]
\begin{align*}
\ev_\nu\Big(&\sum_{\ell=i+1}^n \big(
t^{n-\frac{i+\ell+1}{2}} t_n^{\frac12} (s_iA^{(i+1)}_{-\ell})
+ 
t^{\frac{\ell-i-1}{2}}(s_iA^{(i+1)}_\ell) \big)
\Big)
=\ev_\nu\Big(\Big(\prod_{m=i+2}^n C_{\varepsilon_i+\varepsilon_m}\Big) C_{\varepsilon_i}
\Big(\prod_{m=i+2}^n C_{\varepsilon_i-\varepsilon_m}\Big)\Big).
\end{align*}
\end{lemma}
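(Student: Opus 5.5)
Part (a) is a direct comparison of the explicit formulas \eqref{Adef1A}--\eqref{Adef1C}. First I would write out how $s_i$ acts on the coroot sequence \eqref{0endcrtseq} for $i+1$. That sequence is $\beta'_{-(i+1)}=\varepsilon_{i+1}+\varepsilon_{i+2},\ldots,\beta'_{-n}=\varepsilon_{i+1}$, $\beta'_{n}=\varepsilon_{i+1}-\varepsilon_n,\ldots$, and applying $s_i$ replaces $\varepsilon_{i+1}$ by $\varepsilon_i$. So $s_iA^{(i+1)}_{\pm\ell}$ is the same expression as $A^{(i)}_{\pm\ell}$, except that the coroots $\beta_{-i}=\varepsilon_i+\varepsilon_{i+1}$ and $\beta_{i+1}=\varepsilon_i-\varepsilon_{i+1}$ are absent, and the powers of $t$ are shifted by one.

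Next I would go through the terms of $A^{(i)}_{\pm\ell}$ for $\ell\ge i+2$ and sort them into two types. Some terms contain the factor $\prod_{m=i}^{\ell-2}C_{\beta_{-m}}$, which includes $C_{\beta_{-i}}=C_{\varepsilon_i+\varepsilon_{i+1}}$. The others contain $F^\pm_{\beta_{-i}}=F^\pm_{\varepsilon_i+\varepsilon_{i+1}}$ multiplied by a $t$-power and by $F$-factors that involve $\varepsilon_{i+2}$ and beyond. For the second type I would use the mechanism of Example~\ref{CFshrink}. Because $\nu_{i+1}=\nu_{i+2}=0$, we have $\ev_\nu(Y_{i+2})=t^{-1}\ev_\nu(Y_{i+1})$, and this gives the key identity
\[
\ev_\nu(C_{\varepsilon_i+\varepsilon_{i+1}}F^\pm_{\varepsilon_i+\varepsilon_{i+2}})=t^{\mp\frac12}\ev_\nu(F^\pm_{\varepsilon_i+\varepsilon_{i+1}}).
\]
I would check the $F^-$ version by the same two-line rational-function computation that the example does for $F^+$. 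With this identity, $C_{\varepsilon_i+\varepsilon_{i+1}}$ times the leading $F^\pm$ factor of $s_iA^{(i+1)}_{\pm\ell}$ becomes the matching $F^\pm_{\beta_{-i}}$ with exactly the $t^{\pm\frac12}$ shift seen in \eqref{Adef1C}, and the $C$-product terms match factor by factor. This proves (a) term by term.

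For (b) I would first substitute (a) in reverse, so that the left side becomes $\ev_\nu(C_{\varepsilon_i+\varepsilon_{i+1}})^{-1}$ times a weighted sum of the $A^{(i)}_{\pm\ell}$. That is awkward, so I would instead prove (b) directly by induction on $n-i$, or equivalently by a telescoping argument. Using \eqref{bigC}, $C_\alpha=t^{\frac12}_\alpha+F^+_\alpha=t^{-\frac12}_\alpha+F^-_\alpha$, the right-hand product $\prod C_{\varepsilon_i+\varepsilon_m}\,C_{\varepsilon_i}\prod C_{\varepsilon_i-\varepsilon_m}$ expands as a sum over the first position where an $F$ replaces a $t$-power. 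I would then reorganize this expansion according to which reading $s_\ell\cdots s_n\cdots$ or $s_{\ell-1}\cdots$ terminates at that position. The goal is to match the $\ell$-th piece with $t^{n-\frac{i+\ell+1}{2}}t_n^{\frac12}\,s_iA^{(i+1)}_{-\ell}+t^{\frac{\ell-i-1}{2}}\,s_iA^{(i+1)}_\ell$. The pieces with no $F$ at all, namely the pure product of $t^{\pm\frac12}$ and $t_n^{\frac12}$ powers, should match the $t$-power terms coming from $A_{-\ell}$. These matches would be checked after evaluation at $\nu$, using the degeneracies $\ev_\nu(Y_{i+1})=t\,\ev_\nu(Y_{i+2})=\cdots$, which make products of consecutive $C$'s telescope.

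The main obstacle is the bookkeeping in (b): keeping the $t$-exponents aligned, and handling the $t_n$ factor from $C_{\varepsilon_i}$ versus $F^+_{\beta_{-n}}$. My first attempt would be the case $n=i+2$ done by hand. I would then run the induction step by peeling off $C_{\varepsilon_i+\varepsilon_{i+2}}$ and $C_{\varepsilon_i-\varepsilon_{i+2}}$ and relating the remaining sum to the $i+1$ instance via (a).
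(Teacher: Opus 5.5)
Your part (a) is the paper's argument: a term-by-term comparison using $\ev_\nu(C_{\varepsilon_i+\varepsilon_{i+1}}F^\pm_{\varepsilon_i+\varepsilon_{i+2}})=t^{\mp\frac12}\ev_\nu(F^\pm_{\varepsilon_i+\varepsilon_{i+1}})$ (Example~\ref{CFshrink} and its $F^-$ analogue), with $C_{\varepsilon_i+\varepsilon_{i+1}}$ absorbed directly into the $C$-products.

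For (b), the induction you settle on at the end is also the paper's route: descend in $i$, split $C_{\varepsilon_i+\varepsilon_{i+2}}$ and $C_{\varepsilon_i-\varepsilon_{i+2}}$ off the product, and use (a) one level down. As written, however, it does not close. Part (a) at level $i+1$ can be transported by $s_i$, which is legitimate since it only uses $\ev_\nu(Y_{k+1})=t^{-1}\ev_\nu(Y_k)$ for $k\ge i+2$. It then rewrites only the summands with $\ell\ge i+3$, each as $t^{\frac12}C_{\varepsilon_i+\varepsilon_{i+2}}$ times the $s_i$-image of the matching summand of the level-$(i+1)$ sum. The four summands with $\ell\in\{i+1,i+2\}$ are not touched by (a). They are exactly where the factor $F^+_{\varepsilon_i-\varepsilon_{i+2}}$ comes from, and you need that factor to build $C_{\varepsilon_i-\varepsilon_{i+2}}=t^{\frac12}+F^+_{\varepsilon_i-\varepsilon_{i+2}}$. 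Your sketch never says how these summands are handled.

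The missing step is to regroup them with \eqref{bigC}. Put $P=\big(\prod_{m=i+3}^n C_{\varepsilon_i+\varepsilon_m}\big)C_{\varepsilon_i}\big(\prod_{m=i+3}^n C_{\varepsilon_i-\varepsilon_m}\big)$ and $R=s_is_{i+1}A^{(i+2)}_{i+2}$. Use the explicit formulas in the $C$-product form from the proof of Proposition~\ref{0end}. They give $s_iA^{(i+1)}_{i+1}=R+F^+_{\varepsilon_i+\varepsilon_{i+2}}PF^+_{\varepsilon_i-\varepsilon_{i+2}}$ and $s_iA^{(i+1)}_{i+2}=PF^+_{\varepsilon_i-\varepsilon_{i+2}}+F^-_{\varepsilon_i+\varepsilon_{i+2}}R$. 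Also $A^{(i+1)}_{-(i+1)}=A^{(i+2)}_{-(i+2)}=1$ and $A^{(i+1)}_{-(i+2)}=F^+_{\varepsilon_{i+1}+\varepsilon_{i+2}}$. Then $t^{\frac12}+F^+_\alpha=C_\alpha=t^{-\frac12}+F^-_\alpha$ yields
\begin{align*}
t^{n-i-1}t_n^{\frac12}\,s_iA^{(i+1)}_{-(i+1)}+t^{n-i-\frac32}t_n^{\frac12}\,s_iA^{(i+1)}_{-(i+2)}
&=t^{\frac12}C_{\varepsilon_i+\varepsilon_{i+2}}\cdot t^{n-i-2}t_n^{\frac12}\,s_is_{i+1}A^{(i+2)}_{-(i+2)},\\
t^{\frac12}\,s_iA^{(i+1)}_{i+2}+s_iA^{(i+1)}_{i+1}
&=t^{\frac12}C_{\varepsilon_i+\varepsilon_{i+2}}\,R+C_{\varepsilon_i+\varepsilon_{i+2}}\,P\,F^+_{\varepsilon_i-\varepsilon_{i+2}}.
\end{align*}
Hence the left side of (b) equals $t^{\frac12}C_{\varepsilon_i+\varepsilon_{i+2}}$ times the $s_i$-image of the level-$(i+1)$ left side, plus $C_{\varepsilon_i+\varepsilon_{i+2}}PF^+_{\varepsilon_i-\varepsilon_{i+2}}$. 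The induction hypothesis turns the first term into $t^{\frac12}C_{\varepsilon_i+\varepsilon_{i+2}}P$. Then $t^{\frac12}+F^+_{\varepsilon_i-\varepsilon_{i+2}}=C_{\varepsilon_i-\varepsilon_{i+2}}$ finishes the step. You can anchor the induction at $i=n-1$, where the sum is $t_n^{\frac12}+F^+_{\varepsilon_{n-1}}=C_{\varepsilon_{n-1}}$.

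Your other suggestion, expanding the product by the position of the first $F$, is not an independent route. The summands do not line up with that expansion: every $s_iA^{(i+1)}_{-\ell}$ with $\ell\ge i+2$ is a $t$-multiple of the single function $F^+_{\varepsilon_i+\varepsilon_{i+2}}$. Making it work therefore requires this same regrouping.
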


\begin{proof}
Using
$\ev_\nu\Big(C_{\varepsilon_i+\varepsilon_{i+1}} F^+_{\varepsilon_{i}+\varepsilon_{i+2}} \Big)
 = t^{-\frac12} \ev_\nu\Big(F^+_{\varepsilon_{i}+\varepsilon_{i+1}}\Big)$ from Example~\ref{CFshrink} gives
\begin{align*}
\ev_\nu\Big(C_{\varepsilon_i+\varepsilon_{i+1}}(s_iA^{(i+1)}_{-\ell})\Big) 
&= t^{-\frac12 (\ell-i-2)} \ev_\nu\Big(C_{\varepsilon_i+\varepsilon_{i+1}} F^+_{\varepsilon_{i}+\varepsilon_{i+2}} \Big) \\ 
&
=t^{-\frac12 (\ell-i-2)} t^{-\frac12} \ev_\nu\Big(F^+_{\varepsilon_{i}+\varepsilon_{i+1}} \Big) 
= \ev_\nu\Big(A_{-\ell}^{(i)}\Big),
\end{align*}
establishing the first identity in (a).

Using
$\ev_\nu\Big(C_{\varepsilon_i+\varepsilon_{i+1}} F^-_{\varepsilon_{i}+\varepsilon_{i+2}} \Big)
= t^{\frac12} \ev_\nu\Big(F^-_{\varepsilon_{i}+\varepsilon_{i+1}}\Big)$ gives
\begin{align*}
\ev_\nu&\Big(C_{\varepsilon_i+\varepsilon_{i+1}}s_i \left(A_\ell^{(i+1)} \right) \Big) \\
&= \ev_\nu\Big(C_{\varepsilon_i+\varepsilon_{i+1}} \Big(\prod_{m=i+2}^{\ell-1} C_{\varepsilon_{i}+\varepsilon_m}\Big) 
\Big(\prod_{r=\ell+1}^n C_{\varepsilon_{i} +\varepsilon_r}\Big) C_{\varepsilon_{i}}
t^{-\frac12 (n-\ell)} F^+_{\varepsilon_{i}-\varepsilon_n}\Big) \\
&\quad + t^{\frac12 (\ell-1-i-1)} \ev_\nu\Big(C_{\varepsilon_i+\varepsilon_{i+1}} 
F^-_{\varepsilon_{i}+\varepsilon_{i+2}}
F_{\varepsilon_{i}}^+ \Big) \\
&\quad +t^{\frac12  (\ell-1-i-1)} \ev_\nu\Big(C_{\varepsilon_i+\varepsilon_{i+1}}
F^-_{\varepsilon_{i}+\varepsilon_{i+2}}
\Big(\sum_{s=\ell+1}^n F^+_{\varepsilon_{i}+\varepsilon_s}
\Big(\prod_{r=s+1}^n C_{\varepsilon_{i}+\varepsilon_r}\Big) C_{\varepsilon_i}
t^{-\frac12 (n-s)} F^+_{\varepsilon_{i}-\varepsilon_n}
\Big) \Big)\\
&
=\ev_\nu\Big(\Big(\prod_{m=i+1}^{\ell-1} C_{\varepsilon_{i}+\varepsilon_m}\Big) \Big(\prod_{r=\ell+1}^n C_{\varepsilon_{i} +\varepsilon_r}\Big) C_{\varepsilon_{i}}
t^{-\frac12 (n-\ell)} F^+_{\varepsilon_{i}-\varepsilon_n} \Big)\\
&\qquad + t^{\frac12  (\ell-i-1)}  
\ev_\nu\Big(F^-_{\varepsilon_{i}+\varepsilon_{i+1}}
F_{\varepsilon_{i}}^+  \Big) \\
&\qquad +t^{\frac12  (\ell-i-1)} 
\ev_\nu\Big(F^-_{\varepsilon_{i}+\varepsilon_{i+1}}
\Big(\sum_{s=\ell+1}^n F^+_{\varepsilon_{i}+\varepsilon_s}
\Big(\prod_{r=s+1}^n C_{\varepsilon_{i}+\varepsilon_r}\Big) C_{\varepsilon_i}
t^{-\frac12 (n-s)} F^+_{\varepsilon_{i}-\varepsilon_n}
\Big) \Big) \\
&= \ev_\nu\Big(A_\ell^{(i)} \Big),
\end{align*}
which establishes the second identity in (a).

\smallskip\noindent
(b) First note that, by~\eqref{bigC},
\begin{align}
\ev_\nu\Big(&t^{n-i-1}t_n^{\frac12} (s_i A^{(i+1)}_{-(i+1)})
+t^{n-i-1-\frac{1}{2}}t_n^{\frac12} (s_i A^{(i+1)}_{-(i+2)})\Big)
\nonumber \\
&
= \ev_\nu\Big(t^{\frac{2n-2i-3}{2}}t_n^{\frac12}(t^{\frac12}+F_{\varepsilon_i+\varepsilon_{i+2}}^+)\Big)
= \ev_\nu\Big(t^{\frac{2n-2i-3}{2}}t_n^{\frac12}C_{\varepsilon_i+\varepsilon_{i+2}}\Big)
\label{ST1}
\end{align}
and that 
\begin{align*}
\ev_\nu&\Big(t^{\frac12} (s_iA^{(i+1)}_{i+2}) + (s_iA^{(i+1)}_{i+1}) \Big)
\\
&=t^{\frac12} \ev_\nu\Big( s_i
\left( 
\left[\prod_{m=i+3}^n C_{\varepsilon_{i+1}+\varepsilon_m}\right] C_{\varepsilon_{i+1}}
\left[\prod_{m=i+3}^n C_{\varepsilon_{i+1}-\varepsilon_m}\right] F_{\varepsilon_{i+1}-\varepsilon_{i+2}}^+ \right) \Big)\\
&\qquad
+ t^{\frac12} \ev_\nu\Big( s_i
\left( F_{\varepsilon_{i+1}+\varepsilon_{i+2}}^-
\left[
F_{\varepsilon_{i+1}}^+ 
+ \sum_{\ell=i+3}^n F^+_{\varepsilon_{i+1}+\varepsilon_\ell}
\Big(\prod_{m=\ell+1}^n C_{\varepsilon_{i+1}+\varepsilon_m}\Big) C_{\varepsilon_{i+1}}
\Big(\prod_{m=\ell+1}^n C_{\varepsilon_{i+1}-\varepsilon_m}\Big)
F^+_{\varepsilon_{i+1}-\varepsilon_\ell}
\right]\right) \Big)
\\
&\qquad
+ \ev_\nu\Big( s_i \left(
F_{\varepsilon_{i+1}}^+ 
+ \sum_{\ell=i+2}^n F^+_{\varepsilon_{i+1}+\varepsilon_\ell}
\Big(\prod_{m=\ell+1}^n C_{\varepsilon_{i+1}+\varepsilon_m}\Big) C_{\varepsilon_{i+1}}
\Big(\prod_{m=\ell+1}^n C_{\varepsilon_{i+1}-\varepsilon_m}\Big)
F^+_{\varepsilon_{i+1}-\varepsilon_\ell} \right) \Big)
\\
&= t^{\frac12}
 \ev_\nu\Big(
\left(\prod_{m=i+3}^n C_{\varepsilon_i+\varepsilon_m}\right) C_{\varepsilon_i}
\left(\prod_{m=i+3}^n C_{\varepsilon_i-\varepsilon_m}\right)F_{\varepsilon_i-\varepsilon_{i+2}}^+\Big)\\
&\qquad
+t^{\frac12} \ev_\nu\Big(
F_{\varepsilon_i + \varepsilon_{i+2}}^-
\left[
F_{\varepsilon_i}^+ 
+ \sum_{\ell=i+3}^n F^+_{\varepsilon_i+\varepsilon_\ell}
\Big(\prod_{m=\ell+1}^n C_{\varepsilon_i+\varepsilon_m}\Big) C_{\varepsilon_i}
\Big(\prod_{m=\ell+1}^n C_{\varepsilon_i-\varepsilon_m}\Big)
F^+_{\varepsilon_i-\varepsilon_\ell} \right] \Big)
\\
&\qquad
+ 
\ev_\nu\Big( F_{\varepsilon_i}^+ 
+ \sum_{\ell=i+2}^n F^+_{\varepsilon_i+\varepsilon_\ell}
\Big(\prod_{m=\ell+1}^n C_{\varepsilon_i+\varepsilon_m}\Big) C_{\varepsilon_i}
\Big(\prod_{m=\ell+1}^n C_{\varepsilon_i-\varepsilon_m}\Big)
F^+_{\varepsilon_i-\varepsilon_\ell}\Big).
\end{align*}
So
\begin{align*}
\ev_\nu &\Big(t^{\frac12} (s_iA^{(i+1)}_{i+2}) + (s_iA^{(i+1)}_{i+1}) \Big)
\\
&= \ev_\nu\Big(t^{\frac12}(F_{\varepsilon_i + \varepsilon_{i+2}}^-+t^{-\frac12})
\left[
F_{\varepsilon_i}^+ 
+ \sum_{\ell=i+3}^n F^+_{\varepsilon_i+\varepsilon_\ell}
\Big(\prod_{m=\ell+1}^n C_{\varepsilon_i+\varepsilon_m}\Big) C_{\varepsilon_i}
\Big(\prod_{m=\ell+1}^n C_{\varepsilon_i-\varepsilon_m}\Big)
F^+_{\varepsilon_i-\varepsilon_\ell} \right]\Big)
\\
&\quad
+\ev_\nu\Big( (t^{\frac12}+F^+_{\varepsilon_i+\varepsilon_{i+2}})
\Big(\prod_{m=i+3}^n C_{\varepsilon_i+\varepsilon_m}\Big) C_{\varepsilon_i}
\Big(\prod_{m=i+3}^n C_{\varepsilon_i-\varepsilon_m}\Big)
F^+_{\varepsilon_i-\varepsilon_{i+2}} \Big)
\\
&= \ev_\nu\Big( t^{\frac12}C_{\varepsilon_i + \varepsilon_{i+2}}
\Big(
F_{\varepsilon_i}^+ 
+ \sum_{\ell=i+3}^n F^+_{\varepsilon_i+\varepsilon_\ell}
\Big(\prod_{m=\ell+1}^n C_{\varepsilon_i+\varepsilon_m}\Big) C_{\varepsilon_i}
\Big(\prod_{m=\ell+1}^n C_{\varepsilon_i-\varepsilon_m}\Big)
F^+_{\varepsilon_i-\varepsilon_\ell} \Big) \Big)
\\
&\quad
+ \ev_\nu\Big(C_{\varepsilon_i+\varepsilon_{i+2}}
\Big(\prod_{m=i+3}^n C_{\varepsilon_i+\varepsilon_m}\Big) C_{\varepsilon_i}
\Big(\prod_{m=i+3}^n C_{\varepsilon_i-\varepsilon_m}\Big)
F^+_{\varepsilon_i-\varepsilon_{i+2}} \Big).
\end{align*}
Thus
\begin{align}
\ev_\nu\Big(t^{\frac12} &(s_iA^{(i+1)}_{i+2}) + (s_iA^{(i+1)}_{i+1}) \Big)
\label{ST2}
\\
&= \ev_\nu\Big(t^{\frac12} C_{\varepsilon_i+\varepsilon_{i+2}}(s_is_{i+1}A^{(i+2)}_{i+2})
+ C_{\varepsilon_i+\varepsilon_{i+2}} \Big(\prod_{\ell=i+3}^n C_{\varepsilon_i+\varepsilon_\ell}\Big)
C_{\varepsilon_i}
\Big(\prod_{\ell=i+3}^n C_{\varepsilon_i-\varepsilon_\ell}\Big)
F^+_{\varepsilon_i+\varepsilon_{i+2}}\Big).
\nonumber
\end{align}

Let 
\begin{align*}
L 
&=\sum_{\ell=i+1}^n 
t^{n-\frac{i+\ell+1}{2}} t_n^{\frac12} (s_iA^{(i+1)}_{-\ell})
+ \sum_{\ell=i+1}^n t^{\frac{\ell-i-1}{2}}(s_iA^{(i+1)}_\ell).
\end{align*}
so that $\ev_\nu(L)$ is the left-hand side of the identity in (b). 
Plugging in~\eqref{ST1} and~\eqref{ST2} gives
\begin{align*}
\ev_\nu(L) &= \ev_\nu\Big( t^{n-i-1}t_n^{\frac12} (s_i A^{(i+1)}_{-(i+1)}) +t^{n-i-1-\frac{1}{2}}t_n^{\frac12} (s_i A^{(i+1)}_{-(i+2)})
+\sum_{\ell=i+3}^n 
t^{n-\frac{i+\ell+1}{2}} t_n^{\frac12} (s_iA^{(i+1)}_{-\ell}) \Big)
\\
&\quad
+ \ev_\nu\Big(t^{\frac12} (s_iA^{(i+1)}_{i+2}) + (s_iA^{(i+1)}_{i+1}) 
+ \sum_{\ell=i+3}^n t^{\frac{\ell-i-1}{2}}(s_iA^{(i+1)}_\ell) \Big)
\\
&= \ev_\nu\Big( t^{\frac{2n-2i-3}{2}} t_n^{\frac12} C_{\varepsilon_i+\varepsilon_{i+2}}
+ \sum_{\ell=i+3}^n 
t^{n-\frac{i+\ell+1}{2}} t_n^{\frac12} (s_i A^{(i+1)}_{-\ell})
+ \sum_{\ell=i+3}^n t^{\frac{\ell-i-1}{2}} (s_iA^{(i+1)}_\ell) \Big)
\\
&\quad
+ \ev_\nu\Big( t^{\frac12} C_{\varepsilon_i+\varepsilon_{i+2}}(s_is_{i+1}A^{(i+2)}_{i+2})
+ C_{\varepsilon_i+\varepsilon_{i+2}} \Big(\prod_{\ell=i+3}^n C_{\varepsilon_i+\varepsilon_\ell}\Big)
C_{\varepsilon_i}
\Big(\prod_{\ell=i+3}^n C_{\varepsilon_i-\varepsilon_\ell}\Big)
F^+_{\varepsilon_i+\varepsilon_{i+2}}\Big).
\end{align*}
Now note that $A^{(i+2)}_{-(i+2)}=1$ and that if $\ell\in \{i+3, \ldots, n\}$ then
\begin{align*}
\ev_\nu\Big(s_iA^{(i+1)}_{-\ell}\Big)
&= \ev_\nu\Big( C_{\varepsilon_i+\varepsilon_{i+2}}(s_is_{i+1}A^{(i+2)}_{-\ell}) \Big)
\quad\hbox{and}  \\
\ev_\nu\Big(s_iA^{(i+1)}_{\ell}\Big)
&= \ev_\nu\Big( C_{\varepsilon_i+\varepsilon_{i+2}}(s_is_{i+1}A^{(i+2)}_\ell) \Big).
\end{align*}
Thus, 
\begin{align*}
\ev_\nu (L) 
&= \ev_\nu\Big(t^{\frac{2n-2i-3}{2}}t_n^{\frac12} C_{\varepsilon_i+\varepsilon_{i+2}} (s_is_{i+1}A^{(i+2)}_{-(i+2)})\Big) \\
&\quad
+ \ev_\nu\Big(\sum_{\ell=i+3}^n 
t^{n-\frac{i+\ell+1}{2}} t_n^{\frac12} C_{\varepsilon_i+\varepsilon_{i+2}}(s_is_{i+1}A^{(i+2)}_{-\ell})
+ \sum_{\ell=i+3}^n t^{\frac{\ell-i-1}{2}}C_{\varepsilon_i+\varepsilon_{i+2}}(s_is_{i+1}A^{(i+2)}_\ell) \Big)
\\
&\quad
+ \ev_\nu\Big(t^{\frac12} C_{\varepsilon_i+\varepsilon_{i+2}}(s_is_{i+1}A^{(i+2)}_{i+2})
+ C_{\varepsilon_i+\varepsilon_{i+2}} \Big(\prod_{\ell=i+3}^n C_{\varepsilon_i+\varepsilon_\ell}\Big)
C_{\varepsilon_i}
\Big(\prod_{\ell=i+3}^n C_{\varepsilon_i-\varepsilon_\ell}\Big)
F^+_{\varepsilon_i+\varepsilon_{i+2}} \Big)
\\
&=
\ev_\nu\Big(t^{\frac12}C_{\varepsilon_i+\varepsilon_{i+2}} \left[s_is_{i+1} \Big( \sum_{\ell=i+2}^n 
t^{n-\frac{i+\ell+2}{2}} t_n^{\frac12}  A^{(i+2)}_{-\ell}
+ \sum_{\ell=i+2}^n t^{\frac{\ell-i-2}{2}} A^{(i+2)}_\ell \Big)\right]\Big)
\\
&\quad
+ \ev_\nu\Big(C_{\varepsilon_i+\varepsilon_{i+2}} \Big(\prod_{\ell=i+3}^n C_{\varepsilon_i+\varepsilon_\ell}\Big)
C_{\varepsilon_i}
\Big(\prod_{\ell=i+3}^n C_{\varepsilon_i-\varepsilon_\ell}\Big)
F^+_{\varepsilon_i+\varepsilon_{i+2}}\Big).
\end{align*}
Then, by induction,
\begin{align*}
\ev_\nu(L)
&=
\ev_\nu\Big(t^{\frac12} C_{\varepsilon_i+\varepsilon_{i+2}} 
\Big(\prod_{\ell=i+3}^n C_{\varepsilon_i+\varepsilon_\ell}\Big)
C_{\varepsilon_i}
\Big(\prod_{\ell=i+3}^n C_{\varepsilon_i-\varepsilon_\ell}\Big) \Big)
\\
&\qquad
+ \ev_\nu\Big( C_{\varepsilon_i+\varepsilon_{i+2}} 
\Big(\prod_{\ell=i+3}^n C_{\varepsilon_i+\varepsilon_\ell}\Big)
C_{\varepsilon_i}
\Big(\prod_{\ell=i+3}^n C_{\varepsilon_i-\varepsilon_\ell}\Big)
F^+_{\varepsilon_i+\varepsilon_{i+2}} \Big)
\\
&=
\ev_\nu\Big(C_{\varepsilon_i+\varepsilon_{i+2}} 
\Big(\prod_{\ell=i+3}^n C_{\varepsilon_i+\varepsilon_\ell}\Big)
C_{\varepsilon_i}
\Big(\prod_{\ell=i+3}^n C_{\varepsilon_i-\varepsilon_\ell}\Big)
(t^{\frac12}+F^+_{\varepsilon_i+\varepsilon_{i+2}})\Big)
\\
&= \ev_\nu\Big(\Big(\prod_{\ell = i+2}^n C_{\varepsilon_i+\varepsilon_\ell}\Big)C_{\varepsilon_i} 
\Big(\prod_{\ell= i+2}^n C_{\varepsilon_i-\varepsilon_\ell}\Big)\Big).
\end{align*}
\end{proof}

\subsubsection{Proof of Proposition~\ref{0end}}\label{subsubsect: 0end}

This subsection provides the proof of Proposition~\ref{0end}.  Recall the statement.

\begin{prop*} 
Let $i\in \{1, \ldots, n\}$. Let 
\begin{align*}
\mu &= (\mu_1, \ldots, \mu_{i-1}, -\mu_i, 0, \ldots, 0)
\quad\hbox{with $\mu_1,\ldots, \mu_{i-1} \in \ZZ$ and $\mu_i \in \ZZ_{>0}$, \quad and let} \\
\nu &= (\mu_1, \ldots, \mu_{i-1}, \mu_i, 0, \ldots, 0)
\quad\hbox{so that}\quad
\mu = s_is_{i+1}\cdots s_n \cdots s_{i+1}s_i\nu.
\end{align*}
Let $y\in W_{\mathrm{fin}}$ with $0 < y(i) < y(i+1) < y(i+2) < \cdots < y(n)$ so that $y$ is minimal length in the coset $yW_{[i+1,-(i+1)]}$
and $y(i)<y(i+1)$.
Let $A^{(i)}_{\pm\ell}= A^{(i)}_{\pm\ell}(\beta_{-i}, \ldots, \beta_{-n}, \beta_n, \ldots \beta_{i+1})$ as defined in~\eqref{Adef1A}-\eqref{Adef1C}
where $(\beta_{-i}, \ldots, \beta_{-n}, \beta_n, \ldots \beta_{i+1})$ is the coroot sequence given in~\eqref{0endcrtseq}.
Then
\begin{align*}
\widehat{E}^y_\mu
&= 
\ev_\nu(A^{(i)}_{i})\widehat{E}_\nu^{y}
+
\sum_{\ell=i+1}^n \ev_\nu(A^{(i)}_{\ell})\widehat{E}_\nu^{ys_{\ell-1}\cdots s_i}
+
\sum_{\ell=i}^n \ev_\nu(A^{(i)}_{-\ell})\widehat{E}_\nu^{ys_\ell\cdots s_n\cdots s_i}.
\end{align*}
\end{prop*}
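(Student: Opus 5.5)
We argue by descending induction on $i$, i.e.\ on the length $2(n-i)+1$ of the around-the-end word $s_i\cdots s_n\cdots s_i$, peeling off the first letter $s_i$ (the one applied to $\mu$ first) with Proposition~\ref{alcwksteps}. The base case $i=n$ is Proposition~\ref{alcwksteps}(c) applied to $\mu=(\mu_1,\ldots,\mu_{n-1},-\mu_n)$. Since $y(n)>0$, it gives $\widehat{E}^y_\mu=\widehat{E}^{ys_n}_\nu+\ev_\nu(F^+_{\alpha_n})\widehat{E}^y_\nu$. This matches $A^{(n)}_{-n}=1$ and $A^{(n)}_n=F^+_{\beta_{-n}}=F^+_{\varepsilon_n}$, interpreting the empty sums and products in \eqref{Adef1B} appropriately. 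The case $i=n-1$ can also be checked directly, so that the induction step below may assume $i\le n-2$, which is where Lemma~\ref{endcombgen} applies.

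For the induction step, write $\mu'=s_i\mu=(\mu_1,\ldots,\mu_{i-1},0,-\mu_i,0,\ldots,0)$, so that $\mu_i<\mu_{i+1}$ for $\mu$ in the sense of Proposition~\ref{alcwksteps}(b). Since $y(i)\prec y(i+1)$, Proposition~\ref{alcwksteps}(b) gives
\[
\widehat{E}^y_\mu=\widehat{E}^{ys_i}_{\mu'}+\ev_{\mu'}(F^+_{\alpha_i})\widehat{E}^y_{\mu'}.
\]
Transport the fold function back to $\nu$ as in the proof of Proposition~\ref{0gap}. Then use the last $s_i$ (rightmost in the word) on the $\nu$ side to relate things to $\nu'=s_i\nu$. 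The first term $\widehat{E}^{ys_i}_{\mu'}$ is then an around-the-end problem with $i$ replaced by $i+1$: the element $ys_i$ satisfies $0<ys_i(i+1)=y(i)<y(i+2)<\cdots$, which is exactly the hypothesis needed to apply the induction hypothesis. Applying it and then the final step $s_i$ (via Proposition~\ref{alcwksteps}(b) again) yields terms $\ev_\nu(C_{\varepsilon_i+\varepsilon_{i+1}}\,s_iA^{(i+1)}_{\pm\ell})\widehat{E}^{ys_\ell\cdots s_i}_\nu$. By Lemma~\ref{endcombgen}(a), these are $\ev_\nu(A^{(i)}_{\pm\ell})$ for $\ell\ge i+2$, which accounts for the coefficients $A^{(i)}_{\pm\ell}$ with $\ell\ge i+2$ and the term $A^{(i)}_{-i}=1$.

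The second term $\widehat{E}^y_{\mu'}$ does not have the right minimal-coset shape, because $\mu'$ has a $0$ in position $i$. Here one expands again using the induction hypothesis after conjugating by $s_i$. Because $\nu'$ is stabilized by $s_{i+1},\ldots,s_n$, one uses \eqref{stabfactor} ($T_j\widehat{E}_{\nu}=t^{1/2}\widehat{E}_\nu$, $T_n\widehat{E}_\nu=t_n^{1/2}\widehat{E}_\nu$) to collapse all the resulting terms onto the few targets $\widehat{E}^{y}_\nu$, $\widehat{E}^{ys_i}_\nu$ and $\widehat{E}^{ys_{i+1}s_i}_\nu$. The powers $t^{n-\frac{i+\ell+1}{2}}t_n^{\frac12}$ and $t^{\frac{\ell-i-1}{2}}$ appearing in Lemma~\ref{endcombgen}(b) are exactly these stabilizer factors.

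The main obstacle is this collapsing bookkeeping, in particular identifying the coefficients of $\widehat{E}^y_\nu$ and $\widehat{E}^{ys_i}_\nu$ with $A^{(i)}_i$ and $A^{(i)}_{i+1}$ of \eqref{Adef1B}. The plan is to sum the collapsed coefficients and apply Lemma~\ref{endcombgen}(b), which turns them into the full product $\prod C_{\varepsilon_i+\varepsilon_m}\,C_{\varepsilon_i}\prod C_{\varepsilon_i-\varepsilon_m}$. After multiplying by the fold function $F^\pm$ coming from the peeled step and using $t^{\pm1/2}+F^\pm=C$ from \eqref{bigC}, this should reproduce the expressions in \eqref{Adef1B}. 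If the direct identification fails, the fallback is to verify the coefficient of $\widehat{E}^{y}_\nu$ alone. The coefficient of $\widehat{E}^{ys_i}_\nu$ would then follow from the relation $\widehat{E}^{ys_i}_\nu$ versus $\tau_i$ applied at $\nu$, together with the analogous $C$–$F$ identities used in Example~\ref{CFshrink}.
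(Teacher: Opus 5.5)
Your skeleton is the paper's: peel the first $s_i$ with Proposition~\ref{alcwksteps}(b), apply the statement at level $i+1$, peel the last $s_i$, collapse with \eqref{stabfactor}, and finish with Lemma~\ref{endcombgen}. However, your handling of the second summand $\ev_\nu(F^+_{\varepsilon_i+\varepsilon_{i+1}})\widehat{E}^y_{\mu'}$ is wrong, and the bookkeeping built on it does not close.

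That summand already has the right shape. $\mu'$ is the same weight as in the first summand, and the level-$(i+1)$ statement puts no condition on the first $i$ entries. Moreover $y$ itself satisfies $0<y(i+1)<\cdots<y(n)$. So the induction hypothesis applies to $\widehat{E}^y_{\mu'}$ directly, with no conjugation.

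Also, the first summand alone never produces $C_{\varepsilon_i+\varepsilon_{i+1}}\,s_iA^{(i+1)}_{\pm\ell}$. For $\ell\ge i+2$, keeping the final $s_i$ gives $\widehat{E}^{ys_is_\ell\cdots s_n\cdots s_{i+1}s_i}_\nu=\widehat{E}^{ys_\ell\cdots s_n\cdots s_is_{i+1}}_\nu=t^{\frac12}\widehat{E}^{ys_\ell\cdots s_n\cdots s_i}_\nu$, and similarly with $ys_{\ell-1}\cdots s_i$. That is only $t^{\frac12}\,s_iA^{(i+1)}_{\pm\ell}$. The missing $F^+_{\varepsilon_i+\varepsilon_{i+1}}\,s_iA^{(i+1)}_{\pm\ell}$ is the kept part of the second summand. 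Only the sum equals $C_{\varepsilon_i+\varepsilon_{i+1}}\,s_iA^{(i+1)}_{\pm\ell}$ by \eqref{bigC}, which is what Lemma~\ref{endcombgen}(a) needs. Likewise $A^{(i)}_{-(i+1)}=F^+_{\varepsilon_i+\varepsilon_{i+1}}$ comes from the second summand. So the second summand feeds every target, not just $\widehat{E}^y_\nu$, $\widehat{E}^{ys_i}_\nu$ and $\widehat{E}^{ys_{i+1}s_i}_\nu$.

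Only the parts where the final $s_i$ is crossed out collapse under \eqref{stabfactor}. Write $L$ for the sum on the left of Lemma~\ref{endcombgen}(b). The crossed parts of the second summand land on $\widehat{E}^y_\nu$ with total coefficient $F^+_{\varepsilon_i-\varepsilon_{i+1}}F^+_{\varepsilon_i+\varepsilon_{i+1}}L$. Those of the first summand land on $\widehat{E}^{ys_i}_\nu$ with coefficient $F^+_{\varepsilon_i-\varepsilon_{i+1}}$ times $L$ minus its $\ell=i+1$ term.

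That $\ell=i+1$ term is exceptional. Since $ys_i$ has a descent at $i$, $\widehat{E}^{ys_i}_{s_i\nu}=\widehat{E}^y_\nu+\ev_\nu(F^-_{\varepsilon_i-\varepsilon_{i+1}})\widehat{E}^{ys_i}_\nu$. This puts $s_iA^{(i+1)}_{i+1}$ on $\widehat{E}^y_\nu$, so that coefficient is $s_iA^{(i+1)}_{i+1}+F^+_{\varepsilon_i-\varepsilon_{i+1}}F^+_{\varepsilon_i+\varepsilon_{i+1}}L$. Together with the kept $\ell=i+1$ term of the second summand, it puts $(F^-_{\varepsilon_i-\varepsilon_{i+1}}+F^+_{\varepsilon_i+\varepsilon_{i+1}})\,s_iA^{(i+1)}_{i+1}$ on $\widehat{E}^{ys_i}_\nu$.

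The step your plan lacks is the swap $F^-_{\varepsilon_i-\varepsilon_{i+1}}+F^+_{\varepsilon_i+\varepsilon_{i+1}}=F^+_{\varepsilon_i-\varepsilon_{i+1}}+F^-_{\varepsilon_i+\varepsilon_{i+1}}$, which holds because $F^+_\alpha-F^-_\alpha=t^{-\frac12}-t^{\frac12}$ for both coroots. It completes $L$ and makes the coefficient of $\widehat{E}^{ys_i}_\nu$ equal to $F^+_{\varepsilon_i-\varepsilon_{i+1}}L+F^-_{\varepsilon_i+\varepsilon_{i+1}}\,s_iA^{(i+1)}_{i+1}$. Lemma~\ref{endcombgen}(b) then turns $L$ into the product of $C$'s, and the two coefficients become $A^{(i)}_i$ and $A^{(i)}_{i+1}$ respectively. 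This is not achieved by multiplying by a fold function and using $t^{\pm\frac12}+F^\pm=C$. With these corrections your outline becomes the paper's proof.
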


\begin{proof}
The proof is by descending induction on $i$.  The base case $i=n$ is covered in Proposition~\ref{alcwksteps}(c) with $y(n)>0$. Thus,
$$\widehat{E}^y_\mu 
= \widehat{E}^{ys_n}_\nu + \ev_\nu(F^+_{\alpha_n})\widehat{E}^y_\nu
= \widehat{E}^{ys_n}_\nu + \ev_\nu(F^+_{\varepsilon_n})\widehat{E}^y_\nu,
$$
so that $A^{(n)}_{-n} = 1$ and $A^{(n)}_n = F_{\varepsilon_n}^+$.

The induction step is as follows.  Let $v = s_{-(i+1),i+1}$.
Then
\begin{align*}
\widehat{E}^y_{s_i v s_i\nu}
&= \widehat{E}^{ys_i}_{vs_i\nu} + \ev_{vs_i\nu}(F^+_{\alpha_i})\widehat{E}^y_{vs_i\nu}
= \widehat{E}^{ys_i}_{vs_i\nu} + \ev_{vs_i\nu}(F^+_{\varepsilon_i-\varepsilon_{i+1}})\widehat{E}^y_{vs_i\nu}
\\
&= \widehat{E}^{ys_i}_{vs_i\nu} + \ev_\nu(F^+_{\varepsilon_i+\varepsilon_{i+1}})\widehat{E}^y_{vs_i\nu}
\\
&= \Big(\sum_{\ell=i+1}^n \ev_{s_i\nu}(A^{(i+1)}_{-\ell})\widehat{E}^{ys_is_\ell\cdots s_n\cdots s_{i+1}}_{s_i\nu}
+\sum_{\ell=n}^{i+1} \ev_{s_i\nu}(A^{(i+1)}_\ell) \widehat{E}^{ys_i s_\ell\cdots s_{i+1}}_{s_i\nu}  \Big)
\\
&\quad
+ \ev_{\nu}(F^+_{\varepsilon_i+\varepsilon_{i+1}}) 
\Big(\sum_{\ell=i+1}^n \ev_{s_i\nu}(A^{(i+1)}_{-\ell})\widehat{E}^{ys_\ell\cdots s_n\cdots s_{i+1}}_{s_i\nu}
+\sum_{\ell=n}^{i+1} \ev_{s_i\nu}(A^{(i+1)}_\ell) \widehat{E}^{y s_\ell\cdots s_{i+1}}_{s_i\nu}  \Big)
\\
&= 
{\color{blue}\sum_{\ell=i+1}^n \ev_{\nu}(s_iA^{(i+1)}_{-\ell})\widehat{E}^{ys_is_\ell\cdots s_n\cdots s_{i+1}}_{s_i\nu}}
+
{\color{red} \sum_{\ell=n}^{i+1} \ev_{\nu}(s_iA^{(i+1)}_\ell) \widehat{E}^{ys_i s_\ell\cdots s_{i+1}}_{s_i\nu}} 
\\
&\quad
+ \ev_{\nu}(F^+_{\varepsilon_i+\varepsilon_{i+1}}) 
\left({\color{brown} \sum_{\ell=i+1}^n \ev_{\nu}(s_kA^{(i+1)}_{-\ell})\widehat{E}^{ys_\ell\cdots s_n\cdots s_{i+1}}_{s_i\nu} }
+ {\color{purple} \sum_{\ell=n}^{i+1} \ev_{\nu}(s_iA^{(i+1)}_\ell) \widehat{E}^{y s_\ell\cdots s_{i+1}}_{s_i\nu} } \right),
\end{align*}
where the color-coding for the terms is introduced as a visual aid to facilitate tracking of the terms in the remainder of the computation.

By applying induction to each term, 
\begin{align*}
\widehat{E}^y_{s_ivs_i\nu}
&= 
{\color{blue} 
\left(\widehat{E}^{ys_i\cdots s_n\cdots s_i}_{\nu}
+\ev_\nu\left(F_{\varepsilon_i-\varepsilon_{i+1}}^+\right) 
\widehat{E}^{ys_i \cdots s_n\cdots s_{i+1}}_{\nu}\right) \ev_\nu\left(s_iA^{(i+1)}_{-(i+1)}\right)  } \\
&\quad
{\color{blue} 
+ \sum_{\ell=i+2}^n 
\left(\widehat{E}^{ys_is_\ell\cdots s_n\cdots s_i}_{\nu}
+\ev_\nu\left(F_{\varepsilon_i-\varepsilon_{i+1}}^+ \right)\widehat{E}^{ys_is_\ell\cdots s_n\cdots s_{i+1}}_{\nu}\right)
\ev_\nu\left(s_iA^{(i+1)}_{-\ell}\right)
}\\
&\quad
{\color{red} 
+\sum_{\ell=i+2}^n 
\left(\widehat{E}^{ys_is_{\ell-1}\cdots s_i}_{\nu}
+\ev_\nu\left(F_{\varepsilon_i-\varepsilon_{i+1}}^+\right) \widehat{E}^{ys_is_{\ell-1}\cdots s_{i+1}}_{\nu}\right)
\ev_\nu \left(s_iA^{(i+1)}_{\ell}\right) }
\\
&\quad
{\color{red} 
+ \left(\widehat{E}^y_{\nu}
+\ev_\nu\left(F_{\varepsilon_i-\varepsilon_{i+1}}^- \right) \widehat{E}^{ys_i}_{\nu}\right)
\ev_\nu \left(s_iA^{(i+1)}_{i+1}\right)
}\\
&\quad
{\color{brown} 
+\sum_{\ell=i+1}^n 
\left(\widehat{E}^{ys_\ell\cdots s_n\cdots s_i}_{\nu}
+\ev_\nu\left(F_{\varepsilon_i-\varepsilon_{i+1}}^+\right) \widehat{E}^{ys_\ell\cdots s_n\cdots s_{i+1}}_{\nu}\right)
\ev_\nu\left(F_{\varepsilon_i+\varepsilon_{i+1}}^+ s_i A^{(i+1)}_{-\ell}\right)
}\\
&\quad
{\color{purple} 
+\sum_{\ell=i+2}^n 
\left(\widehat{E}^{ys_{\ell-1}\cdots s_i}_{\nu}
+\ev_\nu\left(F_{\varepsilon_i-\varepsilon_{i+1}}^+\right) \widehat{E}^{ys_{\ell-1}\cdots s_{i+1}}_{\nu} \right)
\ev_\nu\left(F_{\varepsilon_i+\varepsilon_{i+1}}^+ s_iA^{(i+1)}_{\ell}\right)
 } \\
 &\quad
{\color{purple} 
+\left(\widehat{E}^{ys_i}_{\nu}
+\ev_\nu\left(F_{\varepsilon_i-\varepsilon_{i+1}}^+ \right)\widehat{E}^y_{\nu}\right)
\ev_\nu\left(F_{\varepsilon_i+\varepsilon_{i+1}}^+ s_iA^{(i+1)}_{i+1}\right).
}
\end{align*}
Since
\begin{align*}
ys_is_\ell\cdots s_n\cdots s_i &= ys_\ell\cdots s_n\cdots s_{i+2}s_i s_{i+1}s_i = ys_\ell\cdots s_n\cdots s_i s_{i+1} \text{, \quad and }\\
ys_i s_{\ell-1}\cdots s_i &= ys_{\ell-1}\cdots s_{i+2}s_i s_{i+1}s_i = ys_i s_{\ell-1}\cdots s_i s_{i+1},    
\end{align*} 
then
\begin{align*}
\widehat{E}^y_{s_i vs_i\nu}
&= 
{\color{blue} 
\left(\widehat{E}^{ys_i\cdots s_n\cdots s_i }_{\nu}
+\ev_\nu\left(F_{\varepsilon_i-\varepsilon_{i+1}}^+\right) 
\widehat{E}^{ys_i \cdots s_n\cdots s_{i+1}}_{\nu}\right) \ev_\nu\left(s_iA^{(i+1)}_{-(i+1)}\right)  } \\
&\quad
{\color{blue} 
+ \sum_{\ell=i+2}^n 
\left(\widehat{E}^{ys_\ell\cdots s_n\cdots s_i s_{i+1}}_{\nu}
+\ev_\nu\left(F_{\varepsilon_i-\varepsilon_{i+1}}^+ \right)
\widehat{E}^{ys_i s_\ell\cdots s_n\cdots s_{i+1}}_{\nu}\right) \ev_\nu\left(s_i A^{(i+1)}_{-\ell}\right)
}\\
&\quad
{\color{red} 
+\sum_{\ell = i+2}^n 
\left(\widehat{E}^{ys_{\ell-1}\cdots s_i s_{i+1}}_{\nu}
+\ev_\nu\left(F_{\varepsilon_i-\varepsilon_{i+1}}^+\right) \widehat{E}^{ys_i s_{\ell-1}\cdots s_{i+1}}_{\nu}\right)
\ev_\nu \left(s_i A^{(i+1)}_{\ell}\right) }
\\
&\quad
{\color{red} 
+ \left(\widehat{E}^y_{\nu}
+\ev_\nu\left(F_{\varepsilon_i-\varepsilon_{i+1}}^- \right) \widehat{E}^{ys_i}_{\nu}\right)
\ev_\nu \left(s_i A^{(i+1)}_{i+1}\right)
}\\
&\quad
{\color{brown} 
+\sum_{\ell = i+1}^n 
\left(\widehat{E}^{ys_\ell\cdots s_n\cdots s_i }_{\nu}
+\ev_\nu\left(F_{\varepsilon_i-\varepsilon_{i+1}}^+\right) \widehat{E}^{ys_\ell\cdots s_n\cdots s_{i+1}}_{\nu}\right)
\ev_\nu\left(F_{\varepsilon_i+\varepsilon_{i+1}}^+ s_iA^{(i+1)}_{-\ell}\right)
}\\
&\quad
{\color{purple} 
+\sum_{\ell=i+2}^n 
\left(\widehat{E}^{ys_{\ell-1}\cdots s_i}_{\nu}
+\ev_\nu\left(F_{\varepsilon_i-\varepsilon_{i+1}}^+\right) \widehat{E}^{ys_{\ell-1}\cdots s_{i+1}}_{\nu} \right)
\ev_\nu\left(F_{\varepsilon_i+\varepsilon_{i+1}}^+ s_iA^{(i+1)}_{\ell}\right)
 } \\
 &\quad
{\color{purple} 
+\left(\widehat{E}^{ys_i}_{\nu}
+\ev_\nu\left(F_{\varepsilon_i-\varepsilon_{i+1}}^+ \right)\widehat{E}^{y}_{\nu}\right)
\ev_\nu\left(F_{\varepsilon_i+\varepsilon_{i+1}}^+ s_iA^{(i+1)}_{i+1}\right).
}
\end{align*}

Next, apply~\eqref{stabfactor} so that we are left with terms of the form $\widehat{E}^w_\nu$ where $w=ys_\ell \ldots s_i$ 
with $\ell\in \{i, \ldots, n\}$
or $w=ys_\ell \ldots s_n \ldots s_i$ with $\ell\in \{i+1, \ldots, n-1\}$:
\begin{align}
\widehat{E}^y_{s_ivs_i\nu}
&= 
{\color{blue} 
\left(\widehat{E}^{ys_i\cdots s_n\cdots s_i}_{\nu}
+\ev_\nu\left(F_{\varepsilon_i-\varepsilon_{i+1}}^+\right) t_n^{\frac12}t^{\frac{2(n-1-i)}{2}}
\widehat{E}^{ys_i}_{\nu}\right) \ev_\nu\left(s_iA^{(i+1)}_{-(i+1)}\right)  } 
\nonumber \\
&\quad
{\color{blue} 
+ \sum_{\ell=i+2}^n 
\left(t^{\frac12}\widehat{E}^{ys_\ell\cdots s_n\cdots s_i}_{\nu}
+\ev_\nu\left(F_{\varepsilon_i-\varepsilon_{i+1}}^+ \right)
t_n^{\frac12} t^{\frac{n-1-i+n-1-(\ell-1)}{2}}  \widehat{E}^{ys_i}_{\nu}
\right)\ev_\nu\left(s_iA^{(i+1)}_{-\ell}\right)
}
\nonumber \\
&\quad
{\color{red} 
+\sum_{\ell=i+2}^n 
\left(t^{\frac12}\widehat{E}^{ys_{\ell-1}\cdots s_i}_{\nu}
+\ev_\nu\left(F_{\varepsilon_i-\varepsilon_{i+1}}^+\right) 
t^{\frac{\ell-1-i}{2}}  \widehat{E}^{ys_i}_{\nu}\right)
\ev_\nu \left(s_iA^{(i+1)}_{\ell}\right) }
\nonumber \\
&\quad
{\color{red} 
+ \left(\widehat{E}^y_{\nu}
+\ev_\nu\left(F_{\varepsilon_i-\varepsilon_{i+1}}^- \right) \widehat{E}^{ys_i}_{\nu}\right)
\ev_\nu \left(s_iA^{(i+1)}_{i+1}\right)
}
\nonumber \\
&\quad
{\color{brown} 
+\sum_{\ell=i+1}^n 
\left(\widehat{E}^{ys_\ell\cdots s_n\cdots s_i}_{\nu}
+\ev_\nu\left(F_{\varepsilon_i-\varepsilon_{i+1}}^+\right) t_n^{\frac12} 
t^{\frac{n-1-i+n-1-(\ell-1)}{2}}  \widehat{E}^{y}_{\nu}
\right)
\ev_\nu\left(F_{\varepsilon_i+\varepsilon_{i+1}}^+ s_iA^{(i+1)}_{-\ell}\right)
}
\nonumber \\
&\quad
{\color{purple} 
+\sum_{\ell=i+2}^n 
\left(\widehat{E}^{ys_{\ell-1}\cdots s_i}_{\nu}
+\ev_\nu\left(F_{\varepsilon_i-\varepsilon_{i+1}}^+\right) 
t^{\frac{\ell-1-i}{2}}  \widehat{E}^{y}_{\nu}
\right)
\ev_\nu\left(F_{\varepsilon_i+\varepsilon_{i+1}}^+ s_iA^{(i+1)}_{\ell}\right)
 } 
 \nonumber \\
 &\quad
{\color{purple} 
+\left(\widehat{E}^{ys_i}_{\nu}
+\ev_\nu\left(F_{\varepsilon_i-\varepsilon_{i+1}}^+ \right)\widehat{E}^{y}_{\nu}\right)
\ev_\nu\left(F_{\varepsilon_i+\varepsilon_{i+1}}^+ s_iA^{(i+1)}_{i+1}\right).
}
\label{Prop511pf}
\end{align}

Writing
\begin{align*}
\widehat{E}^y_{s_ivs_i\nu}
&= \sum_{\ell=i}^n \ev_\nu(B^{(i)}_{-\ell})\widehat{E}_\nu^{ys_\ell\cdots s_n\cdots s_i}
+ \sum_{\ell=i}^n \ev_\nu(B^{(i)}_{\ell})\widehat{E}_\nu^{ys_{\ell-1}\cdots s_i},
\end{align*}
we need to check that $\ev_\nu(B^{(i)}_{\pm\ell}) = \ev_\nu(A^{(i)}_{\pm\ell})$. 
This follows for most of the coefficients by~\eqref{Adef1A} and~\eqref{Adef1C}. For $\ell \in \{i+1,\ldots, n\}$,
\begin{align*}
\ev_\nu\left(B^{(i)}_{-\ell}\right)
&= t^{\frac12}\ev_\nu\left(s_iA^{(i+1)}_{-\ell}\right) + \ev_\nu\left(F_{\varepsilon_i+\varepsilon_{i+1}}^+ s_iA^{(i+1)}_{-\ell}\right) 
= \ev_\nu\left(C_{\varepsilon_i+\varepsilon_{i+1}} s_iA^{(i+1)}_{-\ell}\right) 
= \ev_\nu\left(A^{(i)}_{-\ell}\right)
\end{align*}
and for $\ell \in \{i+2,\ldots, n\}$
\begin{align*}
\ev_\nu\left(B^{(i)}_{\ell}\right)
&= t^{\frac12}\ev_\nu \left(s_iA^{(i+1)}_{\ell}\right) + \ev_\nu\left(F_{\varepsilon_i+\varepsilon_{i+1}}^+ s_iA^{(i+1)}_{\ell}\right) 
= \ev_\nu\left(C_{\varepsilon_i+\varepsilon_{i+1}} s_iA^{(i+1)}_{\ell}\right) 
= \ev_\nu\left(A^{(i)}_{\ell}\right).
\end{align*}
For $\ell\in\{i, i+1\}$,
\begin{align*}
\ev_\nu\left(B^{(i)}_{-i}\right) 
&= 1 = \ev_\nu\left(A^{(i)}_{-i}\right) \qquad\hbox{and}\qquad
\ev_\nu\left(B^{(i)}_{-(i+1)} \right) 
=\ev_\nu\left(F_{\varepsilon_i+\varepsilon_{i+1}}^+\right)
=\ev_\nu\left(A^{(i)}_{-(i+1)} \right).
\end{align*}

It remains to show that
$\ev_\nu(B^{(i)}_{i})=\ev_\nu(A^{(i)}_{i})$ and $\ev_\nu(B^{(i)}_{i+1}) = \ev_\nu(A^{(i)}_{i+1})$.
By~\eqref{Prop511pf},
\begin{align*}
\ev_\nu\Big(B^{(i)}_{i+1} \Big)
&= \ev_\nu\Big( F_{\varepsilon_i-\varepsilon_{i+1}}^+ 
\Big(\sum_{\ell = i+1}^n t^{n-\frac{\ell+i+1}{2}} t_n^{\frac12} (s_iA^{(i+1)}_{-\ell})
+\sum_{\ell = n}^{i+2} t^{\frac{\ell-i-1}{2}}  (s_iA^{(i+1)}_{\ell})\Big)\Big)
\\
&\qquad\quad
+ \ev_\nu\Big(\left(F_{\varepsilon_i-\varepsilon_{i+1}}^- +F_{\varepsilon_i+\varepsilon_{i+1}}^+\right)(s_iA^{(i+1)}_{i+1})\Big)
\end{align*}
and 
\begin{align*}
\ev_\nu\Big(B^{(i)}_i \Big)
= \ev_\nu\Big((s_iA^{(i+1)}_{i+1})
+F_{\varepsilon_i-\varepsilon_{i+1}}^+ F_{\varepsilon_i+\varepsilon_{i+1}}^+
\Big(\sum_{\ell=i+1}^n t^{n-\frac{i+\ell+1}{2}}t_n^{\frac12} (s_iA^{(i+1)}_{-\ell}) 
+t^{\frac{\ell-i-1}{2}}(s_iA^{(i+1)}_\ell)\Big)\Big).
\end{align*}

By~\eqref{Fdefn}, $F_{\varepsilon_i-\varepsilon_{i+1}}^+ - F_{\varepsilon_i-\varepsilon_{i+1}}^-
=t^{-\frac12}-t^{\frac12} = F_{\varepsilon_i+\varepsilon_{i+1}}^+ - F_{\varepsilon_i+\varepsilon_{i+1}}^-$,
and so 
$F_{\varepsilon_i-\varepsilon_{i+1}}^- + F_{\varepsilon_i+\varepsilon_{i+1}}^+ 
= F_{\varepsilon_i-\varepsilon_{i+1}}^+ + F_{\varepsilon_i+\varepsilon_{i+1}}^-$.
Using
$F_{\varepsilon_i-\varepsilon_{i+1}}^- + F_{\varepsilon_i+\varepsilon_{i+1}}^+ 
= F_{\varepsilon_i-\varepsilon_{i+1}}^+ + F_{\varepsilon_i+\varepsilon_{i+1}}^-$ 
and then Lemma~\ref{endcombgen}(b) gives
\begin{align*}
\ev_\nu\Big(B^{(i)}_{i+1}\Big)
&= \ev_\nu\Big(F_{\varepsilon_i-\varepsilon_{i+1}}^+ \Big(\sum_{\ell = i+1}^n t^{n-\frac{\ell+i-1}{2}} t_n^{\frac12} (s_iA^{(i+1)}_{-\ell})
+\sum_{\ell = n}^{i+1} t^{\frac{\ell-i-1}{2}}  (s_iA^{(i+1)}_{\ell})\Big)
+F_{\varepsilon_i+\varepsilon_{i+1}}^-(s_iA^{(i+1)}_{i+1}) \Big)\\
&= 
\ev_\nu\Big(\Big(\prod_{m=i+2}^n C_{\varepsilon_i+\varepsilon_m}\Big) C_{\varepsilon_i}
\Big(\prod_{m=i+2}^n C_{\varepsilon_i-\varepsilon_m}\Big)F_{\varepsilon_i-\varepsilon_{i+1}}^+ \Big)
\\
&\quad
+\ev_\nu\Big(F_{\varepsilon_i+\varepsilon_{i+1}}^-
\Big(
F_{\varepsilon_i}^+ 
+ \sum_{\ell=i+2}^n F^+_{\varepsilon_i+\varepsilon_\ell}
\Big(\prod_{m=\ell+1}^n C_{\varepsilon_i+\varepsilon_m}\Big) C_{\varepsilon_i}
\Big(\prod_{m=\ell+1}^n C_{\varepsilon_i-\varepsilon_m}\Big)
F^+_{\varepsilon_i-\varepsilon_\ell}
\Big) \Big)
\\
&= \ev_\nu\Big(A^{(i)}_{i+1}\Big),
\end{align*}
where the last equality is from~\eqref{Adef1B}.

Using Lemma~\ref{endcombgen}(b) gives
\begin{align*}
\ev_\nu\Big(B^{(i)}_i\Big)
&=
\ev_\nu\Big((s_iA^{(i+1)}_{i+1})
+F_{\varepsilon_i-\varepsilon_{i+1}}^+ F_{\varepsilon_i+\varepsilon_{i+1}}^+
\Big(\prod_{m=i+2}^n C_{\varepsilon_i+\varepsilon_m}\Big) C_{\varepsilon_i}
\Big(\prod_{m=i+2}^n C_{\varepsilon_i-\varepsilon_m}\Big) \Big)
\\
&=
\ev_\nu\Big(\Big(
F_{\varepsilon_i}^+ 
+ \sum_{\ell=i+2}^n F^+_{\varepsilon_i+\varepsilon_\ell}
\Big(\prod_{m=\ell+1}^n C_{\varepsilon_i+\varepsilon_m}\Big) C_{\varepsilon_i}
\Big(\prod_{m=\ell+1}^n C_{\varepsilon_i-\varepsilon_m}\Big)
F^+_{\varepsilon_i-\varepsilon_\ell}
\Big) \Big)
\\
&\qquad
+ \ev_\nu\Big(F_{\varepsilon_i-\varepsilon_{i+1}}^+ 
\Big(\prod_{m=i+2}^n C_{\varepsilon_i+\varepsilon_m}\Big) C_{\varepsilon_i}
\Big(\prod_{m=i+2}^n C_{\varepsilon_i-\varepsilon_m}\Big) F_{\varepsilon_i+\varepsilon_{i+1}}^+ \Big)
 \\
&=
\ev_\nu\Big( F_{\varepsilon_i}^+ 
+ \sum_{\ell=i+1}^n F^+_{\varepsilon_i+\varepsilon_\ell}
\Big(\prod_{m=\ell+1}^n C_{\varepsilon_i+\varepsilon_m}\Big) C_{\varepsilon_i}
\Big(\prod_{m=\ell+1}^n C_{\varepsilon_i-\varepsilon_m}\Big)
F^+_{\varepsilon_i-\varepsilon_\ell} \Big) 
\\
&= \ev_\nu\Big(A^{(i)}_i \Big),
\end{align*}
where the last equality is from~\eqref{Adef1B}.
\end{proof}

\subsection{Around-the-end compression weights when $y(j)\prec y(i)\prec y(j+1)$}\label{subsect: around the end general}

Proposition~\ref{proposition:GeneralCaseAroundTheEnd}
generalizes Proposition~\ref{0end} to arbitrary $y$ and expresses the general coefficients 
$A^{(i,j,-i)}_{\pm\ell}$ for around-the-end compression
in terms of the $A^{(i)}_{\pm\ell}$ defined in~\eqref{Adef1A},~\eqref{Adef1B},~\eqref{Adef1C}.

Suppose that $i\in \{1, \ldots, n\}$ and let $W_{[i+1,-(i+1)]}$ be the subgroup of $W_{\mathrm{fin}}$ generated
by $s_{i+1}, \ldots, s_n$.  An element 
$$\hbox{$y \in W_{\mathrm{fin}}$ is minimal length in $yW_{[i+1,-(i+1)]}$
\quad if and only if \quad}
0 < y(i+1) < \cdots < y(n).$$ 

\begin{prop} 
\label{proposition:GeneralCaseAroundTheEnd}
Let $i\in \{1, \ldots, n\}$,
\begin{align*}
\mu &= (\mu_1, \ldots, \mu_{i-1}, -\mu_i, 0, \ldots, 0)
\quad\hbox{with $\mu_1,\ldots, \mu_{i-1} \in \ZZ$ and $\mu_i \in \ZZ_{>0}$, \quad and let} \\
\nu &= (\mu_1, \ldots, \mu_{i-1}, \mu_i, 0, \ldots, 0)
\quad\hbox{so that}\quad \mu = s_i\cdots s_n \cdots s_i\nu.
\end{align*}
Let $y\in W_{\mathrm{fin}}$ with $0 < y(i+1) < y(i+2) < \cdots < y(n)$.
Let
$$j = \begin{cases}
i, &\hbox{if $0<{\color{blue} y(i)} < y(i+1)<\cdots < y(n)$, } \\
m, &\hbox{if $0<y(i+1)<\cdots < y(m)  < {\color{blue} y(i) } < y(m+1) < \cdots < y(n)$,} \\
n, &\hbox{if $0<y(i+1)<\cdots  <y(n)< {\color{blue} y(i)}$,} \\
-n, &\hbox{if $0<y(i+1)<\cdots  <y(n)< {\color{blue} -y(i)}$,} \\
-m, &\hbox{if $0<y(i+1)<\cdots < y(m) < {\color{blue} -y(i)} < y(m+1) < \cdots < y(n)$,} \\
-i, &\hbox{if $0 < {\color{blue} -y(i)} < y(i+1)<\cdots <y(n)$.}
\end{cases}
$$
Let $A^{(i)}_{\pm\ell}= A^{(i)}_{\pm\ell}(\beta_{-i}, \ldots, \beta_{-n}, \beta_n, \ldots \beta_{i+1})$ as defined in~\eqref{AgencA}-\eqref{AgencD}
where $(\beta_{-i}, \ldots, \beta_{-n}, \beta_n, \ldots \beta_{i+1})$ is the coroot sequence given in~\eqref{0endcrtseq}.
Then 
$$\widehat{E}^y_\mu
= \sum_{\ell = i}^n 
\ev_\nu(A^{(i,j,-i)}_{-\ell}) \widehat{E}^{ys_\ell \cdots s_n\cdots s_i}_\nu
+\ev_\nu(A^{(i,j,-i)}_{\ell}) \widehat{E}^{ys_{\ell-1} \cdots s_i}_\nu,
$$
\end{prop}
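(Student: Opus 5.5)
Our plan is to reduce Proposition~\ref{proposition:GeneralCaseAroundTheEnd} to Proposition~\ref{0end}. We write $y$ as $y = y' w$, where $y'$ satisfies $0<y'(i)<y'(i+1)<\cdots<y'(n)$ and $w$ lies in the parabolic subgroup generated by $s_i,\ldots,s_n$. Concretely, $w$ is the minimal element moving position $i$ into slot $j$. If $j=m\in\{i+1,\ldots,n\}$ then $w = s_i s_{i+1}\cdots s_{m-1}$. If $j=-m$ then $w = s_i\cdots s_n\cdots s_{m}$, so the sign of $y(i)$ is flipped through $s_n$. In every case $\ell(y)=\ell(y')+\ell(w)$, so $T_y = T_{y'}T_w$ and hence $\widehat{E}^y_\mu = T_{y'}T_w\widehat{E}_\mu$. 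Since $\mu=(\ldots,-\mu_i,0,\ldots,0)$ has zeros in positions $i+1,\ldots,n$, the generators $s_{i+1},\ldots,s_n$ fix $\mu$. However, $s_i$ does not fix $\mu$, so $T_w$ does not simply produce a scalar. We therefore avoid commuting $T_w$ past $\widehat{E}_\mu$ directly. Instead we induct on $\ell(w)$ along the chain $j=i,\,i+1,\ldots,n,-n,\ldots,-i$, which is exactly the ordering used in Cases 1--4 of the definition of $A^{(i,j,-i)}$.

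For the inductive step, suppose $y = \tilde y s_k$ with $\ell(y)=\ell(\tilde y)+1$ and $k\in\{i,\ldots,n\}$, where $\tilde y$ corresponds to the previous value of $j$. Then $\widehat{E}^y_\mu = T_{\tilde y}T_k\widehat{E}_\mu$. We apply the induction hypothesis to $\widehat{E}^{\tilde y}_{\mu}$ in its operator form, namely $T_{\tilde y}$ applied to the expansion $\widehat{E}_\mu=\tau_{s_i\cdots s_n\cdots s_i}\widehat{E}_\nu$. We then move $T_k$ through the product of $\tau$'s using the braid relations together with $\tau_k Y^\lambda = Y^{s_k\lambda}\tau_k$ from~\eqref{taupY}.

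A cleaner route is to compare directly with $T_{y'}$. Note that $T_k - T_k^{-1}=t_k^{\frac12}-t_k^{-\frac12}$, and by~\eqref{stabfactor} $T_k\widehat{E}^{w'}_\nu = t^{\pm\frac12}\widehat{E}^{w'}_\nu$ whenever $s_k$ fixes $\nu$ and lengths add. Since $\nu$ has zeros in positions $i+1,\ldots,n$, every $T_k$ with $k\ge i+1$ acts by a scalar on $\widehat{E}_\nu$. Each right-multiplication $\widehat{E}^{ys_{\ell-1}\cdots s_i}_\nu\mapsto \widehat{E}^{ys_{\ell-1}\cdots s_is_k}_\nu$ is therefore either a relabeling or a scalar $t^{\pm\frac12}$ (or $t_n^{\pm\frac12}$). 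This is how the correction terms $(t^{\frac12}-t^{-\frac12})t^{\pm\frac12(\cdot)}A^{(i)}_{-i}$ and the sums $\sum_j t^{\frac12(j-i-1)}A^{(i)}_j$ in~\eqref{AgencB}--\eqref{AgencD} arise. They come from the quadratic relation whenever $T_k$ must be inverted to restore a reduced expression.

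Summarizing the bookkeeping: we start from Proposition~\ref{0end} for $y'$ and multiply by $T_w$ on the left of $\widehat{E}_\nu$-terms. Each coset representative $y'd_{\ell,i}$ times $w$ is rewritten as $y d_{\ell',i}u$ with $u$ in the stabilizer of $\nu$, and~\eqref{stabfactor} converts $u$ into a power of $t^{\frac12}$ or $t_n^{\frac12}$. We then collect coefficients of each $\widehat{E}^{yd_{\ell,i}}_\nu$. For Case 2 ($j=m>i$), the terms $\widehat{E}^{y'd_{\ell,i}}$ with $i\le\ell\le m$ get permuted cyclically. One term, the one landing on $\ell=i$, picks up the quadratic-relation corrections, which yields~\eqref{AgencB} and the modified $A_{-\ell}$ for $\ell\le m$. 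Case 3 adds the single $s_n$ step, which contributes the extra $t(t_n^{\frac12}-t_n^{-\frac12})A^{(i)}_{-i}$ in~\eqref{AgencC}. Case 4 continues back down through $s_{n-1},\ldots,s_m$, and produces~\eqref{AgencD}.

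The main obstacle is the exact tracking of powers of $t^{\frac12}$ and $t_n^{\frac12}$ in Case 4. Here the coefficient $t(t_n^{\frac12}-t_n^{-\frac12})+t^{\frac12}t_n^{\frac12}(t^{\frac12}-t^{-\frac12})(t^{n-m}-1)$ must emerge as a telescoping sum of geometric contributions from each descending step. We would first verify this for $n=i+1$ and $n=i+2$ by direct expansion, and then set up the induction on $n-m$ with the identity $\sum_{r=0}^{n-m-1}t^{r}(t-1)=t^{n-m}-1$ as the key simplification.
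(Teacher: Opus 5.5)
Your overall architecture matches the paper's proof. You pull out the minimal coset representative $y'$ of $y\langle s_i,\ldots,s_n\rangle$, so that $T_{y'}$ can be applied at the very end with lengths adding. You walk along the chain $j=i,i+1,\ldots,n,-n,\ldots,-i$. You get the corrections from the quadratic relation together with \eqref{stabfactor}, which is Lemma~\ref{Tred}. Finally you sum the resulting recursions, including the geometric series in Case~4, which is Lemma~\ref{Arec}. However, two steps fail as written.

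First, your $w$ is the inverse of the element you need. Since $y(i)=y'(w(i))$, you need $w(i)=m$, i.e.\ $w=s_{m-1}\cdots s_i$ for $j=m$ and $w=s_m\cdots s_{n-1}s_ns_{n-1}\cdots s_i$ for $j=-m$. With your $w=s_i\cdots s_{m-1}$ and $m\ge i+2$ one gets $(y'w)(m-1)=y'(m)>y'(i)=(y'w)(m)$, so $y'w$ violates $y(i+1)<\cdots<y(n)$. Indeed $T_w\widehat E_\mu=t^{\frac12(m-1-i)}T_i\widehat E_\mu$, because $s_{i+1},\ldots,s_{m-1}$ fix $\mu$, so this is not a new case at all.

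Second, consecutive elements of the chain differ by a simple reflection on the \emph{left}, $v_{\mathrm{next}}=s_kv_{\mathrm{prev}}$ (for instance $s_ms_{m-1}\cdots s_i=s_m\cdot(s_{m-1}\cdots s_i)$), not on the right. For $k\ge i+1$, passing from $y$ to $ys_k$ breaks $0<y(i+1)<\cdots<y(n)$, and $T_k\widehat E_\mu$ is then only the scalar $t^{\frac12}\widehat E_\mu$ or $t_n^{\frac12}\widehat E_\mu$. So your ``$y=\tilde y s_k$'' step works only with $s_i$, and only for $j=i\to i+1$. There is also no relation that moves $T_k$ through the $\tau$'s: \eqref{taupY} concerns $\tau$'s and $Y$'s. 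For the same reason your summary cannot start from Proposition~\ref{0end} ``for $y'$'': that is an identity for $T_{y'}\widehat E_\mu$, and $T_w$ cannot be inserted into it after the fact.

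What works, and is what the paper does, is the following.
\begin{enumerate}
\item Prove the claim for $y=v_j$ itself, starting from Proposition~\ref{0end} with $y=1$.
\item Pass from $v_{\mathrm{prev}}$ to $v_{\mathrm{next}}=s_kv_{\mathrm{prev}}$ by applying $T_k$ on the left to each term $\widehat E^{v_{\mathrm{prev}}d}_\nu$. The main term becomes $\widehat E^{v_{\mathrm{next}}d}_\nu$ with the same label $d$.
\item Only when $\ell(s_kv_{\mathrm{prev}}d)<\ell(v_{\mathrm{prev}}d)$, which happens for at most two $d$ per step, does $T_k=T_k^{-1}+(t^{\frac12}-t^{-\frac12})$ produce an extra term $\widehat E^{v_{\mathrm{prev}}d}_\nu$ (with $t_n$ in place of $t$ when $k=n$). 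Using \eqref{stabfactor}, this extra term becomes a power of $t^{\frac12}$ or $t_n^{\frac12}$ times $\widehat E^{v_{\mathrm{next}}d'}_\nu$.
\item Apply $T_{y'}$ at the end.
\end{enumerate}
In Case~2 the two affected terms are $d=s_i\cdots s_n\cdots s_i$ and $d=s_{m-1}\cdots s_i$, feeding $A_{-m}$ and $A_i$ respectively. Your description, in which only the term landing on $\ell=i$ is corrected, misses the first of these.
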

\begin{proof}

Let $W_{[1,i]}$ be the subgroup of $W_{\mathrm{fin}}$ generated by $s_1, \ldots, s_{i-1}$ and let
$u\in W_{[1,i]}$ be such that
$$y=uv,\quad\hbox{where $u\in W_{[1,i]}$ and}
\quad
v = \begin{cases}
1, &\hbox{if $j=i$,} \\
s_{m-1}s_{m-2}\cdots s_i, &\hbox{if $j=m$,} \\
s_{n-1}s_{n-2}\cdots s_i, &\hbox{if $j=n$,} \\
s_n s_{n-1}\cdots s_i, &\hbox{if $j=-n$,} \\
s_m\cdots s_{n-1}s_n s_{n-1}\cdots s_i, &\hbox{if $j=-m$,} \\
s_i\cdots s_{n-1}s_n s_{n-1}\cdots s_i, &\hbox{if $j=-i$.}
\end{cases}
$$
Assume that the statement has been proved for $v$. That is,
\begin{align}
\widehat{E}^v_\mu 
&=\sum_{\ell = i}^n \ev_\nu(A^{(i,m,-i)}_{-\ell}) \widehat{E}^{vs_\ell \cdots s_n\cdots s_i}_\nu
+\ev_\nu(A^{(i,m,-i)}_{\ell}) \widehat{E}^{vs_{\ell-1} \cdots s_i}_\nu.
\label{vcase}
\end{align}
Then
\begin{align*}
\widehat{E}^{y}_\mu 
&= \widehat{E}^{uv}_\mu 
= T_u \widehat{E}^{v}_\mu 
=\sum_{\ell = i}^n \ev_\nu(A^{(i,m,-i)}_{-\ell}) T_u\widehat{E}^{vs_\ell \cdots s_n\cdots s_i}_\nu
+\ev_\nu(A^{(i,m,-i)}_{\ell}) T_u\widehat{E}^{vs_{\ell-1} \cdots s_i}_\nu
\\
&=\sum_{\ell = i}^n \ev_\nu(A^{(i,m,-i)}_{-\ell}) \widehat{E}^{uvs_\ell \cdots s_n\cdots s_i}_\nu
+\ev_\nu(A^{(i,m,-i)}_{\ell}) \widehat{E}^{uvs_{\ell-1} \cdots s_i}_\nu
\\
&=\sum_{\ell = i}^n \ev_\nu(A^{(i,m,-i)}_{-\ell}) \widehat{E}^{ys_\ell \cdots s_n\cdots s_i}_\nu
+\ev_\nu(A^{(i,m,-i)}_{\ell}) \widehat{E}^{ys_{\ell-1} \cdots s_i}_\nu.
\end{align*}
So it suffices to prove~\eqref{vcase} for each of the possible values of $j$.
The proof uses Lemmas~\ref{Arec} and~\ref{Tred}, which are proved separately below.

\noindent
\emph{Case 1:  $j=i$.}
This is the case covered by Proposition~\ref{0end}.

\noindent
\emph{Case 2:  $j=m$ with $m\in \{i+1, \ldots, n\}$.} 
Set $v = s_{m-1}s_{m-2}\cdots s_i$ and $w= s_{m-1}v = s_{m-2}\cdots s_i$.  
Using $\widehat{E}^v_\mu = T_{m-1} \widehat{E}^w_\mu$ gives
\begin{align*}
\widehat{E}^v_\mu &= \widehat{E}^{s_{m-1}\cdots s_i}_\mu
= T_{m-1}\widehat{E}^{s_{m-2}\cdots s_i}_\mu
\\
&= T_{m-1}\Big(\sum_{\ell = i}^n 
\ev_\nu(A^{(i,m-1,-i)}_{-\ell}) \widehat{E}^{ws_\ell \cdots s_n\cdots s_i}_\nu
+\ev_\nu(A^{(i,m-1,-i)}_{\ell}) \widehat{E}^{ws_{\ell-1} \cdots s_i}_\nu\Big)
\\
&= \sum_{\ell = i}^n 
\ev_\nu(A^{(i,m-1,-i)}_{-\ell}) T_{m-1}\widehat{E}^{ws_\ell \cdots s_n\cdots s_i}_\nu
+\sum_{\ell = i}^n \ev_\nu(A^{(i,m-1,-i)}_{\ell}) T_{m-1}\widehat{E}^{ws_{\ell-1} \cdots s_i}_\nu.
\end{align*}
Plugging in the expressions for $T_{m-1}\widehat{E}^{ws_\ell \cdots s_n\cdots s_i}_\nu$ and $T_{m-1}\widehat{E}^{ws_{\ell-1} \cdots s_i}_\nu$
which are given in Case 2 of Lemma~\ref{Tred} gives
\begin{align*}
\widehat{E}^v_\mu 
&= 
\ev_\nu(A^{(i,m-1,-i)}_{-i}) \big(
\widehat{E}^{vs_i\cdots s_n\cdots s_i}_\nu
+(t^{\frac12}-t^{-\frac12})t^{-\frac12(m-i-1)} \widehat{E}_\nu^{vs_m\cdots s_n \cdots s_i}\big) \\
&\qquad+
\sum_{\ell=i+1}^{n} 
\ev_\nu(A^{(i,m-1,-i)}_{-\ell})  \widehat{E}^{vs_\ell \cdots s_n\cdots s_i}_\nu
+
\sum_{\ell=i+1}^{m-1} \ev_\nu(A^{(i,m-1,-i)}_{\ell})  \widehat{E}^{vs_{\ell-1} \cdots  s_i}_\nu \\
&\qquad+ \ev_\nu(A^{(i,m-1,-i)}_{m})
\big(\widehat{E}^{vs_{m-1} \cdots  s_i}+(t^{\frac12}-t^{-\frac12})t^{\frac12(m-i-1)}\widehat{E}^v_\nu\big) \\
&\qquad+
\sum_{\ell=m+1}^{n} \ev_\nu(A^{(i,m-1,-i)}_{\ell})  \widehat{E}^{vs_{\ell-1} \cdots  s_i}_\nu \\
&= \sum_{\ell=i}^{m-1} \ev_\nu(A_{-\ell}^{(i,m-1,-i)}) \widehat{E}^{vs_\ell \cdots s_n\cdots s_i}_\nu 
+ \sum_{\ell=m+1}^{n} \ev_\nu(A_{-\ell}^{(i,m-1,-i)}) \widehat{E}^{vs_\ell \cdots s_n\cdots s_i}_\nu \\
&\qquad+ \ev_\nu(A_{-m}^{(i,m-1,-i)} + (t^{\frac12}-t^{-\frac12})t^{-\frac12(m-1-i)}A_{-i}^{(i,m-1,-i)}) 
\widehat{E}^{vs_m \cdots s_n\cdots s_i}_\nu \\
&\qquad+ \sum_{\ell=i+1}^{n} \ev_\nu(A_{\ell}^{(i,m-1,-i)}) \widehat{E}^{vs_{\ell-1} \cdots  s_i}_\nu \\
&\qquad+ \ev_\nu\big(A_{i}^{(i,m-1,-i)} + (t^{\frac12}-t^{-\frac12})t^{\frac12(m-1-i)}A_m^{(i,m-1,-i)}\big) \widehat{E}^v_\nu.
\end{align*}
Then, by Lemma~\ref{Arec} and the relations in~\eqref{Agenr2},
\begin{align*}
\widehat{E}^v_\mu 
&=\sum_{\ell = i}^n \ev_\nu(A^{(i,m,-i)}_{-\ell}) \widehat{E}^{vs_\ell \cdots s_n\cdots s_i}_\nu
+\ev_\nu(A^{(i,m,-i)}_{\ell}) \widehat{E}^{vs_{\ell-1} \cdots s_i}_\nu.
\end{align*}

The remaining cases are similar to Case 2. 

\end{proof}

\begin{lemma} \label{Arec}
The $A^{(i,m,-i)}_{\pm \ell}$ are determined by~\eqref{AgencA} and the following recursions:

\noindent 
Case 2: $m\in \{i+1, \ldots, n\}$.
\begin{align}
A^{(i,m,-i)}_{-\ell} &= A^{(i,m-1,-i)}_{-\ell}, \qquad \text{if $-\ell\in \{-i, \ldots, -n\}$ and $-\ell\ne -m$,} \nonumber
\\
A^{(i,m,-i)}_{-m} &= A^{(i,m-1,-i)}_{-m} + (t^{\frac12}-t^{-\frac12})t^{-\frac12(m-1-i)} A^{(i,m-1,-i)}_{-i},\qquad \hbox{(the case $-\ell = -m$),}
\nonumber
\\
A^{(i,m,-i)}_{\ell} &= A^{(i,m-1,-i)}_{\ell}, \qquad\text{if $\ell\in \{i+1, \ldots, n\}$,}
\label{Agenr2}
\\
A^{(i,m,-i)}_i &= A^{(i,m-1,-i)}_i +  (t^{\frac12}-t^{-\frac12})t^{\frac12(m-1-k)} A^{(i,m-1,-i)}_m,  \qquad \text{(the case $\ell= i$).} \nonumber
\end{align}
Case 3: $-m=-n$.
\begin{align}
A^{(i,-n,-i)}_{-\ell} &= A^{(i,n,-i)}_{-\ell}, && \text{if $-\ell \in \{-i, \ldots, -n\}$,}\nonumber
\\
A^{(i,-n,-i)}_{\ell} &= A^{(i,n,-i)}_{\ell}, && \text{if $\ell\in \{i+1, \ldots, n\}$,}\nonumber
\\
A^{(i,-n,-i)}_i &= A^{(i,n,-i)}_i + (t_n^{\frac12}-t_n^{-\frac12}) A^{(i,n,-i)}_{-i},  && \text{(the case $\ell= i$).}
\label{Agenr3}
\end{align}

Case 4. $-m\in \{-i, \ldots, -(n-1)\}$.
\begin{align}
A^{(i,-m,-i)}_{-\ell} &= A^{(i, -(m+1),-i)}_{-\ell},\qquad\hbox{if $-\ell\in \{-i, \ldots, -n\}$.} \nonumber \\
A^{(i,-m,-i)}_\ell &= A^{(i, -(m+1),-i)}_\ell,\qquad\hbox{if $\ell\in \{i+1, \ldots, n\}$ and $\ell\neq m$,} \nonumber
\\
A^{(i,-m,-i)}_{m} 
&= A^{(i,-(m+1),-i)}_{m}+(t^{\frac12}-t^{-\frac12})t^{-\frac12(2n-2-i-m)}t_n^{-\frac12} A^{(i,-(m+1),-i)}_{-i}, \nonumber
\\
A^{(i,-m,-i)}_i
&= A^{(i,-(m+1),-i)}_i+(t^{\frac12}-t^{-\frac12})t^{\frac12(2n-2-m-i)}t_n^{\frac12}
A^{(i,-(m+1),-i)}_{-(m+1)}
\label{Agenr4}
\end{align}
\end{lemma}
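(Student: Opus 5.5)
The plan is to verify the recursions in Lemma~\ref{Arec} directly from the explicit definitions~\eqref{AgencA}--\eqref{AgencD}. The recursions together with the initial data~\eqref{AgencA} (the case $m=i$, where $A^{(i,i,-i)}_{\pm\ell}=A^{(i)}_{\pm\ell}$) determine every $A^{(i,m,-i)}_{\pm\ell}$ uniquely. This is because the index $m$ runs through the chain $i, i+1, \ldots, n, -n, -(n-1), \ldots, -i$, and each recursion expresses the functions at one link in terms of those at the previous link. So it suffices to check that the closed formulas satisfy each recursion; uniqueness then gives the lemma.

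\emph{Case 2} ($m\in\{i+1,\ldots,n\}$). I will induct on $m$ and compare the closed formula at $m$ with that at $m-1$ (for $m-1=i$, use~\eqref{AgencA}).
\begin{itemize}
\item For $-\ell\ne -m$ the closed formulas agree, because the correction term $(t^{\frac12}-t^{-\frac12})t^{-\frac12(\ell-1-i)}A^{(i)}_{-i}$ is added exactly for $-\ell\in\{-i,\ldots,-m\}$. Passing from $m-1$ to $m$ adds only the index $-m$.
\item At $-\ell=-m$ one must use $A^{(i,m-1,-i)}_{-i}=A^{(i)}_{-i}=1$. The correction at $\ell=i$ is $(t^{\frac12}-t^{-\frac12})t^{\frac12}$ times $A^{(i)}_{-i}$; this has to be reconciled against the stated recursion, and I expect a $t$-power bookkeeping check there.
\item The positive indices $\ell\ge i+1$ are unchanged, which is immediate.
\item For $\ell=i$, the sum in~\eqref{AgencB} gains the single term $j=m$, namely $(t^{\frac12}-t^{-\frac12})t^{\frac12(m-1-i)}A^{(i)}_m$, and $A^{(i)}_m=A^{(i,m-1,-i)}_m$.
\end{itemize}

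\emph{Case 3} ($-m=-n$). Compare~\eqref{AgencC} with the Case 2 formula at $m=n$. The negative and positive indices $\ell\ge i+1$ coincide, and the $\ell=i$ entries differ by a multiple of $A^{(i)}_{-i}=A^{(i,n,-i)}_{-i}$. The normalization of this multiple, $t$ versus $1$, will be read off by matching against the derivation in Proposition~\ref{proposition:GeneralCaseAroundTheEnd}, where it arises from applying $T_n$ to the term $\widehat{E}^{vs_i\cdots s_n\cdots s_i}_\nu$.

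\emph{Case 4} ($-m\in\{-i,\ldots,-(n-1)\}$). Induct downward from $-n$, comparing~\eqref{AgencD} at $m$ with the formula at $m+1$ (or with~\eqref{AgencC} when $m+1=n$).
\begin{itemize}
\item The positive range $\{m,\ldots,n\}$ gains the index $m$, with correction $(t^{\frac12}-t^{-\frac12})t^{-\frac12(2n-4-m-i)}t_n^{-\frac12}A^{(i)}_{-i}$.
\item The sum over $A^{(i)}_{-j}$ in the $\ell=i$ entry gains the term $j=m+1$, and $A^{(i)}_{-(m+1)}=A^{(i,-(m+1),-i)}_{-(m+1)}$ up to the added multiple of $A^{(i)}_{-i}$.
\item The last coefficient changes by $t^{\frac12}t_n^{\frac12}(t^{\frac12}-t^{-\frac12})(t^{n-m}-t^{n-m-1})$. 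This change has to be absorbed into the correction carried inside $A^{(i,-(m+1),-i)}_{-(m+1)}$.
\end{itemize}

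\textbf{Main obstacle.} I expect the main difficulty to be this last absorption step, together with the exponent mismatches. I will first expand $t^{\frac12(2n-2-m-i)}t_n^{\frac12}\,(t^{\frac12}-t^{-\frac12})\,t^{-\frac12(m-i)}A^{(i)}_{-i}$ and check that it equals the increment of the $A^{(i)}_{-i}$ coefficient. If it does not, the defining formulas contain typos (the stated exponents are visibly inconsistent), and I will correct them so that they agree with the coefficients actually produced by applying $T_{m}$ via Lemma~\ref{Tred}, treating the recursions as primary.
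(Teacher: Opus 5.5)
Checking the closed formulas against the recursions and then invoking uniqueness along the chain $i,\dots,n,-n,\dots,-i$ is the paper's computation run backwards. The paper starts from \eqref{AgencA} and iterates \eqref{Agenr2}--\eqref{Agenr4} to produce the closed formulas, so your strategy itself is reasonable.

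The gap is that every point you defer is a place where the check actually fails. This includes ``a $t$-power bookkeeping check'', ``$t$ versus $1$ will be read off'', and ``if it does not \dots\ I will correct them''. The closed formulas as printed do not satisfy the recursions, so your argument cannot be completed for the lemma as stated. You noticed the tension at $\ell=i$, and it is decisive.

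The Case~2 closed formula puts $-\ell=-i$ in the corrected range. With $A^{(i)}_{-i}=1$ from \eqref{Adef1A}, it gives $A^{(i,m,-i)}_{-i}=\bigl(1+(t^{\frac12}-t^{-\frac12})t^{\frac12}\bigr)A^{(i)}_{-i}=t$ for $m>i$. But the first relation in \eqref{Agenr2} applies because $-i\neq -m$, and it forces $A^{(i,m,-i)}_{-i}=A^{(i,i,-i)}_{-i}=1$. This already fails at $m=i+1$. Feeding the value $t$ into the $-\ell=-m$ relation at the next step then produces a correction $t$ times the closed one.

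The same spurious $t$ is carried into later cases. It is the factor $t$ in the last term of \eqref{AgencC}, and the exponent $2n-4-\ell-i$ in the Case~4 formula for $A^{(i,-m,-i)}_\ell$.

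Your Case~4 ``absorption'' step fails as well. Via $A^{(i,-(m+1),-i)}_{-(m+1)}$, the last relation in \eqref{Agenr4} raises the $A^{(i)}_{-i}$-term of the $\ell=i$ entry by $(t^{\frac12}-t^{-\frac12})^2t^{n-m-1}t_n^{\frac12}$. The closed formula \eqref{AgencD} raises it by $t$ times that. In addition, \eqref{AgencD} carries $t^{\frac12(j-i)}A^{(i)}_j$ where \eqref{AgencC} has $t^{\frac12(j-i-1)}A^{(i)}_j$, and \eqref{Agenr4} does not account for that change.

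The paper's own derivation picks up the spurious $t$ by applying the $-\ell=-m$ relation of \eqref{Agenr2} with $\ell=i$, which is outside its range. Its geometric sum $t^n\frac{t^{-n}-t^{-m}}{t^{-1}-1}$ also equals $t\sum_{j=m+1}^n t^{n-j}$, not $\sum_{j=m+1}^n t^{n-j}$.

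Your fallback is not well defined either, because ``agree with Lemma~\ref{Tred}'' and ``treat the recursions as primary'' are incompatible in Case~4. Lemma~\ref{Tred} sends the correction coming from the $-i$ slot to $\widehat{E}^{vs_m\cdots s_i}_\nu$, which is the slot $\ell=m+1$. The third relation in \eqref{Agenr4} instead puts it in slot $\ell=m$.

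For instance, take $n=2$ and $i=m=1$. Then $v=s_1s_2s_1$, $w=s_2s_1$ and $ws_1s_2s_1=s_1$. Here $T_1\widehat{E}^{s_1}_\nu=\widehat{E}^{1}_\nu+(t^{\frac12}-t^{-\frac12})\widehat{E}^{s_1}_\nu$, and $s_1\in vs_1\langle s_2\rangle$, which is the $\ell=2$ slot.

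Moreover, for $m=i$ the last two relations in \eqref{Agenr4} both prescribe $A^{(i,-i,-i)}_i$, with different values. So your uniqueness step also breaks at the final link.

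What is missing is an independent determination of the true coefficients. For example, $A^{(i,j,-i)}_{-i}=1$ for every $j$. Only the subword keeping every letter of $s_i\cdots s_n\cdots s_i$ has product $x$ with $x(i)=-i$, and that alcove-walk term has coefficient $1$. With such an anchor you must first restate the closed formulas and the slot index in \eqref{Agenr4}. Only then can a verify-then-uniqueness argument go through.
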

\begin{proof}
\emph{Case 2: $m\in \{i+1, \ldots, n\}$.} 

\noindent
If $-\ell\in \{-i, \ldots, -m\}$ then using the first equality in~\eqref{Agenr2}
and then the second equality in~\eqref{Agenr2} and then the first equality again gives
\begin{align*}
A^{(i,m,-i)}_{-\ell} =  A^{(i,\ell,-i)}_{-\ell} 
&=A^{(i,\ell-1,-i)}_{-\ell} + (t^{\frac12}-t^{-\frac12})t^{-\frac12(\ell-1-i)} A^{(i,\ell-1,-i)}_{-i} \\
&=A^{(i,i,-i)}_{-\ell} + (t^{\frac12}-t^{-\frac12})t^{-\frac12(\ell-1-i)} A^{(i,i,-i)}_{-i},
\end{align*}
which is the first relation~\eqref{AgencB}.

\noindent
If $-\ell\in \{-(m+1), \ldots, -n\}$ then first relation in~\eqref{Agenr2} gives $A^{(i,m,-i)}_{-\ell} = A^{(i,i,-i)}_{-\ell}$, 
which is the second relation in~\eqref{AgencB}.

\noindent
If $\ell\in \{i+1, \ldots, n\}$ then third relation in~\eqref{Agenr2} gives $A^{(i,m,-i)}_\ell = A^{(i,i,-i)}_\ell$, which is the
third relation in~\eqref{AgencB}.

\noindent
To prove the last equation in~\eqref{AgencB} (where $\ell=i$) use induction on $m$.
The base case is $m=i+1$ for which
$A^{(i,i+1,-i)}_i = A^{(i,i,-i)}_i+ (t^{\frac12}-t^{-\frac12}) A^{(i,i,-i)}_{i+1},
$
and the induction step is
\begin{align*}
&A^{(i,m,-i)}_i = A^{(i,m-1,-i)}_i + (t^{\frac12}-t^{-\frac12})t^{\frac12(m-1-i)} A^{(i,m-1,-i)}_m \\ 
&= \Big(A^{(i,i,-i)}_i+ (t^{\frac12}-t^{-\frac12})\sum_{j=i+1}^{m-1} t^{\frac12(j-1-i)}\ev_\nu(A^{(i,i,-i)}_j)\Big)
+ (t^{\frac12}-t^{-\frac12})t^{\frac12(m-1-i)} A^{(i,m-1,-i)}_m \\
&\quad= \ev_\nu(A^{(i,i,-i)}_i+ (t^{\frac12}-t^{-\frac12})
\sum_{j=i+1}^m t^{\frac12(j-1-k)}\ev_\nu(A^{(i,i,-i)}_j).
\end{align*}

\smallskip\noindent
\emph{Case 3: $-m= -n$.} 
The first relation in~\eqref{Agenr3} and the first relation in~\eqref{AgencB} give that if $-\ell\in \{-i, \ldots, -n\}$ then
$$A^{(i, -n,-1)}_{-\ell} = A^{(i,n,-i)}_{-\ell} = 
A^{(i)}_{-\ell}+(t^{\frac12}-t^{-\frac12})t^{-\frac12(\ell-1-i)}A^{(i)}_{-i},$$
which is the first relation in~\eqref{AgencC}.  The second relation in~\eqref{Agenr3} and the third relation in~\eqref{AgencB} give
that if $\ell\in \{i+1, \ldots, n\}$ then
$$A^{(i,-n,-i)}_\ell = A^{(i,n,-i)}_\ell = A^{(i)}_\ell,
\quad\hbox{which is the second relation in~\eqref{AgencC}.}
$$
The third relation in~\eqref{Agenr3} and the first and last relations in~\eqref{AgencB} give
\begin{align*}
A^{(i,-n,-i)}_i 
&= A^{(i,n,-i)}_i + (t_n^{\frac12}-t_n^{-\frac12})A^{(i,n,-i)}_{-i}
\\
&= \Big(A^{(i)}_i +(t^{\frac12}-t^{-\frac12})\sum_{j=i+1}^n t^{\frac12(j-1-i)}A^{(i)}_j\Big)
+(t_n^{\frac12}-t_n^{-\frac12})\left[A^{(i)}_{-i}+(t^{\frac12}-t^{-\frac12})t^{\frac12}A^{(i)}_{-i}\right],
\end{align*}
which is the last relation in~\eqref{AgencC}.

\smallskip\noindent
\emph{Case 4: $-m\in \{-i, \ldots, -(n-1)\}$.}

\noindent
If $-\ell\in \{-i, \ldots, -n\}$ then sucessively using the first relation in~\eqref{Agenr4}
and then the first relation in~\eqref{AgencC} gives
\begin{align*}
A^{(i,-m,-i)}_{-\ell}
&= A^{(i,-n,-i)}_{\ell}
=A^{(i)}_{-\ell}+(t^{\frac12}-t^{-\frac12})t^{-\frac12(\ell-1-i)}A^{(i)}_{-i},
\end{align*}
which is the first relation in~\eqref{AgencD}.

\noindent
If $\ell\in \{m, \ldots,n\}$ then using the first relation in~\eqref{Agenr4} and then the third equality in 
\eqref{Agenr4} and then the first relation in~\eqref{Agenr4} again and then the first relation in~\eqref{AgencC} gives
\begin{align*}
A^{(i,-m,-i)}_\ell 
&= A^{(i, -\ell,-i)}_\ell \\
&=  A^{(i,-(\ell+1),-i)}_{\ell}
+ (t^{\frac12}-t^{-\frac12})t^{-\frac12(2n-2-i-\ell)}t_n^{-\frac12}
A^{(i,-(\ell+1),-i)}_{-i}  \\
&= A^{(i,-n,-i)}_{\ell} + (t^{\frac12}-t^{-\frac12})t^{-\frac12(2n-2-i-\ell)}t_n^{-\frac12}
A^{(i,-n,-i)}_{-i} \\
&= A^{(i,i,-i)}_{\ell} + (t^{\frac12}-t^{-\frac12})t^{-\frac12(2n-2-i-\ell)}t_n^{-\frac12} t A^{(i,i,-i)}_{-i}.
\end{align*}
This is the second relation in~\eqref{AgencD}. 

\noindent
If $\ell\in \{i+1,\ldots, m-1\}$ then successively using the second relation in~\eqref{Agenr4} 
and then using the second relation in~\eqref{AgencC} gives 
$$A^{(i,-m,-i)}_\ell = A^{(i,-n,-i)}_\ell = A^{(i)}_\ell,$$
which is the third relation in~\eqref{AgencD}.

\noindent
If $\ell=i$ then using the last equality in~\eqref{Agenr4} as the base case of 
a descending induction on $m$ gives
\begin{align*}
A^{(i,-m,-i)}_i
&= A^{(i,-(m+1),-i)}_i  
+(t^{\frac12}-t^{-\frac12})t^{\frac12(2n-2-m-i)}t_n^{\frac12} A^{(i,-(m+1),-i)}_{-(m+1)}  \\
&= \Big(A^{(i,-n,-i)}_i+(t^{\frac12}-t^{-\frac12})t_n^{\frac12} \sum_{j=m+2}^n t^{\frac12(2n-1-j-i)} A^{(i,-n,-i)}_{-j}\Big)
\\
&\qquad\qquad
+(t^{\frac12}-t^{-\frac12})t^{\frac12(2n-2-m-i)}t_n^{\frac12} A^{(i,-(m+1),-i)}_{-(m+1)}  \\
&= A^{(i,-n,-i)}_i+(t^{\frac12}-t^{-\frac12})t_n^{\frac12} \sum_{j=m+1}^n t^{\frac12(2n-1-j-i)} A^{(i,-n,-i)}_{-j}.
\end{align*}
Then the first and the last relations in~\eqref{AgencC} give
\begin{align*}
A^{(i,-m,-i)}_i 
&= A^{(i,-n,-i)}_i
+ (t^{\frac12} - t^{-\frac12})t_n^{\frac12} \sum_{j=m+1}^n t^{\frac12(2n-j-1-i)} A^{(i,-n,-i)}_{-j} 
\\
&= A^{(i)}_i +  (t^{\frac12}-t^{-\frac12})
\Big(\sum_{j=i+1}^n t^{\frac12(j-i)} A^{(i)}_j\Big) + t(t_n^{\frac12}-t_n^{-\frac12}) A^{(i)}_{-i} 
\\
&\qquad + (t^{\frac12} - t^{-\frac12})t_n^{\frac12} \sum_{j=m+1}^n t^{\frac12(2n-j-1-i)} 
\big( A^{(i)}_{-j} + (t^{\frac12}-t^{-\frac{1}{2}})t^{-\frac{1}{2}(j-1-i)}A^{(i)}_{-i}\big)
\\
&= A^{(i)}_i +  (t^{\frac12}-t^{-\frac12})
\Big(\sum_{j=i+1}^n t^{\frac12(j-i)} A^{(i)}_j\Big) 
+ (t^{\frac12} - t^{-\frac12})t_n^{\frac12} \Big(\sum_{j=m+1}^n t^{\frac12(2n-j-1-i)} A^{(i)}_{-j}\Big)
\\
&\qquad + \Big(
t(t_n^{\frac12}-t_n^{-\frac12}) + t^n t_n^{\frac12}(t^{\frac12}-t^{-\frac12})^2 \frac{(t^{-n}-t^{-m})}{(t^{-1}-1)}\Big) A^{(i)}_{-i} 
\\
&= A^{(i)}_i +  (t^{\frac12}-t^{-\frac12})
\Big(\sum_{j=i+1}^n t^{\frac12(j-i)} A^{(i)}_j\Big) 
+ (t^{\frac12} - t^{-\frac12})t_n^{\frac12} \Big(\sum_{j=m+1}^n t^{\frac12(2n-j-1-i)} A^{(i)}_{-j}\Big)
\\
&\qquad + \Big(
t(t_n^{\frac12}-t_n^{-\frac12}) + t^{\frac12}t_n^{\frac12}(t^{\frac12}-t^{-\frac12}) (t^{n-m}-1)\Big) A^{(i)}_{-i} 
\end{align*}
which provides the last relation in~\eqref{AgencD}.
\end{proof}

\begin{lemma} \label{Tred}
Let $i\in \{1, \ldots, n\}$ and let $\nu = (\nu_1, \ldots, \nu_i, 0,\ldots, 0)\in \ZZ^n$. 

\smallskip\noindent
Case 2: If $m\in \{i+1, \ldots, n\}$ and $v= s_{m-1}s_{m-2}\cdots s_i$ and $w=s_{m-1}v=s_{m-2}\cdots s_i$ then
\begin{align*}
T_{m-1}\widehat{E}^{ws_\ell\cdots s_n\cdots s_i}_\nu
&= \widehat{E}^{vs_\ell \cdots s_n\cdots s_i}_\nu, 
&&\hbox{if $-\ell\in \{-(i+1), \ldots, -n\}$,}
\\
T_{m-1}\widehat{E}^{ws_i\cdots s_n\cdots s_i}_\nu
&=\widehat{E}^{vs_i\cdots s_n\cdots s_i}_\nu
+(t^{\frac12}-t^{-\frac12})t^{-\frac12(m-i-1)} \widehat{E}_\nu^{vs_m\cdots s_n \cdots s_i},
&&\hbox{(the case $-\ell=-i$)}, \\
T_{m-1}\widehat{E}^{ws_{\ell-1}\cdots s_i}_\nu
&= \widehat{E}^{vs_{\ell-1} \cdots  s_i}_\nu, &&\hbox{if $\ell\in \{i, \ldots, n\}$ and $\ell \ne m$},
\\
T_{m-1}\widehat{E}^{ws_{m-1}\cdots s_i}_\nu
&= \widehat{E}^{vs_{m-1} \cdots  s_i}_\nu+(t^{\frac12}-t^{-\frac12})t^{\frac12(m-i-1)}\widehat{E}^v_\nu &&\hbox{(the case $\ell=m$)}.
\end{align*}
Case 3: If $v= s_ns_{n-1}\cdots s_i$ and $w=s_nv=s_{n-1}\cdots s_i$ then
\begin{align*}
T_n\widehat{E}^{ws_\ell\cdots s_n \cdots s_i} &= \widehat{E}^{vs_\ell \cdots s_n \cdots s_i}_\nu, 
&&\hbox{if $-\ell\in \{-(i+1), \ldots, -n\}$,} 
\\
T_n\widehat{E}^{ws_i\cdots s_n\cdots s_i} &= \widehat{E}^{vs_i\cdots s_n \cdots s_i}_\nu
+(t_n^{\frac12}-t_n^{-\frac12})\widehat{E}^v_\nu
&&\hbox{(the case $-\ell=-i$),} 
\\
T_n\widehat{E}^{ws_{\ell-1} \cdots s_i} &= \widehat{E}^{vs_\ell \cdots s_n \cdots s_i}_\nu, 
&&\hbox{if $\ell\in \{i, \ldots, n\}$.} 
\end{align*}
Case 4: If $-m\in \{-i, \ldots, -(n-1)\}$ and $v=s_m\cdots s_n\cdots s_i$ and $w=s_m v=s_{m+1}\cdots s_n\cdots s_i$ then
\begin{align*}
T_m \widehat{E}^{ws_\ell\cdots s_n\cdots s_i}_\nu &= \widehat{E}^{vs_\ell\cdots s_n \cdots s_i}_\nu, 
\qquad\hbox{if $\ell\in \{i, \ldots, n\}$,} 
\\
T_m \widehat{E}^{ws_{\ell-1}\cdots s_i}_\nu &= \widehat{E}^{vs_\ell\cdots s_n \cdots s_i}_\nu, 
\qquad \hbox{if $\ell\in \{i,i+1, \ldots, n\}$ and $\ell\not\in \{ i,m+1\}$,} \\
T_m \widehat{E}^{ws_i\cdots s_n\cdots s_i}_\nu 
&= \widehat{E}^{vs_\ell\cdots s_n \cdots s_i}_\nu + (t^{\frac12}-t^{-\frac12})
t^{\frac12(i+m)-(n-1)}
t_n^{-\frac12}\widehat{E}^{vs_m\cdots s_i}
\qquad\hbox{(the case $\ell=i$),} \\
T_m \widehat{E}^{ws_{m+1}\cdots s_n\cdots s_i}_\nu 
&= \widehat{E}^{vs_{m+1}\cdots s_n \cdots s_i}_\nu 
+ (t^{\frac12}-t^{-\frac12}) t^{(n-1)-\frac12(i+m)}
t_n^{\frac12} \widehat{E}^v,
\qquad \hbox{(the case $\ell=m+1$).}
\end{align*}
\end{lemma}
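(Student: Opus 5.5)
Each identity in Lemma~\ref{Tred} reduces to one computation: multiply $T_{m-1}$ (or $T_n$, $T_m$) on the left of $T_x$, where $x$ is the relevant signed permutation, and then use the stabilizer identity~\eqref{stabfactor} for $\nu$. The only Hecke algebra inputs are the following.
\begin{itemize}
\item If $\ell(s_jx)=\ell(x)+1$, then $T_jT_x=T_{s_jx}$.
\item If $\ell(s_jx)=\ell(x)-1$, then $T_jT_x = T_{s_jx}+(t_j^{\frac12}-t_j^{-\frac12})T_x$, which follows from the quadratic relation $T_j-T_j^{-1}=t_j^{\frac12}-t_j^{-\frac12}$ (with $t_j=t$ for $j<n$ and $t_j=t_n$ for $j=n$).
\end{itemize}
So the plan is, in each case, to compute the signed permutation $x$ (namely $ws_\ell\cdots s_n\cdots s_i$ or $ws_{\ell-1}\cdots s_i$) in one-line notation. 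We then decide whether $s_jx$ is longer or shorter by comparing $x^{-1}(j)$ and $x^{-1}(j+1)$ in the order $\prec$ (or checking the sign of $x^{-1}(n)$ when $j=n$), using Remark~\ref{sgnorder}. Finally we rewrite $s_jx$ as $v\cdot(\text{the appropriate word})$.

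In the generic subcases the length goes up, and $s_{m-1}ws_\ell\cdots s_i = vs_\ell\cdots s_i$ holds identically, so those equalities are immediate. In the exceptional subcases (e.g.\ $-\ell=-i$ or $\ell=m$ in Case~2) the length goes down. We then get $\widehat{E}^{vs_\ell\cdots}_\nu+(t^{\frac12}-t^{-\frac12})\widehat{E}^{x}_\nu$. The extra term $\widehat{E}^x_\nu$ must be brought to the form $\widehat{E}^{v\cdot d}_\nu$ times a power of $t$ (and $t_n$). For this we factor $x = x'u$ with $u\in W_\nu$, where $W_\nu\supseteq\langle s_{i+1},\dots,s_n\rangle$ because $\nu_{i+1}=\cdots=\nu_n=0$. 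Then~\eqref{stabfactor} gives $\widehat{E}^x_\nu = t^{\frac12\ell_s(u)}t_n^{\frac12\ell_d(u)}\widehat{E}^{x'}_\nu$; if instead $x'$ is the longer element, we use the inverse relation $T_u^{-1}\widehat{E}_\nu = t^{-\frac12\ell_s(u)}\cdots$. For instance, in Case~2 with $\ell=m$ we have $x=ws_{m-1}\cdots s_i$, and $x$ differs from $v=s_{m-1}\cdots s_i$ by a right factor in $\langle s_{i+1},\dots,s_{m-1}\rangle$ of length $m-i-1$, giving $t^{\frac12(m-i-1)}$. With $-\ell=-i$, the leftover is $vs_m\cdots s_n\cdots s_i$ up to a factor in $W_\nu$ whose length is subtracted, giving $t^{-\frac12(m-i-1)}$.

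Cases~3 and~4 proceed the same way. The new feature is that the reflection is $s_n$ (in Case~3), so the quadratic relation contributes $t_n^{\frac12}-t_n^{-\frac12}$. In Case~4, the stabilizer factors contain an $s_n$, contributing $t_n^{\pm\frac12}$ together with powers like $t^{\pm((n-1)-\frac12(i+m))}$, which come from $\ell_s$ of the absorbed element of $W_{[i+1,-(i+1)]}$. I would also double-check the statement's stated targets in Case~3, third line, and Case~4, second line, since those seem to read $vs_\ell\cdots s_n\cdots s_i$ where $vs_{\ell-1}\cdots s_i$ looks intended. I would prove whichever form the computation produces.

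The main obstacle is bookkeeping rather than ideas. For each exceptional subcase we must correctly identify the right $W_\nu$-coset representative and compute $\ell_s$ and $\ell_d$ of the absorbed factor, which determines the exact exponents of $t$ and $t_n$. I would do this by writing each element as a signed permutation, reading off inversions via Remark~\ref{sgnorder}, and sanity-checking the exponents against small cases ($n=3$, $i=1$). These cases should be consistent with Lemma~\ref{Arec}, since that lemma is exactly what these coefficients must feed into in the proof of Proposition~\ref{proposition:GeneralCaseAroundTheEnd}.
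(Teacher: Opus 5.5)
Your plan is the paper's proof. In each case it left-multiplies by $T_{m-1}$, $T_n$ or $T_m$, uses $T_j=T_j^{-1}+(t_j^{\frac12}-t_j^{-\frac12})$ only in the descent subcases, and absorbs the leftover factor from $\langle s_{i+1},\dots,s_n\rangle\subseteq W_\nu$ by~\eqref{stabfactor}. It relies on the same explicit factorizations you describe, for instance $ws_{m-1}\cdots s_i=vs_{m-1}\cdots s_{i+1}$ in Case~2 and $ws_i\cdots s_n\cdots s_i=s_m\cdots s_i$ in Case~4.

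The typos you flagged are genuine. The correct targets are $\widehat{E}^{vs_{\ell-1}\cdots s_i}_\nu$, since $s_nws_{\ell-1}\cdots s_i=vs_{\ell-1}\cdots s_i$ (and likewise $s_mws_{\ell-1}\cdots s_i=vs_{\ell-1}\cdots s_i$). In Case~4 the index ranges of the first two lines are also interchanged: it is the $ws_\ell\cdots s_n\cdots s_i$ line that must exclude $\ell\in\{i,m+1\}$, as the last two lines of Case~4 show.
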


\begin{proof} \emph{Case 2: $v= s_{m-1}s_{m-2}\cdots s_i$ and $w=s_{m-1}z=s_{m-2}\cdots s_i$.}
\hfil\break
If $-\ell\in \{-i, \ldots, -n\}$ and $-\ell \neq -i$, then 
$v s_\ell \cdots s_n \cdots s_i > w s_\ell\cdots s_n \cdots s_i$, and
\begin{align*}
T_{m-1} \widehat{E}^{ws_\ell \cdots s_n \cdots s_i}_\nu
&= T_{m-1}T_{ws_\ell \cdots s_n \cdots s_i}\widehat{E}_\nu
= T_{s_{m-1}ws_\ell\cdots s_n \cdots s_i}\widehat{E}_\nu
= \widehat{E}^{vs_\ell\cdots s_n \cdots s_i}_\nu.
\end{align*}
If $-\ell\in \{-i, \ldots, -n\}$ and $-\ell = -i$ then 
$vs_i\cdots s_n\cdots s_i < ws_i\cdots s_n\cdots s_i$ and
\begin{align*}
T_{m-1} \widehat{E}^{ws_i \cdots s_n \cdots s_i}_\nu
&= (T_{m-1}^{-1}+(t^{\frac12}-t^{-\frac12})) T_{ws_i \cdots s_n \cdots s_i}\widehat{E}_\nu
\\
&= T_{s_{m-1} w s_i \cdots s_n \cdots s_i}\widehat{E}_\nu
+(t^{\frac12}-t^{-\frac12}) T_{ws_i \cdots s_n \cdots s_i}\widehat{E}_\nu \\
&= T_{v s_i \cdots s_n \cdots s_i}\widehat{E}_\nu
+(t^{\frac12}-t^{-\frac12}) T_{ws_i \cdots s_n \cdots s_i}\widehat{E}_\nu.
\end{align*}
Since
\begin{align*}
v s_{m} \cdots s_{n} \cdots s_i &=
\big( s_{m-1} {\color{red} s_{m-2} \cdots s_i} \big)
\big(s_{m} \cdots s_{n} \cdots s_i \big) \\
&= s_{m-1} 
\big(s_{m} \cdots s_{n} \cdots s_i \big)
{\color{red} s_{m-1} \cdots s_{i+1}}  \\
&= \big( w s_i \cdots s_{n} \cdots s_i  \big)
s_{m-1} \cdots s_{i+1},
\end{align*}
then $w s_i \cdots s_{n} \cdots s_i = v s_{m} \cdots s_n \cdots s_i (s_{i+1}^{-1} \cdots s_{m-1}^{-1})$
and
$$
T_{ws_i\cdots s_n\cdots s_i}\widehat{E}_\nu 
= T_{vs_m\cdots s_n\cdots s_i}T_{i+1}^{-1}\cdots T_{m-1}^{-1}\widehat{E}_\nu 
= t^{-\frac12(m-i-1)}\widehat{E}^{vs_m\cdots s_n\cdots s_i}_\nu.
$$
Thus
$$
T_{m-1} \widehat{E}^{ws_i \cdots s_n \cdots s_i}_\nu
=
 \widehat{E}^{vs_i \cdots s_n \cdots s_i}_\nu
+(t^{\frac12}-t^{-\frac12}) t^{-\frac12(m-i-1)} \widehat{E}^{vs_m\cdots s_n\cdots s_i}_\nu.
$$
If $\ell\in \{i, \ldots, n\}$ and $\ell \neq m$ then $v s_{\ell-1} \cdots s_i > ws_{\ell-1} \cdots s_i$
and
\begin{align*}
T_{m-1} \widehat{E}^{ws_{\ell-1} \cdots  s_i}_\nu
&
= T_{m-1}T_{ws_{\ell-1} \cdots s_i}\widehat{E}_\nu
= T_{s_{m-1}ws_{\ell-1} \cdots s_i}\widehat{E}_\nu
= \widehat{E}^{vs_\ell\cdots s_n \cdots s_i}_\nu.
\end{align*}
If $\ell\in \{i, \ldots, n\}$ and $\ell = m$ then $vs_{m-1}\cdots s_i < ws_{m-1}\cdots s_i$ and
\begin{align*}
T_{m-1}\widehat{E}^{ws_{m-1}\cdots s_i}_\nu
& = (T_{m-1}^{-1}+(t^{\frac12}-t^{-\frac12}))T_{ys_{m-1}\cdots s_i}\widehat{E}_\nu 
= T_{vs_{m-1}\cdots s_i}\widehat{E}_\nu  + (t^{\frac12}-t^{-\frac12}))T_{ws_{m-1}\cdots s_i}\widehat{E}_\nu.
\end{align*}
Since $ws_{m-1}\cdots s_i = vs_{m-1}\cdots s_{i+1}$ then
$$
T_{ws_{m-1}\cdots s_i}\widehat{E}_\nu 
= T_{v}T_{m-1}\cdots T_{i+1}\widehat{E}_\nu
= t^{\frac12(m-i-1)}T_{v}\widehat{E}_\nu
= t^{\frac12(m-i-1)}\widehat{E}^{v}_\nu
$$
giving
$$T_{m-1}\widehat{E}^{ws_{m-1}\cdots s_i}_\nu
= T_{vs_{m-1}\cdots s_i}\widehat{E}_\nu  + (t^{\frac12}-t^{-\frac12}))t^{\frac12(m-i-1)}\widehat{E}^{v}_\nu.
$$

\smallskip\noindent
\emph{Case 3: $v= s_ns_{n-1}\cdots s_i$ and $w=s_nv=s_{n-1}\cdots s_i$.}
\hfil\break
If $-\ell \in \{-i,\ldots, -n\}$ and $-\ell\ne -i$  then  $v s_\ell \cdots s_n \cdots s_i > w s_\ell\cdots s_n \cdots s_i$ and
\begin{align*}
T_{n} \widehat{E}^{ws_\ell \cdots s_n \cdots s_i}_\nu
&= T_{n}T_{ws_\ell \cdots s_n \cdots s_i}\widehat{E}_\nu
= T_{s_{n}ws_\ell\cdots s_n \cdots s_i}\widehat{E}_\nu
= \widehat{E}^{vs_\ell\cdots s_n \cdots s_i}_\nu.
\end{align*}
If $-\ell=-i$ then $ws_i\cdots s_n\cdots s_i = s_n s_{n-1}\cdots s_i=v$ and $s_n ws_i\cdots s_n \cdots s_i = s_{n-1}\cdots s_i$. Thus, 
\begin{align*}
T_n(\widehat{E}_\nu^{ws_i\cdots s_n \cdots s_i}) 
&= (T_n^{-1} + (t_n^{\frac12}-t_n^{-\frac12})) \widehat{E}_\nu^{ws_i\cdots s_n \cdots s_i} 
= T_{s_nws_i\cdots s_n \cdots s_i}\widehat{E}_\nu + (t_n^{\frac12}-t_n^{-\frac12})\widehat{E}_\nu^{ws_i\cdots s_n \cdots s_i} \\
&= \widehat{E}_\nu^{s_{n-1} \cdots s_i} + (t_n^{\frac12}-t_n^{-\frac12})\widehat{E}_\nu^{v} 
= \widehat{E}_\nu^{vs_i \cdots s_n \cdots s_i} + (t_n^{\frac12}-t_n^{-\frac12})\widehat{E}_\nu^{v}.
\end{align*}
If $\ell \in \{i, \ldots, n\}$ then $v s_{\ell-1} \cdots s_i > ws_{\ell-1} \cdots s_i$ and
\begin{align*}
T_n \widehat{E}^{ws_{\ell-1} \cdots  s_i}_\nu
&
= T_nT_{ws_{\ell-1} \cdots s_i}\widehat{E}_\nu
= T_{s_n ws_{\ell-1} \cdots s_i}\widehat{E}_\nu
= \widehat{E}^{vs_\ell\cdots s_n \cdots s_i}_\nu.
\end{align*}

\smallskip\noindent
\emph{Case 4: $v=s_m\cdots s_n\cdots s_i$ and $w=s_m v=s_{m+1}\cdots s_n\cdots s_i$ .}
\hfil\break
If $\ell \in \{i, \ldots, n\}$ then $v s_{\ell-1} \cdots s_i > ws_{\ell-1} \cdots s_i$ and
\begin{align*}
T_m \widehat{E}^{ws_{\ell-1} \cdots  s_i}_\nu
&
= T_mT_{ws_{\ell-1} \cdots s_i}\widehat{E}_\nu
= T_{s_m ws_{\ell-1} \cdots s_i}\widehat{E}_\nu
= \widehat{E}^{vs_\ell\cdots s_n \cdots s_i}_\nu.
\end{align*}
If $\ell \in \{i,\ldots, n\}$ and $\ell\not\in \{i,m+1\}$ then $v s_\ell \cdots s_n \cdots s_i > w s_\ell\cdots s_n \cdots s_i$ and
\begin{align*}
T_m \widehat{E}^{ws_\ell \cdots s_n \cdots s_i}_\nu
&= T_m T_{ws_\ell \cdots s_n \cdots s_i}\widehat{E}_\nu
= T_{s_m ws_\ell\cdots s_n \cdots s_i}\widehat{E}_\nu
= \widehat{E}^{vs_\ell\cdots s_n \cdots s_i}_\nu.
\end{align*}
If $\ell=i$ then
$w s_i \cdots s_{n} \cdots s_i
= (s_{m+1}\cdots s_n \cdots s_i )( s_i \cdots s_n \cdots s_i ) =  s_m \cdots s_i$ and
\begin{align*}
v s_{m} \cdots s_i (s_{i+1} \cdots s_n \cdots s_{m+1}) 
&= s_ m\cdots s_n \cdots s_i {\color{blue} s_m \cdots s_i \cdots s_m } s_{m+1} \cdots s_n \cdots s_{m+1} \\
&= s_ m\cdots s_n \cdots s_i {\color{blue} s_i \cdots s_m \cdots s_i} s_{m+1} \cdots s_n \cdots s_{m+1}\\
&=  s_ m\cdots s_n \cdots s_{m+1} {\color{purple} s_{m-1} \cdots s_i } s_{m+1} \cdots s_n \cdots s_{m+1} \\
&= s_m {\color{purple} s_{m-1} \cdots s_i } s_{m+1} \cdots s_n \cdots s_{m+1}s_{m+1}\cdots s_n \cdots s_{m+1} \\
&=  s_{m} \cdots s_i
=w s_i \cdots s_n \cdots s_i,
\end{align*}
giving
$$
T_{ws_i\cdots s_n\cdots s_i}\widehat{E}_\nu
= T_{vs_m\cdots s_i}T^{-1}_{i+1}\cdots T_n^{-1}\cdots T_{m+1}^{-1}\widehat{E}_\nu
=t^{-\frac12\big((n-1-i) + (n-1-m)\big)}t_n^{-\frac12}\widehat{E}^{vs_m\cdots s_i}_\nu.
$$
So
\begin{align*}
T_m\widehat{E}^{ws_i\cdots s_n\cdots s_i}
&= T_mT_{ws_i\cdots s_n \cdots s_i}\widehat{E}_\nu
= (T_m^{-1}+(t^{\frac12}-t^{-\frac12}))T_{ws_i\cdots s_n \cdots s_i}\widehat{E}_\nu
\\
&=\widehat{E}^{vs_i\cdots s_n\cdots s_i}_\nu
+(t^{\frac12}-t^{-\frac12})t^{-\frac12\big((n-1-i) + (n-1-m)\big)}t_n^{-\frac12}\widehat{E}^{vs_m\cdots s_i}_\nu.
\end{align*}
If $\ell=m+1$ then
\begin{align*}
ws_{m+1}\cdots s_n\cdots s_i &=
\big( s_{m+1} \cdots s_n \cdots s_{m+1}s_m {\color{blue}s_{m-1}\cdots s_i} \big)
\big(s_{m+1} \cdots s_n \cdots s_i \big) \\
&= \big( s_{m+1} \cdots s_n \cdots s_{m+1} {\color{purple} s_m} \big)
\big(s_{m+1} \cdots s_n \cdots s_i \big) {\color{blue}s_{m} \cdots s_{i+1}}  \\
&= \big( s_{m+1} \cdots s_n \cdots s_{m+1} \big)
\big({\color{purple} s_m} s_{m+1} \cdots s_n \cdots s_i \big) 
s_{m} \cdots s_{i+1}  \\
&= \big( s_m s_{m+1} \cdots s_n \cdots s_i \big) 
 \big( s_{m+1} \cdots s_n \cdots s_{m+1} \big)
s_m \cdots s_{i+1}  \\
&= v s_{m+1} \cdots s_n \cdots s_{i+1},
\end{align*}
giving
$$
T_{ws_{m+1}\cdots s_n\cdots s_i }\widehat{E}_\nu 
= T_vT_{m+1}\cdots T_n\cdots T_{i+1}\widehat{E}_\nu
= t^{\frac12((n-1-m)+(n-1-i))}t_n^{\frac12}\widehat{E}^v_\nu.
$$
So
\begin{align*}
T_m\widehat{E}^{ws_{m+1}\cdots s_n\cdots s_i}
&= T_mT_{ws_{m+1}\cdots s_n \cdots s_i}\widehat{E}_\nu
= (T_m^{-1}+(t^{\frac12}-t^{-\frac12}))T_{ws_{m+1}\cdots s_n \cdots s_i}\widehat{E}_\nu
\\
&= T_{vs_{m+1}\cdots s_n \cdots s_i}\widehat{E}_\nu
+(t^{\frac12}-t^{-\frac12})T_{ws_{m+1}\cdots s_n \cdots s_i}\widehat{E}_\nu
\\
&= \widehat{E}^{vs_{m+1}\cdots s_n \cdots s_i}_\nu
+(t^{\frac12}-t^{-\frac12})t^{\frac12((n-1-m)+(n-1-i))}t_n^{\frac12}\widehat{E}^v_\nu.
\end{align*}
\end{proof}

\newpage
\appendix

\section{Appendix: The double affine Hecke algebra (DAHA)}\label{section:DAHA}

\subsection{The double affine Artin group for type $CC_n$}

Use a graphical notation for relations so that
$$
\begin{array}{cl}
\beginpicture
\setcoordinatesystem units <1cm,1cm>       
\setplotarea x from 2.8 to 4.2, y from 1.8 to 2 
{\scriptsize
\multiput {$\circ$} at 3  1.9 *1 1 0 /      
\put {$g_i$}     at 3 2.1   %
\put {$g_j$}     at 4 2.1   
\linethickness=0.5pt                      
}
\endpicture
&\hbox{means $g_ig_j =g_jg_i$,} 
\\ \\
\beginpicture
\setcoordinatesystem units <1cm,1cm>       
\setplotarea x from 2.8 to 4.2, y from 1.8 to 2 
{\scriptsize
\multiput {$\circ$} at 3  1.9 *1 1 0 /      
\put {$g_i$}     at 3 2.1   %
\put {$g_j$}     at 4 2.1   
\linethickness=0.5pt                      
\putrule from 3.05 1.9 to 3.95 1.9   
}
\endpicture
&\hbox{means $g_ig_jg_i = g_jg_ig_j$, and} \\ \\
\beginpicture
\setcoordinatesystem units <1cm,1cm>       
\setplotarea x from 2.8 to 4.2, y from 1.8 to 2 
{\scriptsize
\multiput {$\circ$} at 3  1.9 *1 1 0 /      
\put {$g_i$}     at 3 2.1   %
\put {$g_j$}     at 4 2.1   
\linethickness=0.5pt                      
\putrule from 3.03 1.935 to 3.97 1.935  %
\putrule from 3.03 1.865 to 3.97 1.865  %
}
\endpicture
&\hbox{means $g_ig_jg_ig_j = g_jg_ig_jg_i$.} 
\end{array}
$$
The type $(C_n, C_n)$ \emph{double affine Artin group (DAArt)} is the group $\tilde \cB_n$ is generated by 
$q^{\frac12}$, $T_1,\ldots, T_n$ and $T_0, 
T_0^{\vee}$
with relations
\begin{equation}
q^\frac12 \in Z(\tilde \cB_n),
\quad
T_0^{\vee} T_1^{-1}T_0 T_1 = T_1^{-1}T_0 T_1T_0^{\vee}, 
\quad
T_0^{-1} T_1(T_0^{\vee})^{-1} T_1^{-1} 
= T_1(T_0^{\vee})^{-1} T_1^{-1}T_0^{-1},
\label{BdefrelsI}
\end{equation}
and
\begin{equation}
\begin{matrix}
\beginpicture
\setcoordinatesystem units <1cm,1cm>        
\setplotarea x from -3 to 3, y from -0.5 to 0.5  
\multiput {$\circ$} at 1   0 *2 1 0 /      %
\multiput {$\circ$} at -3   0 *2 1 0 /      
\put {$T_n$}     at 3 0.4   %
\put {$T_{n-1}$}     at 2 0.4   %
\put {$T_{n-2}$} at 1 0.4   %
\put {$T_2$}     at -1 0.4   %
\put {$T_1$}     at -2 0.4   %
\put {$T_0$}     at -3 0.4   
\linethickness=0.75pt                          
\putrule from -2.97 0.045 to -2.03 0.045       
\putrule from -2.97 -0.045 to -2.03 -0.045       %
\putrule from -1.95 0 to -1.05 0              %
\putrule from 1.05 0 to 1.95 0              %
\putrule from 2.03 0.045 to 2.97 0.045       
\putrule from 2.03 -0.045 to 2.97 -0.045       %
\setlinear
\setdashes <2mm,1mm>          %
\putrule from -0.95 0 to 0.95 0  
\endpicture
\\
\beginpicture
\setcoordinatesystem units <1cm,1cm>        
\setplotarea x from -3 to 3, y from -0.5 to 0.5  
\multiput {$\circ$} at 1   0 *2 1 0 /      %
\multiput {$\circ$} at -3   0 *2 1 0 /      
\put {$T_n$}     at 3 0.4   %
\put {$T_{n-1}$}     at 2 0.4   %
\put {$T_{n-2}$} at 1 0.4   %
\put {$T_2$}     at -1 0.4   %
\put {$T_1$}     at -2 0.4   %
\put {$T_0^{\vee}$}     at -3 0.4   
\linethickness=0.75pt                          
\putrule from -2.97 0.045 to -2.03 0.045       
\putrule from -2.97 -0.045 to -2.03 -0.045       %
\putrule from -1.95 0 to -1.05 0              %
\putrule from 1.05 0 to 1.95 0              %
\putrule from 2.03 0.045 to 2.97 0.045       
\putrule from 2.03 -0.045 to 2.97 -0.045       %
\setlinear
\setdashes <2mm,1mm>          %
\putrule from -0.95 0 to 0.95 0  
\endpicture
\end{matrix}
\label{Bdefrels2I}
\end{equation}
There are redundant relations in this presentation of $\tilde \cB_n$.  In particular, the second relation
in~\eqref{BdefrelsI} implies $T_1T_0^{\vee} T_1^{-1}T_0 = T_0T_1T_0^{\vee}T_1^{-1}$, giving 
that $T_0$ commutes with $T_1T_0^{\vee} T_1^{-1}$ and thus that $T_0^{-1}$ commutes with
$T_1(T_0^{\vee})^{-1}T_1^{-1}$.  But this is the last relation in 
\eqref{BdefrelsI}.

For $j=1,\ldots, n$ define $Y^{\varepsilon_j}$ and $X^{\varepsilon_j}$ in $\tilde \cB_n$ by 
\begin{equation}
\begin{array}{lcl}
Y^{\varepsilon_1} = T_0T_1\cdots T_n\cdots T_1
&\qquad
\hbox{and}\qquad 
&Y^{\varepsilon_{j+1}} = T_j^{-1}Y^{\varepsilon_j}T_j^{-1},
\\
X^{\varepsilon_1} = (T_0^{\vee})^{-1}T_1^{-1}\cdots T_n^{-1}\cdots T_1^{-1},
&\quad\hbox{and}\quad
&X^{\varepsilon_{j+1}}
= T_j X^{\varepsilon_j}T_j.
\end{array}
\label{XYdefn}
\end{equation}

\begin{prop}\label{dualityB}  (Duality)~\cite[(3.5.1)]{Mac03} There is an involutive automorphism 
$\iota\colon \tilde \cB_n \longrightarrow  \tilde \cB_n$ given by
$$\iota(q^{\frac12}) = q^{-\frac12},
\quad
\iota(T_0) = (T_0^{\vee})^{-1},
\quad
\iota(T_0^{\vee}) = T_0^{-1},
\quad
\hbox{and}\quad
\iota(T_i) = T_i^{-1}\ \ \hbox{for $i\in\{1,\ldots, n\}$.}
$$
Furthermore
$$\iota(Y^{\varepsilon_i}) = X^{\varepsilon_i}, \qquad\hbox{for  $i\in\{1,\ldots, n\}$}.$$ 
\end{prop}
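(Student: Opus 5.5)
The proof will be a direct verification on the generators, followed by a reduction of the second claim to the first. First I check that $\iota$, defined on generators, respects every defining relation of $\tilde\cB_n$ in \eqref{BdefrelsI} and \eqref{Bdefrels2I}. The relation $q^{\frac12}\in Z(\tilde\cB_n)$ is clearly preserved. For the braid relations among $T_1,\ldots,T_n$, note that the images $T_i\mapsto T_i^{-1}$ satisfy the same braid relations, since inverting both sides of $T_iT_jT_i=T_jT_iT_j$ (or of the four-term relation) gives the relation for the inverses after reversing the word, and braid words are palindromic. Under $\iota$ the two diagrams in \eqref{Bdefrels2I} are exchanged: the $T_0$-diagram becomes the diagram for $(T_0^\vee)^{-1}$ with $T_1^{-1},\ldots,T_n^{-1}$, which is the inverse of the $T_0^\vee$-diagram, and symmetrically. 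The second relation in \eqref{BdefrelsI} maps to
$$T_0^{-1}T_1(T_0^\vee)^{-1}T_1^{-1}=T_1(T_0^\vee)^{-1}T_1^{-1}T_0^{-1},$$
which is exactly the third relation. The third relation maps back to the second, up to inverting both sides. Hence $\iota$ is a well-defined endomorphism. Applying $\iota$ twice is the identity on each generator, since $\iota(\iota(T_0))=\iota(T_0^\vee)^{-1}=T_0$, and so on. Therefore $\iota$ is an involutive automorphism.

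Next I establish $\iota(Y^{\varepsilon_1})=X^{\varepsilon_1}$. Directly,
$$\iota(Y^{\varepsilon_1})=\iota(T_0T_1\cdots T_n\cdots T_1)=(T_0^\vee)^{-1}T_1^{-1}\cdots T_n^{-1}\cdots T_1^{-1}=X^{\varepsilon_1}$$
by \eqref{XYdefn}. I then induct on $j$. Applying $\iota$ to $Y^{\varepsilon_{j+1}}=T_j^{-1}Y^{\varepsilon_j}T_j^{-1}$ gives
$$\iota(Y^{\varepsilon_{j+1}})=T_j\,\iota(Y^{\varepsilon_j})\,T_j=T_jX^{\varepsilon_j}T_j=X^{\varepsilon_{j+1}}.$$

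The main obstacle is bookkeeping rather than a conceptual difficulty. I need to confirm that the two mixed relations in \eqref{BdefrelsI} are genuinely swapped by $\iota$, including the order of factors after inversion. The paper's earlier remark, that the third relation follows from the second, makes this step safe, because it suffices to show that the image of each relation lies in the normal subgroup generated by the relations. Finally, the check that $q^{\frac12}\mapsto q^{-\frac12}$ is compatible with the relations is trivial, since $q^{\frac12}$ does not appear in any relation other than centrality.
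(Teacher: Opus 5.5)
Your proof is correct and is essentially the paper's argument: $\iota$ preserves the centrality of $q^{\frac12}$, interchanges the two mixed relations in \eqref{BdefrelsI} and the two diagrams in \eqref{Bdefrels2I}, and $\iota(Y^{\varepsilon_j})=X^{\varepsilon_j}$ then follows from \eqref{XYdefn} by induction on $j$, as you wrote. Two small corrections: the image of the third relation in \eqref{BdefrelsI} is exactly the second relation, with no inversion of both sides needed; and for the four-term braid relations, inverting gives back the same relation with its sides swapped, so the justification is not that the words are palindromic.
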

\begin{proof}
The involution $\iota$ fixes the first relation in~\eqref{BdefrelsI}, switches the second and third relations
in~\eqref{BdefrelsI}, and switches the relations in~\eqref{Bdefrels2I}.
The last statement follows from~\eqref{XYdefn}.
\end{proof}

The elements of the braid group on $n+3$ strands given by
$$T_0 =
\beginpicture
\setcoordinatesystem units <.5cm,.5cm>         
\setplotarea x from -5 to 3.5, y from -2 to 2    
\put{$\bullet$} at -3 0.75      %
\put{$\bullet$} at -2 0.75      %
\put{$\bullet$} at -1 0.75      %
\put{$\bullet$} at  0 0.75      
\put{$\bullet$} at  1 0.75      %
\put{$\bullet$} at  2 0.75      %
\put{$\bullet$} at  3 0.75      %
\put{$\bullet$} at -3 -0.75          %
\put{$\bullet$} at -2 -0.75          %
\put{$\bullet$} at -1 -0.75          %
\put{$\bullet$} at  0 -0.75          
\put{$\bullet$} at  1 -0.75          %
\put{$\bullet$} at  2 -0.75          %
\put{$\bullet$} at  3 -0.75          %
\plot -6.0 1.25 -6.0 -1.25 / 
\plot -5.75 1.25 -5.75  -1.25 / 
\ellipticalarc axes ratio 1:1 360 degrees from -6.0 1.25 center at -5.875 1.25 
\put{$*$} at -5.875 1.25 
\ellipticalarc axes ratio 1:1 180 degrees from -6.0 -1.25 center at -5.875 -1.25
\plot -4.5 1.25 -4.5 -0.13 / 
\plot -4.5 -0.37   -4.5 -1.25 / 
\plot -4.25 1.25 -4.25  -0.13 / 
\plot -4.25 -0.37 -4.25 -1.25 /
\ellipticalarc axes ratio 1:1 360 degrees from -4.5 1.25 center at
-4.375 1.25 \put{$*$} at -4.375 1.25 \ellipticalarc axes ratio 1:1
180 degrees from -4.5 -1.25 center at -4.375 -1.25
\plot 4.5 1.25 4.5 -1.25 / \plot 4.25 1.25 4.25 -1.25 /
\ellipticalarc axes ratio 1:1 360 degrees from 4.5 1.25 center at
4.375 1.25 \put{$*$} at 4.375 1.25 \ellipticalarc axes ratio 1:1 180
degrees from 4.25 -1.25 center at 4.375 -1.25
\plot -2 0.75  -2 -0.75 / \plot -1 0.75  -1 -0.75 / \plot  0 0.75 0
-0.75 / \plot  1 0.75   1 -0.75 / \plot  2 0.75   2 -0.75 / \plot 3
0.75   3 -0.75 / \setlinear
\plot -3.3 0.25  -4.1 0.25 / \ellipticalarc axes ratio 2:1 180
degrees from -4.65 0.25  center at -4.65 0 \plot -4.65 -0.25  -3.3
-0.25 / \setquadratic
\plot  -3.3 0.25  -3.05 .45  -3 0.75 / \plot  -3.3 -0.25  -3.05
-0.45  -3 -0.75 /
\endpicture
\qquad\hbox{and}\qquad 
T_n = \beginpicture
\setcoordinatesystem units <.5cm,.5cm>         
\setplotarea x from -5 to 3.5, y from -2 to 2    
\put{$\bullet$} at -3 0.75      %
\put{$\bullet$} at -2 0.75      %
\put{$\bullet$} at -1 0.75      %
\put{$\bullet$} at  0 0.75      
\put{$\bullet$} at  1 0.75      %
\put{$\bullet$} at  2 0.75      %
\put{$\bullet$} at  3 0.75      %
\put{$\bullet$} at -3 -0.75          %
\put{$\bullet$} at -2 -0.75          %
\put{$\bullet$} at -1 -0.75          %
\put{$\bullet$} at  0 -0.75          
\put{$\bullet$} at  1 -0.75          %
\put{$\bullet$} at  2 -0.75          %
\put{$\bullet$} at  3 -0.75          %
\plot -6.0 1.25 -6.0 -1.25 / 
\plot -5.75 1.25 -5.75  -1.25 / 
\ellipticalarc axes ratio 1:1 360 degrees from -6.0 1.25 center at -5.875 1.25 
\put{$*$} at -5.875 1.25 
\ellipticalarc axes ratio 1:1 180 degrees from -6.0 -1.25 center at -5.875 -1.25
\plot -4.5 1.25 -4.5 -1.25 / \plot -4.25 1.25 -4.25 -1.25 /
\ellipticalarc axes ratio 1:1 360 degrees from -4.5 1.25 center at
-4.375 1.25 \put{$*$} at -4.375 1.25 \ellipticalarc axes ratio 1:1
180 degrees from -4.5 -1.25 center at -4.375 -1.25
\plot 4.5 1.25 4.5 0.37 / 
\plot 4.5 0.13   4.5 -1.25 / 
\plot 4.25 1.25 4.25  0.37 / 
\plot 4.25 0.13 4.25 -1.25 / 
\ellipticalarc axes
ratio 1:1 360 degrees from 4.5 1.25 center at 4.375 1.25 \put{$*$}
at 4.375 1.25 \ellipticalarc axes ratio 1:1 180 degrees from 4.25
-1.25 center at 4.375 -1.25
\plot  -3 0.75  -3 -0.75 / \plot -2 0.75  -2 -0.75 / \plot -1 0.75
-1 -0.75 / \plot  0 0.75   0 -0.75 / \plot  1 0.75   1 -0.75 / \plot
2 0.75   2 -0.75 / \setlinear
\plot 3.3 -0.25  4.1 -0.25 / \ellipticalarc axes ratio 2:1 180
degrees from 4.65 -0.25  center at 4.65 0 \plot 4.65 0.25  3.3 0.25
/ \setquadratic
\plot  3.3 0.25  3.05 .45  3 0.75 / \plot  3.3 -0.25  3.05 -0.45  3
-0.75 /
\endpicture
$$
$$T_0^{\vee} =
\beginpicture
\setcoordinatesystem units <.5cm,.5cm>         
\setplotarea x from -7 to 3.5, y from -2 to 2    
\put{$\bullet$} at -3 0.75      %
\put{$\bullet$} at -2 0.75      %
\put{$\bullet$} at -1 0.75      %
\put{$\bullet$} at  0 0.75      
\put{$\bullet$} at  1 0.75      %
\put{$\bullet$} at  2 0.75      %
\put{$\bullet$} at  3 0.75      %
\put{$\bullet$} at -3 -0.75          %
\put{$\bullet$} at -2 -0.75          %
\put{$\bullet$} at -1 -0.75          %
\put{$\bullet$} at  0 -0.75          
\put{$\bullet$} at  1 -0.75          %
\put{$\bullet$} at  2 -0.75          %
\put{$\bullet$} at  3 -0.75          %
\plot -6.0 1.25 -6.0 -0.13 / 
\plot -6.0 -0.37   -6.0 -1.25 / 
\plot -5.75 1.25 -5.75  -0.13 / 
\plot -5.75 -0.37 -5.75 -1.25 /
\ellipticalarc axes ratio 1:1 360 degrees from -6.0 1.25 center at -5.875 1.25 
\put{$*$} at -5.875 1.25 
\ellipticalarc axes ratio 1:1 180 degrees from -6.0 -1.25 center at -5.875 -1.25
\plot -4.5 1.25 -4.5 -1.25 / 
\plot -4.25 1.25 -4.25  -1.25 / 
\ellipticalarc axes ratio 1:1 360 degrees from -4.5 1.25 center at
-4.375 1.25 \put{$*$} at -4.375 1.25 \ellipticalarc axes ratio 1:1
180 degrees from -4.5 -1.25 center at -4.375 -1.25
\plot 4.5 1.25 4.5 -1.25 / \plot 4.25 1.25 4.25 -1.25 /
\ellipticalarc axes ratio 1:1 360 degrees from 4.5 1.25 center at
4.375 1.25 \put{$*$} at 4.375 1.25 \ellipticalarc axes ratio 1:1 180
degrees from 4.25 -1.25 center at 4.375 -1.25
\plot -2 0.75  -2 -0.75 / 
\plot -1 0.75  -1 -0.75 / 
\plot  0 0.75 0 -0.75 / 
\plot  1 0.75   1 -0.75 / 
\plot  2 0.75   2 -0.75 / 
\plot 3 0.75   3 -0.75 / 
\setlinear
\plot -3.3 0.25  -4.1 0.25 / 
\plot -4.65 0.25 -5.6 0.25 /
\ellipticalarc axes ratio 2:1 180 degrees from -6.15 0.25  center at -6.15 0 
\plot -6.15 -0.25 -4.65 -0.25 /
\plot -4.1 -0.25 -3.3 -0.25 / 
\setquadratic
\plot  -3.3 0.25  -3.05 .45  -3 0.75 / \plot  -3.3 -0.25  -3.05
-0.45  -3 -0.75 /
\endpicture
\qquad\hbox{and}\qquad
T_i =
\beginpicture
\setcoordinatesystem units <.5cm,.5cm>         
\setplotarea x from -6.5 to 3.5, y from -2 to 2    
\put{${}^i$} at 0 1.2      %
\put{${}^{i+1}$} at 1 1.2      %
\put{$\bullet$} at -3 .75      %
\put{$\bullet$} at -2 .75      %
\put{$\bullet$} at -1 .75      %
\put{$\bullet$} at  0 .75      
\put{$\bullet$} at  1 .75      %
\put{$\bullet$} at  2 .75      %
\put{$\bullet$} at  3 .75      %
\put{$\bullet$} at -3 -.75          %
\put{$\bullet$} at -2 -.75          %
\put{$\bullet$} at -1 -.75          %
\put{$\bullet$} at  0 -.75          
\put{$\bullet$} at  1 -.75          %
\put{$\bullet$} at  2 -.75          %
\put{$\bullet$} at  3 -.75          %
\plot -6.0 1.25 -6.0 -1.25 / 
\plot -5.75 1.25 -5.75  -1.25 / 
\ellipticalarc axes ratio 1:1 360 degrees from -6.0 1.25 center at -5.875 1.25 
\put{$*$} at -5.875 1.25 
\ellipticalarc axes ratio 1:1 180 degrees from -6.0 -1.25 center at -5.875 -1.25
\plot -4.5 1.25 -4.5 -1.25 / \plot -4.25 1.25 -4.25 -1.25 /
\ellipticalarc axes ratio 1:1 360 degrees from -4.5 1.25 center at
-4.375 1.25 \put{$*$} at -4.375 1.25 \ellipticalarc axes ratio 1:1
180 degrees from -4.5 -1.25 center at -4.375 -1.25
\plot 4.5 1.25 4.5 -1.25 / \plot 4.25 1.25 4.25 -1.25 /
\ellipticalarc axes ratio 1:1 360 degrees from 4.5 1.25 center at
4.375 1.25 \put{$*$} at 4.375 1.25 \ellipticalarc axes ratio 1:1 180
degrees from 4.25 -1.25 center at 4.375 -1.25
\plot -3 .75  -3 -.75 / \plot -2 .75  -2 -.75 / \plot -1 .75  -1
-.75 /
\plot  2 .75   2 -.75 / \plot  3 .75   3 -.75 / \setquadratic
\plot  0 -.75  .05 -.45  .4 -0.1 / \plot  .6 0.1  .95 0.45  1 .75 /
\plot 0 .75  .05 .45  .5 0  .95 -0.45  1 -.75 /
\endpicture\ ,
$$
satisfy the relations in~\eqref{BdefrelsI} and~\eqref{Bdefrels2I}.
Pictorially, 
$$Y^{\varepsilon_j}
= \beginpicture
\setcoordinatesystem units <.5cm,.5cm>         
\setplotarea x from -6.3 to 3.5, y from -2 to 2    
\put{${}^j$} at 0 1.45
\put{$\bullet$} at -3 1      %
\put{$\bullet$} at -2 1      %
\put{$\bullet$} at -1 1      %
\put{$\bullet$} at  0 1      
\put{$\bullet$} at  1 1      %
\put{$\bullet$} at  2 1      %
\put{$\bullet$} at  3 1      %
\put{$\bullet$} at -3 -1          %
\put{$\bullet$} at -2 -1          %
\put{$\bullet$} at -1 -1          %
\put{$\bullet$} at  0 -1          
\put{$\bullet$} at  1 -1          %
\put{$\bullet$} at  2 -1          %
\put{$\bullet$} at  3 -1          %
\plot -6.0 1.5 -6.0 -1.5 / 
\plot -5.75 1.5 -5.75  -1.5 / 
\ellipticalarc axes ratio 1:1 360 degrees from -6.0 1.5 center at -5.875 1.5 
\put{$*$} at -5.875 1.5 
\ellipticalarc axes ratio 1:1 180 degrees from -6.0 -1.5 center at -5.875 -1.5
\plot -4.5 1.5 -4.5 0.13 / 
\plot -4.5 -0.13 -4.5 -1.5 / 
\plot -4.25 1.5 -4.25 0.13 / 
\plot -4.25 -0.13 -4.25 -1.5 / 
\ellipticalarc axes ratio 1:1 360 degrees from -4.5 1.5 center at -4.375 1.5 
\put{$*$} at -4.375 1.5 
\ellipticalarc axes ratio 1:1 180 degrees from -4.5 -1.5 center at -4.375 -1.5
\plot 4.5 1.5 4.5 0.13 / 
\plot 4.5 -0.13   4.5 -1.5 / 
\plot 4.25 1.5 4.25  0.13 / 
\plot 4.25 -0.13 4.25 -1.5 / 
\ellipticalarc axes ratio 1:1 360 degrees from 4.5 1.5 center at 4.375 1.5 
\put{$*$} at 4.375 1.5 
\ellipticalarc axes ratio 1:1 180 degrees from 4.25 -1.5 center at 4.375 -1.5
\plot -3 1 -3 0.62 /
\plot -3 0.37 -3 0.13 /
\plot -3 -0.13 -3 -1 /
\plot -2 1 -2 0.62 /
\plot -2 0.37 -2 0.13 /
\plot -2 -0.13 -2 -1 /
\plot -1 1 -1 0.62 /
\plot -1 0.37 -1 0.13 /
\plot -1 -0.13 -1 -1 /
\plot  1 1  1 0.13 /
\plot 1 -0.13  1 -1 / 
\plot  2 1  2 0.13 /
\plot 2 -0.13  2 -1 / 
\plot  3 1   3 0.13 /
\plot 3 -0.13  3 -1 /
\setquadratic
\plot 0 1 -0.05 .7 -0.3 0.5 /
\setlinear
\plot -0.3 0.5 -4.1 0.5 /
\ellipticalarc axes ratio 2:1 180 degrees from -4.65 0.5  center at -4.65 0.25 
\plot -4.65 0 4.65 0 / 
\ellipticalarc axes ratio 2:1 180 degrees from 4.65 -0.5  center at 4.65 -0.25
\plot 4.1 -0.5 3.2 -0.5 / 
\plot 2.2 -0.5  2.8 -0.5 / 
\plot 1.2 -0.5  1.8 -0.5 / 
\plot 0.3 -0.5  0.8 -0.5 /
\setquadratic
\plot 0 -1 0.05 -0.7 0.3 -0.5 /
\endpicture
\qquad\hbox{and}\qquad
X^{\varepsilon_j}
= \beginpicture
\setcoordinatesystem units <.5cm,.5cm>         
\setplotarea x from -7 to 3.5, y from -2 to 2    
\put{${}^j$} at 0 1.45
\put{$\bullet$} at -3 1      %
\put{$\bullet$} at -2 1      %
\put{$\bullet$} at -1 1      %
\put{$\bullet$} at  0 1      
\put{$\bullet$} at  1 1      %
\put{$\bullet$} at  2 1      %
\put{$\bullet$} at  3 1      %
\put{$\bullet$} at -3 -1          %
\put{$\bullet$} at -2 -1          %
\put{$\bullet$} at -1 -1          %
\put{$\bullet$} at  0 -1          
\put{$\bullet$} at  1 -1          %
\put{$\bullet$} at  2 -1          %
\put{$\bullet$} at  3 -1          %
\plot -6.0 1.5 -6.0 0.63 /
\plot -6.0 0.37 -6.0 -1.5 / 
\plot -5.75 1.5  -5.75 0.63 /
\plot -5.75 0.37  -5.75 -1.5 / 
\ellipticalarc axes ratio 1:1 360 degrees from -6.0 1.5 center at -5.875 1.5 
\put{$*$} at -5.875 1.5 
\ellipticalarc axes ratio 1:1 180 degrees from -6.0 -1.5 center at -5.875 -1.5
\plot -4.5 1.5 -4.5 -1.5 /
\plot -4.25 1.5 -4.25 -1.5 / 
\ellipticalarc axes ratio 1:1 360 degrees from -4.5 1.5 center at -4.375 1.5 
\put{$*$} at -4.375 1.5 
\ellipticalarc axes ratio 1:1 180 degrees from -4.5 -1.5 center at -4.375 -1.5
\plot 4.5 1.5 4.5 -0.37 / 
\plot 4.5 -0.63   4.5 -1.5 / 
\plot 4.25 1.5 4.25  -0.37 / 
\plot 4.25 -0.63 4.25 -1.5 / 
\ellipticalarc axes ratio 1:1 360 degrees from 4.5 1.5 center at 4.375 1.5 
\put{$*$} at 4.375 1.5 
\ellipticalarc axes ratio 1:1 180 degrees from 4.25 -1.5 center at 4.375 -1.5
\plot -3 1 -3 -1 /
\plot -2 1 -2 -1 /
\plot -1 1 -1 -1 /
\plot  1 1  1 -0.37 /
\plot 1 -0.62  1 -1 / 
\plot  2 1  2 -0.37 /
\plot 2 -0.62  2 -1 / 
\plot  3 1   3 -0.37 /
\plot 3 -0.62  3 -1 /
\setquadratic
\plot 0 1 -0.05 .7 -0.3 0.5 /
\setlinear
\plot -0.3 0.5 -0.8 0.5 /
\plot -1.2 0.5 -1.8 0.5 /
\plot -2.2 0.5 -2.8 0.5 /
\plot -3.2 0.5 -4.1 0.5 /
\plot -4.65 0.5 -6.15 0.5 /
\ellipticalarc axes ratio 2:1 180 degrees from -6.15 0.5  center at -6.15 0.25 
\plot -5.6 0 -4.65 0 /
\plot -4.1 0 -3.2 0 /
\plot -2.8 0 -2.2 0 /
\plot -1.8 0 -1.2 0 /
\plot -0.8 0 0.8 0 /
\plot 1.2 0 1.8 0 /
\plot 2.2 0 2.8 0 /
\plot 3.2 0 4.1 0 / 
\ellipticalarc axes ratio 2:1 180 degrees from 4.65 -0.5  center at 4.65 -0.25
\plot 4.65 -0.5 0.3 -0.5 /
\setquadratic
\plot 0 -1 0.05 -0.7 0.3 -0.5 /
\endpicture
$$
The pictorial viewpoint is particularly convenient for verifying  that the subgroups
\begin{equation}
X = \langle X^{\varepsilon_1}, \ldots, X^{\varepsilon_n}\rangle
\qquad\hbox{and}\qquad
Y = \langle Y^{\varepsilon_1}, \ldots, Y^{\varepsilon_n}\rangle
\qquad\hbox{are abelian.}
\label{XYabelian}
\end{equation}

The \emph{affine braid groups} $\cB_n$ and $\cB_n^{\vee}$ are the subgroups of $\tilde \cB_n$
given by 
$$
\cB_n = \langle T_0, T_1\ldots, T_n\rangle
\qquad\hbox{and}\qquad
\cB_n^{\vee} = \langle T_0^{\vee}, T_1\ldots, T_n\rangle.
$$  

\begin{thm} \label{Baltpres} 
\item[(a)]The group $\tilde \cB_n$ is presented by generators $T_0^{\vee}, T_1,\ldots, T_n$ and
$Y^{\varepsilon_1}, \ldots, Y^{\varepsilon_n}$ and relations
$$
\beginpicture
\setcoordinatesystem units <1cm,1cm>        
\setplotarea x from -3 to 3, y from -0.5 to 0.5  
\multiput {$\circ$} at 1   0 *2 1 0 /      %
\multiput {$\circ$} at -3   0 *2 1 0 /      
\put {$T_n$}     at 3 0.4   %
\put {$T_{n-1}$}     at 2 0.4   %
\put {$T_{n-2}$} at 1 0.4   %
\put {$T_2$}     at -1 0.4   %
\put {$T_1$}     at -2 0.4   %
\put {$T_0^{\vee}$}     at -3 0.4   
\linethickness=0.75pt                          
\putrule from -2.97 0.045 to -2.03 0.045       
\putrule from -2.97 -0.045 to -2.03 -0.045       %
\putrule from -1.95 0 to -1.05 0              %
\putrule from 1.05 0 to 1.95 0              %
\putrule from 2.03 0.045 to 2.97 0.045       
\putrule from 2.03 -0.045 to 2.97 -0.045       %
\setlinear
\setdashes <2mm,1mm>          %
\putrule from -0.95 0 to 0.95 0  
\endpicture
\qquad \qquad
q^\frac12 \in Z(\tilde \cB_n),
$$
\begin{equation}\label{Ycomm}
Y^{\varepsilon_i}Y^{\varepsilon_j}
= Y^{\varepsilon_j}Y^{\varepsilon_i},
\ \ \hbox{ for $i,j\in \{1, \ldots, n\}$,}
\qquad
T_n Y^{\varepsilon_j} = Y^{\varepsilon_j}T_n,\ \ 
\hbox{for $j\in\{1,\ldots, n-1\}$,}
\end{equation}
\begin{equation}\label{TY1}
T_0^{\vee} Y^{\varepsilon_j} = Y^{\varepsilon_j} T_0^{\vee},\ \ 
\hbox{for $j\in \{2,\ldots, n\}$,}
\end{equation}
\begin{equation}\label{TY2}
Y^{\varepsilon_{i+1}} = T_i^{-1}Y^{\varepsilon_i}T_i^{-1}
\quad\hbox{and}\quad
T_iY^{\varepsilon_j} = Y^{\varepsilon_j}T_i,
\quad\hbox{for $i\in\{1,\ldots, n-1\}$ and $j\not\in\{i, i+1\}$,}
\end{equation}
\item[(b)]The group $\tilde \cB_n$ is presented by generators $T_0, T_1,\ldots, T_n$ and
$X^{\varepsilon_1}, \ldots, X^{\varepsilon_n}$ and relations
$$
\beginpicture
\setcoordinatesystem units <1cm,1cm>        
\setplotarea x from -3 to 3, y from -0.5 to 0.5  
\multiput {$\circ$} at 1   0 *2 1 0 /      %
\multiput {$\circ$} at -3   0 *2 1 0 /      
\put {$T_n$}     at 3 0.3   %
\put {$T_{n-1}$}     at 2 0.3   %
\put {$T_{n-2}$} at 1 0.3   %
\put {$T_2$}     at -1 0.3   %
\put {$T_1$}     at -2 0.3   %
\put {$T_0$}     at -3 0.3   
\linethickness=0.75pt                          
\putrule from -2.97 0.045 to -2.03 0.045       
\putrule from -2.97 -0.045 to -2.03 -0.045       %
\putrule from -1.95 0 to -1.05 0              %
\putrule from 1.05 0 to 1.95 0              %
\putrule from 2.03 0.045 to 2.97 0.045       
\putrule from 2.03 -0.045 to 2.97 -0.045       %
\setlinear
\setdashes <2mm,1mm>          %
\putrule from -0.95 0 to 0.95 0  
\endpicture
\qquad \qquad
q^\frac12 \in Z(\tilde \cB_n),
$$
\begin{equation}\label{Xcomm}
X^{\varepsilon_i}X^{\varepsilon_j}
= X^{\varepsilon_j}X^{\varepsilon_i},
\quad\hbox{for $i,j\in \{1, \ldots, n\}$,}
\qquad
T_n X^{\varepsilon_j} = X^{\varepsilon_j}T_n,\ \ 
\hbox{for $j\in\{1,\ldots, n-1\}$,}
\end{equation}
\begin{equation}\label{TX1}
T_0 X^{\varepsilon_j} = X^{\varepsilon_j} T_0,\ \ 
\hbox{for $j\in\{2,\ldots, n\}$.}
\end{equation}
\begin{equation}\label{TX2}
X^{\varepsilon_{i+1}} = T_iX^{\varepsilon_i}T_i
\quad\hbox{and}\quad
T_iX^{\varepsilon_j} = X^{\varepsilon_j}T_i,
\qquad\hbox{for $i\in\{1,\ldots, n-1\}$ and $j\not\in\{i, i+1\}$.}
\end{equation}
\end{thm}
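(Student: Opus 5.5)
Since part (b) is the image of part (a) under the duality $\iota$ of Proposition~\ref{dualityB}, the plan is to prove (a) directly and obtain (b) by applying $\iota$. The automorphism $\iota$ sends $T_0^{\vee}\mapsto T_0^{-1}$, $T_i\mapsto T_i^{-1}$ and $Y^{\varepsilon_j}\mapsto X^{\varepsilon_j}$. It therefore carries the generators and relations of (a) to those of (b), after inverting the $T$'s; braid relations are invariant under inverting all generators, and $Y^{\varepsilon_{i+1}}=T_i^{-1}Y^{\varepsilon_i}T_i^{-1}$ becomes $X^{\varepsilon_{i+1}}=T_iX^{\varepsilon_i}T_i$.

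For (a), let $G$ be the abstract group with the stated generators and relations. I would define mutually inverse homomorphisms $\phi\colon G\to\tilde\cB_n$ and $\psi\colon\tilde\cB_n\to G$.

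\textbf{The map $\phi$.} It is the identity on $q^{\frac12}$, $T_0^{\vee}$ and $T_1,\dots,T_n$, and sends $Y^{\varepsilon_j}$ to the element defined in \eqref{XYdefn}. The relations to check in $\tilde\cB_n$ are as follows.
\begin{itemize}
\item The Dynkin-diagram braid relations for $T_0^{\vee},T_1,\dots,T_n$ are part of \eqref{Bdefrels2I}.
\item Commutativity of the $Y$'s is \eqref{XYabelian}.
\item The relation $Y^{\varepsilon_{i+1}}=T_i^{-1}Y^{\varepsilon_i}T_i^{-1}$ holds by definition.
\item For $j\notin\{i,i+1\}$, the commutation $T_iY^{\varepsilon_j}=Y^{\varepsilon_j}T_i$ is the standard Bernstein-type relation in the affine braid group $\cB_n=\langle T_0,\dots,T_n\rangle$ of type $\tilde C_n$. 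I would prove it by writing $Y^{\varepsilon_j}=T_{j-1}^{-1}\cdots T_1^{-1}\,T_0T_1\cdots T_n\cdots T_j$ and pushing $T_i$ through with braid relations. Alternatively, it can be read off from the braid pictures.
\item $T_nY^{\varepsilon_j}=Y^{\varepsilon_j}T_n$ for $j<n$ is handled in the same way.
\item \eqref{TY1}, i.e.\ $T_0^{\vee}$ commuting with $Y^{\varepsilon_j}$ for $j\ge2$, is the one place where the mixed relations \eqref{BdefrelsI} enter. For $j=2$ we have $Y^{\varepsilon_2}=T_1^{-1}T_0T_1\cdots T_n\cdots T_2$. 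Here $T_0^{\vee}$ commutes with $T_1^{-1}T_0T_1$ by the second relation of \eqref{BdefrelsI}, and it commutes with $T_2,\dots,T_n$ by \eqref{Bdefrels2I}. The cases $j>2$ then follow by conjugating with $T_j^{-1}$, which commutes with $T_0^{\vee}$.
\end{itemize}

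\textbf{The map $\psi$.} This is the step where the real content lies. I would define $\psi(T_0)=Y^{\varepsilon_1}(T_1\cdots T_n\cdots T_1)^{-1}$, obtained by solving \eqref{XYdefn} for $T_0$, with $\psi$ the identity on the other generators. One must then verify the following in $G$:
\begin{itemize}
\item the braid relations of $T_0$ with $T_1,\dots,T_n$ (a $4$-braid relation with $T_1$ and commutation with $T_j$ for $j\ge2$);
\item both relations in \eqref{BdefrelsI}.
\end{itemize}

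Commutation of $\psi(T_0)$ with $T_j$ for $j\ge2$ reduces to two facts. First, $T_j$ commutes with $Y^{\varepsilon_1}$, by \eqref{TY2} for $j<n$ and by \eqref{Ycomm} for $j=n$. Second, $T_j$ commutes with $T_1\cdots T_n\cdots T_1$, which is the standard finite type $C$ fact that this element is central modulo the parabolic part.

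The $4$-braid relation between $\psi(T_0)$ and $T_1$ is where I expect the main obstacle. My plan is to rewrite it using $Y^{\varepsilon_2}=T_1^{-1}Y^{\varepsilon_1}T_1^{-1}$ and the commutativity $Y^{\varepsilon_1}Y^{\varepsilon_2}=Y^{\varepsilon_2}Y^{\varepsilon_1}$. This is the classical Bernstein–Lusztig argument, by which a translation lattice plus the finite Hecke/braid group presents the affine braid group, and I would cite or mimic the standard proof for $\tilde C_n$.

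For the relations \eqref{BdefrelsI}, I would use \eqref{TY1} to see that $T_0^{\vee}$ commutes with $T_1^{-1}\psi(T_0)T_1=Y^{\varepsilon_2}(T_2\cdots T_n\cdots T_2)^{-1}$. Each factor commutes with $T_0^{\vee}$, by \eqref{TY1} and by the braid relations. This gives the second relation of \eqref{BdefrelsI}, and the third is redundant, as remarked after \eqref{BdefrelsI}.

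Finally, $\phi\circ\psi$ and $\psi\circ\phi$ are the identity on generators by construction, since both maps are defined through the same formula \eqref{XYdefn}.
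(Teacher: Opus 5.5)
Your proposal is correct and is essentially the paper's proof. It uses the same two directions: $T_0=Y^{\varepsilon_1}(T_1\cdots T_n\cdots T_1)^{-1}$, commutation of $T_0$ with $T_j$ for $j\ge 2$, the mixed relation via $T_1^{-1}T_0T_1=Y^{\varepsilon_2}(T_2\cdots T_n\cdots T_2)^{-1}$, redundancy of the third relation, and (b) obtained by applying $\iota$. The paper carries out the $4$-braid step, which you leave as a plan, with exactly your ingredients: $T_1$ commutes with $Y^{\varepsilon_1}Y^{\varepsilon_2}=T_0T_1T_0\,A\,T_1A$, where $A=T_2\cdots T_n\cdots T_2$ commutes with $T_1AT_1$, so cancelling $AT_1A$ gives $T_1T_0T_1T_0=T_0T_1T_0T_1$.
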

\begin{proof}~\eqref{BdefrelsI} $\longrightarrow $ (a):
The relations~\eqref{Ycomm} are checked using the pictorial expressions
for $Y^{\varepsilon_j}$ and the additional
relations  in~\eqref{TY1} follow from the definition of $Y^{\varepsilon_j}$ and
the relation $T_0^{\vee} T_1^{-1}T_0 T_1 = T_1^{-1}T_0 T_1T_0^{\vee}.$

(a) $\longrightarrow $~\eqref{BdefrelsI}:  Since
$$T_0 = Y^{\varepsilon_1}T_1^{-1}\cdots T_{n-1}^{-1}T_n^{-1}T_{n-1}^{-1}\cdots T_1^{-1},
$$
the generators in~\eqref{BdefrelsI} can be written in terms of the generators in (a).
We need to show that the relations~\eqref{TY1} and~\eqref{TY2}
imply
$$T_0T_1T_0T_1=T_1T_0T_1T_0,
\quad
T_0^{\vee} T_1^{-1}T_0 T_1 = T_1^{-1}T_0 T_1T_0^{\vee},
\quad\hbox{and}\quad T_0T_i=T_iT_0,\ \ \hbox{for $i = 2,3,\ldots, n$.}
$$
Let
$$T_{s_{\varepsilon_1}} = 
\beginpicture
\setcoordinatesystem units <.5cm,.5cm>         
\setplotarea x from -4 to 5.5, y from -2 to 2    
\put{$\bullet$} at -3 0.75      %
\put{$\bullet$} at -2 0.75      %
\put{$\bullet$} at -1 0.75      %
\put{$\bullet$} at  0 0.75      
\put{$\bullet$} at  1 0.75      %
\put{$\bullet$} at  2 0.75      %
\put{$\bullet$} at  3 0.75      %
\put{$\bullet$} at -3 -0.75          %
\put{$\bullet$} at -2 -0.75          %
\put{$\bullet$} at -1 -0.75          %
\put{$\bullet$} at  0 -0.75          
\put{$\bullet$} at  1 -0.75          %
\put{$\bullet$} at  2 -0.75          %
\put{$\bullet$} at  3 -0.75          %
\plot 4.5 1.25 4.5 0.37 / 
\plot 4.5 0.13   4.5 -1.25 / 
\plot 4.25 1.25 4.25  0.37 / 
\plot 4.25 0.13 4.25 -1.25 / 
\ellipticalarc axes ratio 1:1 360 degrees from 4.5 1.25 center at 4.375 1.25 
\put{$*$} at 4.375 1.25 
\ellipticalarc axes ratio 1:1 180 degrees from 4.25 -1.25 center at 4.375 -1.25
\plot -6.0 1.25 -6.0 -1.25 / 
\plot -5.75 1.25 -5.75  -1.25 / 
\ellipticalarc axes ratio 1:1 360 degrees from -6.0 1.25 center at -5.875 1.25 
\put{$*$} at -5.875 1.25 
\ellipticalarc axes ratio 1:1 180 degrees from -6.0 -1.25 center at -5.875 -1.25
\plot -4.5 1.25 -4.5 -1.25 / 
\plot -4.25 1.25 -4.25 -1.25 /
\ellipticalarc axes ratio 1:1 360 degrees from -4.5 1.25 center at -4.375 1.25 
\put{$*$} at -4.375 1.25 
\ellipticalarc axes ratio 1:1 180 degrees from -4.5 -1.25 center at -4.375 -1.25
\plot -2 0.75 -2 0.38 /
\plot -2 0.12 -2 -0.75 /
\plot -1 0.75 -1 0.38 /
\plot -1 0.12 -1 -0.75 /
\plot 0 0.75 0 0.38 /
\plot 0 0.12 0 -0.75 /
\plot 1 0.75 1 0.38 /
\plot 1 0.12 1 -0.75 /
\plot 2 0.75 2 0.38 /
\plot 2 0.12 2 -0.75 /
\plot 3 0.75 3 0.38 /
\plot 3 0.12 3 -0.75 /
\setlinear
\plot -2.7 0.25  4.65 0.25 / 
\ellipticalarc axes ratio 2:1 180 degrees from 4.65 -0.25  center at 4.65 0 
\plot 4.1 -0.25  3.2 -0.25 /
\plot 2.8 -0.25 2.2 -0.25 /
\plot 1.8 -0.25 1.2 -0.25 /
\plot 0.8 -0.25 0.2 -0.25 /
\plot -0.2 -0.25 -0.8 -0.25 /
\plot -1.2 -0.25 -1.8 -0.25 /
\plot -2.2 -0.25 -2.7 -0.25 /
\setquadratic
\plot  -2.7 0.25  -2.95 .45  -3 0.75 / 
\plot  -2.7 -0.25  -2.95 -0.45  -3 -0.75 /
\endpicture
= T_1\cdots T_n \cdots T_1.$$
Using that $T_iT_{s_{\varepsilon_1}}^{-1} = T_{s_{\varepsilon_1}}^{-1}T_i$, for $i\in\{2,3,\ldots, n\}$,
$$T_0T_i = Y^{\varepsilon_1}T_{s_{\varepsilon_1}}^{-1}T_i 
= Y^{\varepsilon_1}T_iT_{s_{\varepsilon_1}}^{-1}
= T_iY^{\varepsilon_1}T_{s_{\varepsilon_1}}^{-1} = T_iT_0,
\qquad\hbox{for $i\in\{2,\ldots, n\}$.}$$

Using $T_0^{\vee} Y^{\varepsilon_2} = Y^{\varepsilon_2}T_0^{\vee}$ and
$Y^{\varepsilon_2} = T_1^{-1}Y^{\varepsilon_1}T_1^{-1}$ and
$T_0T_i = T_iT_0$ for $i\in\{2,\ldots, n\}$,
\begin{align*}
T_0^{\vee} T_1^{-1}T_0T_1
&= T_0^{\vee} T_1^{-1}Y^{\varepsilon_1} T_1^{-1}\cdots T_n^{-1}\cdots T_1^{-1}T_1 
= T_0 Y^{\varepsilon_2} T_2^{-1}\cdots T_n^{-1}\cdots T_2^{-1} \\
&=  Y^{\varepsilon_2}T_0^{\vee} T_2^{-1}\cdots T_n^{-1}\cdots T_2^{-1} 
=  T_1^{-1}Y^{\varepsilon_1} T_1^{-1}
 T_2^{-1}\cdots T_n^{-1}\cdots T_2^{-1} T_0^{\vee} \\
&= T_1^{-1}T_0T_1T_0^{\vee}.
\end{align*}

Since
$Y^{\varepsilon_1} = T_0T_1\cdots T_n \cdots T_1$
and
$Y^{\varepsilon_2} = T_1^{-1}Y^{\varepsilon_1}T_1^{-1} 
= T_1^{-1}T_0T_1\cdots T_n\cdots T_2$ then
\begin{align*}
Y^{\varepsilon_1+\varepsilon_2} 
&= T_0T_1\cdots T_n\cdots T_2T_0T_1\cdots T_n\cdots T_2 
= T_0T_1T_0T_2\cdots T_n \cdots T_2T_1\cdots T_n\cdots T_2.
\end{align*}
Then $T_1^{-1}Y^{\varepsilon_1+\varepsilon_2}
=T_1^{-1}Y^{\varepsilon_1}Y^{\varepsilon_2}
= T_1^{-1}Y^{\varepsilon_1}T^{-1}Y^{\varepsilon_1}T_1^{-1}
= Y^{\varepsilon_2}Y^{\varepsilon_1}T_1^{-1} = Y^{\varepsilon_2+\varepsilon_1}T_1^{-1}
$ gives
\begin{align*}
T_1Y^{\varepsilon_1+\varepsilon_2}
&= T_1T_0T_1T_0T_2\cdots T_n \cdots T_2 T_1\cdots T_n\cdots T_2 \\
=Y^{\varepsilon_1+\varepsilon_2}T_1
&= T_0T_1T_0 T_2\cdots T_n\cdots T_2 T_{s_{\varphi}}
= T_0T_1T_0 T_{s_{\varphi}} T_2\cdots T_n\cdots T_2 \\
&= T_0T_1T_0T_1T_2\cdots T_n\cdots T_2T_1\cdots T_n\cdots T_2,
\end{align*}
since $T_{s_\varphi} T_i = T_i T_{s_\varphi}$ for $i\in\{2,\ldots, n\}$.
Thus $T_1T_0T_1T_0 = T_0T_1T_0T_1$.

\smallskip\noindent
(b)  Let $\iota\colon \tilde \cB_n\longrightarrow  \tilde \cB_n$ be the automorphism of Proposition
\ref{dualityB}.
Since $\iota(Y^{\varepsilon_i}) = X^{\varepsilon_i}$,  applying the 
automorphism $\iota$ to the presentation in (a) gives the presentation in (b).
\end{proof}

\begin{remark}

Let $T_{s_{\varepsilon_1}} = T_1\cdots T_n\cdots T_1$ and 
\begin{equation}
T_0^\sharp = q^{-\frac12}X^{\varepsilon_1}T_0^{-1}
= q^{-\frac12}(T_0^{\vee})^{-1}T_{s_{\varepsilon_1}}^{-1} T_0^{-1}
=q^{-\frac12} (T_0^{\vee})^{-1}Y^{-\varepsilon_1}
\label{T0sharp}
\end{equation}
Then, as explained in~\cite[proof of Proposition 3.9]{IS03},
$$\beginpicture
\setcoordinatesystem units <1cm,1cm>        
\setplotarea x from -3 to 3, y from -0.5 to 0.5  
\multiput {$\circ$} at 1   0 *2 1 0 /      %
\multiput {$\circ$} at -3   0 *2 1 0 /      
\put {$T_n$}     at 3 0.4   %
\put {$T_{n-1}$}     at 2 0.4   %
\put {$T_{n-2}$} at 1 0.4   %
\put {$T_2$}     at -1 0.4   %
\put {$T_1$}     at -2 0.4   %
\put {$T^\sharp_0$}     at -3 0.4   
\linethickness=0.75pt                          
\putrule from -2.97 0.045 to -2.03 0.045       
\putrule from -2.97 -0.045 to -2.03 -0.045       %
\putrule from -1.95 0 to -1.05 0              %
\putrule from 1.05 0 to 1.95 0              %
\putrule from 2.03 0.045 to 2.97 0.045       
\putrule from 2.03 -0.045 to 2.97 -0.045       %
\setlinear
\setdashes <2mm,1mm>          %
\putrule from -0.95 0 to 0.95 0  
\endpicture
\qquad\hbox{and}\qquad
T_0^{\vee} T_0^\sharp T_0 T_{s_{\varepsilon_1}} = q^{-\frac12},
$$
since 
$$T_0^{\vee} T_0^\sharp T_0 T_{s_{\varepsilon_1}}
= T_0^{\vee} q^{-\frac12}X^{\varepsilon_1}T_{s_{\varepsilon_1}}
=q^{-\frac12} T_0^{\vee} (T_0^{\vee})^{-1}T_{s_{\varepsilon_1}}^{-1}T_{s_{\varepsilon_1}}
= q^{-\frac12}
$$
and
\begin{align*}
T_0^\sharp  T_1 T_0^\sharp T_1 
&= q^{-\frac12}X^{\varepsilon_1}T_0^{-1}T_1 q^{-\frac12}X^{\varepsilon_1}T_0^{-1} T_1 
= q^{-\frac12}X^{\varepsilon_1}T_0^{-1}T_1 q^{-\frac12}X^{\varepsilon_1}T_1T_1^{-1}T_0^{-1} T_1 \\
&= q^{-\frac12}X^{\varepsilon_1}T_0^{-1} q^{-\frac12}X^{\varepsilon_2}T_1^{-1}T_0^{-1} T_1 
= q^{-\frac12}X^{\varepsilon_1} q^{-\frac12}X^{\varepsilon_2}T_0^{-1} T_1^{-1}T_0^{-1} T_1 \\
&= q^{-\frac12}X^{\varepsilon_1} q^{-\frac12}X^{\varepsilon_2}T_1T_0^{-1} T_1^{-1}T_0^{-1} \\
&= T_1 q^{-\frac12}X^{\varepsilon_1} q^{-\frac12}X^{\varepsilon_2}T_0^{-1} T_1^{-1}T_0^{-1} 
= T_1 q^{-\frac12}X^{\varepsilon_1} T_0^{-1} q^{-\frac12}X^{\varepsilon_2} T_1^{-1}T_0^{-1} \\
&= T_1 q^{-\frac12}X^{\varepsilon_1} T_0^{-1} T_1 q^{-\frac12}X^{\varepsilon_1} T_1 T_1^{-1}T_0^{-1} 
= T_1 q^{-\frac12}X^{\varepsilon_1} T_0^{-1} T_1 q^{-\frac12}X^{\varepsilon_1} T_0^{-1} 
= T_1 T_0^\sharp T_1 T_0^\sharp.
\end{align*}

\end{remark}

\begin{remark}
The presentation of the double affine braid group given in~\eqref{BdefrelsI} and
\eqref{Bdefrels2I} is implicit in 
\cite[Cor.\ 4.18]{H06} and quite explicit in~\cite[Theorem 3.11]{IS03}.
\end{remark}

\subsection{The double affine Hecke algebra (DAHA) of type $CC_n$}

As in~\eqref{T0sharp}, let
$$T_0^\sharp = q^{-\frac12}X^{\varepsilon_1}T_0^{-1}
= q^{-\frac12}(T_0^{\vee})^{-1}T_1^{-1}\cdots T_n^{-1}\cdots T_1^{-1}T_0^{-1}
=q^{-\frac12} (T_0^{\vee})^{-1}Y^{-\varepsilon_1},
\quad\hbox{in $\widetilde{\cB}$.}
$$ 
Let $u_0^{\frac12}, u_n^{\frac12}, t_0^{\frac12}, t_n^{\frac12}, t^{\frac12}\in \CC^\times$.
The \emph{double affine Hecke algebra} $\widetilde{H}$ is the quotient of the group algebra of
$\widetilde{\cB}$ by
$$
(T_0^\sharp - u_0^{\frac12})(T_0^\sharp+u_0^{-\frac12}) = 0,
\qquad
(T_0^{\vee} - u_n^{\frac12})(T_0^{\vee} +u_n^{-\frac12}) = 0,
$$
\begin{equation}
(T_0 - t_0^{\frac12})(T_0+t_0^{-\frac12}) = 0,
\qquad
(T_n - t_n^{\frac12})(T_n +t_n^{-\frac12}) = 0,
\quad\hbox{and}
\label{DHeckedefnI}
\end{equation}
$$
(T_i - t^{\frac12})(T_i+t^{-\frac12}) = 0,\quad\hbox{for $i=1,\ldots, n-1$.}
$$

\subsubsection{The translation relations in DAHA}

Recall from~\eqref{XYdefn} that $Y^{\varepsilon_j}$ and $X^{\varepsilon_j}$ are defined as
\begin{equation*}
\begin{array}{lcl}
Y^{\varepsilon_1} = T_0T_1\cdots T_n\cdots T_1
&\qquad
\hbox{and}\qquad 
&Y^{\varepsilon_{j+1}} = T_j^{-1}Y^{\varepsilon_j}T_j^{-1},
\\
X^{\varepsilon_1} = (T_0)^{-1}T_1^{-1}\cdots T_n^{-1}\cdots T_1^{-1},
&\quad\hbox{and}\quad
&X^{\varepsilon_{j+1}}
= T_j X^{\varepsilon_j}T_j.
\end{array}
\end{equation*}
Define (a symbol $\delta$ such that)
$$q^{\frac12} = X^{\frac12\delta} \quad\hbox{and let}\quad
X_i = X^{\varepsilon_i},
\quad\hbox{so that}\quad
q^{\frac{k}{2}} X_1^{\mu_1}\cdots X_n^{\mu_n}
=X^{\frac{k}{2}\delta+\mu_1\varepsilon_1+\cdots+\mu_n\varepsilon_n}=X^\mu
$$
for $\mu = \mu_1\varepsilon_1+\cdots+\mu_n\varepsilon_n+\frac{k}{2}\delta
\in\ZZ\varepsilon_1+\cdots+\ZZ\varepsilon_n + \frac{1}{2}\ZZ \delta$.  
For $\mu = \mu_1\varepsilon_1+\cdots+\mu_n\varepsilon_n+\frac{k}{2}\delta$ define
(the level 0 action of $W_Y$ on $\fa^*_\ZZ+\frac12\ZZ\delta$)
\begin{align*}
s_0\mu
&=  
-\mu_1\varepsilon_1+\mu_2\varepsilon_2+\cdots+\mu_n\varepsilon_n
+\big(\hbox{$\frac{k}{2}$}+\mu_1\big)\delta
\\
s_n\mu
&=  \mu_1\varepsilon_1+\cdots+\mu_{n-1}\varepsilon_{n-1}-\mu_n\varepsilon_n
+\hbox{$\frac{k}{2}$}\delta
\qquad\hbox{and}
\\
s_i\mu
&=  \hbox{$\frac{k}{2}$}\delta+\mu_1\varepsilon_1+\cdots
+\mu_{i-1}\varepsilon_{i-1}+\mu_{i+1}\varepsilon_i+\mu_i\varepsilon_{i+1}
+\mu_{i+2}\varepsilon_{i+2}+\cdots +\mu_n\varepsilon_n,
\end{align*}
for $i\in \{1, \ldots, n-1\}$.
Similarly define (a symbol $K$ so that)
$$q^{\frac12} = Y^{-\frac12 K} \quad\hbox{and let}\quad
Y_i = Y^{\varepsilon_i},
\quad\hbox{so that}\quad
q^{-\frac{k}{2}} Y_1^{\lambda_1}\cdots Y_n^{\lambda_n}
=Y^{\frac{k}{2}K+\lambda_1\varepsilon_1+\cdots+\lambda_n\varepsilon_n}=Y^{\lambda},
$$
for $\lambda 
= \lambda_1\varepsilon_1+\cdots+\lambda_n\varepsilon_n+\frac{k}{2}K
\in \ZZ\varepsilon_1+\cdots+\ZZ\varepsilon_n+\frac{1}{2}\ZZ K$.  
For $\lambda 
= \lambda_1\varepsilon_1 +\cdots + \lambda_n\varepsilon_n+\frac{k}{2}K$ define
(the level 0 action of $W$ on $\fa_\ZZ+\frac12\ZZ K$)
\begin{align}
s_0 \lambda
&=  
-\lambda_1\varepsilon_1+\lambda_2\varepsilon_2+\cdots+\lambda_n\varepsilon_n
+\big(\hbox{$\frac{k}{2}$} + \lambda_1\big)K,
\nonumber \\
s_n\lambda
&=  \lambda_1\varepsilon_1+\cdots+\lambda_{n-1}\varepsilon_{n-1}
-\lambda_n\varepsilon_n +\hbox{$\frac{k}{2}$}K
\qquad\hbox{and}
\label{WveeactionI} 
\\
s_i\lambda
&=  \lambda_1\varepsilon_1 +\cdots
+\lambda_{i-1}\varepsilon_{i-1} +\lambda_{i+1}\varepsilon_i 
+\lambda_i\varepsilon_{i+1}
+\lambda_{i+2}\varepsilon_{i+2}+\cdots +\lambda_n\varepsilon_n
+\hbox{$\frac{k}{2}$} K,
\nonumber 
\end{align}
for $i\in \{1, \ldots, n-1\}$.

\begin{prop} The following relations hold in $\widetilde{H}$.
\begin{align}
T_0X^\mu 
&= X^{s_0\mu}T_0 + 
\frac{ (t_0^{\frac12}-t_0^{-\frac12})+(u_0^{\frac12}-u_0^{-\frac12})q^{\frac12}X^{-\varepsilon_1} 
}{1-qX^{-2\varepsilon_1}}(X^{\mu}-X^{s_0\mu}),
\nonumber \\
T_nX^\mu 
&= X^{s_n\mu}T_n + 
\frac{ (t_n^{\frac12}-t_n^{-\frac12})+(u_n^{\frac12}-u_n^{-\frac12})X^{\varepsilon_n}
}{1-X^{2\varepsilon_n}}(X^{\mu}-X^{s_n\mu}) \quad\hbox{and}
\label{DAHATXI} 
\\
T_i X^\mu 
&= X^{s_i\mu}T_i + \frac{t^{\frac12}-t^{-\frac12}}{1-X^{\varepsilon_i-\varepsilon_{i+1}} }
(X^\mu - X^{s_i\mu})
\qquad\hbox{for $i\in \{1, \ldots, n-1\}$,}
\nonumber
\end{align}
\begin{align}
T_0^{\vee} Y^{\lambda}
&= Y^{s_0 \lambda}T_0^{\vee} + \frac{
((u_n^{\frac12}- u_n^{-\frac12})+(u_0^{\frac12}-u_0^{-\frac12})Y^{-\alpha_0})
}{ 1- Y^{-2\alpha_0} }
(Y^{\lambda}-Y^{s_0 \lambda}),
\nonumber \\
T_nY^{\lambda} &= Y^{s_n\lambda}T_n + 
\frac{\big((t_n^{\frac12}-t_n^{-\frac12})+(t_0^{\frac12}-t_0^{-\frac12})Y^{-\varepsilon_n})}
{1-Y^{-2\varepsilon_n}}(Y^{\lambda}-Y^{s_n\lambda}),
\ \ \hbox{and}
\label{DAHATYI} 
\\
T_iY^{\lambda} &= Y^{s_i\lambda}T_i + (t^{\frac12}-t^{-\frac12})
\frac{Y^{\lambda}-Y^{s_i\lambda}}
{1-Y^{-(\varepsilon_i-\varepsilon_{i+1})}},
\ \ \hbox{for $i\in \{1,\ldots, n-1\}$.}
\nonumber
\end{align}
\end{prop}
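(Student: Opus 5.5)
The plan is to derive the six relations from the operator realization \eqref{DGaction}, first for the generators, then for all $X^\mu$ and $Y^\lambda$ by a Leibniz rule.

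\textbf{The $X$-relations \eqref{DAHATXI}.} Realize $\widetilde H$ on $\KK[x^{\pm1}]$, with $X^{\varepsilon_j}$ acting as $X_j$ and $T_0,\ldots,T_n$ as in \eqref{DGaction}. Each $T_i$ has the form $T_i = t_{\alpha_i}^{\frac12} - c^X_{\alpha_i}(1-\xi_{s_i})$, where $c^X_{\alpha_i}$ is a multiplication operator. It follows that
\[
T_iX^\mu - X^{s_i\mu}T_i = \big(t_{\alpha_i}^{\frac12}-c^X_{\alpha_i}\big)\big(X^\mu - X^{s_i\mu}\big),
\]
because $\xi_{s_i}X^\mu = X^{s_i\mu}\xi_{s_i}$. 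The identity \eqref{bigC}, in its $X$-version $t_{\alpha}^{\frac12}-c^X_{\alpha}$, evaluates $t_{\alpha_i}^{\frac12}-c^X_{\alpha_i}$ as the displayed fractions. For example, for $i\in\{1,\ldots,n-1\}$ it gives $(t^{\frac12}-t^{-\frac12})/(1-x_ix_{i+1}^{-1})$, and the cases $i=0,n$ follow from the same expansion of $c^X$ used in Section~\ref{sec: c-functions} for $F^\pm$.

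Two points need care. First, the displayed relations are relations in the algebra, not merely in this representation. I would justify this in one of two ways: either by faithfulness of the polynomial representation (standard, \cite{Sah99}), or more robustly by proving them directly from the presentation. For the direct route, induct on $\mu$ using the twisted Leibniz rule: if the relation holds for $X^\mu$ and $X^\nu$, then
\[
T_iX^{\mu+\nu} = X^{s_i\mu}\big(T_iX^\nu\big) + D\big(X^\mu-X^{s_i\mu}\big)X^\nu,
\]
which closes up to the same coefficient $D$. This reduces everything to generators. In particular, the case $T_iX^{\varepsilon_i}=X^{\varepsilon_{i+1}}T_i^{-1}$ is exactly \eqref{TX2} combined with the quadratic relation. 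Second, for $T_0$ and $T_n$ the generator case uses the quadratic relations for $T_0^\sharp$ and $T_0^\vee$. Concretely, $X^{\varepsilon_1}T_0^{-1}=q^{\frac12}T_0^\sharp$, so $X^{\varepsilon_1}$ and $X^{-\varepsilon_1}$ combine $T_0$ and $T_0^\sharp$, and the two Hecke relations produce the two-parameter numerator.

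\textbf{The $Y$-relations \eqref{DAHATYI}.} These follow formally by the duality $\iota$ of Proposition~\ref{dualityB}, which swaps $X\leftrightarrow Y$, $q^{\frac12}\mapsto q^{-\frac12}$, $T_0\leftrightarrow (T_0^\vee)^{-1}$ and $T_i\mapsto T_i^{-1}$. One checks that $\iota$ preserves the Hecke relations after the parameter swap $t_0\leftrightarrow u_n$, $u_0$ fixed. Then $T_i^{-1}=T_i-(t^{\frac12}-t^{-\frac12})$ converts the inverted relations back into the stated form. The exponent bookkeeping uses $q^{\frac12}=Y^{-\frac12K}$ and $Y^{\alpha_0}=q^{-\frac12}Y_1^{-1}$.

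\textbf{Main obstacle.} The hard step is the $T_0^\vee$–$Y$ relation, specifically tracking parameters under $\iota$ so that $u_n$ pairs with the constant term and $u_0$ with $Y^{-\alpha_0}$. I would verify this first in rank one on the generator $Y^{\varepsilon_1}$, using $T_0^\vee T_0^\sharp T_0T_{s_{\varepsilon_1}}=q^{-\frac12}$.
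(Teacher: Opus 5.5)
Your ``direct route'' for the $X$-relations is the paper's proof. The paper establishes the twisted Leibniz rule and then checks generators. For $T_0$ it works with $X^{\alpha_0}=q^{\frac12}X^{-\varepsilon_1}$, using $T_0^\sharp=q^{-\frac12}X^{\varepsilon_1}T_0^{-1}$ and the $t_0,u_0$ quadratic relations, with \eqref{TX1} handling $\varepsilon_2,\dots,\varepsilon_n$. For $T_n$ it works with $X^{\varepsilon_n}$, using that $X^{-\varepsilon_n}T_n^{-1}$ is conjugate to $T_0^\vee$ and the $t_n,u_n$ relations, with \eqref{Xcomm}. Your polynomial-representation alternative is a correct computation: $T_iX^\mu-X^{s_i\mu}T_i=(t_{\alpha_i}^{\frac12}-c^X_{\alpha_i})(X^\mu-X^{s_i\mu})$ holds for all $\mu$ at once, and the three expansions of $t_{\alpha_i}^{\frac12}-c^X_{\alpha_i}$ do give the stated fractions. (This is the $X$-analogue of the expansion of $F^-_\alpha$, not \eqref{bigC}.) However, it only yields relations in $\widetilde H$ after importing faithfulness, which the paper does not need here.

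The genuine difference is the $Y$-half. The paper does a second direct computation. It gets $T_0^\vee Y^{\alpha_0}-Y^{-\alpha_0}T_0^\vee=(u_0^{\frac12}-u_0^{-\frac12})+(u_n^{\frac12}-u_n^{-\frac12})Y^{\alpha_0}$ from $T_0^\sharp=(T_0^\vee)^{-1}Y^{\alpha_0}$. It gets $T_nY^{\varepsilon_n}-Y^{-\varepsilon_n}T_n=(t_0^{\frac12}-t_0^{-\frac12})+(t_n^{\frac12}-t_n^{-\frac12})Y^{\varepsilon_n}$ because $T_n^{-1}Y^{\varepsilon_n}=T_n^{-1}\cdots T_1^{-1}T_0T_1\cdots T_n$ is conjugate to $T_0$. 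You instead transport the $X$-relations by $\iota$. This halves the work and explains why $t_0$ and $u_n$ trade places, at the cost of relying on Proposition~\ref{dualityB}.

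The route works, but the parameter bookkeeping must be stated correctly. Since $\iota$ inverts the $T$'s, it carries $(T-s^{\frac12})(T+s^{-\frac12})=0$ to $(T-s^{-\frac12})(T+s^{\frac12})=0$. So a linear extension with only the swap $t_0\leftrightarrow u_n$ does \emph{not} preserve the quadratic relations. You need one of two equivalent fixes:
\begin{itemize}
\item take $\iota$ semilinear, inverting all of $t^{\frac12},t_0^{\frac12},t_n^{\frac12},u_0^{\frac12},u_n^{\frac12}$ (then ``swap $t_0\leftrightarrow u_n$, $u_0$ fixed'' is right);
\item take $\iota$ to be a linear isomorphism onto $\widetilde H$ from the DAHA with parameters $(t^{-1},u_n^{-1},t_n^{-1},u_0^{-1},t_0^{-1})$ in the slots $(t,t_0,t_n,u_0,u_n)$.
\end{itemize}
With this, the image of the $T_0$-relation is $(T_0^\vee)^{-1}Y^\lambda=Y^{s_0\lambda}(T_0^\vee)^{-1}+D'(Y^\lambda-Y^{s_0\lambda})$, where $D'=\frac{(u_n^{-\frac12}-u_n^{\frac12})+(u_0^{-\frac12}-u_0^{\frac12})Y^{\alpha_0}}{1-Y^{2\alpha_0}}$. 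Then $D'+(u_n^{\frac12}-u_n^{-\frac12})$ is exactly the stated coefficient, and $T_n$ and $T_i$ go the same way. If you omit the inversion, the conversion $T^{-1}=T-(s^{\frac12}-s^{-\frac12})$ produces wrong coefficients, e.g.\ $(t^{\frac12}-t^{-\frac12})\frac{2-Y^{\alpha_i}}{1-Y^{\alpha_i}}$ for $T_i$.
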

\begin{proof}
If $\mu, \nu\in \fh_\ZZ^*$ and 
\begin{align*}
T_iX^\mu 
&= X^{s_i\mu}T_i + 
\frac{\big((t_i^{\frac12}-t_i^{-\frac12})+(u_i^{\frac12}-u_i^{-\frac12})X^{\alpha_i} \big)
}{1-X^{2\alpha_i} }(X^{\mu}-X^{s_i\mu}), \qquad\hbox{and} \\
T_iX^\nu 
&= X^{s_i\nu}T_i + 
\frac{\big((t_i^{\frac12}-t_i^{-\frac12})+(u_i^{\frac12}-u_i^{-\frac12})X^{\alpha_i} \big)
}{1-X^{2\alpha_i} }(X^{\nu}-X^{s_i\nu}),
\end{align*}
then
\begin{align}
T_iX^{\mu+\nu}
&= T_iX^\mu X^\nu
= \left(X^{s_i\mu}T_i + 
\frac{\big((t_i^{\frac12}-t_i^{-\frac12})+(u_i^{\frac12}-u_i^{-\frac12})X^{\alpha_i} \big)
}{1-X^{2\alpha_i} }(X^{\mu}-X^{s_i\mu}) \right) X^{\nu} \nonumber \\
&= X^{s_i\mu}
\left(
X^{s_i\nu}T_i + 
\frac{\big((t_i^{\frac12}-t_i^{-\frac12})+(u_i^{\frac12}-u_i^{-\frac12})X^{\alpha_i} \big)
}{1-X^{2\alpha_i} }(X^{\nu}-X^{s_i\nu}) \right)  \nonumber \\
&\qquad
+\frac{\big((t_i^{\frac12}-t_i^{-\frac12})+(u_i^{\frac12}-u_i^{-\frac12})X^{\alpha_i} \big)
}{1-X^{2\alpha_i} }(X^{\mu}-X^{s_i\mu}) X^{\nu}  \nonumber \\
&= X^{s_i(\mu+\nu)}T_i
+\frac{\big((t_i^{\frac12}-t_i^{-\frac12})+(u_i^{\frac12}-u_i^{-\frac12})X^{\alpha_i} \big)
}{1-X^{2\alpha_i} }(X^{\mu+\nu} +X^{s_i\mu+\nu}-X^{s_i\mu+\nu}-X^{s_i(\mu+\nu)})
\nonumber \\
&= X^{s_i(\mu+\nu)}T_i
+\frac{\big((t_i^{\frac12}-t_i^{-\frac12})+(u_i^{\frac12}-u_i^{-\frac12})X^{\alpha_i} \big)
}{1-X^{2\alpha_i} }(X^{\mu+\nu} - X^{s_i(\mu+\nu)}).
\label{*}
\end{align}

By~\eqref{TX1}, $T_0X^{\varepsilon_j} = X^{\varepsilon_j}T_0$
for $j\in \{2, \dots, n\}$.  Since $q^{\frac12}\in Z(\tilde H)$ then $T_0 q^{\frac12} = q^{\frac12}T_0$.
Following~\cite[p.\,81]{Mac03}:
Since $T_0' = q^{-\frac12}X^{\varepsilon_1}T_0^{-1}$ then
\begin{align*}
T_0X^{\alpha_0} - X^{-\alpha_0}T_0 
&= T_0q^{\frac12}X^{-\varepsilon_1}-q^{-\frac12}X^{\varepsilon_1}T_0 \\
&= T_0q^{\frac12}X^{-\varepsilon_1}-q^{-\frac12}X^{\varepsilon_1}(T_0^{-1}+t_0^{\frac12}-t_0^{-\frac12}) \\
&= (T_0')^{-1}-T_0'-(t_0^{\frac12}-t_0^{-\frac12})q^{-\frac12}X^{\varepsilon_1} \\
&= -(u_0^{\frac12} - u_0^{-\frac12})-(t_0^{\frac12}-t_0^{-\frac12})q^{-\frac12}X^{\varepsilon_1} \\
&= \big((t_0^{\frac12}-t_0^{-\frac12})+(u_0^{\frac12}-u_0^{-\frac12})q^{\frac12}X^{-\varepsilon_1}\big)
\frac{q^{\frac12}X^{-\varepsilon_1}-q^{-\frac12}X^{\varepsilon_1}}{1-qX^{-2\varepsilon_1}} \\
&= \big((t_0^{\frac12}-t_0^{-\frac12})+(u_0^{\frac12}-u_0^{-\frac12})X^{\alpha_0}\big)
\frac{X^{\alpha_0}-X^{-\alpha_0}}{1-X^{2\alpha_0}},
\end{align*}
The elements $\frac12\delta, \alpha_0, \varepsilon_2, \ldots, \varepsilon_n$
form a basis of $\fh_\ZZ^* = \frac12 \ZZ\delta+\ZZ\varepsilon_1+\cdots+\ZZ\varepsilon_n$
and so the first relation in~\eqref{DAHATXI} then follows from~\eqref{*}.

By the last relation in~\eqref{Xcomm}, $T_nX^{\varepsilon_j} = X^{\varepsilon_j} T_n$
for $j\in \{1, \ldots, n-1\}$.
Since
$$X^{-\varepsilon_n}T_n^{-1} = T_n\cdots T_1 T_0 T_1^{-1}\cdots T_n^{-1}
\qquad\hbox{is conjugate to $T_0$}
$$
then, by the second relation in~\eqref{DHeckedefnI},
$X^{-\varepsilon_n}T_n^{-1} 
- (X^{-\varepsilon_n}T_n^{-1})^{-1} = u_n^{\frac12}-u_n^{-\frac12}.$
Thus,
\begin{align*}
T_nX^{\alpha_n} - X^{-\alpha_n}T_n 
&= T_nX^{\varepsilon_n}-X^{-\varepsilon_n}T_n \\
&= T_nX^{\varepsilon_n}-X^{-\varepsilon_n}(T_n^{-1}+t_n^{\frac12}-t_n^{-\frac12}) \\
&= -(u_n^{\frac12} - u_n^{-\frac12})-(t_n^{\frac12}-t_n^{-\frac12})X^{-\varepsilon_n} \\
&= \big((t_n^{\frac12}-t_n^{-\frac12})+(u_n^{\frac12}-u_n^{-\frac12})X^{\varepsilon_n}\big)
\frac{X^{\varepsilon_n}-X^{-\varepsilon_n}}{1-X^{2\varepsilon_n}}.
\end{align*}
The elements $\frac12\delta, \alpha_0, \varepsilon_2, \ldots, \varepsilon_n$
form a basis of $\fh_\ZZ^* = \frac12 \ZZ\delta+\ZZ\varepsilon_1+\cdots+\ZZ\varepsilon_n$
and so the second relation in~\eqref{DAHATXI} then follows from~\eqref{*}.

The proof of part (b) is similar using the following computations.
Since $Y^{\alpha_0} = q^{-\frac12}Y^{-\varepsilon_1}$
and
$T_0^\# = (T_0^{\vee})^{-1}q^{-\frac12}Y^{-\varepsilon_1}$ then
\begin{align*}
T_0^{\vee} Y^{\alpha_0} - Y^{-\alpha_0}T_0^{\vee} 
&= \big((T_0^{\vee})^{-1} + (u_n^{\frac12}-u_n^{-\frac12})\big) q^{-\frac12}Y^{-\varepsilon_1}
-q^{\frac12}Y^{\varepsilon_1}T_0^{\vee} \\
&= (T_0^{\vee})^{-1}q^{-\frac12}Y^{-\varepsilon_1} - q^{\frac12}Y^{\varepsilon_1}T_0^{\vee}
+(u_n^{\frac12} - u_n^{-\frac12})Y^{\alpha_0} \\
&= (u_0^{\frac12}-u_0^{-\frac12}) + ( u_n^{\frac12} - u_n^{-\frac12})Y^{\alpha_0} \\
&=\big( (u_0^{\frac12} - u_0^{-\frac12})Y^{-\alpha_0} + (u_n^{\frac12} - u_n^{-\frac12})\big)
\frac{Y^{\alpha_0} - Y^{-\alpha_0}}{1-Y^{-2\alpha_0}},
\end{align*}
and, since
$T_n^{-1}Y^{\varepsilon_n} = T_n^{-1}T_{n-1}^{-1}\cdots T_1^{-1}T_0T_1\cdots T_n$
is conjugate to $T_0$ then, by the third relation in~\eqref{DHeckedefnI},
$$T_n^{-1}Y^{\varepsilon_n} - (T_n^{-1}Y^{\varepsilon_n})^{-1}
=t_0^{\frac12}- t_0^{-\frac12}, \qquad\hbox{so that}
$$
\begin{align*}
T_nY^{\alpha_n} - Y^{-\alpha_n}T_n 
&= T_nY^{\varepsilon_n}-Y^{-\varepsilon_n}T_n \\
&= (T_n^{-1}+(t_n^{\frac12}-t_n^{-\frac12}))Y^{\varepsilon_n} - Y^{-\varepsilon_n}T_n \\
&= (T_n^{-1}Y^{\varepsilon_n}) - (T_n^{-1}Y^{\varepsilon_n})^{-1} 
+ (t_n^{\frac12}-t_n^{-\frac12})Y^{\varepsilon_n} \\
&= (t_0^{\frac12}-t_0^{-\frac12}) 
+ (t_n^{\frac12}-t_n^{-\frac12})Y^{\varepsilon_n} \\
&= \big( (t_0^{\frac12} - t_0^{-\frac12})Y^{-\varepsilon_n} + (t_n^{\frac12}-t_n^{-\frac12})\big)
\frac{Y^{\varepsilon_n} - Y^{-\varepsilon_n}}{1-Y^{-2\varepsilon_n}}.
\qquad\qquad\qquad\qquad\qquad\qquad\qedhere
\end{align*}
\end{proof}

\subsubsection{Intertwiners for type $CC_n$}

Recall the notations for $c$-functions $c^Y_{\alpha}$ defined in~\eqref{cfcndefn} and for fold functions $F^+_{\alpha_i}$ and $F^-_{\alpha_i}$ defined in~\eqref{Fdefn}, related by
\begin{equation*}
F_{\alpha}^+ = t_{\alpha}^{-\frac12} - c_{\alpha}^Y
\quad\hbox{and}\quad
F_{\alpha}^- = t_{\alpha}^{\frac12} - c_{\alpha}^Y.
\end{equation*}
The \emph{intertwiner operators} are
\begin{equation}
\begin{array}{r@{\;}c@{\;}l} 
\tau_{0} &=&  T_{0}^{\vee} + F^+_{\alpha_i}\\
&=& (T_{0}^{\vee})^{-1} +  F^-_{\alpha_i},
\end{array}
\qquad\text{and}\qquad
\begin{array}{r@{\;}c@{\;}l}
\tau_i &=& T_i + F^+_{\alpha_i} \\
&=& (T_i)^{-1} +  F^-_{\alpha_i},
\end{array}
\qquad\hbox{for $i\in \{1, \ldots, n\}$.}
\label{intdefn}
\end{equation}

\begin{prop} \label{taupastY}
The operators $\tau_0, \ldots, \tau_n$ satisfy the relations
$$
\beginpicture
\setcoordinatesystem units <1cm,1cm>        
\setplotarea x from -3 to 3, y from -0.5 to 0.5  
\multiput {$\circ$} at 1   0 *2 1 0 /      %
\multiput {$\circ$} at -3   0 *2 1 0 /      
\put {$\tau_n$}     at 3 0.3   %
\put {$\tau_{n-1}$}     at 2 0.3   %
\put {$\tau_{n-2}$} at 1 0.3   %
\put {$\tau_2$}     at -1 0.3   %
\put {$\tau_1$}     at -2 0.3   %
\put {$\tau_0$}     at -3 0.3   
\linethickness=0.75pt                          
\putrule from -2.97 0.045 to -2.03 0.045       
\putrule from -2.97 -0.045 to -2.03 -0.045       %
\putrule from -1.95 0 to -1.05 0              %
\putrule from 1.05 0 to 1.95 0              %
\putrule from 2.03 0.045 to 2.97 0.045       
\putrule from 2.03 -0.045 to 2.97 -0.045       %
\setlinear
\setdashes <2mm,1mm>          %
\putrule from -0.95 0 to 0.95 0  
\endpicture
,
\qquad
\tau_i Y^{\lambda} = Y^{s_i\lambda}\tau_i
\qquad \hbox{and}\qquad
(\tau_i)^2 
=c^Y_{-\alpha_i}c^Y_{\alpha_i},
$$
for $i\in \{0, 1, \ldots, n\}$ and $\lambda\in \ZZ\varepsilon_1+\cdots \ZZ\varepsilon_n+\frac12\ZZ K$.
\end{prop}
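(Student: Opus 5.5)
We work in the DAHA $\widetilde{H}$ and treat the three assertions of Proposition~\ref{taupastY} separately: the intertwining relation $\tau_iY^\lambda = Y^{s_i\lambda}\tau_i$, the quadratic relation $\tau_i^2 = c^Y_{-\alpha_i}c^Y_{\alpha_i}$, and the braid relations.

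\textbf{Intertwining relation.} This is the most direct step and we do it first. For $i\in\{1,\ldots,n\}$, write $\tau_i = T_i + F^+_{\alpha_i}$ and use the relations \eqref{DAHATYI}, which have the form $T_iY^\lambda - Y^{s_i\lambda}T_i = g_i(Y)(Y^\lambda - Y^{s_i\lambda})$ for the rational function $g_i$ appearing there. Since $F^+_{\alpha_i}$ lies in $\KK(Y)$, it commutes with $Y^\lambda$, so
\[
\tau_iY^\lambda - Y^{s_i\lambda}\tau_i = \big(g_i + F^+_{\alpha_i}\big)\big(Y^\lambda - Y^{s_i\lambda}\big).
\]
The claim reduces to the identity of rational functions $F^+_{\alpha_i} = -g_i$. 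This follows from the explicit formula for $F^+_\alpha$ derived after \eqref{Fdefn}: compare numerators $(t_\alpha^{-1/2}-t_\alpha^{1/2}) + (u_\alpha^{-1/2}-u_\alpha^{1/2})Y^{-\alpha}$ over $1-Y^{-2\alpha}$, using the parameter assignment \eqref{cortparams}. For example, for $i=n$ one has $t_{\alpha_n}=t_n$ and $u_{\alpha_n}=t_0$, which matches the second line of \eqref{DAHATYI}. The case $i=0$ is identical, using the first line of \eqref{DAHATYI} with $T_0^\vee$ together with $t_{\alpha_0}=u_n$ and $u_{\alpha_0}=u_0$. This argument only uses the first formula for $\tau_i$; the equality of the two expressions in \eqref{intdefn} follows from the quadratic relation $T_i - T_i^{-1} = t_{\alpha_i}^{1/2} - t_{\alpha_i}^{-1/2}$ together with $F^+ - F^- = t^{-1/2}_\alpha - t^{1/2}_\alpha$.

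\textbf{Quadratic relation.} Compute
\[
\tau_i^2 = (T_i + F^+)(T_i + F^+) = T_i^2 + T_iF^+ + F^+T_i + (F^+)^2 .
\]
Move $T_i$ past $F^+ = F^+(Y)$ using the intertwining formula just established in the form $T_i f = (s_if)T_i - F^+(f - s_if)$. Then reduce $T_i^2 = (t^{1/2}-t^{-1/2})T_i + 1$. The coefficient of $T_i$ must vanish: it equals $(t^{1/2}-t^{-1/2}) + F^+ + s_iF^+$, and $s_iF^+_{\alpha} = t^{-1/2} - C_{\alpha}$, so by \eqref{cplusc} this coefficient is zero. The remaining scalar part simplifies, again via \eqref{cplusc} and \eqref{bigC}, to $C_{\alpha}C_{-\alpha}$.

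\textbf{Braid relations.} We expect these to be the main obstacle. The plan is the standard argument, which avoids expanding the products. Work in the localization of $\widetilde{H}$ over $\KK(Y)$, which acts faithfully on a space where the $\tau_w\cdot 1$ separate generic $Y$-weights; alternatively, use the fact that $\widetilde H\otimes \KK(Y)$ is free over $\KK(Y)$ with basis $\{T_w\}$. Both sides of a braid relation of length $m$ intertwine $Y^\lambda$ with $Y^{w\lambda}$ for the same $w$. Moreover, both sides have leading term $T_{w}$ in the $T$-basis with coefficient $1$. The difference $D$ therefore satisfies $DY^\lambda = Y^{w\lambda}D$ and is a $\KK(Y)$-combination of $T_u$ with $u<w$. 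Comparing top terms under the intertwining property forces $D=0$, since an element intertwining by $w$ must have its maximal $T_u$ equal to $T_w$. The delicate part is the doubly-laced relations involving $\tau_0$ and $\tau_n$, where the $F^\pm$ carry two parameters. If the abstract argument has a gap, the fallback is a direct rank-two computation in types $C_2$ and $CC_1$.
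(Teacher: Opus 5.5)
Your proposal is correct. The intertwining relation is handled exactly as in the paper. Both arguments rewrite \eqref{DAHATYI} as $T_iY^\lambda - Y^{s_i\lambda}T_i = -F^+_{\alpha_i}(Y^\lambda - Y^{s_i\lambda})$ and absorb the correction term into $\tau_i$.

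For the quadratic relation the paper arranges the algebra slightly differently. It writes $\tau_i^2=\tau_iT_i+F^+_{-\alpha_i}\tau_i$, replaces the left factor $\tau_i$ by $T_i^{-1}+F^-_{\alpha_i}$, and kills the $T_i$-coefficient via $F^-_{\alpha_i}+F^+_{-\alpha_i}=0$. You instead expand $(T_i+F^+_{\alpha_i})^2$ and use $T_i^2=(t_{\alpha_i}^{1/2}-t_{\alpha_i}^{-1/2})T_i+1$. Both come down to \eqref{cplusc}.

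The real difference is in the braid relations. There the paper only points to two routes: brute-force expansion of both sides as in Macdonald, or faithfulness of the polynomial representation together with the action of the $\tau_i$ on generic $E_\mu$. The second route also requires checking that the scalars produced by the two sides agree.

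Your leading-term argument in $\KK(Y)\otimes\widetilde H$ is uniform and computation-free. It treats the commuting, simply laced and doubly laced relations identically, including those involving the two-parameter fold functions at $\tau_0$ and $\tau_n$, so the rank-two fallback you mention is unnecessary.

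Its cost is that it rests on inputs the paper never states, and you should make them explicit:
\begin{itemize}
\item the PBW fact that the localization is free over $\KK(Y)$ on $\{T_w : w\in W\}$, with $W$ the affine Weyl group and $T_{s_0}=T_0^\vee$;
\item the triangularity $T_uY^\lambda\in Y^{u\lambda}T_u+\sum_{\ell(v)<\ell(u)}\KK(Y)T_v$, which follows from \eqref{DAHATYI} by induction on $\ell(u)$;
\item faithfulness of the level-$0$ action of $W$ on $Q$, together with $q$ being an independent parameter, so that $u_0\neq w$ yields some $\lambda$ with $Y^{u_0\lambda}\neq Y^{w\lambda}$ in $\KK(Y)$.
\end{itemize}
The last point is exactly what forces the top coefficient $f_{u_0}$ of the difference of the two sides to vanish.
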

\begin{proof}
In terms of $c$-functions, the relations in~\eqref{DAHATYI} are written in the form
\begin{equation}
T_{\alpha_i} Y^{\lambda} 
= Y^{s_i\lambda} T_{\alpha_i} + (c_{\alpha_i}^Y-t_{\alpha_i}^{-\frac12}) 
(Y^{\lambda} - Y^{s_i \lambda} )
\label{cfcnTpastY}
\end{equation}
The relation~\eqref{cfcnTpastY} is equivalent to
$
(T_{\alpha_i} +  (t_{\alpha_i}^{-\frac12}- c_{\alpha_i}^Y)) Y^{\lambda}
= Y^{s_i\lambda} (T_{\alpha_i} + (t_{\alpha_i}^{-\frac12} - c_{\alpha_i}^Y))
$
and
$$
\tau_i Y^{\lambda} = Y^{s_i\lambda}\tau_i.
$$
Using
$F^-_{\alpha_i}+F^+_{-\alpha_i} = t^{-\frac12} - c^Y_{\alpha_i} +t^{\frac12} - c^Y_{-\alpha_i} = 0$ gives
\begin{align*}
(\tau_i)^2 
&= \tau_i (T_{\alpha_i}+F^+_{\alpha_i})
= \tau_i T_i + F^+_{s_i\alpha_i} \tau_i 
=(T_i^{-1}+F^-_{\alpha_i})T_i + F^+_{-\alpha_i} \tau_i \\
&=1+F^-_{\alpha_i}T_i + F^+_{-\alpha_i}
(T_i + F^+_{\alpha_i})
=1+0\cdot T_i+(t^{\frac12}_{\alpha_i}-c^Y_{-\alpha_i})(t^{\frac12}_{\alpha_i}-c^Y_{\alpha_i})
\\
&=1+(t_{\alpha}-t^{\frac12}_{\alpha_i}(c^Y_{-\alpha_i}+c^Y_{\alpha_i})+
c^Y_{-\alpha_i}c^Y_{\alpha_i})
=1+t_{\alpha}-t^{\frac12}_{\alpha_i}(t^{\frac12}_{\alpha_i}+t^{-\frac12}_{\alpha_i})
+c^Y_{-\alpha_i}c^Y_{\alpha_i}
\\
&=c^Y_{-\alpha_i}c^Y_{\alpha_i}.
\end{align*}
There are two standard ways to prove the braid relations: by direct check or by using the polynomial representation
and the action of the $\tau_i$ on the $E_\mu$.  For example, checking that
$\tau_0\tau_1\tau_0\tau_1 = \tau_1\tau_0\tau_1\tau_0$ can be checked
by expanding the left side and the right side and confirming that they are equal as in~\cite[(5.5.2) and (5.6.4)]{Mac03}.
Alternatively, use that the polynomial representattion of $\widetilde H$ is faithful and that for a generic $\mu\in \ZZ^n$,
$\tau_0\tau_1\tau_0\tau_1 E_\mu = (\mathrm{const})E_{s_0s_1s_0s_1\mu} = (\mathrm{const})E_{s_1s_0s_1s_0\mu} =
\tau_1\tau_0\tau_1\tau_0E_\mu$.
\end{proof}

\subsection{The hexagon basis lemma}

The following Proposition is used for executing the alcove walk method of computing $E_\mu$
(see~\cite[Proposition 5.3]{GR21} for the type $GL_n$ case).  Let
$$\overline{s_0} = s_1\cdots s_n \cdots s_1\in W_{\mathrm{fin}} 
$$
so that $\overline{s_0}$ is the signed permutation that switches $1$ and $-1$.

\begin{prop} \label{xzsicross}
Let $\mu\in \ZZ^n$ and $w\in W_{\mathrm{fin}}$.  Let $i\in \{1, \ldots, n\}$.  Then
$$X^\mu T_{ws_i} = \begin{cases}
X^\mu T_w T_i, &\hbox{if $\ell(ws_i)>\ell(w)$,} \\
X^\mu T_w T_i^{-1}, &\hbox{if $\ell(ws_i)<\ell(w)$,} 
\end{cases}
\quad\hbox{and}\quad
\begin{cases}
X^\mu T_w(T_0^{\vee})^{-1} = X^\mu X_j T_{w\overline{s_0}}, &\hbox{if $w(1) = j$,} \\
X^\mu T_wT_0^{\vee} = X^\mu X_j^{-1} T_{w\overline{s_0}}, &\hbox{if $w(1) = -j$.} 
\end{cases}
$$
\end{prop}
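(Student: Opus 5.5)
The plan is to prove the two halves separately. In both, the idea is to move $T_i$ or $T_0^{\vee}$ across $T_w$ using the braid-group relations in $\tilde\cB_n$, so the factor $X^\mu$ on the left just rides along.

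\emph{The finite case, $i\in\{1,\ldots,n\}$.} This is the standard Iwahori--Hecke fact that $T_w$ is well defined by reduced words; it follows from the braid relations \eqref{Bdefrels2I} via Matsumoto's theorem. If $\ell(ws_i)>\ell(w)$, then appending $s_i$ to a reduced word for $w$ gives a reduced word for $ws_i$, so $T_{ws_i}=T_wT_i$. If $\ell(ws_i)<\ell(w)$, put $w'=ws_i$; then $\ell(w's_i)>\ell(w')$, so $T_w=T_{w'}T_i$, and hence $T_{ws_i}=T_wT_i^{-1}$. Multiplying on the left by $X^\mu$ finishes this case.

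\emph{The $T_0^{\vee}$ case.} By \eqref{XYdefn}, $X_1=(T_0^{\vee})^{-1}T_1^{-1}\cdots T_n^{-1}\cdots T_1^{-1}$, so
\[
(T_0^{\vee})^{-1}=X_1\,T_1\cdots T_n\cdots T_1=X_1T_{\overline{s_0}},
\]
and $\overline{s_0}=s_1\cdots s_n\cdots s_1$ is reduced of length $2n-1$. Similarly
\[
T_0^{\vee}=T_1^{-1}\cdots T_n^{-1}\cdots T_1^{-1}X_1^{-1}=T_{\overline{s_0}}^{-1}X_1^{-1}.
\]
The goal is the commutation $T_wX_1=X_{j}^{\pm1}T_w'$ with a suitable correction in the $T$'s. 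Rather than expect literal commutation, I would use the relations \eqref{TX2}, $X_{k+1}=T_kX_kT_k$, together with $T_nX_n^{-1}T_n=X_n^{-1}$ as a braid-group identity (it is the relation $T_n X^{\varepsilon_n}\cdots$ coming from the conjugation of $T_0$; it holds in $\tilde\cB_n$ by the pictures). These give, for a simple reflection $s_k$ with $\ell(s_kv)>\ell(v)$, the rule
\[
T_kX^{\varepsilon_a}T_k^{\pm1}=X^{s_k\varepsilon_a}
\]
in the appropriate orientation. So I would prove by induction on $\ell(w)$ the key claim
\[
T_wX_1T_{\overline{s_0}}=X_{w(1)}T_{w\overline{s_0}}\quad\hbox{when } w(1)=j>0,
\]
and the analogous identity with $X_j^{-1}$ and $T_{\overline{s_0}}^{-1}$ when $w(1)=-j$. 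Here $X_{-j}$ means $X_j^{-1}$.

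\emph{The induction.} For $w=s_kw''$ with length adding, the inductive step is the computation $T_kX_aT_{w''\overline{s_0}}$. When $k$ moves $a$ (for instance $a=k$), use $T_kX_k=X_{k+1}T_k^{-1}$ and check that the length additivity gives $T_k^{-1}T_{w''\overline{s_0}}=T_{s_kw''\overline{s_0}}$ or $T_kT_{\cdots}$ as required. When $k$ does not move $a$, $T_k$ simply commutes with $X_a$. The main obstacle is the bookkeeping of signs: deciding whether $\ell(w\overline{s_0})$ is $\ell(w)+2n-1$ or $\ell(w)-(2n-1)$, according to the sign of $w(1)$, and correspondingly whether the factor is $T_{\overline{s_0}}$ or $T_{\overline{s_0}}^{-1}$. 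I would handle it with the inversion description of length in Remark~\ref{sgnorder}. The condition $w(1)>0$ means that $w$ sends every inversion of $\overline{s_0}$ to a noninversion, so the lengths add, while $w(1)<0$ makes $w\overline{s_0}$ shorter by $2n-1$, and then $T_{w\overline{s_0}}=T_wT_{\overline{s_0}}^{-1}$.
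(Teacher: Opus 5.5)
\textbf{There is a genuine gap.} Your key identity $T_wX_1T_{\overline{s_0}}=X_{w(1)}T_{w\overline{s_0}}$ for $w(1)>0$ is correct. The inductive step you outline is also the right one: $T_k$ commutes with $X_a$ unless $s_k$ moves $a$, and $T_kX_k=X_{k+1}T_k^{-1}$. The problem is the length bookkeeping that makes this step close. The device you propose for it is false: $\ell(w\overline{s_0})=\ell(w)+2n-1$ holds only when $w(1)=1$. For $n=2$ and $w=s_1$ one has $w(1)=2>0$, yet $w\overline{s_0}=s_2s_1$ has length $2$. In general $\ell(w\overline{s_0})-\ell(w)=\pm(2n-2j+1)$ when $w(1)=\pm j$, so $T_{w\overline{s_0}}\neq T_wT_{\overline{s_0}}^{\pm1}$ in general. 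Indeed, if $T_{w\overline{s_0}}=T_wT_{\overline{s_0}}$ held, your key identity would reduce to $T_wX_1=X_{w(1)}T_w$, which already fails for $w=s_1$ since $T_1X_1=X_2T_1^{-1}$. What the induction actually needs is a local statement about left multiplication: if $\ell(s_kw'')>\ell(w'')$ and $a=w''(1)>0$, then $a\neq k+1$, and $\ell(s_kw''\overline{s_0})<\ell(w''\overline{s_0})$ exactly when $a=k$. This follows from $(w''\overline{s_0})^{-1}\alpha_k=\overline{s_0}(w''^{-1}\alpha_k)$. The root $w''^{-1}\alpha_k$ is positive, $\overline{s_0}$ only negates $\varepsilon_1$, and $\varepsilon_1$ occurs in $w''^{-1}\alpha_k$ precisely when $s_k$ moves $a$. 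If $a=k+1$ it would occur with coefficient $-1$, which positivity excludes.

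The second gap is the case $w(1)=-j$. The relation $T_nX_n^{-1}T_n=X_n^{-1}$ you invoke does not hold in $\tilde{\cB}_n$. The exponent sum of $T_n$ is a well-defined homomorphism $\tilde{\cB}_n\to\ZZ$, because each defining relation in \eqref{BdefrelsI} and \eqref{Bdefrels2I} is balanced in $T_n$. By \eqref{XYdefn} it equals $-1$ on every $X_j$, so it takes the value $3$ on $T_nX_n^{-1}T_n$ but $1$ on $X_n^{-1}$. The relation fails in $\widetilde{H}$ too: by \eqref{DAHATXI}, the $T_n$-free part of $T_nX_n^{-1}T_n$ is $X_n$. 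So an induction that crosses from $w(1)>0$ to $w(1)<0$ through a letter $s_n$ has nothing to run on, and your ``analogous identity'' is unproved.

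You do not need it. If $w(1)>0$ and $s_k$ is a left descent of $w$, then $s_kw$ still sends $1$ to a positive number: for $k=n$, being a left descent forces $w^{-1}(n)<0$, so $w(1)\neq n$. Hence the positive case is a self-contained induction that never pairs $T_n$ with $X_n$. For $w(1)=-j$, apply the positive case to $w'=w\overline{s_0}$, which has $w'(1)=j$ and $w'\overline{s_0}=w$. The identity $T_{w'}(T_0^{\vee})^{-1}=X_jT_w$ then rearranges to $T_wT_0^{\vee}=X_j^{-1}T_{w\overline{s_0}}$.

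With these repairs your induction is a valid, if longer, alternative to the paper's proof. The paper writes $w=b_{\pm j}z$ with $z\in\langle s_2,\ldots,s_n\rangle$, $b_j=s_{j-1}\cdots s_1$ and $b_{-j}=s_j\cdots s_n\cdots s_1$. It uses only that $T_z$ commutes with $T_0^{\vee}$ and that $X_j=T_{b_j}(T_0^{\vee})^{-1}T_{b_{-j}}^{-1}$ by \eqref{XYdefn}. That handles both signs in one line each, with no $X$--$T$ exchange relations at all.
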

\begin{proof}
The first equality follows from the fact that
$$\hbox{if $w\in W_{\mathrm{fin}}$ and $\ell(ws_i)>\ell(w)$ then}\quad T_{ws_i} = T_w T_i .
$$
For the second equality:  For $j\in \{1, \ldots, n\}$ let
$$b_j = s_{j-1}\cdots s_2s_1 
\quad\hbox{and}\quad
b_{-j} = s_j\cdots s_n\cdots s_2s_1
\quad\hbox{and}\quad
\overline{s_0} = b_{-j}^{-1}b_j = b_j^{-1}b_{-j} 
$$
If $W'_{\mathrm{fin}}$ is the subgroup of $W_{\mathrm{fin}}$ generated by $\{s_2, \ldots, s_n\}$ then
$b_1, \ldots, b_n, b_{-n}, \ldots, b_{-1}$ are the minimal length representatives of the cosets in $W_{\mathrm{fin}}/W'_{\mathrm{fin}}$.
From the relations in the last diagram of~\eqref{Bdefrels2I},
$$
\hbox{If $z\in W'_{\mathrm{fin}}$\quad then}\quad
\overline{s_0}z = z\overline{s_0}
\quad\hbox{and}\quad
T_zT_0^{\vee} = T_0^{\vee} T_z.
$$
Recall from~\eqref{XYdefn} that
$$X_j
= T_{j-1}\cdots T_1(T_0^{\vee})^{-1}T_1^{-1}\cdots T_n^{-1}\cdots T_j^{-1}
\qquad\hbox{so that}\qquad
X_j = T_{b_j}(T_0^{\vee})^{-1}T_{b_{-j}}^{-1}.
$$
Let $j\in \{1, \ldots, n\}$.  If $w(1) = j$ then $w = b_j z$ with $z\in W'_{\mathrm{fin}}$.  
Using that $w \overline{s_0} = b_j z \overline{s_0}
=b_j \overline{s_0} z = b_jb_j^{-1}b_{-j}z = b_{-j}z$ gives
$$T_w(T_0^{\vee})^{-1} = T_{b_j} T_z (T_0^{\vee})^{-1} = T_{b_j}(T_0^{\vee})^{-1}T_z = X_jT_{b_{-j}}T_z = X_jT_{w \overline{s_0} }.
$$
If $w(1)=-j$ then $w=b_{-j}z$ with $z\in W'_{\mathrm{fin}}$.  
Using that $w \overline{s_0} = b_{-j}z \overline{s_0}
=b_{-j} \overline{s_0} z = b_{-j}b_{-j}^{-1}b_j z = b_j z$ gives
$$T_wT_0^{\vee} = T_{b_{-j}}T_zT_0^{\vee} = T_{b_{-j}}T_0^{\vee} T_z  = X_j^{-1}T_{b_j}T_z = X_j^{-1}T_{w \overline{s_0}}.
$$
\end{proof}

\newpage

\section{Appendix: The CMW Example}\label{sec: examples}
\allowdisplaybreaks

The following Proposition extends Proposition~\ref{0gap} to include the case that $i=0$ and $\mu_m=1$.
This result will allow us to give an expansion of $\widehat{E}^y_\mu$ for the cases treated in 
\cite[Theorem 1.10]{CMW23}.

\begin{prop} \label{0tomgapB} Let $m\in \{1, \ldots, n\}$ and $\beta = -\varepsilon_1-\varepsilon_m+K$.
Let $\mu = (\mu_1, \ldots \ldots, \mu_n)\in \ZZ^n$ be
such that $\mu_1=\cdots=\mu_{m-1}=0$ and $\mu_m=1$ 
and let $\nu =s_0s_1\cdots  s_{m-1}\mu$ so that
$$
\mu = (0,0,\ldots, 0, 1, \mu_{m+1}, \ldots, \mu_n)
\qquad\hbox{and}\qquad
\nu = (0, 0, \ldots, 0, \mu_{m+1}, \ldots, \mu_n).
$$
Let $y\in W_{\mathrm{fin}}$ with 
$$y(1)\prec y(2)\prec \cdots \prec y(m-1),$$
where the order $\prec$ is given by $1\prec 2\prec \cdots \prec n \prec -n \prec \cdots \prec -2 \prec -1$.  
Then
\begin{align*}
\widehat{E}^y_\mu 
&= G_{0,m}^y \widehat{E}^y_\nu
+G_{1,m}^y x_{y(1)} \widehat{E}^{ys_{-1,1}}_\nu
+G_{2,m}^y x_{y(2)} \widehat{E}^{ys_{-2,2}}_\nu
+\cdots
+G_{m,m}^y x_{y(m)} \widehat{E}^{ys_{-m,m}}_\nu
\end{align*}
where, if $j\in \{0, \ldots, m-1\}$ is such that
$y(1)\prec \ldots \prec y(j)\prec y(m)\prec y(j+1)\prec \cdots \prec y(m-1)$ then 
$$
G_{i,m}^y = \begin{cases}
t^{-\frac12(m-2i+2\Card\{r\in \{1, \ldots, i-1\}\ |\ y(r)\prec y(-i)\})} \ev_\nu(F^+_\beta),
&\hbox{if $i\in \{1, \ldots, j-1\}$,} \\
t^{-\frac12(m-2i+2\Card\{r\in \{1, \ldots, i-1\}\ |\ y(r)\prec y(-i)\})} \ev_\nu(F^-_\beta),
&\hbox{if $i\in \{j, \ldots, m-1\}$,} \\
t^{-\frac12(1+m-2i+2\Card\{r\in \{1, \ldots, m-1\}\ |\ y(r)\prec y(-i)\})}, &\hbox{if $i=m$,}
\end{cases}
$$
and if $\alpha_0^\vee = -\varepsilon_1+K$ and $k\in \{1, \ldots, m-1\}$ is minimal such that $y(k)<0$, or $k = 0$ if $y(1), \ldots, y(m-1)$ are all positive, then if $k < j$:
\begin{align*}
G_{0,m}^y 
&= \ev_\nu(F^-_\beta F^+_{\alpha^\vee_0}) t^{-\frac{1}{2}(m-2)} \frac{t^{m-1} - t^{j-1}}{t-1}
+
\ev_\nu(F^+_\beta F^+_{\alpha^\vee_0}) t^{-\frac{1}{2}(m-2)} \frac{t^{j-1} - t^{k}}{t-1} \\
&\qquad\quad 
+
\ev_\nu(F^+_\beta F^-_{\alpha^\vee_0})
t^{-\frac{1}{2}(m-2)} \frac{t^{k} - 1}{t-1}
+
\begin{cases}
\ev_\nu(F^-_{\alpha^\vee_0})t^{\frac12(m-1)}, &\hbox{if $y(m)<0$,} \\
\ev_\nu(F^+_{\alpha^\vee_0})t^{\frac12(m-1)}, &\hbox{if $y(m)>0$,} 
\end{cases}
\end{align*}
and if $k \ge j$:
\begin{align*}
G_{0,m}^y 
&= \ev_\nu(F^-_\beta F^+_{\alpha^\vee_0}) t^{-\frac{1}{2}(m-2)} \frac{t^{m-1} - t^{k}}{t-1}
+
\ev_\nu(F^-_\beta F^-_{\alpha^\vee_0}) t^{-\frac{1}{2}(m-2)} \frac{t^{k} - t^{j-1}}{t-1} \\
&\qquad\quad 
+
\ev_\nu(F^-_\beta F^-_{\alpha^\vee_0})
t^{-\frac{1}{2}(m-2)} \frac{t^{j-1} - 1}{t-1}
+
\begin{cases}
\ev_\nu(F^-_{\alpha^\vee_0})t^{\frac12(m-1)}, &\hbox{if $y(m)<0$,} \\
\ev_\nu(F^+_{\alpha^\vee_0})t^{\frac12(m-1)}, &\hbox{if $y(m)>0$.} 
\end{cases}
\end{align*}
\end{prop}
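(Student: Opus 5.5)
The plan is to factor the word $s_0s_1\cdots s_{m-1}$ relating $\nu$ and $\mu$ into two pieces and expand each with results already proved. Note that $\mu = s_{m-1}\cdots s_1 s_0\nu$; in box-greedy language, $d_m = s_{m-1}\cdots s_1s_0$ is the factor for the single new box $(m,1)$. First apply the $0$-gap expansion to the initial segment $s_{m-1}\cdots s_1$. This is Proposition~\ref{0gap} with $i=1$, extended to allow $m=2$ via the base case Proposition~\ref{alcwksteps}(b). Set $\nu' = s_0\nu = (1,0,\ldots,0,\mu_{m+1},\ldots,\mu_n)$. Proposition~\ref{0gap} then gives
\[
\widehat{E}^y_\mu=\sum_{\ell=1}^m \ev_{\nu'}(A^{(1,j',m)}_\ell)\,\widehat{E}^{ys_{\ell-1}\cdots s_1}_{\nu'} .
\]
Here $j'$ is determined by where $y(m)$ sits among $y(1)\prec\cdots\prec y(m-1)$, and it matches the $j$ in the statement up to an index shift. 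Using $\ev_{\nu'}(f)=\ev_\nu(s_0 f)$, each fold function $F^\pm_{\varepsilon_1-\varepsilon_m}$ is converted into $F^\pm_\beta$ with $\beta=s_0(\varepsilon_1-\varepsilon_m)=-\varepsilon_1-\varepsilon_m+K$.

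Next, expand each term $\widehat{E}^{w}_{\nu'}$, with $w=ys_{\ell-1}\cdots s_1$, by Proposition~\ref{alcwksteps}(a), since $\nu'_1=1>0$. This gives
\[
\widehat{E}^{w}_{\nu'} = x_{w(1)}^{\pm1}\widehat{E}^{w\overline{s_0}}_\nu+\ev_\nu(F^{\mp}_{\alpha_0^\vee})\,\widehat{E}^w_\nu ,
\]
where $\ev_{\nu}(F_{\alpha_0})$ at this step is rewritten as $F_{\alpha_0^\vee}$ with $\alpha_0^\vee=-\varepsilon_1+K$. The sign is chosen according to $\mathrm{sgn}\,w(1)=\mathrm{sgn}\,y(\ell)$. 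The $x$-terms give the $G^y_{i,m}$ terms with $i=\ell$. To rewrite $\widehat{E}^{ys_{\ell-1}\cdots s_1\overline{s_0}}_\nu$ as a power of $t$ times $\widehat{E}^{ys_{-\ell,\ell}}_\nu$, use that $\nu_1=\cdots=\nu_m=0$. By \eqref{stabfactor}, $T_r$ acts by $t^{1/2}$ for $r<m$ on $\widehat{E}_\nu$, so it suffices to reduce modulo the stabilizer $\langle s_1,\ldots,s_{m-1}\rangle$. The element $s_{\ell-1}\cdots s_1\overline{s_0}$ and the reflection $s_{-\ell,\ell}$ differ by an element of this parabolic. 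The $t$-exponent then comes from a length count, namely the number of inversions between $\ell$ and the earlier positions after the sign flip. This produces the $\Card\{r<i : y(r)\prec y(-i)\}$ correction, together with the shift $m-2i$ (and the extra $1$ when $i=m$) coming from the $A$-weights.

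Finally, collect all the $\widehat{E}^w_\nu$ terms. Each $w=ys_{\ell-1}\cdots s_1$ differs from $y$ by an element of the stabilizer, so $\widehat{E}^w_\nu=t^{\frac12(\ell-1)}\widehat{E}^y_\nu$. Summing $t^{-\frac12(m-\ell-1)}t^{\frac12(\ell-1)}\ev_\nu(F^\pm_\beta F^\mp_{\alpha_0^\vee})$ over $\ell$ gives geometric series $\sum t^{\ell-1}$. These split at $\ell=j$, where $F^+_\beta$ switches to $F^-_\beta$, and at $\ell=k$, where the sign of $y(\ell)$ switches (positives precede negatives in $\prec$). The two orderings $k<j$ and $k\ge j$ yield the two displayed formulas, and the $\ell=m$ term with weight $1$ gives the final $t^{\frac12(m-1)}F^\pm_{\alpha_0^\vee}$ according to the sign of $y(m)$.

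The main obstacle is the exact bookkeeping of the $t$-power normalizations in the second step. This means computing $\ell_s$ of the parabolic correction relating $s_{\ell-1}\cdots s_1\overline{s_0}$ to $s_{-\ell,\ell}$ modulo $W_\nu$ for arbitrary $y$, and also checking the boundary behavior of the series ranges. I would first verify this for $m=2$ and small $n$ directly against the formula before doing the general count.
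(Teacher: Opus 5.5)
Your route is the paper's, step for step: Proposition~\ref{0gap} with $i=1$ on $s_{m-1}\cdots s_1$ (with Proposition~\ref{alcwksteps}(b) for $m=2$, as you note), then transport by $s_0$ turning $F^\pm_{\varepsilon_1-\varepsilon_m}$ into $F^\pm_\beta$. After that come Proposition~\ref{alcwksteps}(a) on each $\widehat{E}^{ys_{h-1}\cdots s_1}_{s_0\nu}$, the identity $s_{h-1}\cdots s_1\overline{s_0}=s_{-h,h}\,s_{h-1}\cdots s_1$, absorption of $\langle s_1,\ldots,s_{m-1}\rangle\subseteq W_\nu$ into powers of $t$, and geometric sums. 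The gap is your claim that this bookkeeping reproduces the displayed coefficients. It does not, and one of your stated steps is false; the paper's proof makes the same slips. Two tools do the bookkeeping. First, $\widehat{E}^{zu}_\nu=t^{\frac12(\ell(zu)-\ell(z))}\widehat{E}^z_\nu$ for $u\in\langle s_1,\ldots,s_{m-1}\rangle$, from \eqref{stabfactor} via a parabolic factorization of $z$. Second, by the descent rule, $\ell(zs_{h-1}\cdots s_1)-\ell(z)=\#\{r<h: z(r)\prec z(h)\}-\#\{r<h: z(r)\succ z(h)\}$. For $z=y$ and $h\le m-1$ this equals $h-1$, as you say. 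For $h=m$, however, the entry $y(m)$ is unconstrained. Then $\widehat{E}^{ys_{m-1}\cdots s_1}_\nu=t^{\frac12(2j-m+1)}\widehat{E}^y_\nu$ with $j=\#\{r<m: y(r)\prec y(m)\}$. So the last term of $G^y_{0,m}$ carries $t^{\frac12(2j-m+1)}$, which equals $t^{\frac12(m-1)}$ only when $j=m-1$. Likewise, for $z=ys_{-i,i}$ the count is $2\Card\{r<i: y(r)\prec y(-i)\}-(i-1)$. Hence the exponents of $G^y_{i,m}$, $1\le i\le m$, are right only with $\succ$ in place of $\prec$ inside $\Card$.

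Your computation will also expose sign and index slips. Proposition~\ref{alcwksteps}(a) yields $\ev_\nu(F^-_{\alpha_0})$ when $y(h)>0$ and $\ev_\nu(F^+_{\alpha_0})$ when $y(h)<0$, with $\alpha_0=-\varepsilon_1+\frac12K$. No rewriting to a fold function for $-\varepsilon_1+K$ is possible, since that coroot carries the parameters $t_n,t_0$; read $\alpha_0^\vee$ in the statement as $\alpha_0$. Consequences:
\begin{itemize}
\item The $h=m$ term has $F^-_{\alpha_0}$ when $y(m)>0$, opposite to the display.
\item For $h<m$ the sign changes after $p=\#\{r<m: y(r)>0\}$, not at the minimal $k$ with $y(k)<0$, nor at $0$ when all entries are positive.
\item $F^+_\beta$ occurs exactly for $i\le j$. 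You rightly noticed that the index of Proposition~\ref{0gap} is $j+1$, but the display uses the unshifted ranges.
\end{itemize}
For instance, take $n=m=2$ and $y=s_1$, so $j=0$. Then $\widehat{E}^{s_1}_{(0,1)}=\widehat{E}^{1}_{(1,0)}+\ev_\nu(F^-_\beta)\widehat{E}^{s_1}_{(1,0)}$ gives $G^y_{0,2}=\ev_\nu(F^-_\beta F^-_{\alpha_0})+t^{-\frac12}\ev_\nu(F^-_{\alpha_0})$ and $G^y_{2,2}=t^{\frac12}$. The display instead gives $\ev_\nu(F^-_\beta F^+_{\alpha_0^\vee})+t^{\frac12}\ev_\nu(F^+_{\alpha_0^\vee})$ and $t^{-\frac12}$. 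So your planned $m=2$ test will fail against the printed statement. Carried out carefully, your argument proves the corrected formula
\[
G^y_{0,m}=t^{-\frac12(m-2)}\sum_{h=1}^{m-1}t^{h-1}\ev_\nu\big(F^{\sigma_h}_\beta F^{\rho_h}_{\alpha_0}\big)+t^{\frac12(2j-m+1)}\ev_\nu\big(F^{\rho_m}_{\alpha_0}\big),
\]
with $\sigma_h=+$ if and only if $h\le j$, and $\rho_h=-$ if and only if $y(h)>0$. Take $G^y_{i,m}$ corrected as above.
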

\begin{proof} 
Using $\mu= (0, \ldots, 0, 1, \mu_{m+1}, \ldots, \mu_n)$ and $s_0\nu = (1, 0, \ldots, 0, \mu_{m+1}, \ldots, \mu_n)$
Proposition \ref{0gap} (across the zero gap) gives
\begin{align*}
\widehat{E}^y_\mu 
&= \widehat{E}^{ys_{m-1}\cdots s_1}_{s_0\nu} + 
\ev_{s_0\nu}(F^-_{\varepsilon^\vee_1-\varepsilon^\vee_m}) 
(
\widehat{E}^{ys_{m-2}\cdots s_1}_{s_0\nu}
+t^{-\frac12}\widehat{E}^{ys_{m-3}\cdots s_1}_{s_0\nu}
+\cdots + t^{-\frac12(m-j-1)} \widehat{E}^{ys_{j-1}\cdots s_1}_{s_0\nu}) 
\\
&\qquad
+\ev_{s_0\nu}(F^+_{\varepsilon^\vee_1-\varepsilon^\vee_m}) 
(t^{-\frac{1}{2}(m-j)} \widehat{E}^{ys_{j-2}\cdots s_1}_{s_0\nu}+\cdots 
+ t^{-\frac12(m-3)} \widehat{E}^{ys_1}_{s_0\nu}
+ t^{-\frac12(m-2)} \widehat{E}^y_{s_0\nu}) \\
&= \widehat{E}^{ys_{m-1}\cdots s_1}_{s_0\nu} + 
\ev_{\nu}(F^-_{-\varepsilon^\vee_1-\varepsilon^\vee_m + K}) 
(
\widehat{E}^{ys_{m-2}\cdots s_1}_{s_0\nu}
+t^{-\frac12}\widehat{E}^{ys_{m-3}\cdots s_1}_{s_0\nu}
+\cdots + t^{-\frac12(m-j-1)} \widehat{E}^{ys_{j-1}\cdots s_1}_{s_0\nu}) 
\\
&\qquad
+\ev_{\nu}(F^+_{-\varepsilon^\vee_1-\varepsilon^\vee_m + K}) 
(t^{-\frac{1}{2}(m-j)} \widehat{E}^{ys_{j-2}\cdots s_1}_{s_0\nu}+\cdots 
+ t^{-\frac12(m-3)} \widehat{E}^{ys_1}_{s_0\nu}
+ t^{-\frac12(m-2)} \widehat{E}^y_{s_0\nu})
\end{align*}
To shorten the notation, let
$$b^\pm = \ev_{\nu}(F^\pm_{-\varepsilon^\vee_1-\varepsilon^\vee_m+K}), \qquad
a^\pm = \ev_\nu(F^\pm_{-\varepsilon_1+K})
\qquad\hbox{and}\qquad
a_m = \begin{cases}
a^-, &\hbox{if $y(m)<0$,} \\
a^+, &\hbox{if $y(m)<0$.}
\end{cases}
$$
Assume that $k<j$.  Using~\eqref{lvl0onaZ} and Proposition~\ref{alcwksteps}, 
\begin{align*}
\widehat{E}^y_\mu 
&= x_{y(m)}\widehat{E}^{ys_{m-1}\cdots s_1s_{-1,1}}_{\nu} 
\\
&\qquad
+ b^- \left( \begin{array}{l}
x_{y(m-1)}\widehat{E}^{ys_{m-2}\cdots s_1s_{-1,1}}_{\nu}
+x_{y(m-2)}t^{-\frac12}\widehat{E}^{ys_{m-3}\cdots s_1s_{-1,1}}_{\nu}
+\cdots 
\\
\quad\cdots + x_{y(j)}t^{-\frac{1}{2}(m-j-1)} \widehat{E}^{ys_{j-1}\cdots s_1s_{-1,1}}_{\nu} 
\end{array}\right)
\\
&\qquad
+ b^+ \left(\begin{array}{l}
x_{y(j-1)}t^{-\frac{1}{2}(m-j)} \widehat{E}^{ys_{j-2}\cdots s_1s_{-1,1}}_{\nu}+\cdots \\
\quad\cdots
+ x_{y(2)}t^{-\frac12(m-3)} \widehat{E}^{ys_1s_{-1,1}}_{\nu}
+ x_{y(1)} t^{-\frac12(m-2)} \widehat{E}^{ys_{-1,1}}_{\nu}
\end{array}\right)
\\
&\qquad
+\left(
\begin{array}{l}
a_m \widehat{E}^{ys_{m-1}\cdots s_1}_{\nu} 
\\
+b^-a^+\widehat{E}^{ys_{m-2}\cdots s_1}_{\nu}
+b^-a^+ t^{-\frac12}\widehat{E}^{ys_{m-3}\cdots s_1}_{\nu}
+\cdots  + b^-a^+ t^{-\frac12(m-j-1)} \widehat{E}^{ys_{j-1}\cdots s_1}_{\nu} 
\\
+
t^{-\frac12 (m-j)} b^+a^+\widehat{E}^{ys_{j-2}\cdots s_1}_{\nu}+\cdots 
+t^{-\frac12(m-k-2)}b^+a^+\widehat{E}^{ys_k\cdots s_1}_\nu
\\
+t^{-\frac12(m-k-1)}b^+a^-\widehat{E}^{ys_{k-1}\cdots s_1}_\nu
+\cdots
+ t^{-\frac12(m-3)} b^+a^- \widehat{E}^{ys_1}_{\nu}
+ t^{-\frac12(m-2)} b^+a^- \widehat{E}^y_{\nu} 
\end{array}
\right),
\end{align*}
where we use the convention that $x_{-i} = x_{i}^{-1}$.  
If $i-1\in \{1, \ldots, m-1\}$ then $ys_{i-1}\cdots s_1s_{-1,1} = ys_{-i,i}s_{i-1}\cdots s_1$ and
\[
\ell(ys_{-i,i}s_{i-1}\cdots s_1)
=
\ell(ys_{-i,i}) + i - 1 - 2\Card\{r\in \{1, \ldots, i-1\}\ |\ y(r)\prec y(-i)\})
\]
so therefore
\begin{align*}
t^{-\frac12(m-2-i+1)} \widehat{E}^{ys_{i-1}\cdots s_1s_{-1,1}}_\nu
&= t^{-\frac12(m-2-i+1)} \widehat{E}^{ys_{-i,i}s_{i-1}\cdots s_1}_\nu
\\
&=t^{-\frac12(m-2-i+1-(i-1-2\Card\{r\in \{1, \ldots, i-1\}\ |\ y(r)\prec y(-i)\}))}\widehat{E}^{ys_{-i,i}}_\nu
\\
&=t^{-\frac12(m-2i+2\Card\{r\in \{1, \ldots, i-1\}\ |\ y(r)\prec y(-i)\})}\widehat{E}^{ys_{-i,i}}_\nu,
\end{align*}
proving the formula for $G^{y}_{i, m}$.  
For the final coefficient, since $\ell(ys_{r-1}\cdots s_1) = r-1+\ell(y)$ then
\[
\widehat{E}^{ys_{r-1}\cdots s_1}_\nu
= t^{\frac12(r-1)}\widehat{E}^{y}_\nu,
\]
and therefore
\begin{multline*}
\left(
\begin{array}{l}
a_m \widehat{E}^{ys_{m-1}\cdots s_1}_{\nu} 
\\
+b^-a^+\widehat{E}^{ys_{m-2}\cdots s_1}_{\nu}
+b^-a^+ t^{-\frac12}\widehat{E}^{ys_{m-3}\cdots s_1}_{\nu}
+\cdots  + b^-a^+ t^{-\frac12(m-j-1)} \widehat{E}^{ys_{j-1}\cdots s_1}_{\nu} 
\\
+
t^{-\frac12 (m-j)} b^+a^+\widehat{E}^{ys_{j-2}\cdots s_1}_{\nu}+\cdots 
+t^{-\frac12(m-k-2)}b^+a^+\widehat{E}^{ys_k\cdots s_1}_\nu
\\
+t^{-\frac12(m-k-1)}b^+a^-\widehat{E}^{ys_{k-1}\cdots s_1}_\nu
+\cdots
+ t^{-\frac12(m-3)} b^+a^- \widehat{E}^{ys_1}_{\nu}
+ t^{-\frac12(m-2)} b^+a^- \widehat{E}^y_{\nu} 
\end{array}
\right)  \\
=
\left(
a_m t^{\frac{1}{2}(m-1)} 
+
b^-a^+ t^{-\frac{1}{2}(m-2)} \frac{t^{m-1} - t^{j-1}}{t-1}
+
b^+a^+ t^{-\frac{1}{2}(m-2)} \frac{t^{j-1} - t^{k}}{t-1}
+
b^+a^-
t^{-\frac{1}{2}(m-2)} \frac{t^{k} - 1}{t-1}
\right) \widehat{E}^{y}_{\nu}.
\end{multline*}
This proves the formula for $G^{y}_{0, m}$ when $k < j$, and the case $k \ge j$ follows from a similar argument.  
\end{proof}

\begin{remark} In the notation of Proposition~\ref{0tomgapB}, explicit formulas for the $G_{i,m}^z$ are provided by substituting
\begin{align*}
\ev_\nu(F^+_\beta)
&=\ev_\nu\Big( \frac{t^{-\frac12}(1-t)}{1-qY_1Y_m}\Big)
=\frac{t^{-\frac12}(1-t)}{1-qt^{2n-1-m-2\ell}t_0t_n}, 
\\
\ev_\nu(F^-_\beta)
&=\ev_\nu\Big( \frac{t^{-\frac12}(1-t)qY_1Y_m}{1-qY_1Y_m}\Big)
= b^+ qt^{2n-1-m-2\ell}t_0t_n,
\\
\ev_\nu(F_{\alpha^\vee_0}^+)
&=t_n^{\frac12}t^{n-1}
\frac{q^{\frac12}t_0^{\frac12}(u_0^{-\frac12}-u_0^{\frac12})+qt^{n-1}t_0t_n^{\frac12}(u_n^{-\frac12}-u_n^{\frac12}) }
{1-qt^{2n-2} t_0t_n},
\\
\ev_\nu(F_{\alpha^\vee_0}^+)
&=\frac{((u_n^{-\frac12}-u_n^{\frac12})
+(u_0^{-\frac12}-u_0^{\frac12}) q^{-\frac12}t^{v_\nu(1)}(t_0^{-\frac12}t_n^{-\frac12}t^{-n})) 
q  t^{-2v_\nu(1)} (t_0t_nt^{2n})  }
{1- q  t^{-2v_\nu(1)} (t_0t_nt^{2n}) },
\end{align*}
where $\ell$ is the number of negative entries in $\nu$.
\end{remark}

To give an example of how Proposition~\ref{0tomgapB} is used to expand $\widehat{E}^y_\mu$ for the cases treated in 
\cite[Theorem 1.10]{CMW23}
let $n=6$ and compute $E^{(2\overline{5}\overline{3}\overline{1}46)}_{\varepsilon_1+\varepsilon_2+\varepsilon_3}=E^{(2\overline{5}\overline{3}\overline{1}46)}_{(1,1,1,0,0,0)}$.  
The process will produce $2\cdot3\cdot4 = 24$ terms after three applications.
The first application of Proposition~\ref{0tomgapB} produces 2 terms.
\begin{align*}
&\widehat{E}^{(2\overline{5}\overline{3}\overline{1}46)}_{(1,1,1,0,0,0)}
=G_{0,1}^{(2\overline{5}\overline{3}\overline{1}46)}\widehat{E}^{(2\overline{5}\overline{3}\overline{1}46)}_{(0,1,1,0,0,0)}
+G_{1,1}^{(2\overline{5}\overline{3}\overline{1}46)}x_2\widehat{E}^{(\overline{2}\overline{5}\overline{3}\overline{1}46)}_{(0,1,1,0,0,0)}
\end{align*}
The second application of Proposition~\ref{0tomgapB} produces 3 terms for each of the existing 2 terms:
\begin{align*}
\widehat{E}^{(2\overline{5}\overline{3}\overline{1}46)}_{(1,1,1,0,0,0)}
&=G_{0,1}^{(2\overline{5}\overline{3}\overline{1}46)}
G_{0,2}^{(2\overline{5}\overline{3}\overline{1}46)}\widehat{E}^{(2\overline{5}\overline{3}\overline{1}46)}_{(0,0,1,0,0,0)}
+G_{0,1}^{(2\overline{5}\overline{3}\overline{1}46)}
G_{1,2}^{(2\overline{5}\overline{3}\overline{1}46)}x_2
\widehat{E}^{(\overline{2}\overline{5}\overline{3}\overline{1}46)}_{(0,0,1,0,0,0)}
\\
&\qquad
+G_{0,1}^{(2\overline{5}\overline{3}\overline{1}46)}
G_{2,2}^{(2\overline{5}\overline{3}\overline{1}46)}x_5^{-1}
\widehat{E}^{(25\overline{3}\overline{1}46)}_{(0,0,1,0,0,0)}
\\
&+G_{1,1}^{(2\overline{5}\overline{3}\overline{1}46)} x_2
G_{0,2}^{(\overline{2}\overline{5}\overline{3}\overline{1}46)}
\widehat{E}^{(\overline{2}\overline{5}\overline{3}\overline{1}46)}_{(0,0,1,0,0,0)}
+G_{1,1}^{(2\overline{5}\overline{3}\overline{1}46)} x_2
G_{1,2}^{(\overline{2}\overline{5}\overline{3}\overline{1}46)} x_2^{-1} 
\widehat{E}^{(2\overline{5}\overline{3}\overline{1}46)}_{(0,0,1,0,0,0)}
\\
&\qquad
+G_{1,1}^{(2\overline{5}\overline{3}\overline{1}46)} x_2
G_{2,2}^{(\overline{2}\overline{5}\overline{3}\overline{1}46)} x_5^{-1}
\widehat{E}^{(\overline{2}5\overline{3}\overline{1}46)}_{(0,0,1,0,0,0)}.
\end{align*}

In preparation for the next application of Proposition~\ref{0tomgapB}, 
arrange the permutations in the subscripts of $\widehat{E}^w_{(0,0,1,0,0,0)}$ to be minimal length
in the coset $wW_{[1,2]}$ where $W_{[1,2]} = \langle s_1\rangle$ (this requires $w(1)\prec w(2)$).
\begin{align*}
\widehat{E}^{(2\overline{5}\overline{3}\overline{1}46)}_{(1,1,1,0,0,0)}
&=G_{0,1}^{(2\overline{5}\overline{3}\overline{1}46)}
G_{0,2}^{(2\overline{5}\overline{3}\overline{1}46)}\widehat{E}^{(2\overline{5}\overline{3}\overline{1}46)}_{(0,0,1,0,0,0)}
+G_{0,1}^{(2\overline{5}\overline{3}\overline{1}46)}
G_{1,2}^{(2\overline{5}\overline{3}\overline{1}46)}x_2
t^{\frac{1}{2}} \widehat{E}^{(\overline{5}\overline{2}\overline{3}\overline{1}46)}_{(0,0,1,0,0,0)}
\\
&\qquad
+G_{0,1}^{(2\overline{5}\overline{3}\overline{1}46)}
G_{2,2}^{(2\overline{5}\overline{3}\overline{1}46)}x_5^{-1}
\widehat{E}^{(25\overline{3}\overline{1}46)}_{(0,0,1,0,0,0)}
\\
&+G_{1,1}^{(2\overline{5}\overline{3}\overline{1}46)} x_2
G_{0,2}^{(\overline{2}\overline{5}\overline{3}\overline{1}46)}
t^{\frac{1}{2}}\widehat{E}^{(\overline{5}\overline{2}\overline{3}\overline{1}46)}_{(0,0,1,0,0,0)}
+G_{1,1}^{(2\overline{5}\overline{3}\overline{1}46)} x_2
G_{1,2}^{(\overline{2}\overline{5}\overline{3}\overline{1}46)} x_2^{-1} 
\widehat{E}^{(2\overline{5}\overline{3}\overline{1}46)}_{(0,0,1,0,0,0)}
\\
&\qquad
+G_{1,1}^{(2\overline{5}\overline{3}\overline{1}46)} x_2
G_{2,2}^{(\overline{2}\overline{5}\overline{3}\overline{1}46)} x_5^{-1} 
t^{\frac{1}{2}}\widehat{E}^{(5\overline{2}\overline{3}\overline{1}46)}_{(0,0,1,0,0,0)}.
\end{align*}
The next application of Proposition~\ref{0tomgapB} produces 4 terms for each of the existing 6 terms.
\begin{align*}\widehat{E}^{(2\overline{5}\overline{3}\overline{1}46)}_{(1,1,1,0,0,0)} =\hspace{-5em}&
\\
&G_{0,1}^{(2\overline{5}\overline{3}\overline{1}46)}
G_{0,2}^{(2\overline{5}\overline{3}\overline{1}46)}
G_{0,3}^{(2\overline{5}\overline{3}\overline{1}46)} 
\widehat{E}^{(2\overline{5}\overline{3}\overline{1}46)}_{(0,0,0,0,0,0)}
+G_{0,1}^{(2\overline{5}\overline{3}\overline{1}46)}
G_{0,2}^{(2\overline{5}\overline{3}\overline{1}46)}
G_{1,3}^{(2\overline{5}\overline{3}\overline{1}46)} x_2 
\widehat{E}^{(\overline{2}\overline{5}\overline{3}\overline{1}46)}_{(0,0,0,0,0,0)}
\\
&\quad+G_{0,1}^{(2\overline{5}\overline{3}\overline{1}46)}
G_{0,2}^{(2\overline{5}\overline{3}\overline{1}46)}
G_{2,3}^{(2\overline{5}\overline{3}\overline{1}46)} x_5^{-1}  
\widehat{E}^{(25\overline{3}\overline{1}46)}_{(0,0,0,0,0,0)}
+G_{0,1}^{(2\overline{5}\overline{3}\overline{1}46)}
G_{0,2}^{(2\overline{5}\overline{3}\overline{1}46)}
G_{3,3}^{(2\overline{5}\overline{3}\overline{1}46)} x_3^{-1} 
\widehat{E}^{(2\overline{5}3\overline{1}46)}_{(0,0,0,0,0,0)}
\\
\\
&+G_{0,1}^{(2\overline{5}\overline{3}\overline{1}46)}
G_{1,2}^{(2\overline{5}\overline{3}\overline{1}46)} x_2t^{\frac12}
G_{0,3}^{(\overline{5}\overline{2}\overline{3}\overline{1}46)} 
\widehat{E}^{(\overline{5}\overline{2}\overline{3}\overline{1}46)}_{(0,0,0,0,0,0)}
+G_{0,1}^{(2\overline{5}\overline{3}\overline{1}46)}
G_{1,2}^{(2\overline{5}\overline{3}\overline{1}46)} x_2t^{\frac12}
G_{1,3}^{(\overline{5}\overline{2}\overline{3}\overline{1}46)} x_5^{-1} 
\widehat{E}^{(5\overline{2}\overline{3}\overline{1}46)}_{(0,0,0,0,0,0)}
\\
&\quad+G_{0,1}^{(2\overline{5}\overline{3}\overline{1}46)}
G_{1,2}^{(2\overline{5}\overline{3}\overline{1}46)} x_2t^{\frac12}
G_{2,3}^{(\overline{5}\overline{2}\overline{3}\overline{1}46)} x_2^{-1} 
\widehat{E}^{(\overline{5}2\overline{3}\overline{1}46)}_{(0,0,0,0,0,0)}
+G_{0,1}^{(2\overline{5}\overline{3}\overline{1}46)}
G_{1,2}^{(2\overline{5}\overline{3}\overline{1}46)} x_2t^{\frac12}
G_{3,3}^{(\overline{5}\overline{2}\overline{3}\overline{1}46)} x_3^{-1} 
\widehat{E}^{(\overline{5}\overline{2}3\overline{1}46)}_{(0,0,0,0,0,0)}
\\
\\
&+G_{0,1}^{(2\overline{5}\overline{3}\overline{1}46)}
G_{2,2}^{(2\overline{5}\overline{3}\overline{1}46)}x_5^{-1}
G_{0,3}^{(25\overline{3}\overline{1}46)}
\widehat{E}^{(25\overline{3}\overline{1}46)}_{(0,0,0,0,0,0)}
+G_{0,1}^{(2\overline{5}\overline{3}\overline{1}46)}
G_{2,2}^{(2\overline{5}\overline{3}\overline{1}46)}x_5^{-1} 
G_{1,3}^{(25\overline{3}\overline{1}46)} x_2
\widehat{E}^{(\overline{2}5\overline{3}\overline{1}46)}_{(0,0,0,0,0,0)}
\\
&\quad+G_{0,1}^{(2\overline{5}\overline{3}\overline{1}46)}
G_{2,2}^{(2\overline{5}\overline{3}\overline{1}46)}x_5^{-1}
G_{2,3}^{(25\overline{3}\overline{1}46)} x_5
\widehat{E}^{(2\overline{5}\overline{3}\overline{1}46)}_{(0,0,0,0,0,0)}
+G_{0,1}^{(2\overline{5}\overline{3}\overline{1}46)}
G_{2,2}^{(2\overline{5}\overline{3}\overline{1}46)}x_5^{-1} 
G_{3,3}^{(25\overline{3}\overline{1}46)} x_3^{-1}
\widehat{E}^{(253\overline{1}46)}_{(0,0,0,0,0,0)}
\\
\\
&+G_{1,1}^{(2\overline{5}\overline{3}\overline{1}46)} x_2
G_{0,2}^{(\overline{2}\overline{5}\overline{3}\overline{1}46)}
t^{\frac{1}{2}}
G_{0, 3}^{(\overline{5}\overline{2}\overline{3}\overline{1}46)}
\widehat{E}^{(\overline{5}\overline{2}\overline{3}\overline{1}46)}_{(0,0,0,0,0,0)}
+ G_{1,1}^{(2\overline{5}\overline{3}\overline{1}46)} x_2
G_{0,2}^{(\overline{2}\overline{5}\overline{3}\overline{1}46)}
t^{\frac{1}{2}}
G_{1, 3}^{(\overline{5}\overline{2}\overline{3}\overline{1}46)}
x_{5}^{-1} \widehat{E}^{(5\overline{2}\overline{3}\overline{1}46)}_{(0,0,0,0,0,0)}
\\
&\quad+G_{1,1}^{(2\overline{5}\overline{3}\overline{1}46)} x_2
G_{0,2}^{(\overline{2}\overline{5}\overline{3}\overline{1}46)}
t^{\frac{1}{2}}
G_{2, 3}^{(\overline{5}\overline{2}\overline{3}\overline{1}46)}
x_{2}^{-1} \widehat{E}^{(\overline{5}2\overline{3}\overline{1}46)}_{(0,0,0,0,0,0)}
+G_{1,1}^{(2\overline{5}\overline{3}\overline{1}46)} x_2
G_{0,2}^{(\overline{2}\overline{5}\overline{3}\overline{1}46)}
t^{\frac{1}{2}}
G_{3, 3}^{(\overline{5}\overline{2}\overline{3}\overline{1}46)}
x_{3}^{-1} \widehat{E}^{(\overline{5}\overline{2}3\overline{1}46)}_{(0,0,0,0,0,0)}
\\
\\
&+G_{1,1}^{(2\overline{5}\overline{3}\overline{1}46)} x_2
G_{1,2}^{(\overline{2}\overline{5}\overline{3}\overline{1}46)} x_2^{-1} 
G_{0, 3}^{(2\overline{5}\overline{3}\overline{1}46)} 
\widehat{E}^{(2\overline{5}\overline{3}\overline{1}46)}_{(0,0,0,0,0,0)}
+G_{1,1}^{(2\overline{5}\overline{3}\overline{1}46)} x_2
G_{1,2}^{(\overline{2}\overline{5}\overline{3}\overline{1}46)} x_2^{-1} 
G_{1, 3}^{(2\overline{5}\overline{3}\overline{1}46)} 
x_{2} \widehat{E}^{(\overline{2}\overline{5}\overline{3}\overline{1}46)}_{(0,0,0,0,0,0)}
\\
&\quad+G_{1,1}^{(2\overline{5}\overline{3}\overline{1}46)} x_2
G_{1,2}^{(\overline{2}\overline{5}\overline{3}\overline{1}46)} x_2^{-1} 
G_{2, 3}^{(2\overline{5}\overline{3}\overline{1}46)} 
x_{5}^{-1} \widehat{E}^{(25\overline{3}\overline{1}46)}_{(0,0,0,0,0,0)}
+G_{1,1}^{(2\overline{5}\overline{3}\overline{1}46)} x_2
G_{1,2}^{(\overline{2}\overline{5}\overline{3}\overline{1}46)} x_2^{-1} 
G_{3, 3}^{(2\overline{5}\overline{3}\overline{1}46)} 
x_{3}^{-1} \widehat{E}^{(2\overline{5}3\overline{1}46)}_{(0,0,0,0,0,0)}
\\
\\
&+G_{1,1}^{(2\overline{5}\overline{3}\overline{1}46)} x_2
G_{2,2}^{(\overline{2}\overline{5}\overline{3}\overline{1}46)} x_5^{-1} 
t^{\frac{1}{2}}
G_{0, 3}^{(5\overline{2}\overline{3}\overline{1}46)}
\widehat{E}^{(5\overline{2}\overline{3}\overline{1}46)}_{(0,0,0,0,0,0)}
+ G_{1,1}^{(2\overline{5}\overline{3}\overline{1}46)} x_2
G_{2,2}^{(\overline{2}\overline{5}\overline{3}\overline{1}46)} x_5^{-1} 
t^{\frac{1}{2}}
G_{1, 3}^{(5\overline{2}\overline{3}\overline{1}46)}
x_{5} \widehat{E}^{(\overline{5}\overline{2}\overline{3}\overline{1}46)}_{(0,0,0,0,0,0)}
\\
&\quad+ G_{1,1}^{(2\overline{5}\overline{3}\overline{1}46)} x_2
G_{2,2}^{(\overline{2}\overline{5}\overline{3}\overline{1}46)} x_5^{-1} 
t^{\frac{1}{2}}
G_{2, 3}^{(5\overline{2}\overline{3}\overline{1}46)}
x_{2}^{-1} \widehat{E}^{(52\overline{3}\overline{1}46)}_{(0,0,0,0,0,0)}
+ G_{1,1}^{(2\overline{5}\overline{3}\overline{1}46)} x_2
G_{2,2}^{(\overline{2}\overline{5}\overline{3}\overline{1}46)} x_5^{-1} 
t^{\frac{1}{2}}
G_{3, 3}^{(5\overline{2}\overline{3}\overline{1}46)}
x_{3}^{-1} \widehat{E}^{(5\overline{2}3\overline{1}46)}_{(0,0,0,0,0,0)}
\end{align*}
The final expansion is obtained by replacing $\widehat{E}^w_{(0,0,0,0,0,0)}$ by $t^{\frac12\ell_s(w)}t_n^{\frac12\ell_d(w)}$.

\end{document}